\setlist[enumerate]{label=(\roman*)}  
\setlist[enumerate,2]{label=(\alph*)} 
\tikzstyle{morphism}=[fill=white, draw=black, shape=rectangle]
\tikzstyle{medium box}=[fill=white, draw=black, shape=rectangle, minimum width=0.7cm, minimum height=0.7cm]
\tikzstyle{large morphism}=[fill=white, draw=black, shape=rectangle, minimum width=1.7cm, minimum height=1cm]
\tikzstyle{bn}=[fill=black, draw=black, shape=circle, inner sep=1.5pt]
\tikzstyle{state}=[fill=white, draw=black, regular polygon, regular polygon sides=3, minimum width=0.8cm, shape border rotate=180, inner sep=0pt]
\tikzstyle{medium state}=[fill=white, draw=black, regular polygon, regular polygon sides=3, minimum width=1.3cm, inner sep=0pt, shape border rotate=180]
\tikzstyle{large state}=[fill=white, draw=black, regular polygon, regular polygon sides=3, minimum width=2.2cm, shape border rotate=180, inner sep=0pt]
\tikzstyle{wide state}=[fill=white, draw=black, shape=isosceles triangle, minimum width=0.8cm, shape border rotate=270, inner sep=1.4pt, minimum height=0.5cm, isosceles triangle apex angle=80]
\tikzstyle{wn}=[fill=white, draw=black, shape=circle, inner sep=1.5pt]
\tikzstyle{blue morphism}=[fill=white, draw={rgb,255: red,15; green,0; blue,150}, shape=rectangle, text={rgb,255: red,15; green,0; blue,150}, tikzit category=blue]
\tikzstyle{red morphism}=[fill=white, draw={rgb,255: red,150; green,0; blue,2}, shape=rectangle, text={rgb,255: red,150; green,0; blue,2}, tikzit category=red]
\tikzstyle{blue state}=[fill=white, draw={rgb,255: red,15; green,0; blue,150}, shape=circle, regular polygon, regular polygon sides=3, minimum width=0.8cm, shape border rotate=180, inner sep=0pt, text={rgb,255: red,15; green,0; blue,150}, tikzit category=blue]
\tikzstyle{blue node}=[fill={rgb,255: red,15; green,0; blue,150}, draw={rgb,255: red,15; green,0; blue,150}, shape=circle, tikzit category=blue, inner sep=1.5pt]
\tikzstyle{blue}=[text={rgb,255: red,15; green,0; blue,150}, tikzit draw={rgb,255: red,191; green,191; blue,191}, tikzit category=blue, tikzit fill=white, inner sep=0mm]
\tikzstyle{blue wide state}=[fill=white, draw={rgb,255: red,15; green,0; blue,150}, text={rgb,255: red,15; green,0; blue,150}, shape=isosceles triangle, minimum width=0.8cm, shape border rotate=270, inner sep=1.4pt, minimum height=0.5cm, isosceles triangle apex angle=80]
\tikzstyle{red node}=[fill={rgb,255: red,150; green,0; blue,2}, draw={rgb,255: red,150; green,0; blue,2}, shape=circle, inner sep=1.5pt]
\tikzstyle{Purple node}=[fill={rgb,255: red,120; green,0; blue,120}, draw={rgb,255: red,120; green,0; blue,120}, text={rgb,255: red,120; green,0; blue,120}, shape=circle, inner sep=1.5pt]
\tikzstyle{red}=[text={rgb,255: red,150; green,0; blue,2}, inner sep=0mm, tikzit fill=white, tikzit draw={rgb,255: red,191; green,191; blue,191}]
\tikzstyle{purple}=[text={rgb,255: red,150; green,0; blue,150}, inner sep=0mm, tikzit fill=white, tikzit draw={rgb,255: red,191; green,191; blue,191}]
\tikzstyle{white morphism}=[fill=white, draw=white, shape=rectangle, tikzit draw={rgb,255: red,139; green,139; blue,139}]
\tikzstyle{leak morphism}=[fill=white, draw={rgb,255: red,120; green,0; blue,85}, shape=rectangle, text={rgb,255: red,120; green,0; blue,85}, tikzit category=leak]
\tikzstyle{leak}=[text={rgb,255: red,120; green,0; blue,85}, inner sep=0mm, tikzit fill=white, tikzit draw={rgb,255: red,191; green,191; blue,191}, tikzit category=leak]
\tikzstyle{leak node}=[fill={rgb,255: red,120; green,0; blue,85}, draw={rgb,255: red,120; green,0; blue,85}, shape=circle, inner sep=1.5pt, tikzit category=leak]
\tikzstyle{horiz state}=[fill=white, draw=black, regular polygon, regular polygon sides=3, minimum width=1cm, shape border rotate=90, inner sep=0pt]
\tikzstyle{arrow}=[->]
\tikzstyle{dashed box}=[-, dashed]
\tikzstyle{blue arrow}=[-, draw={rgb,255: red,15; green,0; blue,150}, tikzit category=blue]
\tikzstyle{red arrow}=[-, draw={rgb,255: red,150; green,0; blue,2}, tikzit category=red]
\tikzstyle{purple arrow}=[->, draw={rgb,255: red,120; green,0; blue,120}, >=stealth, shorten <=2pt, shorten >=2pt]
\tikzstyle{protected purple arrow}=[->, draw={rgb,255: red,120; green,0; blue,120}, >=stealth, shorten <=2pt, shorten >=2pt, preaction={line width=1.8pt, white, draw}]
\tikzstyle{mapsto}=[{|->}]
\tikzstyle{double wire}=[-, double]
\tikzstyle{protected double wire}=[-, double, preaction={line width=2.5pt,white,draw}]
\tikzstyle{triple wire}=[-, draw, line width=0.4pt, preaction={-, draw, line width=1.4pt, white, preaction={-, draw, line width=2.2pt}}]
\tikzstyle{protected}=[-, preaction={line width=1.8pt,white,draw}]
\tikzstyle{leak arrow}=[-, tikzit draw={rgb,255: red,150; green,0; blue,120}]
\tikzstyle{protected leak arrow}=[-, tikzit draw={rgb,255: red,150; green,0; blue,120}]
\tikzstyle{hollow arrow}=[-, very thin, white, preaction={line width=0.7pt,draw={rgb,255: red,120; green,0; blue,85}}, tikzit category=leak, tikzit draw={rgb,255: red,150; green,0; blue,120}]
\tikzstyle{protected hollow arrow}=[-, very thin, white, preaction={line width=0.7pt,draw={rgb,255: red,120; green,0; blue,85},preaction={line width=2.1pt,white,draw}}, tikzit category=leak, tikzit draw={rgb,255: red,150; green,0; blue,120}]
\tikzstyle{over arrow}=[-, black, preaction={draw=white, double}]
\tikzstyle{purple line}=[-, draw={rgb,255: red,120; green,0; blue,120}]
\tikzstyle{orange line}=[-, draw={rgb,255: red,255; green,100; blue,0}]
\tikzstyle{red line}=[-, draw={rgb,255: red,150; green,0; blue,2}]
\tikzstyle{blue line}=[-, draw={rgb,255: red,15; green,0; blue,150}]
\tikzstyle{blue double arrow}=[-, double, draw={rgb,255: red,15; green,0; blue,150}, tikzit category=blue]
\tikzstyle{red double arrow}=[-, double, draw={rgb,255: red,150; green,0; blue,2}, tikzit category=red]
\tikzstyle{purple double line}=[-, double, draw={rgb,255: red,120; green,0; blue,120}]
\tikzstyle{orange double line}=[-, double, draw={rgb,255: red,255; green,100; blue,0}]
\tikzstyle{curly brace}=[decorate, decoration={brace,amplitude=5pt}]
\tikzstyle{d-wire1 plate}=[-, double=red!20!white]
\tikzstyle{d-wire2 plate}=[-, double=red!32!white]
\tikzstyle{dotted_plate}=[-, densely dotted, draw=red, fill opacity=0.4, fill=red!50!white, rounded corners]
\tikzstyle{twire1}=[-, draw, line width=0.4pt, preaction={-, draw, line width=1.4pt, red!20!white, preaction={-, draw, line width=2.2pt}}]
\tikzstyle{twire2}=[-, draw, line width=0.4pt, preaction={-, draw, line width=1.4pt, red!32!white, preaction={-, draw, line width=2.2pt}}]
\definecolor{myurlcolor}{rgb}{0,0,0.3}
\definecolor{mycitecolor}{rgb}{0,0.3,0}
\definecolor{myrefcolor}{rgb}{0.3,0,0}
\newtheorem{theorem}{Theorem}[section]
\newtheorem{proposition}[theorem]{Proposition}
\newtheorem{lemma}[theorem]{Lemma}
\newtheorem{corollary}[theorem]{Corollary}
\newtheorem{definition}[theorem]{Definition}
\theoremstyle{definition}
\newtheorem{example}[theorem]{Example}
\newtheorem{remark}[theorem]{Remark}
\newtheorem{warning}[theorem]{Warning}
\newtheorem{construction}[theorem]{Construction}
\numberwithin{equation}{section}
\def\cref@thmoptarg[#1]#2#3#4{%
	    \ifhmode\unskip\unskip\par\fi%
	    \normalfont%
	    \trivlist%
	    \let\thmheadnl\relax%
	    \let\thm@swap\@gobble%
	    \thm@notefont{\fontseries\mddefault\upshape}%
	    \thm@headpunct{.}
	    \thm@headsep 5\p@ plus\p@ minus\p@\relax%
	    \thm@space@setup%
	    #2
	    \@topsep \thm@preskip               
	    \@topsepadd \thm@postskip           
	    \def\@tempa{#3}\ifx\@empty\@tempa%
	      \def\@tempa{\@oparg{\@begintheorem{#4}{}}[]}%
	    \else%
	      \refstepcounter[#1]{#3}
	      \@namedef{cref@#3@alias}{#1}
	      \def\@tempa{\@oparg{\@begintheorem{#4}{\csname the#3\endcsname}}[]}%
	    \fi%
	    \@tempa}%
\newcommand{\N}{\mathbb{N}}
\newcommand{\R}{\mathbb{R}}
\newcommand{\Rnneg}{\mathbb{R}_{\ge 0}}
\newcommand{\IF}{\mathbf{I}}	
\let\P\undefined
\DeclarePairedDelimiterXPP{\P}[1]{\operatorname{\mathbf{P}}}[]{}{#1}
\newcommand{\E}[1]{\mathbf{E}\left[#1\right]}	
\newcommand{\cat}[1]{{\mathsf{#1}}} 
\newcommand{\op}{\mathrm{op}}
\newcommand{\FinSub}[1]{\mathsf{FinSub}\left(#1\right)}
\newcommand{\Sub}{\mathsf{Sub}}
\newcommand{\Kl}{\mathsf{Kl}}		
\newcommand{\FinKl}{\mathsf{FinKl}}		
\newcommand{\id}{\mathrm{id}} 		
\newcommand{\Par}[1]{\mathsf{Partial}\left({#1}\right)}	
\newcommand{\distMons}{\mathcal{M}} 
\newcommand{\dom}[1]{\mathrm{dom}(#1)}		
\tikzset{pullback/.style={minimum size=1.2ex,path picture={	
			\draw[opacity=1,black,-,#1] (-0.5ex,-0.5ex) -- (0.5ex,-0.5ex) -- (0.5ex,0.5ex);%
}}}
\newcommand{\comp}{ 		
	\mathchoice{\,}{\,}{}{} 	
}
\newcommand{\domext}{\sqsupseteq}
\newcommand{\ph}{{\kern0.06em}\mathord{\rule[-0.05em]{0.6em}{0.05em}}{\kern0.06em}}		
\newcommand{\phsm}{{\kern0.04em}\mathord{\rule[-0.035em]{0.4em}{0.035em}}{\kern0.04em}}		
\newcommand{\sot}{ \ : \ } 
\newcommand{\supp}[1]{\mathrm{Supp}\left(#1\right)}	
\newcommand{\cC}{\mathsf{C}}		
\renewcommand{\det}{\mathrm{det}}	
\newcommand{\copyable}{\mathrm{cop}}	
\newcommand{\samp}{\mathsf{samp}}	
\newcommand{\Rel}{\mathsf{Rel}}
\newcommand{\FinStoch}{\mathsf{FinStoch}}
\newcommand{\BorelStoch}{\mathsf{BorelStoch}}
\newcommand{\SetMulti}{\mathsf{SetMulti}}
\newcommand{\FinSetMulti}{\mathsf{FinSetMulti}}
\newcommand{\Dist}{\mathsf{Dist}}
\newcommand{\as}[1]{
		\def\relstate{#1}%
		\ifx\relstate\empty
		  \text{a.s.}%
		\else
		  {#1\text{-a.s.}}%
		\fi
	}
\newcommand{\ase}[1]{=_{#1\text{-a.s.}}}					
\DeclareMathOperator{\cop}{copy}
\DeclareMathOperator{\discard}{del}
	\DeclarePairedDelimiter{\abs}{\lvert}{\rvert}
	\DeclarePairedDelimiter{\norm}{\lVert}{\rVert}
	\DeclarePairedDelimiterXPP{\pnorm}[2]{}{\lVert}{\rVert}{_{#1}}{#2}
		\let\oldabs\abs
		\def\abs{\@ifstar{\oldabs}{\oldabs*}}
		\let\oldnorm\norm
		\def\norm{\@ifstar{\oldnorm}{\oldnorm*}}
		\let\oldpnorm\pnorm
		\def\pnorm{\@ifstar{\oldpnorm}{\oldpnorm*}}
	\DeclarePairedDelimiterX{\Set}[1]{\{}{\}}{%
		
		#1
	}
		\let\oldSet\Set
		\def\Set{\@ifstar{\oldSet}{\oldSet*}}
	\DeclarePairedDelimiterX{\Family}[1]{(}{)}{%
		
		#1
	}
		\let\oldFamily\Family
		\def\Family{\@ifstar{\oldFamily}{\oldFamily*}}
\newcommand{\newterm}[1]{\textbf{#1}}
\let\originalleft\left
\let\originalright\right
\renewcommand{\left}{\mathopen{}\mathclose\bgroup\originalleft}
\renewcommand{\right}{\aftergroup\egroup\originalright}
\title{Partializations of Markov categories}
\author{Areeb Shah Mohammed\thanks{This research was funded in whole or in part by the Austrian Science Fund (FWF) [doi:\href{https://www.doi.org/10.55776/P35992}{10.55776/P35992}]. For open access purposes, the author has applied a CC BY public copyright license to any author accepted manuscript version arising from this submission.}}
\affil{Department of Mathematics, University of Innsbruck, Austria}
\date{5\textsuperscript{th} September 2025}
\begin{document}

\maketitle

\begin{abstract}
	The present work develops a construction of a CD category of ``partial kernels'' from a particular type of Markov category called a ``partializable Markov category''.
	These are a generalization of earlier models of categories of partial morphisms such as $p$-categories, dominical categories, restriction categories, etc.\ to a non-deterministic/non-cartesian setting.
	Here all morphisms are quasi-total, with a natural poset enrichment corresponding to one morphism being a ``restriction'' of the other.
	Furthermore, various properties important to categorical probability are preserved, such as positivity, representability, conditionals, Kolmogorov products, and splittings of idempotents.
	We additionally discuss an alternative notion of Kolmogorov product suitable for partial maps, as well as partial algebras for probability monads.

	The primary example is that of the partialization of the category of standard Borel spaces and Markov kernels.
	Other examples include variants where the distributions are finitely supported, or where one considers multivalued maps instead.\footnote{The idea being that the randomness is in terms of ``possibilities'' rather than probabilities.}
\end{abstract}

\tableofcontents

\section{Introduction}\label{introSec}
Markov categories have had success in capturing several probabilistic phenomena.
Intuitively, a Markov category is a category where the morphisms behave like stochastic maps, and the categorical structure is meant to model the information flow involved in composite probabilistic processes.
However, many operations one performs in probability theory (particularly in the standard Kolmogorov model) may involve constructions such as limits or integrals that are not always defined (or finite).
For instance, even a relatively innocuous operation like taking the average $\lim_{n \to \infty} \frac{1}{n} \sum_{i=1}^n X_i$ of a sequence $\left(X_i\right)$ of random variables is only defined when the limit exists.\footnote{Another example might be trying to construct a map taking a distribution on $\mathbb R$ to its mean, which need not exist in general.}
In particular, a stochastic \emph{map}
\[
	\R^\N \to \R \qquad\qquad \left(X_i\right)_{i \in \N} \mapsto \lim_{n \to \infty} \frac{1}{n} \sum_{i=1}^n X_i
\]
would only be partially defined at best, and hence does not neatly fit into a typical Markov category.

Similarly, the notion of expectation (that is involved in the standard law of large numbers) in a categorical setting would also take the form of a suitable map.
This would now take as input a \emph{distribution}, and (deterministically!) return the expectation.
Indeed, this can be formalized as an algebra for the \newterm{distribution monad} $P$ on a Markov category, and for instance expectation defines an algebra $P\left[a,b\right] \to \left[a,b\right]$ for every compact interval $\left[a,b\right]$.
However, in the unbounded case, such as the common case of distributions on $\mathbb R$ or $\Rnneg$, the expectation may not be defined/finite.
Thus, such a map would have to be partially defined as well.

A typical manner of formalizing the notion of a ``partially defined map'' $X \to Y$ is as a ``totally defined'' operation defined on some subobject $D \subseteq X$.
This is akin to the shift from sets and functions to sets and partial functions.
Here, the ``partial maps'' are encoded as equivalence classes of spans $X \hookleftarrow D \to Y$ with $D \hookrightarrow X$ a suitable kind of monomorphism.
Categories of spans have a rich structure, and that of spans with the ``left maps'' being monomorphisms have particularly rich axiomatizations in terms of induced idempotents on their domains.\footnote{Some such axiomatizations are $p$-categories, dominical categories, poset bicategories, and restriction categories. We refer to \cref{domainIdempsHistory,stableClassAliases} for a discussion.}
This formalism for partiality has already shown fruitfulness in other fields of mathematics such as for instance, algebraic topology, homotopy theory, and algebraic geometry~\cite{carmeliAmbidexterityChromaticHomotopy2022,harpazAmbidexterityUniversalityFinite2020,haugsengIteratedSpansClassical2018,lurieClassificationTopologicalField,srinivasExactCategoriesQuillens1996}.
However when monoidal structures on this category of partial maps are discussed in existing literature, they reduce to the cartesian structure on the subcategory of ``total'' maps.
This is insufficient for a probabilistic setting, as the totally defined maps in a category of partial \emph{stochastic} maps ought to be a Markov category.
On the other hand, the cartesian Markov categories correspond to those where every map is deterministic~\cite{foxCoalgebrasCartesianCategories1976}, and hence there is no scope for randomness.

In the present work, we develop a construction of a CD category of ``partial stochastic maps'' from a particular type of Markov category called a \newterm{partializable Markov category}. These are a generalization of the aforementioned partial map construction to a non-deterministic/non-cartesian setting.
This CD category is further such that ``all domain inclusions are deterministic''\footnote{The precise condition is that all maps are quasi-total in the sense of~\cite[Definition 3.1]{dilavore2023evidential} (see also \cref{def:quasi-total}).} and the hom-sets are equipped with a natural poset enrichment corresponding to one map being a ``restriction'' of the other.
Furthermore, we prove that properties such as representability, positivity, conditionals, Kolmogorov products, and splitting of idempotents are preserved.

Additionally, in the representable case, we can make sense of partial algebras for the associated distribution monad.
This allows us to construct, for example, the integration map computing the mean of a distribution on $\Rnneg$ (when defined) as a partial algebra.
Partial algebras provide a framework to encode these kinds of ``partially defined'' probabilistic constructions systematically.

Finally, we introduce the notion of \newterm{lax Kolmogorov products}, a variant of Kolmogorov products particularly suitable for such categories.
Such a notion allows us to make sense of a functorial infinite tensor of maps in a CD category with all maps quasi-total.
We show that when a partializable Markov category has a Kolmogorov product, it extends to a lax Kolmogorov product (in addition to the classic ``strict'' one) in the corresponding CD category of partial maps.

\subsection*{Structure of the paper}
In the remainder of the \cref{introSec}, we summarize the main definitions and results of the paper, and discuss some other works related to the topic.

\cref{backgroundSec} exposits the background material relevant to the paper, in particular the basic definitions of restriction categories and categories of spans, as well as the fragment of the language of Markov and CD categories that we will use.

In \cref{sec:partializable}, we introduce the main contribution of the paper, a CD category of partial maps $\Par{\cC}$ in a Markov category $\cC$ of a suitable type, called a \newterm{partializable Markov category}.
We show that these partializations are positive quasi-Markov categories, which are a class of CD categories where the theory of restriction categories applies, with a corresponding poset enrichment.

Then, in \cref{section:transferProperties} we show that the partialization construction preserves several important properties of Markov categories, such as representability, conditionals, Kolmogorov products, and splitting of idempotents in a natural manner.

Furthermore, we show that in the representable case, the phenomenon of expectation defining an algebra for the distribution monad extends to the non-compact case, with expectation defining a partial algebra for the distribution monad on the nonnegative reals.

The final section introduces the notion of \newterm{lax Kolmogorov product}, a variant of the classical Kolmogorov product for Markov categories that defines a functorial infinite tensor of partial maps.\footnote{This is unfortunately not the case for the direct generalization of the original notion of Kolmogorov product, see \cref{KolProdsNonFunctorialWarning}.}

\subsection{Main definitions and results}
Here we summarize briefly the contents of the paper, linking to the relevant sections for more details (and references).
The definitions of (most) terms used can be found in \cref{backgroundSec}.
\begin{proposition}[Elaborated at {\cref{positiveQuasiMarkovRestriction}}]
	Every positive quasi-Markov category $\cC$ is a restriction category (in the sense of~\cite[Section 2.1.1]{cockettlack2002partialmaps}, see \cref{restrictionCategoryDefinition}).
\end{proposition}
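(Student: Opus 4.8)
The plan is to show that a positive quasi-Markov category carries a restriction structure by explicitly defining the restriction idempotent of each morphism and then verifying the four restriction-category axioms. For a morphism $f \colon X \to Y$, the natural candidate for the restriction $\bar f \colon X \to X$ is obtained by copying the input, feeding one copy through $f$, discarding the output of $f$, and keeping the other copy; in string-diagram terms, $\bar f = (\id_X \tensor (\discard_Y \circ f)) \circ \cop_X$, i.e.\ we run $f$, throw away its value, but record on $X$ ``how much of the domain $f$ was defined on.'' In an ordinary Markov category $\discard_Y \circ f = \discard_X$ by naturality of discarding (totality), which would force $\bar f = \id_X$ and collapse the restriction structure; the whole point is that in a \emph{quasi}-Markov category discarding need not be natural, so $\bar f$ genuinely measures the domain of definition of $f$.

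First I would record the precise definition of a positive quasi-Markov category from the background section, extracting exactly which of the usual Markov-category axioms are retained (comonoid structure, associativity/commutativity/counitality of copy) and which are weakened (naturality of $\discard$). Then I would define $\bar f$ as above and check the four Cockett--Lack axioms in turn: (R1) $f \circ \bar f = f$, which should follow from counitality of the copy map together with the ability to reconstitute $f$ after discarding-and-recopying; (R2) $\bar f \circ \bar g = \bar g \circ \bar f$ for $f,g$ with common domain, which is where \emph{commutativity} of the copy comonoid does the work and lets the two ``domain measurements'' be reordered; (R3) $\overline{\bar g \circ f} = \bar g \circ \bar f$ (or the appropriate variant), a compatibility of restriction with precomposition; and (R4) $\bar g \circ f = f \circ \overline{g \circ f}$, the exchange law relating restriction to composition.

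The main obstacle I expect is axiom (R1), $f \circ \bar f = f$. Diagrammatically this asks that copying the input, running one copy of $f$ only to discard it, and then running $f$ on the other copy, is the same as just running $f$ once. This is precisely the identity that fails in a general CD category and holds only under an extra hypothesis — here I expect \emph{positivity} to be exactly the axiom that makes it go through, since positivity controls how a morphism interacts with discarding of a ``parallel'' copy of itself (it is the condition that lets one absorb a discarded copy of $f$ back into $f$). So the crux of the proof is to unfold positivity into the string-diagrammatic form needed and apply it to the copy-delete-recopy diagram for $f \circ \bar f$.

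Once (R1) is established, I expect (R2)--(R4) to be comparatively routine diagrammatic manipulations using associativity, commutativity, and counitality of the copy comonoid, together with (R1) itself where a restriction idempotent must be simplified. I would organize the verification so that each axiom is reduced to a single string-diagram equality, flag positivity as the one nontrivial input (used in (R1) and possibly (R4)), and conclude that $\bar{(\ph)}$ endows $\cC$ with the structure of a restriction category in the sense of \cref{restrictionCategoryDefinition}.
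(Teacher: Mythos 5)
Your candidate restriction operator is the right one: $\bar f = \left(\id_X \otimes \left(\discard_Y \comp f\right)\right)\comp \cop_X$ is precisely the domain $\dom{f}$ of \cref{domainDefinition}, which is exactly what the paper uses in \cref{positiveQuasiMarkovRestriction}. However, you have the location of the difficulty backwards, and this matters for whether your plan goes through. Axiom (R.1), $f \comp \bar f = f$, is not where positivity enters: it is \emph{literally} the quasi-totality equation of \cref{def:quasi-total} (``$f$ absorbs its domain''), and so holds by the hypothesis that the category is quasi-Markov, with no further work and no use of positivity. The genuinely nontrivial axiom is (R.4), $\bar g \comp f = f\comp \overline{g\comp f}$, and your plan to dispatch it by ``comparatively routine diagrammatic manipulations using associativity, commutativity, and counitality of the copy comonoid, together with (R1)'' cannot succeed: $\Rel$ is a quasi-Markov category, so all of those ingredients are available there, yet (R.4) fails in $\Rel$, as the paper notes in \cref{restPositivityRequirement}. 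Hence (R.4) requires an input beyond the comonoid axioms and quasi-totality, and that input is positivity.

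The way positivity is actually deployed is as follows. For a composable pair $X \xrightarrow{f} Y \xrightarrow{g} Z$, quasi-totality implies that the composite $\discard\comp g \comp f$ is copyable; applying the positivity equation of \cref{positivityDefinition} to the pair $\bigl(f,\ \discard\comp g\bigr)$ then yields exactly $f \comp \overline{g\comp f} = \bar g \comp f$. Your treatment of (R.2) and (R.3) via cocommutativity and coassociativity of the copy comonoids is fine and matches the paper. If you reallocate positivity from (R.1), where it is not needed, to (R.4), where it is indispensable, your argument becomes the paper's proof.
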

\begin{definition}[Elaborated at {\cref{def:partializable}}]
	A \newterm{partializable Markov category} is a Markov category $\cC$ such that:
	\begin{enumerate}
		\item It is positive (so in particular all isomorphisms are deterministic);
		\item Pullbacks of deterministic monomorphisms exist and are themselves deterministic;
		\item Deterministic monomorphisms are closed under tensoring.
	\end{enumerate}
\end{definition}

\noindent To such a Markov category, we associate the following CD category of ``partial stochastic maps''.
\begin{definition}[Combining {\cref{def:partial_C,proposition:SpanMonStr}}]
	Given a partializable Markov category $\cC$, the CD category of ``partial maps in $\cC$'', the \newterm{partialization} $\Par{\cC}$ has:
	\begin{enumerate}
		\item Objects those of the original category $\cat{C}$;
		\item Maps $X\to Y$ equivalence classes of spans
			\[
            X \xhookleftarrow{i} D \xrightarrow{f} Y
        \]
		with $i$ a \emph{deterministic} monomorphism;
		\item Composition is done by pullback: For maps represented by spans $X \xhookleftarrow{i} D_f \xrightarrow{f} Y$ and $Y \xhookleftarrow{j} D_g \xrightarrow{g} Z$,
		\[
		\begin{tikzcd}[sep=tiny, ampersand replacement=\&]
			\& \& E \arrow[ld, "u"', hook] \arrow[rd, "v"] \arrow["\lrcorner"{anchor=center, pos=0.125, rotate=-45}, draw=none, dd] \& \& \\
			\& D_f \arrow[ld, "i"', hook] \arrow[rd, "f"'] \& \& D_g \arrow[ld, "j", hook] \arrow[rd, "g"] \& \\
			X \& \& Y \& \& Z
		\end{tikzcd}
		\]
		the composite is represented by $X \xhookleftarrow{i\comp u} E \xrightarrow{g \comp v} Z$;
	\item Tensoring is done componentwise: For maps $X \xhookleftarrow{i} D_f \xrightarrow{f} Y$ and $X' \xhookleftarrow{j} D_g \xrightarrow{g} Y'$, the tensor is represented by
        \[
            X \otimes X' \xhookleftarrow{i \otimes j} D_f \otimes D_g \xrightarrow{f \otimes g} Y \otimes Y'
        \]
		\item The CD structure is inherited from $\cC$.
\end{enumerate}
\end{definition}
\noindent The original category $\cC$ embeds fully faithfully into $\Par{\cC}$, being identified with the subcategory of total maps (\cref{proposition:SpanCDTotal}).

\begin{example}
Our main example of a partializable Markov category is the category $\BorelStoch$ of standard Borel spaces and Markov kernels (\cref{example:BorelStochPartializable}).
The maps $X \to Y$ in $\Par{\BorelStoch}$ can be identified with stochastic maps $D \to Y$ for a measurable $D \subseteq X$, capturing the intuition of ``partially defined stochastic maps''.
Similarly, the subcategory $\FinStoch$ whose objects can be identified with finite sets and maps with stochastic matrices is another example (\cref{FinStochPartializable}).

Another class of examples of frequent interest in categorical probability is that of distributions valued in an entire zerosumfree semiring (\cref{discDistPartializable}).\footnote{Particularly in light of the relative ease of computation in this setting.}
This class involves categories in typical use such as 
\begin{itemize}
	\item The category $\Dist$ of discrete measurable spaces and finitely supported stochastic maps (\cref{distAndMultiPartializable}~\ref{distPartializable});
	\item The category $\FinSetMulti$ of finite sets and multivalued maps (where the distribution of the ``image'' of a point in the source is replaced by the ``possible images'' it might have in the target) (\cref{distAndMultiPartializable}~\ref{FinSetMultiPartializable}).
\end{itemize}

Expanding on the $\FinSetMulti$ example is the category $\SetMulti$ of sets and multivalued maps (\cref{SetMultiPartializable}).
This can be seen as a ``possibilistic'' analogue of $\Dist$.
\end{example}

\begin{warning}[Clarified in detail at {\cref{parCatsExmpsDiff}}]
	Another notion of partiality that has classically been used in a probabilistic context is that of sub-distributions.
	These are a probabilistic analogue of the model $\mathsf{Set}_{\ast}$ of the category of partial set functions given by pointed sets and pointed morphisms.\footnote{Indeed, for $\mathsf{Set}$ there is an equivalence between $\mathsf{Set}_{\ast}$ and the category of sets and spans, in the sense of $\Par{\mathsf{Set}}$ as we have considered partial maps so far.}
	While similar, the partializations $\Par{\BorelStoch}, \Par{\Dist}$, and $\Par{\SetMulti}$ are however \emph{not} the usual categories $\BorelStoch_{\le 1}, \Kl{\left(\mathbf D_{\le 1}\right)}$, and $\Rel$ of (finitely supported) sub-stochastic distributions and relations.
Essentially, while the objects and hom-sets of these categories can be compared, the compositions are different.
\end{warning}

\noindent The notions of partiality associated to such categories of spans have a formulation in terms of the CD structure.
\begin{proposition}[{\cref{partializationRestrictionStructuresAgree}}]
	The two domain idempotents (in terms of spans and the CD structure, see \cref{restrictionCategoryDefinition,domainDefinition} respectively) agree.
	In particular, the domain of a map in the sense of a CD structure defines a restriction operator.
\end{proposition}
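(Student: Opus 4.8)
The plan is to evaluate both idempotents on a single chosen representative and check they name the same morphism of $\Par{\cC}$. Fix a map $X \to Y$ represented by the span $X \xhookleftarrow{i} D \xrightarrow{f} Y$ with $i$ a deterministic monomorphism. The span-theoretic restriction operator $\overline{f}$ (\cref{restrictionCategoryDefinition}) is the partial identity $X \xhookleftarrow{i} D \xrightarrow{i} X$, obtained by reusing the domain inclusion in place of the right leg. The CD-domain (\cref{domainDefinition}) is, up to the unitor $X \otimes I \cong X$, the composite
\[
	(\id_X \otimes \discard_Y) \comp (\id_X \otimes f) \comp \cop_X \colon X \to X,
\]
built from the CD structure that $\Par{\cC}$ inherits from $\cC$. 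I would establish equality by reducing this composite to the partial identity.

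First I would merge the tensor factors via $(\id_X \otimes \discard_Y)\comp(\id_X \otimes f) = \id_X \otimes (\discard_Y \comp f)$, so that the CD-domain equals $(\id_X \otimes (\discard_Y \comp f)) \comp \cop_X$. Evaluating $\discard_Y \comp f$ inside $\Par{\cC}$ --- that is, composing the span for $f$ with the total span on $\discard_Y$ --- gives $X \xhookleftarrow{i} D \xrightarrow{\discard_D} I$, where I use that $f$ is a genuine (total) morphism of $\cC$, so $\discard_Y \comp f = \discard_D$ by naturality of discarding in the Markov category $\cC$. Tensoring with $\id_X$ and remembering that $\cop_X$ is the total span on the copy map, the CD-domain becomes the composite in $\Par{\cC}$ of
\[
	X \xleftarrow{\id} X \xrightarrow{\cop_X} X \otimes X
	\qquad\text{and}\qquad
	X \otimes X \xhookleftarrow{\id_X \otimes i} X \otimes D \xrightarrow{\id_X \otimes \discard_D} X,
\]
so the whole statement reduces to a single pullback.

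The heart of the argument is the identification of the pullback of $\cop_X \colon X \to X \otimes X$ along the deterministic monomorphism $\id_X \otimes i \colon X \otimes D \into X \otimes X$. I claim its apex is $D$, forming the square with legs $i \colon D \into X$ and $(i \otimes \id_D)\comp\cop_D \colon D \to X \otimes D$. Commutativity is exactly determinism of $i$, namely $\cop_X \comp i = (i \otimes i)\comp\cop_D$; for the universal property, given a competing cone $(a,b)$ with $\cop_X \comp a = (\id_X \otimes i)\comp b$, the factorization is $h \coloneqq (\discard_X \otimes \id_D)\comp b$ --- one verifies $i \comp h = a$ using the counit law, then $b = (i \otimes \id_D)\comp\cop_D\comp h$ by cancelling the monomorphism $\id_X \otimes i$, with uniqueness forced by $i$ being monic. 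This is the only step invoking the defining axioms of a partializable Markov category: $\id_X \otimes i$ is a deterministic mono because such maps are closed under tensoring, and the required pullback exists and is again a deterministic mono by the pullback axiom. I expect this identification to be the sole delicate point.

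Granting the pullback, the composite span has left leg $i$ and right leg $(\id_X \otimes \discard_D)\comp(i \otimes \id_D)\comp\cop_D = (i \otimes \discard_D)\comp\cop_D$. Factoring $i \otimes \discard_D = i \comp (\id_D \otimes \discard_D)$ under $X \otimes I \cong X$ and applying the comonoid counit law $(\id_D \otimes \discard_D)\comp\cop_D = \id_D$, this right leg collapses to $i$. Thus the CD-domain is represented by $X \xhookleftarrow{i} D \xrightarrow{i} X$, which is $\overline{f}$, so the two idempotents agree. The final clause is then immediate: by the preceding proposition $\Par{\cC}$ is a positive quasi-Markov category and hence a restriction category, so the CD-domain --- now identified with the concrete span domain --- indeed defines a restriction operator. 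The remaining manipulations are the routine comonoid bookkeeping standard in CD categories.
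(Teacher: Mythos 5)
Your main computation is correct and follows essentially the same route as the paper: reduce $\dom u$ to the span $\bigl(D,i,\discard_D\bigr)$ using totality of $f$ in the Markov category $\cC$, then identify the pullback of $\cop_X$ along $\id_X\otimes i$ as having apex $D$ with legs $i$ and $\left(i\otimes\id_D\right)\comp\cop_D$, and collapse the right leg to $i$ by the counit law. The only difference is that you verify this pullback square by hand (correctly, using determinism of $i$ for commutativity, the counit law for the factorization, and monicity of $\id_X\otimes i$ from the partializability axioms for uniqueness), whereas the paper simply cites the second square of \cref{corollary:SPTSUniversality}; your direct argument is exactly the content of that corollary, so nothing is lost.

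The one place you should be careful is the justification of the final clause. You invoke ``$\Par{\cC}$ is a positive quasi-Markov category and hence a restriction category'' via \cref{positiveQuasiMarkovRestriction}, but at this stage neither quasi-Markovness nor positivity of $\Par{\cC}$ is available: quasi-Markovness (\cref{partializationsQuasiMarkov}) is deduced \emph{from} the present proposition, and positivity is only established later in \cref{proposition:ParPos}, so the appeal is circular. The intended (and non-circular) justification is the opposite direction: the span assignment $\overline{\left(D,i,f\right)}=\left(D,i,i\right)$ is already known to be a restriction structure on any category of spans over a stable system of monics by~\cite[Proposition 3.1]{cockettlack2002partialmaps}, and your identification then transfers that structure to the CD-domain. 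With that substitution the proof is complete.
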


\begin{corollary}
	This has as immediate consequences:
	\begin{itemize}
		\item $\Par{\cC}$ is quasi-Markov (\cref{partializationsQuasiMarkov});

		\item The two notions of poset enrichment (\cref{def:dom_ext,eq:rest_ord}) agree (\cref{prop:ParExtParOrd}).
		\item The CD and span notions of totality agree (\cref{proposition:SpanCDTotal}).
	\end{itemize}
\end{corollary}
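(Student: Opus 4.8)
The plan is to obtain all three items as formal consequences of the preceding proposition, which identifies the span-theoretic domain idempotent of a partial map with the one extracted from the CD structure via \cref{domainDefinition}. Once these coincide I will write $\bar f$ for the now-unambiguous domain idempotent of a map $f$ in $\Par{\cC}$, and regard the assignment $f \mapsto \bar f$ as a single restriction operator; \cref{partializationRestrictionStructuresAgree} already guarantees that it satisfies the axioms of \cref{restrictionCategoryDefinition}.

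For the quasi-Markov claim I would first unfold the definition of a quasi-Markov category, namely that the domain assignment built from the CD structure (copy with $\cop$, apply $f$, discard with $\discard$) is a genuine restriction operator interacting correctly with the comonoid maps. This is precisely what \cref{partializationRestrictionStructuresAgree} supplies: the CD-domain coincides with the span-domain, and the latter is a bona fide restriction operator because $\Par{\cC}$ is assembled from spans. Hence the quasi-Markov axioms hold on the nose, establishing \cref{partializationsQuasiMarkov}.

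For the remaining two items I would note that each is phrased entirely in terms of a domain idempotent, so the agreement of idempotents transports directly. The domain-extension order of \cref{def:dom_ext} records, in the span language, that $g$ is $f$ cut down to a smaller domain, while the restriction order of \cref{eq:rest_ord} is the CD-structural relation $f \comp \bar g = g$; since the two underlying idempotents are literally equal, the relations coincide once the opposite directional conventions of $\domext$ and $\le$ are matched, giving \cref{prop:ParExtParOrd}. Likewise, a map is total in the span sense exactly when its domain inclusion is invertible, i.e.\ when $\bar f$ is the identity, and total in the CD sense exactly when $\discard \comp f = \discard$, again when $\bar f$ is the identity; equality of the two idempotents then yields \cref{proposition:SpanCDTotal}.

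Essentially all of the mathematical content sits in \cref{partializationRestrictionStructuresAgree}, so the hard part is already done; the only obstacle I anticipate here is clerical, namely checking that the chosen axiomatization of ``quasi-Markov'' asks for nothing more than the CD-domain being a restriction operator, and keeping the direction of $\domext$ aligned with that of the restriction order.
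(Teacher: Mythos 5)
Your approach is the paper's: all three items are read off from \cref{partializationRestrictionStructuresAgree}, with quasi-totality being exactly axiom (R.1) for the (now unambiguous) domain operator, and totality in either sense being equivalent to $\bar f = \id$. One caveat: your claim that both orders are ``phrased entirely in terms of a domain idempotent'' is not literally true of \cref{eq:rest_ord}, which is defined by the existence of a mediating morphism $D_g \to D_f$ between span apices, not by an equation involving $\bar g$. The identification of that span-theoretic relation with $g = f \comp \bar g$ (where $\bar g = (D_g,j,j)$) is the one step that actually requires an argument --- the paper carries it out via an explicit pullback computation in the proof of \cref{prop:ParExtParOrd}, noting it is standard restriction-category theory --- so you should either cite that standard fact or supply the short computation rather than treat it as definitional. (You have also swapped the labels: \cref{def:dom_ext} is the CD-structural relation $f \comp \dom g = g$ and \cref{eq:rest_ord} is the span order, but this is cosmetic and does not affect the argument.)
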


One can also relate the copyable maps of $\Par{\cC}$ to those of $\cC$.
\begin{proposition}[{\cref{proposition:ParCDDeterminism}}]
	A map $X \hookleftarrow D \xrightarrow{f} Y$ of $\Par{\cC}$ is copyable if and only if $f$ is deterministic.
\end{proposition}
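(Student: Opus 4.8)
The plan is to unwind the defining equation of copyability, namely $\cop_Y \comp h = (h \tensor h) \comp \cop_X$ for $h$ the span $X \xhookleftarrow{i} D \xrightarrow{f} Y$, by computing each side explicitly as a span and comparing the results. Recall that the CD structure on $\Par{\cC}$ is inherited from $\cC$, so $\cop_X$ and $\discard_X$ are represented by the total spans $X \xleftarrow{\id} X \xrightarrow{\cop_X} X \tensor X$ and $X \xleftarrow{\id} X \xrightarrow{\discard_X} I$. Since the left leg of a copy span is an identity, every pullback against it is trivial, so such composites can be read off immediately.

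First I would compute the left-hand side. Composing $h$ with $\cop_Y$ amounts to a pullback of $f \colon D \to Y$ against $\id_Y$, whose apex is $D$ itself; hence $\cop_Y \comp h$ is the span $X \xhookleftarrow{i} D \xrightarrow{\cop_Y \comp f} Y \tensor Y$. For the right-hand side, the tensor $h \tensor h$ is the span $X \tensor X \xhookleftarrow{i \tensor i} D \tensor D \xrightarrow{f \tensor f} Y \tensor Y$, and precomposing it with $\cop_X$ requires the pullback of $\cop_X$ along $i \tensor i$.

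The crux of the argument --- and the step I expect to be the main obstacle --- is identifying this pullback, which I claim is the cone $(D, i, \cop_D)$. By the third axiom of a partializable Markov category, $i \tensor i$ is again a deterministic monomorphism, so its pullback along $\cop_X$ exists with a monic left leg $p \colon E \to X$ and a second leg $q \colon E \to D \tensor D$ satisfying $\cop_X \comp p = (i \tensor i) \comp q$. Naturality of copy for the deterministic map $i$ gives $\cop_X \comp i = (i \tensor i) \comp \cop_D$, exhibiting $(D, i, \cop_D)$ as a cone, whence the universal map $D \to E$ factors $i$ through $p$. For the reverse containment of subobjects I would apply $\id_X \tensor \discard_X$ to the pullback identity: the comonoid counit law collapses $(\id_X \tensor \discard_X) \comp \cop_X \comp p$ to $p$, while naturality of $\discard$ in the Markov category $\cC$ (where every map is discardable) rewrites $(\id_X \tensor \discard_X) \comp (i \tensor i) \comp q$ as $i \comp \big((\id_D \tensor \discard_D) \comp q\big)$, factoring the mono $p$ through $i$. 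The two factorizations force $E \cong D$ as subobjects of $X$, and transporting the composite along this iso yields the span $X \xhookleftarrow{i} D \xrightarrow{(f \tensor f) \comp \cop_D} Y \tensor Y$.

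It then remains only to compare the two spans. Both have apex $D$ and the identical left leg $i$; since $i$ is a monomorphism, any span isomorphism witnessing an equivalence between them must be $\id_D$, so the spans are equal precisely when their right legs agree, i.e. when $\cop_Y \comp f = (f \tensor f) \comp \cop_D$. This is exactly the statement that $f$ is copyable in $\cC$, and since $\cC$ is a Markov category (where discardability is automatic) copyability coincides with determinism, completing the equivalence.
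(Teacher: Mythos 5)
Your proposal is correct and follows essentially the same route as the paper: compute both sides of the copyability equation as spans, identify the pullback of $\cop_X$ along $i \otimes i$ with $\left(D, i, \cop_D\right)$, and compare right legs using monicity of $i$. The only difference is that you verify that pullback square directly (via the counit law and totality of $i$), whereas the paper cites it as the third square of \cref{corollary:SPTSUniversality}; your inline argument is a correct proof of that instance.
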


\begin{proposition}[{\cref{proposition:ParPos}}]
	Positivity transfers from $\cC$ to $\Par{\cC}$.
\end{proposition}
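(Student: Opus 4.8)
The plan is to verify the defining equation of positivity directly on representing spans, exploiting that the CD structure of $\Par{\cC}$ is inherited from $\cC$ and that $\cop$ and $\discard$ are total maps (spans with identity left leg). Recall that $\cC$ being positive means that for every composable pair $f\colon X\to Y$, $g\colon Y\to Z$ the two morphisms $X\to Y\tensor Z$ assembled from $f$, $g$ and the copy map according to the positivity axiom agree; I must establish the analogous identity for arbitrary $\bar f,\bar g$ in $\Par{\cC}$. Fix representatives $\bar f = [\,X\xhookleftarrow{i}D_f\xrightarrow{f}Y\,]$ and $\bar g = [\,Y\xhookleftarrow{j}D_g\xrightarrow{g}Z\,]$, and let $E$, together with $u\colon E\into D_f$ and $v\colon E\to D_g$, be the pullback of $f$ along the deterministic monomorphism $j$ computing the composite $\bar g\comp\bar f$ (so $f\comp u = j\comp v$, and $u$ is again a deterministic monomorphism by partializability).

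First I would show that both sides of the positivity equation, computed as spans via the composition and tensor rules (\cref{def:partial_C,proposition:SpanMonStr}), have one and the same domain, namely $E\into X$ through $i\comp u$. Since $\cop$ and $\discard$ are total, composing or tensoring with them introduces no pullback against a nontrivial left leg, so the only domain restrictions arise from $\bar f$ (restricting to $D_f$) and from feeding a copy of $\bar f$'s output into $\bar g$ (restricting that copy to $D_g$). Because $\cop$ is deterministic, hence a comonoid homomorphism, forming a copy commutes with the pullback defining $E$: restricting a branch to $D_g$ before or after copying yields the same subobject. Consequently the restriction of the branch fed into $\bar g$ pulls the whole diagram back to exactly $E$ on either side of the equation, and the kept branches are automatically restricted to the same $E$. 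I expect this domain bookkeeping to be the main obstacle: it is where stability of deterministic monomorphisms under pullback and tensor, together with the comonoid-homomorphism property of $\cop$, must be combined to certify that the two sides land on the same domain $E\into X$.

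With the domains matched, it remains to compare the right legs, two $\cC$-morphisms $E\to Y\tensor Z$. Over $E$ the right leg of $\bar f$ is the total map $f\comp u = j\comp v$, and the branch passed to $\bar g$ factors through $D_g$ via $v$, so $\bar g$ acts there as the honest $\cC$-map $g\comp v$. Tracing the span composition, the two right legs are precisely the two sides of the positivity equation in $\cC$ for the effective maps $j\comp v\colon E\to Y$ and $g\colon D_g\to Z$; using that $j$ is a deterministic monomorphism, and therefore a comonoid homomorphism satisfying $\cop_Y\comp j = (j\tensor j)\comp\cop_{D_g}$, each side rewrites as $(j\tensor \id_Z)$ postcomposed with the corresponding side of the positivity equation in $\cC$ for the composable pair $E\xrightarrow{v}D_g\xrightarrow{g}Z$. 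Those two sides coincide because $\cC$ is positive, and postcomposition preserves the equality, so the right legs agree in $\cC$.

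Finally, since both spans share the left leg $i\comp u$ and have equal right legs, they represent the same morphism of $\Par{\cC}$, which is exactly the positivity equation for $\bar f$ and $\bar g$. Conceptually the argument says that restricting to the common domain $E$ turns the partial maps into total ones, where positivity is nothing but the image under the CD embedding $\cC\hookrightarrow\Par{\cC}$ (\cref{proposition:SpanCDTotal}) of positivity in $\cC$; the only genuine work is certifying that the domain is indeed common, which is the step I would write out most carefully.
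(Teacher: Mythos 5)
Your overall architecture is the same as the paper's: compute both sides of the positivity equation as spans, check that the pullbacks force both to have the common domain $E$ (the apex of the pullback computing $\bar g\comp\bar f$), and then reduce the equality of the right legs to an instance of positivity in $\cC$. The domain bookkeeping you describe is correct and matches the paper's computation.

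However, there is a genuine gap. Positivity (\cref{positivityDefinition}) is a \emph{conditional} property: the equation is only asserted for composable pairs whose composite is copyable, and correspondingly positivity of $\cC$ only hands you the equation for pairs in $\cC$ with deterministic composite. You state the goal as establishing the identity ``for arbitrary $\bar f,\bar g$ in $\Par{\cC}$'' --- that statement is false (it already fails for total morphisms, i.e.\ for $\cC\hookrightarrow\Par{\cC}$ with $\cC=\FinStoch$, since it would force every morphism to be copyable). Consequently your final step, ``those two sides coincide because $\cC$ is positive,'' applied to the pair $E\xrightarrow{v}D_g\xrightarrow{g}Z$, is not justified as written: positivity of $\cC$ only yields that equality when $g\comp v$ is deterministic. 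To close the gap you must (a) restore the hypothesis that $\bar g\comp\bar f$ is copyable in $\Par{\cC}$, and (b) observe via \cref{proposition:ParCDDeterminism} that copyability of the composite span $\bigl(X\xhookleftarrow{i\comp u}E\xrightarrow{g\comp v}Z\bigr)$ is equivalent to $g\comp v$ being deterministic in $\cC$, which is exactly the license needed to invoke positivity of $\cC$ for the pair $(v,g)$. This is precisely how the paper's proof proceeds; with that step inserted your argument goes through.
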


\noindent Representability also interacts naturally with partialization.
\begin{proposition}[{\cref{proposition:ParDistObs,proposition:ParPushforwardComputation}}]
	Consider a partializable and representable Markov category $\cC$.
	The distribution objects and sampling maps of $\cC$ define distribution objects and sampling maps in $\Par{\cC}$ as well.
	In particular, $\Par{\cC}$ is representable.
	
	Under this correspondence, the copyable counterpart of a map $X \hookleftarrow D \xrightarrow{f} Y$ in $\Par{\cC}$ is the map $X \hookleftarrow D \xrightarrow{f^\sharp} PY$.
	Furthermore, the pushforward $P\bigl(X \hookleftarrow D \xrightarrow{f} Y\bigr)$ of a map $X \hookleftarrow D \xrightarrow{f} Y$ in $\Par{\cC}$ is represented by $\bigl(PX \xhookleftarrow{Pi} PD \xrightarrow{Pf} PY\bigr)$.
\end{proposition}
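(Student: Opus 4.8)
The plan is to handle the two clauses in turn: first establish representability of $\Par{\cC}$ together with the formula for the copyable counterpart, and then deduce the pushforward formula from it.

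For representability I would take $PX$ as the distribution object of $X$ in $\Par{\cC}$ and the \emph{total} map $PX \xhookleftarrow{\id} PX \xrightarrow{\samp_X} X$ as its sampling map. Given $\phi \colon A \to X$ represented by $A \xhookleftarrow{i} D \xrightarrow{f} X$, the candidate counterpart is $\phi^\sharp := \bigl(A \xhookleftarrow{i} D \xrightarrow{f^\sharp} PX\bigr)$, with $f^\sharp$ the representative of $f$ in $\cC$. Three checks are needed. First, $\phi^\sharp$ is copyable by \cref{proposition:ParCDDeterminism}, as $f^\sharp$ is deterministic. Second, the composite of $\phi^\sharp$ with the sampling map involves a pullback along an identity and hence is trivial, yielding $A \xhookleftarrow{i} D \xrightarrow{\samp_X \circ f^\sharp} X = \phi$. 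Third, for uniqueness, any copyable $\psi = \bigl(A \xhookleftarrow{j} D' \xrightarrow{g} PX\bigr)$ with the same image under the sampling map gives an isomorphism $\theta \colon D' \to D$ of spans with $i \circ \theta = j$ and $f \circ \theta = \samp_X \circ g$; since $\theta$ is deterministic (positivity) and both $f^\sharp \circ \theta$ and $g$ are deterministic with $\samp_X \circ (\,\cdot\,) = \samp_X \circ g$, uniqueness of representatives in $\cC$ forces $f^\sharp \circ \theta = g$, so $\psi = \phi^\sharp$. This simultaneously proves representability and the stated formula for $\phi^\sharp$, the remaining coherence data being inherited from $\cC$.

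For the pushforward I would use that $P\phi$ is characterized as the unique copyable map with $\samp_Y \circ P\phi = \phi \circ \samp_X$ (the sampling maps being the total ones), and verify the candidate $\Psi := \bigl(PX \xhookleftarrow{Pi} PD \xrightarrow{Pf} PY\bigr)$. It is copyable since $Pf$ is deterministic. Post-composing $\Psi$ with the sampling map of $Y$ (again a trivial pullback) gives $PX \xhookleftarrow{Pi} PD \xrightarrow{\samp_Y \circ Pf} Y$, and $\samp_Y \circ Pf = f \circ \samp_D$ by naturality of sampling, producing $PX \xhookleftarrow{Pi} PD \xrightarrow{f \circ \samp_D} Y$. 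On the other side, $\phi \circ \samp_X$ is computed via the pullback $E$ of $\samp_X$ along $i$, with legs the deterministic mono $i' \colon E \to PX$ and $s' \colon E \to D$, giving $PX \xhookleftarrow{i'} E \xrightarrow{f \circ s'} Y$. Thus everything reduces to showing that the spans $\bigl(PX \xhookleftarrow{Pi} PD \xrightarrow{\samp_D} D\bigr)$ and $\bigl(PX \xhookleftarrow{i'} E \xrightarrow{s'} D\bigr)$ coincide.

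The crux, and the step I expect to be the main obstacle, is establishing that $\bigl(PD, Pi, \samp_D\bigr)$ is itself a pullback of $\samp_X$ along $i$ --- that is, that the sampling-naturality square of the deterministic mono $i$ is cartesian. The square commutes because $\samp_X \circ Pi = i \circ \samp_D$, so the universal property of $E$ furnishes a canonical $\theta \colon PD \to E$ with $i' \circ \theta = Pi$ and $s' \circ \theta = \samp_D$; the work is to show $\theta$ is invertible. Here I would exploit representability to produce a deterministic representative $(s')^\sharp \colon E \to PD$ of the (generally non-deterministic) map $s'$ and set $\eta := (s')^\sharp$. One checks $Pi \circ \eta = i'$ by uniqueness of representatives, since both sides are deterministic and post-compose with $\samp_X$ to $i \circ s'$. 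Then $i' \circ (\theta \circ \eta) = Pi \circ \eta = i'$ forces $\theta \circ \eta = \id_E$ as $i'$ is monic, while $\samp_D \circ (\eta \circ \theta) = s' \circ \theta = \samp_D$ forces $\eta \circ \theta = \id_{PD}$ by uniqueness of the representative of $\samp_D$. Notably this lifting argument needs only representability, existence of the pullback, and monicity of $i'$ --- all at hand --- so no extra assumption (such as monicity of $Pi$) is required. With $\theta$ an isomorphism matching both legs the two spans agree, and uniqueness of $P\phi$ yields $P\phi = \Psi$.
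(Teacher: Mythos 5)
Your proposal is correct and follows essentially the same route as the paper: representability is established by the same section--retraction argument on spans (with $(i,f)\mapsto(i,f^\sharp)$ as the inverse of $\samp_*$), and the pushforward formula reduces, exactly as in the paper's \cref{ParSampPullback}, to showing that the $\samp$-naturality square of a deterministic monomorphism is a pullback, your inverse $(s')^\sharp$ being precisely the paper's $f^\sharp$. The only point worth noting is that your final step ($\eta \comp \theta = \id_{PD}$ via uniqueness of copyable representatives) tacitly uses that the comparison map $\theta$ is copyable, which follows from $i' \comp \theta = Pi$ and \cref{DetMonoLeftCancellation}; the paper instead closes the loop using that $Pi$ is monic (as $P$ is a right adjoint).
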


\noindent When $\cC$ is representable, the distribution monad on $\cC_\det$ extends to a monad on the subcategory of copyable maps $\Par{\cC}_\copyable \cong \Par{\cC_\det}$ of $\Par{\cC}$, and we can thus speak of algebras for this monad, the \newterm{partial algebras}.
\begin{proposition}[{\cref{parAlgChar,IntParAlg}}]
	Partial algebras have a natural characterization in terms of spans in $\cC_\det$.

	Furthermore in $\Par{\BorelStoch}$, the integration map (which computes expectations) defines a partial algebra structure on $\Rnneg$, with domain the distributions with finite expectation.
\end{proposition}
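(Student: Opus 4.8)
The plan is to handle the two assertions in turn: first unwind the definition of a partial algebra into explicit span data (giving the characterization), and then verify that the integration span satisfies the resulting conditions.

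For the characterization, recall that a partial algebra on $A$ is by definition an Eilenberg--Moore algebra $h \colon PA \to A$ for the induced monad $P$ on $\Par{\cC}_\copyable \cong \Par{\cC_\det}$, and that such a map $h$ is exactly a span $PA \xhookleftarrow{i} D \xrightarrow{\alpha} A$ in $\cC_\det$ with $i$ a deterministic monomorphism. I would translate the two algebra axioms using composition-by-pullback in $\Par{\cC}$ together with the pushforward formula $P\bigl(PA \xhookleftarrow{i} D \xrightarrow{\alpha} A\bigr) = \bigl(PPA \xhookleftarrow{Pi} PD \xrightarrow{P\alpha} PA\bigr)$ supplied by \cref{proposition:ParDistObs,proposition:ParPushforwardComputation}. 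Since $\eta_A$ and $\mu_A$ are total, composing them with $h$ amounts to pulling $i$ back along $\eta_A$ respectively $\mu_A$. The unit law $h \circ \eta_A = \id_A$ then becomes the condition that $\eta_A$ factors through $i$ via some $\tilde\eta \colon A \to D$ with $\alpha \circ \tilde\eta = \id_A$, so that the composite is total and equal to the identity. The associativity law $h \circ Ph = h \circ \mu_A$ becomes the assertion that the two subobjects of $PPA$ arising as the respective domains, namely the image of $(P\alpha)^{-1}(D) \subseteq PD$ under $Pi$ and the subobject $\mu_A^{-1}(D)$, coincide, and that the deterministic maps $\alpha \circ P\alpha$ and $\alpha \circ \mu_A$ agree on this common domain. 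This pair of conditions is the promised span characterization; producing it is mostly bookkeeping once the pushforward formula and pullback composition are in hand.

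For the integration example I specialize to $\cC = \BorelStoch$, $A = \Rnneg$, and $P$ the Giry (probability) monad, and I take the span with $D = \Set{\mu \in P\Rnneg \given \int x \,\mathrm d\mu(x) < \infty}$ and $\alpha(\mu) = \int x\,\mathrm d\mu(x)$. First I would check that this is legitimate span data: $D$ is a Borel subset of the standard Borel space $P\Rnneg$, being the finiteness locus of the measurable map $\mu \mapsto \int x\,\mathrm d\mu = \sup_n \int \min(x,n)\,\mathrm d\mu$, so $i \colon D \hookrightarrow P\Rnneg$ is a deterministic monomorphism and $\alpha$ is measurable. The unit law is immediate, since $\int x\,\mathrm d\delta_a = a < \infty$ gives $\delta_a \in D$ with $\alpha(\delta_a) = a$.

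The heart of the matter is associativity, and here the key tool is Tonelli's theorem, which applies without integrability hypotheses precisely because everything is nonnegative. For $\Xi \in PP\Rnneg$ I would compute $\int x \,\mathrm d\bigl(\mu_{\Rnneg}(\Xi)\bigr)(x) = \int \bigl(\int x\,\mathrm d\mu\bigr)\mathrm d\Xi(\mu) = \int \alpha(\mu)\,\mathrm d\Xi(\mu)$, which simultaneously yields the value equality $\alpha\bigl(\mu_{\Rnneg}(\Xi)\bigr) = \alpha\bigl((P\alpha)\Xi\bigr)$ and identifies both sides as $\int \alpha(\mu)\,\mathrm d\Xi(\mu) \in [0,\infty]$. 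The main obstacle is the domain matching demanded by the characterization: the domain of $h \circ Ph$ requires both that $\Xi$ be concentrated on $D$ (so that $(P\alpha)\Xi$ is defined) \emph{and} that $(P\alpha)\Xi$ again lie in $D$, whereas the domain of $h \circ \mu_A$ only requires $\mu_{\Rnneg}(\Xi) \in D$. I would close this gap using nonnegativity once more: finiteness of $\int \alpha(\mu)\,\mathrm d\Xi(\mu)$ already forces $\alpha(\mu) < \infty$ for $\Xi$-almost every $\mu$, hence $\Xi(D) = 1$, so the apparently stronger condition is automatic and both domains equal $\Set{\Xi \given \int \alpha(\mu)\,\mathrm d\Xi(\mu) < \infty}$. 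Combining the value and domain agreements gives the associativity axiom and completes the verification that $(i,\alpha)$ is a partial algebra; it is exactly the nonnegativity of $\Rnneg$ that makes this work, in contrast to distributions on $\R$.
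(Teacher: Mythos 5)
Your proposal is correct and follows essentially the same route as the paper: the characterization is obtained by unwinding the unit and associativity squares via pullback composition and the pushforward formula $P(D,d,a)=(PD,Pd,Pa)$, yielding exactly the paper's three conditions (unit factoring through a section, coincidence of the two pullback domains over $P^2A$, and agreement of the right legs), and the integration example is verified the same way, with Tonelli giving $r(\mu(\pi))=r(r_*\pi)$ and nonnegativity forcing $\pi$ to concentrate on $D$ so that the two domains agree. The only differences are cosmetic (you use $\sup_n\int\min(x,n)\,d\mu$ where the paper uses an increasing sequence of simple functions, and you state rather than cite the section--retraction lemma for the unit law).
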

We also discuss the issues with doing the same for $\R$ in \cref{BochnerIntNotStrictAlg}.

\noindent $\Par{\cC}$ also has conditionals when $\cC$ does, and these are essentially defined in the ``greatest possible domain''.
Thus, in this case $\Par{\cC}$ is a ``partial Markov category'' in the sense of~\cite[Definition 3.2]{dilavore2023evidential}.
\begin{proposition}[{\cref{ConditionalsExist}}]
	Consider a partializable Markov category $\cC$ with conditionals.
	Given a morphism $u \colon A \to X\otimes Y$ in $\Par{\cC}$ represented by a span $\bigl(A \xhookleftarrow{i} D \xrightarrow{f} X \otimes Y\bigr)$, the conditional $u_{\vert X} \colon X \otimes A \to Y$ exists and is represented by the span $\bigl(X \otimes A \xhookleftarrow{X \otimes i} X \otimes D \xrightarrow{f_{\vert X}} Y\bigr)$.

	In particular, $\Par{\cC}$ has conditionals.
\end{proposition}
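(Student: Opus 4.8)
The plan is to verify directly that the proposed span satisfies the defining equation of a conditional in the CD category $\Par{\cC}$, reducing it to the conditional equation already available in $\cC$. Recall that $u_{\vert X}$ is a conditional of $u$ with respect to $X$ precisely when
\[
u = (\id_X \otimes u_{\vert X}) \circ (\cop_X \otimes \id_A) \circ (u_X \otimes \id_A) \circ \cop_A,
\]
where $u_X = (\id_X \otimes \discard_Y) \circ u$ is the $X$-marginal. So I would first compute $u_X$: since $\id_X \otimes \discard_Y$ is total, composition of spans leaves the left leg untouched, giving $u_X = \bigl(A \xhookleftarrow{i} D \xrightarrow{f_X} X\bigr)$ with $f_X = (\id_X \otimes \discard_Y)\circ f$ the $X$-marginal of $f$ in $\cC$. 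Likewise $f_{\vert X} \colon X \otimes D \to Y$ denotes the conditional of $f$ in $\cC$, which exists by hypothesis; and $X \otimes i$ is a deterministic monomorphism by the closure of deterministic monos under tensoring, so the proposed span is a legitimate morphism of $\Par{\cC}$.

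Next I would compute the right-hand composite as a single span, working from right to left. Tensoring and postcomposing by the total maps $\cop_A$ and $\cop_X \otimes \id_A$ is merely bookkeeping on the legs; the two genuine composites are the pullbacks introduced by the partial maps $u_X \otimes \id_A$ (left leg $i \otimes \id_A$) and $\id_X \otimes u_{\vert X}$ (left leg $\id_{X\otimes X} \otimes i$). The crux is the first pullback: I claim that pulling $\cop_A \colon A \to A \otimes A$ back along the monomorphism $i \otimes \id_A$ returns $D$ itself, with legs $i \colon D \to A$ and $\langle \id_D, i\rangle = (\id_D \otimes i)\circ \cop_D \colon D \to D \otimes A$. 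Commutativity of the square is exactly the statement that $i$ commutes with copying, $\cop_A \circ i = (i \otimes i)\circ \cop_D$, which holds because $i$ is deterministic; and the universal property follows by extracting the mediating map with $\discard$ (using naturality of $\discard$ in the Markov category $\cC$) and then cancelling the monomorphism $i \otimes \id_A$.

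Assembling these, after the first pullback the partial composite is $\bigl(A \xhookleftarrow{i} D \xrightarrow{(f_X \otimes i)\circ \cop_D} X \otimes A\bigr)$, and postcomposing $\cop_X \otimes \id_A$ gives right leg $g = \bigl((\cop_X \circ f_X)\otimes i\bigr)\circ \cop_D$. Since $g$ factors through $\id_{X\otimes X} \otimes i$ (via $g' = \bigl((\cop_X \circ f_X)\otimes \id_D\bigr)\circ \cop_D$), the second pullback is trivial: pulling back along a monomorphism through which the map factors returns the source $D$ with identity projection. Hence the full composite is a span with left leg $i$ (the domain is unchanged) and right leg $(\id_X \otimes f_{\vert X}) \circ g'$. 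Invoking bifunctoriality of $\otimes$ to rewrite $(\cop_X \circ f_X)\otimes \id_D = (\cop_X \otimes \id_D)\circ(f_X \otimes \id_D)$, this right leg becomes
\[
(\id_X \otimes f_{\vert X}) \circ (\cop_X \otimes \id_D) \circ (f_X \otimes \id_D) \circ \cop_D,
\]
which is exactly the right-hand side of the conditional equation for $f$ in $\cC$, hence equal to $f$. Thus the composite equals $\bigl(A \xhookleftarrow{i} D \xrightarrow{f} X\otimes Y\bigr) = u$, establishing the conditional equation; since $u$ was arbitrary, $\Par{\cC}$ has conditionals.

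I expect the main obstacle to be the first pullback computation, namely proving cleanly that the domain is \emph{not} enlarged under copying: one must show $\cop_A$ pulls back along $i \otimes \id_A$ to $D$ rather than to some larger subobject. The determinism of $i$ (so that it commutes with $\cop$ and $\discard$) together with the monomorphism property of $i \otimes \id_A$ is precisely what makes this go through, and it is also where the partializability axioms---existence of pullbacks of deterministic monomorphisms and their stability under tensoring---are used.
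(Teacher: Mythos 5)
Your proposal is correct and follows essentially the same route as the paper: both verify the defining equation of the conditional by computing the right-hand composite as a span, using that the pullback of $\cop_A$ along $i\otimes\id_A$ returns $D$ itself (the second square of \cref{corollary:SPTSUniversality}), that a square in which the comparison map factors through the monomorphism is automatically a pullback (\cref{lemma:factorThroughMonoCart}), and finally the conditional equation for $f$ in $\cC$. The paper assembles the computation as a single pasted diagram of pullback squares while you proceed composite by composite, but the key lemmas and the conclusion are the same.
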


\noindent As mentioned earlier, idempotents have proven to be useful both in terms of categorical structures for partiality and for probability.
Fortunately, there is a natural correspondence between the idempotents of $\Par{\cC}$ and those of $\cC$.

\begin{proposition}[{\cref{proposition:IdempSpans,proposition:SplitParIdemps,proposition:PartialIdempStaticStrongBalanced}}]\label{thm:preservedprops}
	For a partializable Markov category $\cC$:
	\begin{enumerate}
		\item Idempotents in $\Par{\cC}$ are idempotents on their domains (in $\cC$);
		\item All idempotents in $\Par{\cC}$ split if they do in $\cC$;
		\item Idempotents in $\Par{\cC}$ are static/strong/balanced if and only if the corresponding idempotent in $\cC$ is.
	\end{enumerate}
\end{proposition}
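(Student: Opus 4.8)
The plan is to first establish the structural description of idempotents claimed in part (i), and then use it to deduce parts (ii) and (iii). An idempotent $e \colon X \to X$ in $\Par{\cC}$ is represented by a span $X \xhookleftarrow{i} D \xrightarrow{f} X$ with $i$ a deterministic monomorphism, and its square $e \circ e$ is computed by pulling the mono $i$ back along $f$. Writing $u \colon E \hookrightarrow D$ and $v \colon E \to D$ for the two legs of this pullback (so that $f \circ u = i \circ v$), the composite is the span $X \xhookleftarrow{i \circ u} E \xrightarrow{f \circ v} X$. First I would show that $e \circ e = e$ forces the comparison isomorphism $\phi \colon D \to E$ to satisfy $i \circ u \circ \phi = i$, and cancelling the mono $i$ gives $u \circ \phi = \id_D$; since $\phi$ is invertible, $u$ is itself an isomorphism, equivalently $f$ factors through $i$, say $f = i \circ \tilde f$ with $\tilde f := v \circ u^{-1} \colon D \to D$. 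The remaining span equation $f \circ v \circ \phi = f$ then reads $f \circ \tilde f = f$, i.e.\ $i \circ \tilde f \circ \tilde f = i \circ \tilde f$, and cancelling $i$ once more yields $\tilde f \circ \tilde f = \tilde f$. This is precisely the assertion that $e$ is an idempotent $\tilde f$ ``on its domain'' $D$, proving (i).

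For part (ii), suppose the domain idempotent $\tilde f \colon D \to D$ splits in $\cC$ as $\tilde f = s \circ r$ with $r \colon D \to S$, $s \colon S \to D$ and $r \circ s = \id_S$. I would exhibit a splitting of $e$ in $\Par{\cC}$ over the same object $S$, setting $\rho := \bigl(X \xhookleftarrow{i} D \xrightarrow{r} S\bigr)$ and $\sigma := \bigl(S \xleftarrow{\id_S} S \xrightarrow{i \circ s} X\bigr)$; both are legitimate spans since $i$ and $\id_S$ are deterministic monomorphisms. Computing $\sigma \circ \rho$ involves a pullback along the identity $\id_S$, which keeps the apex $D$ and yields the span $X \xhookleftarrow{i} D \xrightarrow{i \circ s \circ r} X = X \xhookleftarrow{i} D \xrightarrow{i \circ \tilde f} X$, i.e.\ $e$. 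Computing $\rho \circ \sigma$ requires pulling $i$ back along $i \circ s$; since $i \circ s$ factors through the mono $i$, the apex is isomorphic to $S$ with second leg $s$, so the composite is the span $S \xleftarrow{\cong} S \xrightarrow{r \circ s} S$, which is $\id_S$ because $r \circ s = \id_S$. Hence $e = \sigma \circ \rho$ with $\rho \circ \sigma = \id_S$ splits $e$.

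For part (iii), the notions static, strong, and balanced are each cut out by equations in the CD structure. Since the CD structure of $\Par{\cC}$ is inherited from $\cC$, the maps $\cop_X$ and $\discard_X$ are the total spans coming from $\cC$, and the essential tool is that the deterministic monomorphism $i$ is compatible with them, i.e.\ $\cop_X \circ i = (i \otimes i) \circ \cop_D$ and $\discard_X \circ i = \discard_D$ in $\cC$. The plan is to take each defining equation for $e$, represent every morphism involved by an explicit span with $e$ given by $X \xhookleftarrow{i} D \xrightarrow{i \circ \tilde f} X$, compute both sides as spans (the relevant pullbacks collapsing because $i$ is a mono), and then cancel the common monomorphism $i$ to reduce the identity to the corresponding equation for $\tilde f$ in $\cC$; running the argument backwards supplies the converse. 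This yields the stated biconditionals.

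The main obstacle lies in part (iii): one must carry out the span-level computation for each of the three conditions while tracking several pullbacks and tensor factors, and verify in each case that the apices collapse appropriately so that $i$ cancels cleanly — the compatibility of $i$ with $\cop$ and $\discard$ is what makes this possible, but the bookkeeping (together with matching the precise diagrammatic form of the static, strong, and balanced conditions) is where the genuine work sits. Parts (i) and (ii), by contrast, reduce to the two elementary pullback computations described above.
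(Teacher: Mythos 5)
Your proposal is correct and follows essentially the same route as the paper: part (i) is the paper's lemma characterizing when $\left(B,j,g\right) \comp \left(A,i,f\right) = \left(A,i,f\right)$ specialized to the idempotent case (invertibility of the comparison map forces $f$ to factor through $i$ as $i \comp \tilde f$ with $\tilde f$ idempotent), part (ii) uses the same section--retraction pair $\bigl(\left(S,\id_S,i\comp s\right),\left(D,i,r\right)\bigr)$ verified by the same two pullback computations, and part (iii) matches the paper's strategy of computing both sides of each defining equation as spans with common domain inclusion $i$ and cancelling the monomorphism $i\otimes i$ (monic by the partializability axioms). The only caveat is that part (iii) remains a sketch, but the paper itself only carries out the strong case in detail and declares the other two analogous, so your plan is adequate and faithful to the paper's argument.
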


\noindent Finally, $\Par{\cC}$ also inherits Kolmogorov products from $\cC$.
\begin{proposition}[{\cref{proposition:ParLaxKolProds}, first assertion}]
	Consider a partializable Markov category $\cC$ with Kolmogorov products of size $K$.
	Given a family of objects ${\left(X_k\right)}_{k \in K}$ of $\cC$, the inclusion of the Kolmogorov product projections into $\Par{\cC}$ defines a Kolmogorov product.
\end{proposition}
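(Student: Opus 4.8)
The plan is to take the Kolmogorov product object $X = \bigotimes_{k \in K} X_k$ of $\cC$ together with its deterministic marginalization maps $\pi_F \colon X \to \bigotimes_{k \in F} X_k$ (for finite $F \subseteq K$), and to show that their images under the full embedding $\cC \hookrightarrow \Par{\cC}$ — that is, the total spans $X \xleftarrow{\id} X \xrightarrow{\pi_F} \bigotimes_{k \in F} X_k$ — form a Kolmogorov product in $\Par{\cC}$. Since each $\pi_F$ is deterministic in $\cC$, its image is copyable, hence deterministic, in $\Par{\cC}$ by \cref{proposition:ParCDDeterminism}, so the candidate cone consists of deterministic maps as required. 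As the embedding is a functor preserving the marginalization relations $\pi^G_F \comp \pi_G = \pi_F$, these images form a compatible cone, and the only real task is to verify the universal property.

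The key observation is that \emph{post-composition with a total map preserves domains}. Indeed, for a partial map $A \xhookleftarrow{i} D \xrightarrow{g} Y$ and a total map $Y \xleftarrow{\id} Y \xrightarrow{h} Z$, the pullback defining their composite is taken along an identity and is therefore trivial, so the composite is represented by $A \xhookleftarrow{i} D \xrightarrow{h \comp g} Z$, with the \emph{same} domain $D$. Now suppose $\left(f_F\right)_F$ is a family of morphisms $f_F \colon A \to \bigotimes_{k \in F} X_k$ in $\Par{\cC}$, each represented by a span $A \xhookleftarrow{i_F} D_F \xrightarrow{f_F} \bigotimes_{k \in F} X_k$, compatible with the (total) marginalizations, i.e.\ $\pi^G_F \comp f_G = f_F$ for all finite $F \subseteq G$. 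By the observation, $\pi^G_F \comp f_G$ is represented by a span with domain $D_G$; equating it with $f_F$ \emph{as spans} forces an isomorphism $D_G \cong D_F$ commuting with the inclusions into $A$, so $D_F$ and $D_G$ represent the same subobject of $A$. Since any two finite subsets are contained in their union, all the domains $D_F$ coincide with a single subobject $i \colon D \hookrightarrow A$.

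On this common domain the legs $f_F$ become a family of morphisms $D \to \bigotimes_{k \in F} X_k$ in $\cC$ that is compatible with marginalization (the right legs of the span equivalences commute once the domains are identified). Applying the universal property of the Kolmogorov product \emph{in} $\cC$ yields a unique $\tilde f \colon D \to X$ with $\pi_F \comp \tilde f = f_F$ for all finite $F$, and we set $f \colon A \to X$ to be the span $A \xhookleftarrow{i} D \xrightarrow{\tilde f} X$. Post-composing with the total $\pi_F$ then recovers $\pi_F \comp f = f_F$ exactly. For uniqueness, any $g \colon A \to X$ with $\pi_F \comp g = f_F$ has, again by domain preservation, domain equal to that of $f_F$, namely $D$; its underlying $\cC$-map on $D$ then satisfies the same cone equations and so equals $\tilde f$ by uniqueness in $\cC$, giving $g = f$ in $\Par{\cC}$.

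The main obstacle — really the only non-formal point — is the domain-coincidence step: one must check that it is \emph{equality of spans}, and not merely equality of the underlying $\cC$-morphisms, that compatibility delivers, so that the domains are genuinely pinned down as a common subobject of $A$ before one invokes the universal property of $\cC$. This in turn relies on the projections being total (which they are, as images of $\cC$-morphisms), so that composing with them never shrinks the domain. Everything else reduces cleanly to the universal property in $\cC$ via the total-map embedding.
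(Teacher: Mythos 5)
Your core argument is correct, and it is essentially the strict-case branch of the paper's proof, which goes through the more general \cref{lemma:ParFinSubIndLaxLim}: there, an arbitrary \emph{lax} cone yields comparison maps $t_{F,F'}$ between the domains $D_F$, and one must take the meet of all the $i_F$ in $\Sub_{\det}\left(A\right)$ (whose existence is supplied by \cref{proposition:ParSubobMeets}); for a strict cone the $t_{F,F'}$ are invertible, so the diagram of domains is contractible and the meet is just one of the $i_F$ --- which is exactly what your ``post-composition with a total map preserves domains'' observation delivers directly. Your route is a clean self-contained specialization that avoids the meet machinery entirely, at the cost of not also yielding the lax universal property that the paper wants for \cref{proposition:ParLaxKolProds}. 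The domain-coincidence step, the descent to a compatible cone in $\cC$ on the common domain, and the uniqueness argument are all sound.

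There is, however, one omission you need to patch: by \cref{InfTensorProductDefn}, a (strict) infinite tensor product is a limit cone that is moreover \emph{preserved by tensoring with an arbitrary object $Y$}, and your proof only verifies the bare limit property. The paper handles this by applying its lemma to the diagram $F \mapsto Y \otimes X^F$, whose limit in $\cC$ is $Y \otimes X^K$ by the Kolmogorov hypothesis. Your argument extends verbatim to this diagram --- the projections $\id_Y \otimes \pi_F$ are still total, so post-composition with them still preserves domains, and the rest goes through --- but as written the tensor-stability clause of the definition is simply not addressed, so the proof is incomplete without that extra paragraph.
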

\begin{warning}\label{KolProdsNonFunctorialWarning}
	This notion of Kolmogorov product is no longer functorial in morphisms of $\Par{\cC}$.
	To be precise, the universal property of the Kolmogorov product does not suffice to induce a map $\bigotimes_{i \in I} X_i \to \bigotimes_{i \in I} Y_i$ from an arbitrary \emph{family of maps} ${\left(X_i \to Y_i\right)}_{i \in I}$ in $\Par{\cC}$.\footnote{The key point here is that the typical construction no longer yields a cone to apply the universal property to. For a precise illustration of the problem, see the discussion at the start of \cref{section:KolmogorovProducts}.}
\end{warning}
\noindent The following modification leads to a functorial notion of Kolmogorov product in $\Par{\cC}$.
\begin{definition}[{\cref{def:lax-cone,def:lax-kolmogorov}}]
	Consider a $K$-indexed family of objects $\left(X_k\right)_{k\in K}$ in a quasi-Markov category $\cC$, and the diagram $X^{\left(\phsm\right)}\colon \FinSub{K}^\op\to\cat{C}$ formed by finite products and marginalizations (as with the usual infinite tensor products~\cite[Section 3]{fritzrischel2019zeroone}).
	
	A \newterm{lax cone} over the diagram $X^{\left(\phsm\right)}$ is an object $A$ of $\cat{C}$ together with arrows $f_F\colon A\to X^F$ for all finite $F\subseteq K$, such that for all subsets $G\subseteq F\subseteq K$, the following diagram commutes laxly,
	\[
	\begin{tikzcd}[sep=small]
		&&& X^F \ar{dd}{\pi_{F,G}} \\ 
		A \ar{urrr}{f_F} \ar{drrr}[swap]{f_G} && |[overlay,xshift=2mm]| \ge \\
		&&& X^G
	\end{tikzcd}
	\]
	where $\pi_{F,G}$ denotes the functor action (marginalization) of $X^{\left(\phsm\right)}$ on the inclusion $G\subseteq F$.

A \newterm{lax infinite tensor product} is a lax cone $\big(X^K \xrightarrow{\pi_F} X^F\big)_{F \subseteq K \mathrm{ finite}}$ which is universal in the following sense: for any other lax cone $\big(A \xrightarrow{f_F} X^F\big)_{F \subseteq K \mathrm{ finite}}$ there is a greatest morphism $A \xrightarrow{g} X^K$ such that $\pi_{F}\comp g \le f_F$ for each finite $F \subseteq K$, and this lax limit is further preserved by tensoring with an arbitrary object $Y$.
	
	We call such a lax infinite tensor product a \newterm{lax Kolmogorov product} when the projections $\pi_F$ are deterministic.\footnote{It can be shown that in such a case, a cone with copyable components induces a copyable map.}
\end{definition}

\begin{proposition}[{\cref{proposition:ParLaxKolProds,proposition:ParMorInfProd,proposition:ParCDInfCopies}}]
	Consider a partializable Markov category $\cC$ admitting Kolmogorov products of size $K$.
	Given a family of objects $\left(X_k\right)_{k \in K}$ in $\cC$, the inclusion of the Kolmogorov product projections into $\Par{\cC}$ define a lax Kolmogorov product as well.

	A $K$-indexed family of maps $\bigl(X_k \xhookleftarrow{i_k} D_k \xrightarrow{f_k} Y_k\bigr)_{k \in K}$ in $\Par{\cC}$ induces a map $\bigotimes_{k \in K} X_k \to \bigotimes_{k \in K} Y_k$.
	This map is represented by
	\[
		\bigotimes_{k \in K} X_k \xhookleftarrow{\bigotimes_{k \in K} i_k} \bigotimes_{k \in K} D_k \xrightarrow{\bigotimes_{k \in K} f_k} \bigotimes_{k \in K} Y_k
	\]
	Similarly, the infinite copy $u^{\left(K\right)}$ of a map $u = \bigl(X \xhookleftarrow{i} D \xrightarrow{f} Y\bigr)\colon X \to Y$ in $\Par{\cC}$ is represented by $\bigl(X \xhookleftarrow{i} D \xrightarrow{f^{\left(K\right)}} Y^K\bigr)$.
\end{proposition}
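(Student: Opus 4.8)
The plan is to treat the three assertions in turn, with the bulk of the work going into the lax Kolmogorov product (\cref{proposition:ParLaxKolProds}); the explicit spans for the induced map (\cref{proposition:ParMorInfProd}) and the infinite copy (\cref{proposition:ParCDInfCopies}) are then read off from it. For the lax Kolmogorov product, I would start from an arbitrary lax cone $\bigl(A \xrightarrow{f_F} X^F\bigr)_F$ in $\Par{\cC}$ and write each leg as a span $A \hookleftarrow D_F \xrightarrow{h_F} X^F$ with $D_F \hookrightarrow A$ a deterministic mono. The lax condition $\pi_{F,G}\circ f_F \le f_G$ for $G \subseteq F$ unpacks, on spans, into the nesting $D_F \subseteq D_G$ (composing with the total marginalizations $\pi_{F,G}$ does not shrink a domain) together with the compatibility $\pi_{F,G}\circ h_F = h_G\vert_{D_F}$. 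The crucial construction is the meet $\bar D := \bigwedge_F D_F$ as a deterministic subobject of $A$: I would form the Kolmogorov product $\bigotimes_{F} D_F$ of the domains (reindexing $\FinSub{K}$ by $K$ when $K$ is infinite, so that products of this size exist), which embeds into $\bigotimes_F A$ as a deterministic mono, and pull it back along the infinite diagonal $\cop^{(\FinSub{K})}\colon A \to \bigotimes_F A$. Since the diagonal is a deterministic mono (it is copyable, hence deterministic, and split by any coordinate projection) and pullbacks of deterministic monos exist and are deterministic, this pullback realizes exactly $\bar D = \bigwedge_F D_F$, because a subobject of $A$ factors through the pullback iff it factors through every $D_F$.

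With $\bar D$ in hand, the argument is clean. The restricted total maps $h_F\vert_{\bar D}$ form a strict cone over $X^{(-)}$ in $\cC$ (their lax-compatibility becomes strict once restricted below all the $D_F$), so the Kolmogorov product universal property of $X^K$ in $\cC$ yields a unique deterministic $\bar g\colon \bar D \to X^K$ with $\pi_F\circ \bar g = h_F\vert_{\bar D}$, and I would set $g$ to be the span $A \hookleftarrow \bar D \xrightarrow{\bar g} X^K$. Maximality is then immediate: any competitor $g'$ with $\pi_F\circ g' \le f_F$ for all $F$ has domain contained in each $D_F$, hence in $\bar D$, and agrees with $\bar g$ there by the universal property, so $g' \le g$. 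Preservation of this lax limit under $\otimes Y$ follows from the corresponding stability of the Kolmogorov product and of the defining pullback in $\cC$ (deterministic monos being closed under tensoring).

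For the induced map on morphisms, given the family $\bigl(X_k \hookleftarrow D_k \xrightarrow{f_k} Y_k\bigr)_k$ I would produce the lax cone $\phi_F := \bigl(\bigotimes_{k\in F} f_k\bigr)\circ \pi_F \colon \bigotimes_k X_k \to Y^F$, where $\pi_F$ is the total projection onto $X^F$. That $(\phi_F)$ is a lax cone uses the Markov identity $\discard\circ f_k = \discard$: marginalizing the $F\setminus G$ outputs of $\bigotimes_{k\in F} f_k$ leaves $\bigotimes_{k\in G} f_k$ while forcing the $F\setminus G$ inputs into their domains, giving $\pi_{F,G}\circ\phi_F \le \phi_G$. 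Applying the lax Kolmogorov product above yields a greatest $g\colon \bigotimes_k X_k \to \bigotimes_k Y_k$. To identify $g$ with the asserted span $\bigotimes_k X_k \xhookleftarrow{\bigotimes_k i_k} \bigotimes_k D_k \xrightarrow{\bigotimes_k f_k} \bigotimes_k Y_k$, I would first note that $\bigotimes_k i_k$ is a deterministic mono (deterministic as the universal map of a deterministic cone, monic since a Kolmogorov product of monos is monic) and that $\bigotimes_k f_k$ exists in $\cC$ as the universal map of the strict cone $\bigl(\bigotimes_{k\in F} f_k\bigr)\circ \pi^{D}_F$. Computing $\pi_F\circ(\text{span})$ and comparing domains gives $\pi_F\circ(\text{span})\le \phi_F$, and maximality reduces to the identity $\bigwedge_F \pi_F^{-1}\bigl(\bigotimes_{k\in F} D_k\bigr) = \bigotimes_k D_k$, i.e.\ the realization of $\bigotimes_k D_k$ as the limit of its finite cylinders inside $\bigotimes_k X_k$, which is exactly the meet construction of the first paragraph.

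The infinite copy is the easiest case and I would dispatch it last. For $u = \bigl(X \hookleftarrow D \xrightarrow{f} Y\bigr)$ the relevant lax cone is the family of finite copies $\bigl(u^{(F)}\bigr)_F$, and here no infinite meet is needed because every finite copy has the same domain $D$ (copying is total). Thus the total parts $f^{(F)}\colon D \to Y^F$ already form a strict cone in $\cC$ (again by $\discard\circ f = \discard$, so that marginalizing a copy is a copy), and the Kolmogorov product universal property delivers $f^{(K)}\colon D \to Y^K$, whence $u^{(K)}$ is represented by $\bigl(X \hookleftarrow D \xrightarrow{f^{(K)}} Y^K\bigr)$. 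The main obstacle throughout is the construction and good behaviour of the infinite meet $\bigwedge_F D_F$ for a general lax cone in the first paragraph; this is precisely where the Kolmogorov-product hypothesis is essential, via the diagonal-and-pullback trick, and it rests in turn on the lemma that an infinite tensor of deterministic monomorphisms is again a deterministic monomorphism, which I would isolate and prove separately using the pullback characterization of deterministic monos.
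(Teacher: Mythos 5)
Your proposal is correct and follows essentially the same route as the paper: the meet $\bigwedge_F D_F$ via the infinite diagonal pulled back against $\bigotimes_F D_F \rightarrowtail \bigotimes_F A$ is exactly the paper's construction of $K$-sized meets in $\Sub_{\det}(A)$, the restriction-to-a-strict-cone argument and the maximality check match its general lemma on lax limits of $\FinSub{K}^\op$-diagrams, the identification $\bigwedge_F \pi_F^{-1}\bigl(\bigotimes_{k\in F} D_k\bigr) = \bigotimes_k D_k$ is its "finite cylinders" lemma, and the infinite-copy case is dispatched by the same quasi-totality observation. One small slip: the induced map $\bar g \colon \bar D \to X^K$ need not be \emph{deterministic}, since the cone legs $h_F$ are arbitrary kernels rather than copyable maps, but nothing in your argument uses this, as the span only requires its left leg to be a deterministic monomorphism.
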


\subsection{Related work}
In this section we discuss some related literature.
As much of this work revolves around extending material in the theory of partial maps and restriction categories to a nondeterministic/stochastic setting, we group the discussion by theme rather than individual works (as there is typically partial overlap at multiple recurring themes).
Again, the definitions of (most) terms used can be found in \cref{backgroundSec}.
\begin{remark}[{Empirical sampling as a partial map}]\label{companionWork}
The current article is companion work to a paper on categorical versions of the law of large numbers by Fritz et al.~\cite{fritz2025empirical}.
Central to this is the construction of an ``empirical sampling map'', which intuitively takes an infinite sequence of points and returns a sample from the empirical distribution of that sequence.
However, such an empirical distribution need not always be defined (the relative frequencies need not converge), and consequently we should only expect such a map to be partially defined.
The present work provides the framework used by~\cite{fritz2025empirical} where such constructions are formalized.
\end{remark}

\begin{remark}[{Similar ``categories of partial maps''}]\label{stableClassAliases}
	We construct our partializations as span categories associated to a ``particular stable system of monomorphisms'' in a Markov category, the deterministic monomorphisms.
	The notion of stable system of monomorphisms (and the associated category of partial maps) have featured in several works in the literature, at least as old as Rosolini's PhD thesis~\cite[Section 2.1]{rosoliniContinuityEffectivenessTopoi1986} under the term ``Dominions''.
	However, these works tend to assume that the base category is cartesian, in order to shift focus to a particular abstraction called $p$-categories (amongst other names), where the focus is on an abstraction of the kind of category that $\Par{\cC}$ is (rather than on particular categories and stable systems of monics).
	This is in a sense unavoidable, as the axiomatizations are all in terms of ``pairing'' and ``projection'' maps required to make sense of domains of definition in terms of idempotents on the source.

	A version of $p$-categories also appears in Carboni's~\cite{carboniBicategoriesPartialMaps1987} for bicategories, for which there is such a ``bicategory of partial morphisms'' for every cartesian category.
	Such a notion also appears from the perspective of recursion theory in the work of di Paola and Heller~\cite{paolaDominicalCategoriesRecursion1987} under the name ``Dominical categories''.
	These various models have been compared and shown to be essentially equivalent by Robinson and Rosolini in~\cite{robinsonrosoliniCategoriesPartialMaps1988}, where stable systems of monics are now referred to as ``admissible''.
	In these notions of partial maps, there is a notion of a partial order on appropriate morphisms corresponding to one being a restriction of another (in terms of domains).\footnote{See \cref{restricitonPartialOrderHistory} for a discussion on CD generalizations of this order.} Such a notion is focused on in Curien and Obtu{\l}owicz's~\cite{curienPartialityCartesianClosedness1989}, where they develop a notion of ``partial cartesian closed category'' extending the notion of cartesian closedness (they also develop a stronger notion of ``partial topos'').
	Thus there are a wide variety of equivalent axiomatizations of categories of partial maps in the literature.

	On a purely $1$-categorical level (that is, without considering a symmetric monoidal structure) the literature on every one of these equivalent axiomatizations has as a prominent example categories of spans using the same construction that we use.
	However, every one of these formulations (inspired by partial but ultimately nondeterministic phenomena) assumes that the underlying category of ``total'' morphisms has products, and it is those categorical products in particular that recur throughout the constructions performed.
	Even the more modern approach of Cockett and Lack’s restriction categories~\cite{cockettlack2002partialmaps} only avoids this issue by not constructing a symmetric monoidal structure on arbitrary restriction categories a priori.
	However, the typical symmetric monoidal structure on restriction categories, restriction products~\cite[Proposition 4.3]{cockettlackRestrictionCategoriesIII2007}, runs into the same issue (see \cref{partializationRestrictionProducts}).
	Indeed, restriction categories with restriction products are equivalent to $p$-categories (and hence all the other equivalent notions mentioned above, see \cref{partializationRestrictionProducts}).
	Thus for our purposes it is not enough to simply use the theory developed in the preceding works.\footnote{However, many of the results in the works mentioned above can be extended to/reconstructed for the categories we consider. This may be as simple as observing that the phenomenon in question occurs in the subcategory of copyable morphisms, or at times substituting particular results about CD categories for basic facts about cartesian categories.}

	What we do in this work is to construct a symmetric monoidal structure on our category of spans that yields a CD category where monoidal product is not a restriction product.
	This is necessary for and naturally arises from our motivation in terms of stochastic maps, as the monoidal product is a restriction product if and only if the category is copyable, that is, there is no real nondeterminism.
	Our work can be seen as a non-cartesian extension of the theory of $p$-categories (and equivalent notions).
	While they are not $p$-categories in general, they always contain subcategories defined by the copyable maps which are in fact $p$-categories (essentially corresponding to the subcategory with no nondeterminism).
\end{remark}

\begin{remark}[Monoidal restriction categories]\label{tensorRestrictionDiscussion}
	Heunen and Lemay have introduced a notion of \newterm{monoidal restriction category}~\cite[Definition 4.2]{heunenpacaudlemay2020tensor}.
	These are monoidal categories that also have restriction structures commuting with tensoring.
	The main example of this is that of sets and partial functions, which is a (copyable) span category $\Par{\mathsf{Set}}$.
	However, they do not construct any non-cartesian examples which would be the probabilistically interesting ones.
	Indeed, the main focus of this paper is on a particular class of monoidal restriction categories called ``tensor restriction categories''~\cite[Section 5]{heunenpacaudlemay2020tensor}.
	They provide several equivalent definitions of this notion (which is motivated by tensor topology), one of which is that all tensor categories are essentially obtained from a construction $\mathcal S\left[\ph\right]$~\cite[Definition 3.1]{heunenpacaudlemay2020tensor}.
	However, this is not a span construction!\footnote{The construction is based on \emph{subunits}, not \emph{spans}.}
	Indeed, they note in~\cite[Examples 4.3,5.21]{heunenpacaudlemay2020tensor} that the category of sets and partial functions is not a tensor restriction category.

	The partializations $\Par{\cC}$ we construct are span categories with a compatible restriction structure, and are indeed monoidal restriction categories in this sense.
	However, as with (non-stochastic) partial functions, our partializations are generally not tensor restriction categories either.
\end{remark}

\begin{remark}[Positivity and restriction structures]\label{positiveRestrictionDiscussion}
	Another context in which a class of ``positive CD categories'' turns out to be a restriction category is that of Cioffo et al.'s recent~\cite{cioffoTaxonomyCategoriesRelations2025}.
	There, it is shown in~\cite[Proposition 4.10 and Corollary 4.12]{cioffoTaxonomyCategoriesRelations2025} that positive ``strict oplax cartesian categories''~\cite[Definition 4.5]{cioffoTaxonomyCategoriesRelations2025} are restriction categories (technically, all one needs is an inequality between the two terms in the positivity equation).
	This is in fact generalized by the present work, as it has already been shown in~\cite[Proposition 3.6]{fritz2023lax} that strict oplax cartesian categories are quasi-Markov, and we show in \cref{positiveQuasiMarkovRestriction} that positive quasi-Markov categories are restriction categories in the same way as~\cite[Corollary 4.12]{cioffoTaxonomyCategoriesRelations2025} (see \cref{posQuasiMarkovRestGeneralizesOplax} for more details).

	While oplax cartesian categories (like cartesian CD categories) a priori exclude non-deterministic behavior, the precise arguments involved are more directly related to the ``oplax discardability'' condition, rather than the unsuitable (for our purposes) ``oplax copyability''.\footnote{There is however, subtlety in that these oplax cartesian categories are assumed to be poset \emph{enriched}. Thus for instance, something like positivity would also be involved in showing that a quasi-Markov category is ``oplax discardable'' (oplax cartesian without the oplax copyability condition), as enrichment is not the case for arbitrary quasi-Markov categories (see \cref{restPositivityRequirement}).}
	Indeed, the oplax copyability is only required to show the first restriction category axiom ``R.1'', for which quasi-totality suffices.
	In light of this one may conjecture a relation between positivity and restriction structures.
	However, the relation between quasi-Markov categories and the more general~\cite[Proposition 4.10]{cioffoTaxonomyCategoriesRelations2025} (where the positivity condition is relaxed) remains to be explored.
\end{remark}

\begin{remark}[{Domain idempotents}]\label{domainIdempsHistory}
	As mentioned in \cref{stableClassAliases}, the idea of encoding the domain of a partially defined morphism or relation as an endomorphism on the source has been explored in the cartesian case in depth for $p$-categories~\cite{rosoliniContinuityEffectivenessTopoi1986} and dominical categories~\cite{paolaDominicalCategoriesRecursion1987}.
	This can be extended (as $p$-categories are merely ``restriction categories with restriction products'') to restriction categories; indeed the main data of a restriction category is precisely such a domain operator satisfying certain axioms.
	This definition can also be extended to the non-cartesian CD categorical setting by replacing the cartesian product and its associated diagonal and projections with the CD structure data, as has been done by Di Lavore et al.~\cite{di2022monoidal}, Fritz et al.~\cite[Definition 2.13]{fritz2023lax}, and Gonda et al.~\cite[Definition 2.3]{gonda2024framework} to recover \cref{domainDefinition}. However, without an additional assumption such as quasi-totality, the domain of a morphism is not necessarily an idempotent.
	In fact, quasi-totality can also be seen as one of the restriction category axioms.
	But even typical examples of quasi-Markov categories in the literature such as the category of relations $\Rel$ that have idempotent domain operators are not necessarily restriction categories, as they fail other axioms.

	We work with quasi-Markov categories throughout, so all the results for general CD categories in~\cite{di2022monoidal,fritz2023lax,gonda2024framework} apply, and furthermore the domain endomorphisms are indeed idempotent.
	Additionally, we show in the current work that given the further axiom of ``positivity'', the domain idempotents indeed are domain operators for a restriction category. This connects the two directions of generalization of the notion of domain idempotent in a $p$-category (even without the cartesian hypothesis).
\end{remark}
\begin{remark}[{Partial orders}]\label{restricitonPartialOrderHistory}
	As with domain idempotents (compare \cref{domainIdempsHistory}), in the cartesian setting the construction of \cref{def:dom_ext} is known to define an enrichment in posets for $p$-categories, dominical categories, etc.~\cite{rosoliniContinuityEffectivenessTopoi1986,paolaDominicalCategoriesRecursion1987,carboniBicategoriesPartialMaps1987} and more generally for restriction categories~\cite{cockettlack2002partialmaps}.
The construction can nonetheless be adapted to the non-cartesian CD case to recover \cref{def:dom_ext}, as has been done by Lorenz and Tull~\cite[Definition 97]{lorenz2023causalmodels} and Gonda et al.~\cite[Definition 2.5]{gonda2024framework}.\footnote{To begin, on replacing the terms  $p_{X,Y}$, $\times$, and $\Delta_X$ in~\cite[Definition preceding Lemma 2.1.3]{rosoliniContinuityEffectivenessTopoi1986} with $\id\otimes\discard$, $\otimes$, and $\cop$ one recovers the CD categorical notion of domain from \cref{domainDefinition}.
Then, the $p$-categorical notion of partial order $f \le g \iff f = g \comp \dom f$ is precisely that of the extension partial ordering $\domext$ of \cref{def:dom_ext}.
}
However, it should be noted that the relation is only transitive and antisymmetric in general, while reflexivity is precisely the quasi-totality condition.
Furthermore, even if quasi-totality is assumed (so that the relation is a partial order), it need not be an enrichment (as in the case of $\Rel$).

In the companion work~\cite[Proposition 2.15]{fritz2025empirical} it was shown that the relation $\domext$ is indeed a poset enrichment for \emph{positive} quasi-Markov categories (\cref{positiveQuasiMarkovEnriched}), even without the cartesian hypothesis.
On the other hand, restriction categories are always poset enriched, with the partial order essentially being given by extension.
In the present work we show that positive quasi-Markov categories are restriction categories (\cref{positiveQuasiMarkovRestriction}) in a way such that the partial order associated to the restriction structure is precisely $\domext$.
This thus provides a way to see that positive quasi-Markov categories are poset enriched as a consequence of the theory of restriction categories.
\end{remark}

\begin{remark}[Restriction products]\label{partializationRestrictionProducts}
	A symmetric monoidal structure on restriction categories (such as $\Par{\cC,\distMons}$) that has appeared in the literature is that of ``restriction products'', as in Cockett and Lack's~\cite[Proposition 4.3 and the preceding discussion]{cockettlackRestrictionCategoriesIII2007}.
	As mentioned earlier, the restriction categories approach to partiality has the advantage of not assuming a cartesian hypothesis a priori at the level of mere $1$-categories.
	However, while restriction products are not cartesian products in a general, they do define cartesian products on the subcategory of total morphisms.
	Indeed, if the tensor product of say, a positive quasi-Markov category is a restriction product, it is necessarily the case that all morphisms are copyable (\cref{definition:copyable} and~\cite[Proposition 4.3 (ii)]{cockettlackRestrictionCategoriesIII2007}).
	Thus, they do not suffice to define a monoidal structure for partial \emph{stochastic} maps, as the tensor product of a Markov category is cartesian only in the deterministic case~\cite{foxCoalgebrasCartesianCategories1976}.

	On the other hand, the tensor product on $\Par{\cC}$ is a restriction product when restricted to the subcategory $\Par{\cC}_\copyable$ of copyable morphisms (equivalently the partialization $\Par{\cC_\det}$).
	Indeed, any CD category with all morphisms copyable (in particular, a positive quasi-Markov category) is a $p$-category.
	But a $p$-category is equivalent to a restriction category with restriction products~\cite[Proposition 4.5]{cockettlackRestrictionCategoriesIII2007}.
\end{remark}

\begin{remark}[Domain preserving monads]
	In the recent~\cite{cioffo2025MarkovRestriction}, Cioffo et al.\ follow up on~\cite{cioffoTaxonomyCategoriesRelations2025} by considering new categories of relations: Domain categories, Mass categories, and weakly Markov categories.
	The general goal is to study classes of CD categories that have particular properties that hold in categories of relations, such as $\Rel$.
	The first of these, domain categories, are precisely quasi-Markov categories.
	However, the examples they consider are categories of weighted relations, and in particular are not span categories such as the ones we consider.
	Indeed, the focus of the paper is on the relationship between mass and domain categories, and in turn mass and domain preserving functors and monads.\footnote{The third category, weakly Markov categories are only quasi-Markov in the trivial case, and hence are unrelated to our work.}
	However given positivity, which is the setting we wok in, mass and domain categories coincide (as quasi-totality is equivalent to the domains being copyable).

	Nonetheless, they consider characterizations of when the Kleisli category of a monad is a domain category (or a mass category, or a weakly Markov category) in terms of the monad being ``domain preserving'' (or ``mass preserving'', or ``weakly Markov'' --- indeed, the focus of the paper is more on the functors and monads than the categories themselves).
	This characterization of domain categories is complementary to our results on representability in \cref{sec:representability}.
	Indeed, we show that the partialization $\Par{\cC}$ of a partializable Markov category $\cC$ is always quasi-Markov, that is, a domain category in their sense.
	Furthermore, when $\cC$ is representable, so is $\Par{\cC}$, and the distribution monad on the copyable subcategory $\Par{\cC}_\copyable$ has $\Par{\cC_\det}$ as its Kleisli category.
	But a copyable CD category is precisely a cartesian restriction category, and hence the fact that $\Par{\cC}$ can also be seen (in the representable case) as a consequence of the distribution monad being domain preserving.
	However our approach is independent of theirs, and works without the assumption of representability as we construct the partialization directly as a span category without assuming the existence of a monad to begin with.\footnote{While our approach can be seen as taking a category of stochastic maps extending it to partially defined maps, the approach their results entail is to start with a category of partial deterministic maps and add non-determinism.}
\end{remark}

\begin{remark}[{Partial Markov categories}]\label{partialMarkovCats}
Another formulation for a ``CD category of partial maps'' termed \newterm{partial Markov categories}~\cite[Definition 3.1]{dilavore2024partial} has been developed by Di Lavore et al.~\cite{dilavore2023evidential,dilavore2024partial} for the purpose of studying evidential reasoning and more general forms of inference.
These are defined to be CD categories with conditionals.
When a partializable Markov category has conditionals, so does its partialization, and hence it is a partial Markov category in this sense (although the partializations we construct, while similar in form to their examples, have different composition operations).
Indeed, our main example will be a category of partial stochastic maps between standard Borel spaces \emph{such that} all maps are quasi-total \emph{and} positive.
These conditions guarantee a restriction structure, and hence also a notion of restriction partial order.
However these properties are not satisfied by the examples of partial Markov categories they consider, indeed they do not concern themselves with restriction structures.
In fact, the example $\BorelStoch_{\le 1}$ of standard Borel spaces and sub-stochastic maps they consider is such that the quasi-total maps are not closed under composition, and thus do not even form a category.\footnote{In fact, the maps of our main example $\Par{\BorelStoch}$ can be seen as the maps of their example $\BorelStoch_{\le 1}$ that factor (as Kleisli maps) through $D\left(-\right) \sqcup \ast \rightarrowtail D\left(-\sqcup \ast\right)$. But the multiplication of the latter doesn't give the composition we need (see \cref{parCatsExmpsDiff,parCompDomainsSmall} for more detail).}

\emph{In addition}, the present work is companion work to work on formalizing the law of large numbers in a categorical setting (see \cref{companionWork}).
For this, in addition to positivity and quasi-totality of all maps, we also need the properties of Markov categories mentioned in \cref{thm:preservedprops}.
In this sense (along with having a disjoint set of examples) this work is also independent in direction of theirs; as properties such as conditionals and a distribution monad that are central to their work are not of fundamental interest here (although we do show that they transfer from a category to its partialization).\end{remark}

\begin{remark}[Conditional preorders]
	In the recent~\cite[Definition 3.1]{dilavore2025OrderPartialMarkov}, Di Lavore et al.\ introduce (in addition to the restriction partial order of \cref{def:dom_ext} that we consider) a \newterm{conditional preorder} on the morphisms of a partial Markov category (a CD category with conditionals).
	They note that conditional preorder can differ from the restriction partial order in general (even for quasi-Markov categories, such as $\Rel$~\cite[Example 3.13]{dilavore2025OrderPartialMarkov}).

	However, when the restriction partial order defines a poset \emph{enrichment} (as in the case of positive quasi-Markov categories with conditionals), their results (in particular~\cite[Proposition 3.6 and Lemma 3.7]{dilavore2025OrderPartialMarkov}) quickly imply that the conditional preorder is equivalent to the restriction partial order (as the restriction partial order is subunital).

	We do not assume the existence of conditionals for the objects of consideration in the present work, and do not consider the conditional preorder.
	However when a partializable Markov category has conditionals, we show that its partialization (a positive quasi-Markov category) has conditionals as well.
	Thus, this is a class of examples where the conditional preorder and restriction partial order coincide.
\end{remark}

\subsection*{Acknowledgements}
The author thanks Tobias Fritz, Tom\'a\v{s} Gonda, Antonio Lorenzin, Paolo Perrone, and Dario Stein for valuable suggestions and discussions, both regarding the topic of the current work and also as part of the companion work~\cite{fritz2025empirical} that motivated this work.
The author also thanks Dario Stein, Jean-Simon Lemay, Reuben Van Belle, and Chad Nester for further discussions and suggestions, particularly with regard to categorical formulations of partial maps, restriction categories, and bicategories of relations.
The author would also like to thank Cipriano Junior Cioffo, Fabio Gadducci, and Davide Trotta for comments on draft of this work.
\section{Background on partial maps and categorical probability}\label{backgroundSec}
\subsection{Categorical frameworks for partial maps}
\subsubsection{Categories of spans}
As mentioned in the introduction, a fruitful way of thinking of partial maps in a category is as equivalence classes of spans $X \hookleftarrow D \to Y$, with two spans being identified when there is an isomorphism between the apices that commutes with the span legs.
The necessary conditions for this to define a well behaved category are usually encoded by requiring the left leg of the span, the \newterm{wrong way map} $D \hookrightarrow X$ to belong to a suitable class of maps.
This leads to the following definition.\footnote{We use the terminology of Cockett and Lack~\cite{cockettlack2002partialmaps}, however this is a much older notion. A discussion of the history of this concept can be found at \cref{stableClassAliases}.}
\begin{definition}[{\cite[Section 3.1]{cockettlack2002partialmaps}}]\label{stableSysDefinition}
	A \newterm{stable system of monomorphisms} in a category is a class of monomorphisms $\distMons$ such that:
	\begin{enumerate}
		\item $\distMons$ is closed under composition;
		\item $\distMons$ contains all isomorphisms;
		\item $\distMons$ is closed under pullbacks along arbitrary maps.
	\end{enumerate}
\end{definition}
\begin{construction}[{\cite[Section 3.1]{cockettlack2002partialmaps}}]\label{spanCatDefinition}
	Given a stable system of monomorphisms $\distMons$ in a category $\cC$, one constructs a \newterm{category of spans} $\Par{\cC,\distMons}$ with wrong way maps in $\distMons$.
	This category has:
	\begin{enumerate}
		\item Objects those of the original category $\cat{C}$;
		\item Maps $X\to Y$ equivalence classes of spans
			\[
            X \xhookleftarrow{i} D \xrightarrow{f} Y
        \]
		with $i$ in $\distMons$, where two spans $X \xhookleftarrow{i} D_f \xrightarrow{f} Y$ and $X \xhookleftarrow{j} D_g \xrightarrow{g} Y$ are considered equivalent if and only if there is an isomorphism $D_f\xrightarrow{\simeq} D_g$ making the following diagram commute:
		\[
		\begin{tikzcd}[sep=tiny]
			& D_f \ar[hook]{dl}[swap]{i} \ar{dr}{f} \ar[dd, leftrightarrow,"\simeq" {anchor=south, rotate=270}] \\
			X && Y \\
			& D_g \ar[hook']{ul}{j} \ar{ur}[swap]{g}
		\end{tikzcd}
		\]
		\item Composition is done by pullback: For maps represented by spans $X \xhookleftarrow{i} D_f \xrightarrow{f} Y$ and $Y \xhookleftarrow{j} D_g \xrightarrow{g} Z$,
		\[
		\begin{tikzcd}[sep=tiny, ampersand replacement=\&]
			\& \& E \arrow[ld, "u"', hook] \arrow[rd, "v"] \arrow["\lrcorner"{anchor=center, pos=0.125, rotate=-45}, draw=none, dd] \& \& \\
			\& D_f \arrow[ld, "i"', hook] \arrow[rd, "f"'] \& \& D_g \arrow[ld, "j", hook] \arrow[rd, "g"] \& \\
			X \& \& Y \& \& Z
		\end{tikzcd}
		\]
		the composite is represented by $X \xhookleftarrow{i\comp u} E \xrightarrow{g \comp v} Z$.
\end{enumerate}
\end{construction}

There is a canonical inclusion of $\cC$ into $\Par{\cC,\distMons}$ sending $X \xrightarrow{f} Y$ to the (equivalence class) of the span $X = X \xrightarrow{f}Y$.
We call the morphisms in the image of this inclusion \newterm{total}.
The inclusion is an equivalence on to the subcategory of $\Par{\cC,\distMons}$ comprising the total morphisms.

Such categories of spans are canonically enriched in posets, where a span $X \xhookleftarrow{i} D_f \xrightarrow{f} Y$ is bounded above by $X \xhookleftarrow{j} D_g \xrightarrow{g} Y$ if and only if there is a $D_f\to D_g$ making the following diagram commute:
		\[
		\begin{tikzcd}[sep=tiny]
			& D_f \ar[hook]{dl}[swap]{i} \ar{dr}{f} \ar[dd] \\
			X && Y \\
			& D_g \ar[hook']{ul}{j} \ar{ur}[swap]{g}
		\end{tikzcd}
		\]
		\subsubsection{Restriction categories}
Restriction categories are a versatile framework for dealing with partial morphisms, and have the advantage that they do not involve any cartesianity (at a $1$-categorical level).
Furthermore, the restriction structure also provides an enrichment in posets, which generalizes that of $p$-categories (and equivalent formulations)~\cite[Section 2.1.4 and Examples 2.1.3 (6)]{cockettlack2002partialmaps}.

\begin{definition}[{\cite[Section 2.1.1]{cockettlack2002partialmaps}}]\label{restrictionCategoryDefinition}
	A \newterm{restriction structure} on a category $\cC$ is an assignment of an endomorphism\footnote{The intuition is that pre-composing with $\bar f$ restricts a morphism to the ``domain of definition'' of $f$.} $\bar f\colon X \to X$ to each morphism $f\colon X \to Y$, such that:
	\begin{enumerate}[label={(R.\arabic*):}]
		\item $f\comp \bar f = f$ (restricting $f$ to its domain leaves it unchanged);
		\item $\bar f \comp \bar g = \bar g \comp \bar f$ when $f$ and $g$ have the same source (restrictions commute);
		\item $\overline{g \comp \bar f} = \bar g \comp \bar f$ when $f$ and $g$ have the same source (restricting to the domain of a restriction is the same as restricting separately);
		\item $\bar g \comp f = f \comp \overline{g \comp f}$ when $f$ and $g$ are composable (restricting ``onto the image'' is the same as restricting to the domain of the composite).
	\end{enumerate}
	A \newterm{restriction category} is a category equipped with a restriction structure.

	When $\cC$ is a CD category, we will be interested in the case where it has a restriction structure given by $\bar f = \dom f$ with $\dom f$ the domain of $f$ as in \cref{domainDefinition}.
\end{definition}

Categories of spans such as $\Par{\cC,\distMons}$ have a natural restriction structure~\cite[Proposition 3.1]{cockettlack2002partialmaps}, where the ``domain idempotent'' of a map $X \xhookleftarrow{i} D \xrightarrow{f} Y$ is the map $X \xhookleftarrow{i} D \xhookrightarrow{i} x$.
The poset enrichment induced by this restriction structure is precisely the one mentioned above.

\subsection{Review of categorical probability}
\subsubsection{CD and Markov categories}
CD and Markov categories are a modern abstract generalization of the category of measurable spaces and Markov kernels.
Much of the foundational material on categorical probability via Markov categories revolves around the idea that the main concepts of probability theory, such as statistical (in)dependence, determinism, conditioning, etc., can be meaningfully extended from categories of Markov kernels to more general Markov or CD categories.
Many of these notions can be captured in categorical terms by means of universal properties (limits, colimits, representable functors, etc.) or in terms of categorical structures such as symmetric monoidal categories.

A general reference for many of these results can be found in the work of Cho and Jacobs~\cite{chojacobs2019strings}, and Fritz~\cite{fritz2019synthetic}.
Results that have been developed like this over the past few years feature the Kolmogorov and Hewitt--Savage zero-one laws~\cite{fritzrischel2019zeroone}, the de Finetti theorem~\cite{fritz2021definetti,moss2022probability}, the d-separation theorem for graphical models~\cite{fritz2022dseparation}, the ergodic decomposition theorem~\cite{moss2022ergodic,ensarguet2023ergodic}, the Blackwell--Sherman--Stein theorem~\cite{fritz2023representable} and the Aldous--Hoover theorem~\cite{chen2024aldoushoover}.

\begin{definition}[{\cite[Definition 13 and Definition 2.2]{corradini1999algebraic,chojacobs2019strings}}]
	A \newterm{CD category}\footnote{These are also called \newterm{gs-monoidal} categories.} is a symmetric monoidal category $\cC$ in which every object $X$ is equipped with a distinguished commutative comonoid structure
    \[
		\cop_X\colon X \to X \otimes X, \qquad \discard_X \colon X \to \mathcal I,
	\]
    denoted in string diagram\footnote{Our string diagrams run bottom to top (and left to right).} notation by
	\[
		\tikzfig{copy} \qquad \text{ and } \qquad \tikzfig{del}
	\]
	which is compatible with the tensor product in the sense that for all objects $X$ and $Y$,\footnote{This condition is often called \newterm{uniformity}~\cite[Definition 2.1]{dilavore2023evidential}.}
	\[
        \tikzfig{MCatCopyTensorComp}  \qquad \text{ and } \qquad  \tikzfig{MCatDelTensorComp}
	\]
    and further such that $\cop_{\mathcal I} = \discard_{\mathcal I} = \id_{\mathcal I}$ (up to monoidal coherence).
 \end{definition}
 The intuition here is that maps correspond to ``random processes'' such that composition corresponds to running them in sequence, while the monoidal product corresponds to running them in parallel.
 The copy and delete maps then correspond to the ability to copy a random datum and to discard it, with the compatibility conditions then corresponding to intuitive properties of these operations.
 The CD approach to probability is based on the idea that these properties contain the fundamental behavior of information flow underlying much of probability theory.

 \begin{example}\label{CDCatsExamples}
We recall a few CD categories from the categorical probability literature that appear in this work.
We will focus on the intuition behind the definitions, and defer to the cited references for full details.
	\begin{enumerate}
		\item\label{FinStochBackground} The category $\FinStoch$ of finite sets and stochastic maps has as objects finite sets and maps $X \to Y$ defined by the data of discrete distributions $f\left(\ph\,\vert\, x\right)$ on $Y$ for each $x \in X$~\cite[Example 2.5]{fritz2019synthetic}.
	These discrete distributions are defined entirely by the data of probabilities $f\left(y \,\vert\, x\right)$ for $y\in Y$, corresponding to the probability that the map returns $y$ on input $x$.
	Thus, the morphisms are equivalently described by stochastic matrices (ones such that the sum of each row is $1$), and the composition is given by matrix multiplication (the Chapman--Kolmogorov equation).

	The symmetric monoidal structure is given by cartesian product on objects, and matrix tensor on maps. This corresponds to the idea that the two stochastic maps are tensored independently, that is, $\left(f \otimes g\right)\left(y_1,y_2\,\vert\,x_1,x_2\right) = f\left(y_1\,\vert\,x_1\right)g\left(y_2\,\vert\,x_2\right)$.
	Ordinary (non-stochastic) functions are the maps of this category such that the distributions $f\left(\ph\,\vert\, x\right)$ are Dirac measures (concentrated on the ``image'' of $x$ under $f$).
	The CD structure is given in this manner by the diagonal maps and the unique maps to the one-point space.
\item\label{SetMultiBackground} The category $\SetMulti$ has as objects sets and as maps multivalued functions, where the maps $X \xrightarrow{f} Y$ are given by the data of a \emph{nonempty} image \emph{subset} $f\left(x\right) \subseteq Y$ for each point $x \in X$~\cite[Example 2.6]{fritz2019synthetic} (it can equivalently be seen as the subcategory of $\Rel$ defined by total relations).
	This can be seen as the Kleisli category of the nonempty power set monad on $\cat{Set}$.
	It can be equipped with a CD structure in a manner analogous to that of $\FinStoch$, with the tensor product being given by the product of relations, and the copy and delete maps being given by the diagonal and unique relations to the one-point space.

			It also has a full Markov subcategory $\FinSetMulti$ spanned by the finite sets.
			$\SetMulti$ (or better yet $\FinSetMulti$) can be seen as a ``possibilistic'' analogue of $\FinStoch$, where instead of probabilities of particular outcomes, we record whether they are possible or not.
\item\label{Kl(DR)Background} For a commutative semiring $R$, Coumans and Jacobs in~\cite[Section 5.1]{coumans2013scalars} have constructed a monad of ``$R$-valued finitely supported distributions'' (the idea being to consider flavours of discrete probability in more general semirings than the reals) on the category $\mathsf{Set}$ of sets and functions.
	As observed in~\cite[Example 3.3]{fritz2023representable}, its Kleisli category $\Kl\left(D_R\right)$ is a Markov category whose objects can be thought of as the category of discrete measurable spaces and whose maps are kernels
	\[
		f\colon X \to Y,\qquad x \mapsto \sum_{y \in Y} f\left(y\, \vert\, x\right)\delta_y
	\]
	such that $\supp{f\left(\ph\,\vert\,x\right)} \coloneqq \left\{ y \in Y : f\left(y\,\vert\,x\right) \ne 0\right\}$ is finite.

	As with $\FinStoch$, composition is given by the Chapman--Kolmogorov equation, so that the composite of a sequence $X \xrightarrow{f}Y \xrightarrow{g} Z$ is described by the equation
	\[
		\left(g \circ f\right)\left(z \,\vert\, x\right) = \sum_{y \in Y} g\left(z \,\vert\, y\right) f\left(y \,\vert\, x\right)
	\]
	The tensor and copy--discard structure are defined in the same way as in $\FinStoch$ (see special case~\ref{DistBackground}).

This has two particular instances that we will keep returning to: 
	\begin{enumerate}
		\item\label{DistBackground} The category $\Dist$ of sets and discrete distributions, where the morphisms $X \to Y$ are given by the data of a discrete distribution on $Y$ for each $x \in X$.
			This is the instance where the semiring is $\Rnneg$, and is essentially a version of $\FinStoch$ where the sets are allowed to be infinite.
		\item\label{SetFinMultiBackground} The category $\cat{SetFinMulti}$ of sets and ``finitely multivalued maps''. This is the wide subcategory of $\SetMulti$ whose maps are the multivalued functions $X \xrightarrow{f} Y$ such that for each $x \in X$, the image $f\left(x\right)$ is finite (and nonempty).
			This is the instance where the semiring is the Boolean semiring $\left\{0,1\right\}$ (with addition and multiplication being disjunction and conjunction).\footnote{The reason this category only contains the ``finitely multivalued'' functions is that the distribution monad only considers finitely supported distributions.}
			Its full Markov subcategory spanned by the finite sets is also $\FinSetMulti$.
	\end{enumerate}

\item\label{BorelStochBackground} Finally, our main example $\BorelStoch$~\cite[Section 4]{fritz2019synthetic} is an analogous category of standard Borel spaces and general stochastic maps (Markov kernels).
	In the non-discrete case, probabilities of individual points no longer suffice to determine the entire distribution.
	Thus we take here as objects standard Borel spaces (Polish spaces)\footnote{These tend to be much better behaved than arbitrary measurable spaces.} and one takes as stochastic maps Markov kernels.

	These are assignments of a probability measure $f\left(\ph\,\vert\, x\right)$ on the target space $Y$ for each point $x \in X$, such that the map is measurable in the sense that for every Borel set $B \subseteq Y$, the map $\mathrm{ev}_B\colon X \to \left[0,1\right]$ given by $x \mapsto f\left(B\,\vert\, x\right)$ is measurable.
	Composition is performed by a continuous variant of the Chapman--Kolmogorov equation.
	For a sequence of maps $X \xrightarrow{f} Y \xrightarrow{g} Z$, the composite is given by
	\[
		\left(g \circ f\right)\left(B\,\vert\, x\right) = \int_Y g\left(B\,\vert\, y\right) f\left(dy\,\vert\, x\right)
	\]
	As with $\FinStoch$, the tensor product is given by the product of spaces, and on maps by taking the independent product of distributions. That is, $(f \otimes g)\left(\ph\,\vert\, x, y\right)$ is the unique distribution such that $(f \otimes g)\left(A \times B\,\vert\, x, y\right) = f\left(A\,\vert\, x\right) g\left(B\,\vert\, y\right)$ for arbitrary Borel sets $A,B$.
	The CD structure is defined identically to the discrete case as well.
\end{enumerate}
\end{example}

 \subsubsection{Totality and copyability}
The maps of a CD category are typically	interpreted as ``partially defined random maps''.
One intuitive special case of this notion is those maps intuitively have full domain, the total maps (in terms of being totally defined).
If we think of deletion in a CD category as discarding the output of a map, this still retains the data of whether that output was defined or not.
This lets us capture the idea of totality in terms of CD structure data alone.

 \begin{definition}[{\cite[Definition 2.3 and Definition 2.1]{chojacobs2019strings,fritz2019synthetic}}]
        A map $f\colon X \to Y$ is \newterm{total} if it commutes with deletion
			\[
				\tikzfig{totalityEqn}
			\]
        A \newterm{Markov category} is a CD category in which every map is total.
\end{definition}
			Indeed, this phenomenon can be observed in the (non-probabilistic) category of sets and partial functions, where the total maps are the functions (with full domain).\footnote{Indeed, many authors use the terminology ``functional'' for this notion.}

The complementary special case is that of the ``non-probabilistic'' partially defined maps, whose sub-case in the theory of Markov categories is the concept of determinism.
This works by characterizing the ``non-probabilistic'' maps in terms of the copy maps.
We term the direct generalization of the definition to CD categories ``copyability'', reserving the term ``determinism'' for those morphisms that are additionally total.\footnote{In this, we follow Carboni and Walters~\cite{carboniwalters1987bicartesian} in distinguishing between the comultiplication homomorphisms that are counit homomorphisms and those that aren't.}

\begin{definition}[{\cite[Definition 10.1]{fritz2019synthetic}}]\label{definition:copyable}
	A morphism $X \xrightarrow{f} Y$ in a CD category $\cC$ is called \newterm{copyable} if
	\[
 				\tikzfig{multiplication_natural}
	\]
	The copyable morphisms form a wide subcategory $\cC_{\copyable}$ of $\cC$.

	We call a morphism of a CD category \newterm{deterministic} if it is both total and copyable\footnote{In particular, copyability and determinism coincide in Markov categories.}.
\end{definition}

These are intuitively the maps that have no randomness; the definition can be read as saying that copying the output of a single run of the process is equivalent to running two independent copies of the process (on the same input).
In practice, the deterministic maps tend to correspond to some kind of (measurable) function.

\subsubsection{Domains and quasi-totality}
Categories of spans have natural restriction structures, which involve a notion of ``domain idempotent'' defined in the language of spans.
On the other hand, one can associate to any map of a CD category an idempotent on its domain. In practice, this comprises the data of its ``domain of definition''.

\begin{definition}[{\cite[Definition 2.13 and Definition 2.3]{fritz2023lax,gonda2024framework}}]\label{domainDefinition}
	We define the domain $\dom f$ of a morphism $f$ of a CD category $\cC$ to be the (endo)morphism
	\[
		\tikzfig{domainDefinition}
	\]
\end{definition}
The intuition here is that the composite $\discard \comp f$ encapsulates the ``domain of definition'' of $f$ (it has also been called the ``probability of success''~\cite[Remark 2.18]{dilavore2023evidential}\footnote{This is somewhat revisionist, as the cited article actually calls it the ``probability of failure''.}).
However it is often more convenient to work with its pre-composition with the copy map, in particular as we will work in the context where this endomorphism has the richer structure of an idempotent.\footnote{A further discussion can be found in \cref{domainIdempsHistory}.}

Note that a morphism is total if and only if its domain is the identity (which coincides with the domain being total).
On the other hand, we will also be interested in the complementary case where the domain is copyable.
This leads to the following definition (which is technically stronger a-priori, however equivalent in the categories we consider)

\begin{definition}[{\cite[Definition 3.1 and Definition 8]{dilavore2023evidential,lorenz2023causalmodels}}]\label{def:quasi-total}
		A morphism $f \colon A \to X$ in a CD category is \newterm{quasi-total}\footnote{We follow the terminology ``quasi-total'' as used by Di Lavore and Rom\'an in~\cite[Definition~3.1]{dilavore2023evidential}, generalizing the notion of totality for CD categories. Essentially the same notion has also been termed a ``partial channel'' by Lorenz and Tull in~\cite[preceding Definition~8]{lorenz2023causalmodels}.} if absorbs its domain, that is if
		\begin{equation}\label{eq:quasi-total}
			\tikzfig{quasi-totalDefLHS}\qquad = \qquad\tikzfig{quasi-totalDefRHS}
		\end{equation}
		We call a CD category \newterm{quasi-Markov} if every morphism is quasi-total.
	\end{definition}
	The upshot of quasi-totality is that the domain of a quasi-total map is both copyable and an idempotent.
	In fact, if the CD category is positive (in the sense of \cref{positivityDefinition}), then quasi-totality is equivalent to the copyability of the domain.\footnote{This is an immediate consequence of the definition of positivity, however the result appears already Di Lavore and Rom\'an's~\cite[Proposition~3.5]{dilavore2023evidential} under the alternative hypothesis of conditionals. While it is not the case that conditionals imply positivity for general quasi-Markov categories, both positivity and conditionals independently imply ``copyable marginal independence'', an adaptation of the ``deterministic marginal independence'' of Fritz et al.'s~\cite[Definition 2.4]{fritz2022dilations} with copyability instead of determinism. It again follows rapidly from the definition that given CMI, copyability of the domain implies quasi-totality.}

	In terms of normalizations,\footnote{In the sense of Di Lavore and Rom\'an~\cite[Definition 3.20]{dilavore2023evidential}, or Lorenz and Tull in~\cite[Definition 97]{lorenz2023causalmodels} (indeed they define what they call quasi-total morphisms, ``partial channels'' in this manner).} quasi-total morphisms can be seen as those that are their own normalization.
	Gonda et al.~\cite{gonda2024framework} observe that both copyable and total morphisms are quasi-total, thus the class of quasi-Markov categories includes both the copyable CD categories (where there is intuitively no randomness) and Markov categories.
	Note that the former class is equivalently the class of restriction categories with restriction products of~\cite[Proposition 4.3]{cockettlackRestrictionCategoriesIII2007}, which is in turn equivalent to classical notions of partial maps such as $p$-categories~\cite[Proposition 4.5]{cockettlackRestrictionCategoriesIII2007} (and others, see \cref{stableClassAliases}).
	Indeed, both these classes have a common generalization in terms of (strict) oplax cartesian categories~\cite[Definition 4.5]{cioffoTaxonomyCategoriesRelations2025}, which are themselves special cases of quasi-Markov categories~\cite[Proposition 3.6]{fritz2023lax}.

	Pre-composition with the domain of a morphism can be seen as restricting to its domain of definition.
	Intuitively, this lets one define an order on hom-sets purely in terms of CD structure, as follows.
	\begin{definition}[{\cite[Definition 97 and Definition 2.5]{lorenz2023causalmodels,gonda2024framework}}]\label{def:dom_ext}
	For any two parallel morphisms $f, g \colon A \to X$ in a quasi-Markov category, we say that $f$ \newterm{extends} $g$, denoted $f \domext g$, if we have
	\begin{equation}\label{eq:eq_on_domain}
		\tikzfig{eq_on_domain}
	\end{equation}
	This defines a partial order on the hom-sets of a quasi-Markov category~\cite[Lemma 98]{lorenz2023causalmodels}.\footnote{To be precise, Lorenz and Tull~\cite[Lemma 98]{lorenz2023causalmodels} show that this defines an antisymmetric and transitive relation on the hom-sets of a CD category.
Reflexivity of this relation is merely the quasi-totality equation~\ref{eq:quasi-total}, as observed by~\cite{gonda2024framework}.}
\end{definition}
Thinking of $\discard \comp g$ as the domain of definition of $g$, we can see this as the statement that $g$ is the restriction of $f$ to the domain of $g$.\footnote{A discussion of the non-probabilistic precursor to this construction can be found at \cref{restricitonPartialOrderHistory}.}

\subsubsection{Positivity and enrichment}
A property typical of ``probability-like'' Markov categories is ``positivity'', its name deriving from the fact that typical examples correspond to ``non-signed''/nonnegative probability measures, for instance in Fritz's~\cite[Example 11.25]{fritz2019synthetic}.

\begin{definition}[{\cite[Definition~2.11]{fritz2025empirical}, generalizing \cite[Definition 11.22]{fritz2019synthetic}}]\label{positivityDefinition}
	We call a quasi-Markov category \newterm{positive} if for every composable pair $X \xrightarrow{u} Y \xrightarrow{v} Z$ whose composite $v\comp u\colon X\to Y$ is copyable,
	\[
		\tikzfig{posDefEqn}
	\]
\end{definition}

\begin{proposition}[{\cite[Proposition~2.15]{fritz2025empirical}}]\label{positiveQuasiMarkovEnriched}
	Consider a positive quasi-Markov category. The partial order of \cref{def:dom_ext} is an enrichment\footnote{This subtlety regarding compatibility with composition vanishes in the cartesian case, as positivity holds (as with quasi-totality) merely by the universal property characterizing a map to a cartesian product by its two components.} in posets.
\end{proposition}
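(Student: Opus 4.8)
The plan is to take the partial-order structure on the hom-sets for granted --- antisymmetry and transitivity are already established in \cite{lorenz2023causalmodels}, and reflexivity is precisely the quasi-totality equation \eqref{eq:quasi-total} --- so that the only remaining content is that composition is monotone. Since the hom-posets carry the product order and $\domext$ is transitive, it suffices to prove the two one-sided (whiskering) statements: for $f \domext f'\colon A \to X$ and any $h\colon X \to Y$ we have $h\comp f \domext h\comp f'$, and for $g \domext g'\colon X \to Y$ and any $f\colon A \to X$ we have $g\comp f \domext g'\comp f$. Throughout I would use the reformulation of \cref{def:dom_ext} as $f \domext g \iff g = f \comp \dom{g}$ (``$g$ is the restriction of $f$ to the domain of $g$''), together with the fact (recorded after \cref{def:quasi-total}) that in a quasi-Markov category every domain $\dom{f}$ is a copyable idempotent.

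For the left whiskering I would argue with no appeal to positivity. From $f' = f \comp \dom{f'}$ one gets $h\comp f' = (h\comp f)\comp \dom{f'}$; write $m \coloneqq h\comp f'$. Using the elementary fact that domains only shrink under post-composition, in the form $\dom{f'} \comp \dom{h\comp f'} = \dom{h\comp f'}$, one may insert $\dom{f'}$ freely, so that
\[
	(h\comp f)\comp \dom{h\comp f'} = (h\comp f)\comp \dom{f'} \comp \dom{h\comp f'} = m \comp \dom{m} = m = h\comp f',
\]
where the third equality is reflexivity ($m \comp \dom{m} = m$). This says exactly $h\comp f \domext h\comp f'$.

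The right whiskering is the crux, and it is here that positivity is indispensable. The statement reduces to the single identity $\dom{g'} \comp f = f \comp \dom{g'\comp f}$, which moves a domain on the \emph{target} of $f$ across the (possibly random) map $f$; once it is available, $g'\comp f = g\comp \dom{g'}\comp f = g\comp f \comp \dom{g'\comp f} = (g\comp f)\comp\dom{g'\comp f}$, i.e.\ $g\comp f \domext g'\comp f$. I would obtain the identity as a direct instance of \cref{positivityDefinition}, applied to $u \coloneqq f$ and $v \coloneqq \discard_Y \comp g'$: the composite $v\comp u = \discard\comp(g'\comp f)$ is the domain scalar of $g'\comp f$, which is copyable in any quasi-Markov category, so positivity applies and its conclusion unwinds (via \cref{domainDefinition}) to $(\id_X \otimes (\discard\comp g'))\comp \cop_X \comp f = (f \otimes (\discard \comp g'\comp f))\comp \cop_A$, which is exactly the desired identity. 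The main obstacle is precisely checking that the positivity axiom produces this identity in the form needed; its necessity is genuine, since in a non-positive quasi-Markov category such as $\Rel$ the identity fails (a relation $f$ may meet $\dom{g'}$ without landing inside it), and correspondingly $\domext$ is not an enrichment there.

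Finally, combining the two whiskerings by transitivity, $g\comp f \domext g'\comp f \domext g'\comp f'$, yields monotonicity of composition and hence the poset enrichment. As a consistency check and alternative route, I would observe that the identity established in the previous paragraph is exactly restriction-category axiom R.4 of \cref{restrictionCategoryDefinition}, so this argument simultaneously recovers the derivation through the restriction structure flagged in \cref{restricitonPartialOrderHistory}.
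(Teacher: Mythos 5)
Your proof is correct. Note first that the paper does not actually prove this proposition itself --- it cites the companion work~\cite[Proposition~2.15]{fritz2025empirical} --- but it does flag (in \cref{restricitonPartialOrderHistory} and the lead-in to \cref{subsec:PartialMorphisms}) an alternative derivation via the restriction structure of \cref{positiveQuasiMarkovRestriction}, and your argument is exactly that route made explicit. Your right-whiskering identity $\dom{g'}\comp f = f\comp\dom{g'\comp f}$ is precisely axiom (R.4), and you obtain it from positivity by the same instantiation the paper uses in the proof of \cref{positiveQuasiMarkovRestriction} (apply \cref{positivityDefinition} to the pair $(f,\discard\comp g')$, whose composite is copyable by quasi-totality); the left-whiskering step then inlines the standard restriction-category argument rather than citing~\cite{cockettlack2002partialmaps}. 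Two small remarks. First, the ``elementary fact'' $\dom{f'}\comp\dom{h\comp f'}=\dom{h\comp f'}$ is asserted without proof; it does hold in any quasi-Markov category (post-compose the quasi-totality equation for $f'$ with $\discard\comp h$ to get $\bigl((\discard\comp f')\otimes(\discard\comp h\comp f')\bigr)\comp\cop = \discard\comp h\comp f'$, then use coassociativity), but since it is the one place where the left whiskering could conceivably fail, it deserves a line of justification --- it is the CD-categorical shadow of the consequence $\bar f\comp\overline{g\comp f}=\overline{g\comp f}$ of (R.1)--(R.3), all of which the paper verifies without positivity. Second, your observation that positivity enters only through pre-composition monotonicity is exactly consistent with the paper's $\Rel$ counterexample in \cref{restPositivityRequirement}, which is a right-whiskering failure; this is a worthwhile point that the paper leaves implicit.
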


\subsubsection{Representability}
Maps in CD categories often behave like (partially defined) Markov kernels or multivalued maps.
One convenient property of both these classes of maps is that their data can equivalently be encoded as non-probabilistic ``functions'' into a space of distributions, essentially moving the randomness from the map to its target.
For Markov categories, this has been formalized as the inclusion of the copyable subcategory into the whole category having a right adjoint, which we call the ``distribution'' functor~\cite[Definition 3.10]{fritz2023representable}.
Much the same can be done in the non-Markov case, where this adjoint copyable map may be only partially defined.
\begin{definition}[{\cite[Definition 2.17]{fritz2025empirical}}]\label{RepresentabilityDefinition}
	Consider a quasi-Markov category $\cC$.
	A \newterm{distribution object} $PX$ for an object $X$ of $\cC$ is a representation of the functor $\cC\left(\ph,X\right)\colon \cC_{\copyable}^{\op} \to \mathsf{Set}$.
	In particular, it consists of an object $PX$ and isomorphisms natural in $A \in \cC_{\copyable}$,
	\[
	\cC\left(A,X\right) \cong \cC_\copyable\left(A,PX\right)
\]

A quasi-Markov category is \newterm{representable} if every object $X$ has a distribution object $PX$.
In such a case, the distribution objects assemble into a functor $P\colon \cC \to \cC_\copyable$ that is right adjoint to the inclusion $\cC_\copyable \hookrightarrow \cC$.
\end{definition}

We denote the unit of the adjunction at an object $X$ (the counterpart of $\id_X$) by the (copyable)
\[
	\delta_X\colon X \rightarrowtail PX
\]
and the counit (the counterpart of $\id_{PX}$) by
\[
	\samp_X\colon PX \twoheadrightarrow X
\]
with the intuition that (as in $\BorelStoch$), $\delta$ takes a point $x$ and deterministically\footnote{The unit $\delta$ has been defined to only be a-priori copyable. However it is a section of the counit, and one can show that monomorphisms in a quasi-Markov category are always total~\cite[Lemma 2.6]{fritz2025empirical}. Thus $\delta$ is indeed deterministic.} produces the distribution $\delta_x$ concentrated at $x$, and $\samp$ takes a distribution\footnote{One caveat is that it remains open as to whether sampling is total in general. Thus, this intuitive statement may only apply to some distributions on $X$. However, given a slightly stronger notion of representability called ``observational representability'', $\samp$ can be shown to be total~\cite[Lemma 2.20]{fritz2025empirical}.

While it can be shown that the partialization of an observationally representable partializable Markov category is observationally representable, we will in fact construct sampling maps in \cref{proposition:ParDistObs} that are total in the partialization of an arbitrary representable partializable Markov category.} and returns a sample from it.
In particular, $\samp_X \comp \delta_X = \id_X$.
We further denote the copyable counterpart of a morphism $f\colon X \to Y$ by $f^\sharp\colon X \to PY$ (which is merely $(Pf) \comp \delta$).

This adjunction further induces a monad $\left(P,\mu,\delta\right)$ on the domain $\cC_\copyable$ of the left adjoint.

\subsubsection{Conditionals}
Another recurring notion in categorical probability is that of the conditional of a joint distribution with respect to one (or some) of its factors.
This can be defined purely in terms of the CD structure, and in typical examples corresponds to the usual notion of conditional probability (or more precisely, the better behaved notion of ``regular conditional distribution'').
This can be defined in a ``partial'' sense as well, intuitively corresponding to conditioning being restricted to the domain of (joint) definition.
\begin{definition}\label{ConditionalsDefn}
	Consider a quasi-Markov category $\cC$.
	A \newterm{conditional} of a morphism $f \colon A \to X \otimes Y$ with respect to an output $X$ is a morphism $f_{\vert X} \colon X \otimes A \to Y$ such that
	\begin{equation}\label{ConditionalDefnEqn}
		\tikzfig{ConditionalDefnEqn}
	\end{equation}
\end{definition}

\begin{remark}
	This definition is a direct extension of~\cite[Definition 11.5]{fritz2019synthetic} for Markov categories (and thus, also of the more specific definition for states due to Cho and Jacobs~\cite[Definition 3.5]{chojacobs2019strings}).
	In Di Lavore and Rom\'an's~\cite[Definition 2.3]{dilavore2023evidential} a definition of conditionals for CD categories is given which allows the role of the marginal of $f$ in \cref{ConditionalDefnEqn} to be played by any morphism of suitable type.
	However, in a quasi-Markov category for instance,~\cite[Theorem 3.14]{dilavore2023evidential} shows that that definition is equivalent to \cref{ConditionalsDefn}.
\end{remark}

\subsubsection{Kolmogorov products}
The Kolmogorov extension theorem is a typical tool for working with joint distributions of infinitely many random variables.
Intuitively, it states that a joint distribution on a family of random variables is uniquely characterized by a compatible family of ``finite marginals''.
Fritz and Rischel have categorified this in terms of ``infinite tensor'' and ``Kolmogorov'' products~\cite{fritzrischel2019zeroone}, which we briefly recall.\footnote{While originally defined for semicartesian categories, the same definition can be instantiated in a CD category, as done by Moss and Perrone~\cite[Definition 8.1]{moss2022probability}.}
However, for reasons we mention at \cref{KolProdsNonFunctorialWarning} and discuss in detail at \cref{section:KolmogorovProducts}, we qualify them as ``strict'' when instanced outside a Markov (in particular semicartesian) category.

\begin{definition}[{\cite[Definition 3.1]{fritzrischel2019zeroone}}]\label{InfTensorProductDefn}
Consider a $K$-indexed family of objects ${\left(X_k\right)}_{k \in K}$ in a quasi-Markov category $\cC$.
Let $\FinSub{K}$ be the poset of finite subsets of $K$ and inclusions.
This defines a diagram $X^{\left(\phsm\right)}\colon \FinSub{K}^{\op} \to \cC$, $F \mapsto X^F\coloneqq \bigotimes_{i \in F}X_i$.
The image of a containment $F \subseteq F'$ is denoted $\pi_{F',F}\colon X^{F'} \to X^F$.

A limit cone ${\left(X^K \xrightarrow{\pi_F} X^F\right)}_{F \subseteq K \text{ finite}}$ over this diagram is a \newterm{strict infinite tensor product} if it is preserved by tensoring with an arbitrary object $Y$.
The cone maps are called the \newterm{finite marginalization maps}.
It is further a \newterm{strict Kolmogorov product} if the finite marginalizations are deterministic.
In this case it can be shown that a cone with copyable components ${\left(A \to X^F\right)}_{F \subseteq K \text{ finite}}$ induces a copyable map $A \to X^K$.
\end{definition}
When $\cC$ is a Markov category, this is precisely the same notion as in~\cite[Sections 3 and 4]{fritzrischel2019zeroone}.

\begin{remark}
In a Markov category with $K$-sized Kolmogorov products, a $K$-indexed family of morphisms $\big(X_k \xrightarrow{f_k} Y_k\big)_{k \in K}$ induces a universal morphism $f \colon X^K \to Y^K$.
This is the map induced by the cone over the diagram of finite products $\left(Y^F\right)_{F \subseteq K \text{ finite}}$, where the cone maps are the $\big(X^K \xrightarrow{\pi_F} X^F \xrightarrow{f^F} Y^F\big)_{F \subseteq K \text{ finite}}$.
However, a crucial fact that enables this construction is the fact that the maps $f_i$ are \emph{total}, so that the family of proposed cone maps is compatible with marginalization (and thus actually forms a cone).
\end{remark}

\subsubsection{Idempotents}
Idempotents have (in addition to their incarnation as domain idempotents) proven to be of independent interest for probability (such as support idempotents).
Of particular note are the following three classes of idempotents.	
\begin{definition}[{\cite[Definition 4.4.1]{fritz2023supports}}]
	An idempotent $e\colon X \to X$ in a quasi-Markov category is:
	\begin{itemize}
		\item \newterm{Balanced} if
			\[
				\tikzfig{IdempDefTerm}\qquad = \qquad\tikzfig{BalanceDefTerm}
			\]
		\item \newterm{Static} if
			\[
				\tikzfig{IdempDefTerm}\qquad = \qquad\tikzfig{StaticDefTerm}
			\]
		\item \newterm{Strong} if
			\[
				\tikzfig{IdempDefTerm}\qquad = \qquad\tikzfig{StrongDefTerm}
			\]
	\end{itemize}
\end{definition}
Note that balance is implied by either the static or strong condition, and that copyable idempotents are all three (and conversely)~\cite[Remark 4.1.2]{fritz2023supports}.\footnote{The cited source does this for Markov categories, but the arguments only depend on the CD structure and thus still hold.}

\section{Partializable Markov categories}\label{sec:partializable}
\subsection{Partial morphisms in categorical probability}\label{subsec:PartialMorphisms}

Restriction categories are a modern framework that has shown to be useful for the study of partial maps in a variety of contexts.
Recent work suggests quasi-Markov categories as a suitable framework for partial maps in categorical probability, as they are CD categories with a notion of domain inducing a ``restriction partial order''.\footnote{To be precise, works like~\cite{lorenz2023causalmodels,gonda2024framework} have studied order-like relations on the hom-sets of a CD category, but only given quasi-totality do these truly define partial orders on the hom-sets. See \cref{restricitonPartialOrderHistory} for a discussion.}

We will in fact show that positive quasi-Markov categories are restriction categories, so that all the results of~\cite{cockettlack2002partialmaps} apply (this also supplies another way to see the poset enrichment of \cref{positiveQuasiMarkovEnriched}).\footnote{However we warn that Cockett and Lack’s results on restriction products~\cite{cockettlackRestrictionCategoriesIII2007} do not necessarily apply a priori, as they restrict to the cartesian product on the subcategory of total maps.}

\begin{proposition}[{Generalizing~\cite[Corollary 4.12]{cioffoTaxonomyCategoriesRelations2025} to a non-copyable setting}]\label{positiveQuasiMarkovRestriction}
	Every positive quasi-Markov category is a restriction category, with the restriction $\bar f$ associated to a map $f$ given by the domain $\dom f$ of \cref{domainDefinition}.
\end{proposition}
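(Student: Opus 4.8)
The plan is to verify the four restriction axioms (R.1)--(R.4) of \cref{restrictionCategoryDefinition} directly for the assignment $\bar f \coloneqq \dom f$. Throughout I would write the domain of $f\colon X \to Y$ as $\dom f = (\id_X \otimes e_f)\comp\cop_X$, where $e_f \coloneqq \discard_Y \comp f\colon X \to I$ is the ``effect part'' of $f$; this is just \cref{domainDefinition} with the two legs of $\cop_X$ made explicit. Axiom (R.1), namely $f\comp\dom f = f$, is then immediate: it is exactly the statement that $f$ absorbs its domain, i.e.\ the defining quasi-totality condition~\eqref{eq:quasi-total} (\cref{def:quasi-total}), and as noted in \cref{positiveRestrictionDiscussion} this is the only axiom for which (quasi-)totality is needed.

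Axioms (R.2) and (R.3) are purely structural and use neither positivity nor quasi-totality. For morphisms $f,g$ out of a common object $X$, coassociativity and cocommutativity of $\cop_X$ let me rewrite
\[
	\dom{f} \comp \dom{g} = (\id_X \otimes e_f \otimes e_g)\comp\cop_X^{(3)},
\]
where $\cop_X^{(3)} = (\cop_X \otimes \id_X)\comp\cop_X$ is the threefold copy. Since exchanging the last two tensor factors of $\cop_X^{(3)}$ is a symmetry, the right-hand side is invariant under swapping $e_f$ and $e_g$, which gives (R.2). For (R.3) I would compute $\discard \comp g \comp \dom{f} = (e_g \otimes e_f)\comp\cop_X$, whence $\dom{g\comp\dom{f}} = (\id_X \otimes e_g \otimes e_f)\comp\cop_X^{(3)}$, which is precisely the expression just obtained for $\dom{g}\comp\dom{f}$.

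The substance of the proof is axiom (R.4): for composable $f\colon X\to Y$, $g\colon Y\to Z$ I must show $\dom{g} \comp f = f \comp \dom{g\comp f}$. First I would rewrite both sides. The left-hand side expands to $\dom{g}\comp f = (\id_Y \otimes e_g)\comp\cop_Y\comp f$, while pushing $f$ through the copy on the right-hand side (and using that the second leg lands in $I$) yields $f\comp\dom{g\comp f} = (\id_Y \otimes e_g)\comp(f\otimes f)\comp\cop_X$. Thus (R.4) reduces to the single equation
\[
	(\id_Y \otimes e_g)\comp\cop_Y\comp f = (\id_Y \otimes e_g)\comp(f\otimes f)\comp\cop_X,
\]
which is the positivity equation of \cref{positivityDefinition} instantiated at the costate $v \coloneqq \discard_Z\comp g\colon Y \to I$ (so that $e_g = v$). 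To license this instance I must verify its hypothesis, that the composite $v\comp f = \discard_Z\comp g\comp f$ is copyable. This is where both hypotheses enter: a short counit computation identifies $\discard_Z\comp g\comp f$ with $\discard_X \comp \dom{g\comp f}$, and since $\dom{g\comp f}$ is copyable (the copyability of domains being the upshot of quasi-totality, \cref{def:quasi-total}) and $\discard_X$ is always copyable, their composite is copyable as $\cC_\copyable$ is a subcategory (\cref{definition:copyable}). Positivity then delivers the displayed equation, completing (R.4).

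I expect (R.4) to be the only genuine obstacle, and within it the decisive observation is that, although $g\comp f$ itself need not be copyable, the \emph{discarded} composite $\discard_Z\comp g\comp f$ always is in a quasi-Markov category; this is exactly what makes positivity applicable, and it is the single point at which the hypotheses are really used. Finally, as indicated in \cref{positiveRestrictionDiscussion}, only an inequality between the two sides of the displayed equation is actually needed for (R.4), so a one-sided weakening of positivity would already suffice; I would nonetheless invoke the full positivity equation as the cleanest route.
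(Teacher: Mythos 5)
Your proof is correct and follows essentially the same route as the paper's: (R.1) is quasi-totality, (R.2)--(R.3) follow from the comonoid axioms, and (R.4) is positivity applied to the pair $\bigl(f,\discard\comp g\bigr)$, whose composite is copyable by quasi-totality. Your explicit rewriting in terms of the effects $e_f = \discard\comp f$ just makes the paper's string-diagram computation algebraically explicit.
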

\begin{proof}
	We verify the four conditions $\left(\mathrm{R.1}\right)$\textendash $(\mathrm{R.}4)$ of \cref{restrictionCategoryDefinition}:
	\begin{enumerate}[label={(R.\arabic*):}]
		\item For the proposed restriction structure, this is quasi-totality (\cref{def:quasi-total});
		\item This is a consequence of the copy maps defining co-commutative co-monoids;
		\item This is again a consequence of the copy maps defining co-monoids;
		\item Consider a composable pair $X \xrightarrow{f} Y \xrightarrow{g} Z$.
		A consequence of quasi-totality is that the composite $\discard \comp g \comp f$ is copyable.
		Thus, by positivity (\cref{positivityDefinition}), we have
		\[
			\tikzfig{positiveQuasiMarkovR4}
		\]
		This is the desired equation $f \comp \overline{g \comp f} = \bar g \comp f$.
	\end{enumerate}
\end{proof}

\begin{remark}\label{posQuasiMarkovRestGeneralizesOplax}
	Cioffo et al.\ show in their recent~\cite[Corollary 4.12]{cioffoTaxonomyCategoriesRelations2025} that positive ``strict oplax cartesian categories'' are restriction categories.
	These are poset\footnote{Oplax cartesian categories are defined to be preorder enriched, but they are termed strict when the preorders are posets.} \emph{enriched} CD categories such that copyability and totality hold up to inequality~\cite[Definition 4.5]{cioffoTaxonomyCategoriesRelations2025}.
	It has been shown that strict oplax cartesian categories are all quasi-Markov~\cite[Proposition 3.6]{fritz2023lax}, and thus \cref{positiveQuasiMarkovRestriction} generalizes their result to a non-oplax-copyable setting.

	However, the categories we consider here will generally not be oplax cartesian.
	Indeed, positive oplax cartesian categories are equivalent to cartesian restriction categories (restriction categories with restriction products)~\cite[Remark 4.11]{cioffoTaxonomyCategoriesRelations2025}, and thus the partializations we construct will not be instances of these (see also \cref{partializationRestrictionProducts}).
\end{remark}

\begin{remark}\label{restPositivityRequirement}
	The positivity hypothesis is necessary\footnote{Even in the absence of positivity a quasi-Markov category is still a ``support category''~\cite[Section 2.1]{cockett2012range} (the author would like to thank Jean-Simon Lemay for suggesting this). Positivity is only needed to derive the ``R.$4$'' axiom for restriction categories, whereas the ``weak R.$4$'' axiom for support categories holds for general CD categories.} here, as even for a common quasi-Markov category like $\Rel$ the restriction partial order of \cref{def:dom_ext} need not be compatible with pre-composition\footnote{Consider relations $S,T\colon \{u,v\} \to \{x,y\}$, where $S=\{(u,x)\}$ while $T = \{(u,x),(v,y)\}$. Then $S$ is the restriction of $T$ to its domain $\{u\}$.
On the other hand, pre-composing with $R\colon \{a\} \to \{u,v\}$ one gets two distinct total relations, but $SR$ has range $x$ while $TR$ has full range.} (and thus cannot be induced by a restriction structure, as these are always poset enrichments\footnote{This also shows that positivity is required for \cref{positiveQuasiMarkovEnriched}.}).
	This suggests some connection between positivity and restriction structures, but a precise characterization of when the restriction partial order is an enrichment remains open (of course, one can have enrichment without positivity, as holds trivially for non-positive Markov categories).
\end{remark}
\subsection{The Partialization of a Markov category}\label{subsec:partialization}
We now show how to add partial morphisms to a Markov category $\cat{C}$ to obtain a new quasi-Markov category.
This main idea is as follows: 
We form a new category $\Par{\cC}$ where the objects are the ones of $\cat{C}$, and the morphisms $X \to Y$ are equivalence classes of spans of the form
\[
\begin{tikzcd}
	X & D \ar[hook']{l}[swap]{i} \ar{r}{f} & Y
\end{tikzcd}
\]
where the map $i$ is a particular kind of monomorphism. 
We occasionally denote such a span by a triple $\left(D,i,f\right)$ or merely a pair $\left(i,f\right)$ whenever the objects are clear from context. This is a typical categorical construction of a category of partial maps in terms of spans.

The intuition is that $f$ is only defined on $D$, which is included within $X$ via $i$. 
The domain of $f$ is thus in general not the whole of $X$.
However, as our morphisms are intuitively partial \emph{stochastic} maps, we insist upon the condition that the domain inclusions $D \xhookrightarrow{i} X$ be \emph{deterministic} monomorphisms.\footnote{So all the randomness is in the ``action'' map $X \xrightarrow{f} Y$.}
In order for the construction to work, we require a couple of conditions on the original Markov category.
\begin{definition}\label{def:partializable}
	A Markov category $\cC$ is called \newterm{partializable} if:
	\begin{enumerate}
		\item\label{it:isos_det} It is positive;\footnote{In such a case, all isomorphisms are deterministic~\cite[Remark 11.28]{fritz2019synthetic}.}
		\item\label{it:pullback_cond} Pullbacks of deterministic monomorphisms exist and are themselves deterministic;
		\item If $i \colon D \to X$ is a deterministic monomorphism then every $i \otimes \id_A$ is a monomorphism.
	\end{enumerate}
\end{definition}

Some useful facts about partializable Markov categories are given in \Cref{sec:par_facts}.

The three conditions ensure that in a partializable Markov category, the deterministic monomorphisms form what Cockett and Lack call in~\cite[Section~3.1]{cockettlack2002partialmaps} a \newterm{stable system of monics} (as in \cref{stableSysDefinition}).
This is in essence, the condition for the construction of a category $\Par{\cC}$ with objects those of $\cC$ and morphisms equivalence classes of spans $X \xhookleftarrow{i} A \xrightarrow{f} Y$, with $i$ a deterministic monomorphism (as in \cref{spanCatDefinition}).

\begin{definition}\label{def:partial_C}
	Let $\cat{C}$ be a partializable Markov category. 
	The category $\Par{\cC}$, termed \newterm{partialization} of $\cC$, is constructed as follows:
	\begin{itemize}
		\item The objects are the objects of the original category $\cat{C}$;
		
		\item The morphisms $X\to Y$ are equivalence classes of spans of the form
		\[
		\begin{tikzcd}
			X & D \ar[hook']{l}[swap]{i} \ar{r}{f} & Y
		\end{tikzcd}
		\]
		where $i$ is a \emph{deterministic} monomorphism and where two spans $\left(D,i,f\right)$ and $\left(D',i',f'\right)$ are considered equivalent if and only if there is an isomorphism $D\to D'$ making the following diagram commute:
		\[
		\begin{tikzcd}[sep=small]
			& D \ar[hook]{dl}[swap]{i} \ar{dr}{f} \ar[dd, leftrightarrow,"\simeq" {anchor=south, rotate=270}] \\
			X && Y \\
			& D' \ar[hook']{ul}{i'} \ar{ur}[swap]{f'}
		\end{tikzcd}
		\]
		
		\item The composition of two morphisms represented by spans $X \xhookleftarrow{i} A \xrightarrow{f} Y$ and $Y \xhookleftarrow{j} B \xrightarrow{g} Z$ is obtained by forming the pullback
		\[
		\begin{tikzcd}[sep=small]
			&                                           & C \arrow[ld, "u"', hook] \arrow[rd, "v"] \arrow["\lrcorner"{anchor=center, pos=0.125, rotate=-45}, draw=none, dd]&                                         &   \\
			& A \arrow[ld, "i"', hook] \arrow[rd, "f"'] &                                          & B \arrow[ld, "j", hook] \arrow[rd, "g"] &   \\
			X &                                           & Y                                        &                                         & Z
		\end{tikzcd}
		\]
		and is represented by the span $X \xhookleftarrow{i\comp u} C \xrightarrow{g \comp v} Z$.
	\end{itemize}
\end{definition}

There is a canonical inclusion of $\cC$ into $\Par{\cC}$ sending $X \xrightarrow{f} Y$ to the (equivalence class) of the span $X = X \xrightarrow{f}Y$.

\begin{example}[{standard Borel spaces and Markov kernels}]\label{example:BorelStochPartializable}
	Our main example will be the Markov category $\BorelStoch$~\cite[Section 4]{fritz2019synthetic} of standard Borel spaces (recall \cref{CDCatsExamples}~\ref{BorelStochBackground}).
	Fritz has shown in~\cite[Example~11.25]{fritz2019synthetic} that $\BorelStoch$ is positive.
	Since the deterministic morphisms in $\BorelStoch$ are exactly the measurable maps, it follows that the deterministic monomorphisms are exactly the measurable injections. 
	It is a non-trivial fact that every measurable injection also preserves measurable sets in the forward direction, and is therefore a Borel isomorphism onto its image.\footnote{See Kechris's~\cite[Corollary~(15.2)]{kechris} for a source.} 
	In other words, the deterministic subobjects of a standard Borel space $X$ are exactly its measurable subsets. 
	Hence the morphisms of type $X \to Y$ in $\Par{\BorelStoch}$ are pairs $\left(S,f\right)$ where $S \subseteq X$ is measurable and $f \colon S \to Y$ is a measurable map.
	
	As an immediate consequence, we have that for such a deterministic monomorphism $i$ and object $A$, the tensor $i \otimes \id_A$ is again a deterministic monomorphism (it is again a measurable subset inclusion).

	Finally, concerning the pullback condition, consider a deterministic subobject $T \in \Sigma_Y$ and a Markov kernel $f \colon X \to Y$. 
	Our goal is to construct a pullback diagram in $\BorelStoch$ of the form
	\[\begin{tikzcd}
		S \ar[hook]{d} \ar{r}{f\vert_S} \arrow[rd, "\lrcorner"{anchor=center, pos=0.125}, draw=none]& T \ar[hook]{d} \\
		X \ar{r}{f} & Y
	\end{tikzcd}\]
	with $S \in \Sigma_X$.

	Note that a necessary condition for such a square to even commute is that $f\left(T\, \vert\, x\right) = 1$ for all $x \in S$.
	Indeed for $x \in S$, $f\left(T\, \vert\, x\right)$ is the probability that the image of $s$ under the lower-left path in the diagram is in $T$.
	If the diagram commutes, this must also be the probability under the upper-right path, $f\vert_S\left(T\, \vert\, x\right)$.
	However as this is a Markov kernel to $T$, the probability is $1$.

	Motivated by this, we define\footnote{Where $T^c \coloneqq Y \setminus T$ is the complement of $T$.}
	\[
	S \coloneq \left\{ x \in X \sot f\left(T\, \vert\, x\right) = 1 \right\} = \left\{ x \in X \sot f\left(T^c\, \vert\, x\right) = 0 \right\}
	\]
	which is a measurable subset of $X$ by the measurability of $f\left(T\,\vert \,\ph\right)$.

	We noted that in general, a Markov kernel $h \colon W \to X$ factors through a deterministic subobject $S \subseteq X$ if and only if $h\left(S\,\vert\,w\right) = 1$ for all $w \in W$. 
	This lets us establish the pullback property as follows. 
	Given $h \colon W \to X$, we assume that $f \comp h$ factors through $T$, which means that
	\[
	\left(f \comp h\right)\left(T\,\vert\,w\right) = \int_{x \in X} f\left(T\,\vert\,x\right) \, h\left(dx\,\vert\,w\right) = 1 \qquad \forall w \in W,
	\]
	or (by normalization of probability) equivalently that
	\[
	\int_{x \in X} f\left(T^c\, \vert\, x\right) \, h\left(dx\, \vert\, w\right) = 0 \qquad \forall w \in W.
	\]
	An integral of a nonnegative function vanishes if and only if the integrand vanishes almost everywhere. 
	Taking complements again, we can therefore conclude 
	\[
	f\left(T\,\vert\, \ph \right) \ase{h\left(\ph\,\vert\, w\right)} 1
	\]
	By the definition of $S$, this is equivalent to $h\left(S\,\vert\, w\right) = 1$, which means that $h$ indeed factors through $S$ (necessarily uniquely). 
	Thus, the pullback condition holds as well.
\end{example}

\begin{remark}\label{FinStochPartializable}
	The same argument of \cref{example:BorelStochPartializable} shows that the full subcategory $\FinStoch$~\cite[Example 2.5]{fritz2019synthetic} (recall \cref{CDCatsExamples}~\ref{FinStochBackground}) of finite (discrete) spaces is also partializable.
	This can also be seen as a consequence of the fact that all distributions involved are discrete, which is developed more generally in \cref{discDistPartializable}.
\end{remark}
\begin{example}[Sets and multivalued functions]\label{SetMultiPartializable}
The category $\cat{SetMulti}$ of sets and multivalued maps (\cref{CDCatsExamples}~\ref{SetMultiBackground}) is partializable.
To see this, note first that it has been shown to be positive (for instance, as a consequence of having conditionals).
Furthermore, it is the Kleisli category of the \emph{nonempty} power set monad on $\cat{Set}$, and additionally representable.
In particular, the deterministic monomorphisms are precisely the injective functions.
Thus, the deterministic monomorphisms are also closed under tensor.

It only remains to check the pullback condition.
For this, one need only observe that a multivalued function (total relation) $X \xrightarrow{f} Y$ factors across a deterministic subobject $T \subseteq Y$ if and only if each image set $f\left(x\right) \subseteq T$ (for all $x \in X$).

Thus, if we simply define $S \coloneqq \left\{ x \in X \sot f\left(x\right) \subseteq T\right\} \subseteq X$, then the diagram
	\[\begin{tikzcd}
		S \ar[hook]{d} \ar{r}{f\vert_S} \arrow[rd, "\lrcorner"{anchor=center, pos=0.125}, draw=none]& T \ar[hook]{d} \\
		X \ar{r}{f} & Y
	\end{tikzcd}\]
	is a pullback.
	Indeed, if any multivalued map $W \xrightarrow{h} X$ is such that $f\comp h$ factors through $T$ (necessarily uniquely), then for each $w \in W$, $\left(f \comp h\right)\left(w\right) = \bigcup_{x \in h\left(w\right)} f\left(x\right) \subseteq T$, and thus $h\left(w\right) \subseteq S$.
	Thus, $h$ would have to factor through $S$ (necessarily uniquely).
	
	Thus $\cat{SetMulti}$ satisfies all three conditions for partializability.
\end{example}

\begin{example}[{Distributions valued in an entire zerosumfree semiring}]\label{discDistPartializable}
	For a semiring $R$ (we assume all semirings to be commutative), Coumans and Jacobs in~\cite[Section 5.1]{coumans2013scalars} have constructed a monad of ``$R$-valued finitely supported distributions'' on the category $\mathsf{Set}$ of sets and functions.
	Recall (\cref{CDCatsExamples}~\ref{Kl(DR)Background}) that as observed in~\cite[Example 3.3]{fritz2023representable}, its Kleisli category $\Kl\left(D_R\right)$ is a Markov category (recall \cref{CDCatsExamples}~\ref{Kl(DR)Background}) whose objects can be thought of as the category of discrete measurable spaces and whose morphisms are kernels
	\[
		f\colon X \to Y,\qquad x \mapsto \sum_{y \in Y} f\left(y\, \vert\, x\right)\delta_y
	\]
	such that $\supp{f\left(\ph\,\vert\,x\right)} \coloneqq \left\{ y \in Y : f\left(y\,\vert\,x\right) \ne 0\right\}$ is finite.

	Fritz et al.\ show later in~\cite[Proposition 3.6]{fritz2023representable} that when the semiring is entire, the category $\Kl\left(D_R\right)$ is representable, and the deterministic morphisms are functions between sets.
	In particular, the deterministic \emph{monomorphisms} are (like with $\BorelStoch$ in \cref{example:BorelStochPartializable}) the inclusions of subsets.

	In~\cite[Proposition 2.12]{fritz2022dilations}, it is further shown that for an entire (commutative) semiring $R$, the category $\Kl\left(D_R\right)$ is positive if and only if $R$ is zerosumfree.
	In fact, we show that for an entire zerosumfree semiring $R$, the category $\Kl\left(D_R\right)$ is a partializable Markov category.

	The fact that deterministic monomorphisms are measurable injections implies that they are closed under tensoring with identities.
	Thus it only remains to check the pullback condition~\ref{it:pullback_cond} of \cref{def:partializable}.
	For this, consider again the pullback of a subobject $T \subseteq Y$ along a $f\colon X \to Y$.
	We claim that the subset
	\[
		S \coloneqq \left\{x \in X \sot \supp{f\left(\ph\,\vert\, x\right)}\subseteq T\right\}
	\]
	defines the pullback
	\[\begin{tikzcd}
		S \ar[hook]{d} \ar{r}{f\vert_S} \arrow[rd, "\lrcorner"{anchor=center, pos=0.125}, draw=none]& T \ar[hook]{d} \\
		X \ar{r}{f} & Y
	\end{tikzcd}\]

	For this, note that for a kernel $h \colon W \to X$ to factor across a subset $A \subseteq X$, a necessary and sufficient condition is that $\supp{h\left(\ph\,\vert\, w\right)} \subseteq A$ for all $w \in W$.
	Thus, if for such an $h$ the composite $f\comp h$ factors across $T$, we must have $\supp{\left(f\comp h\right)\left(\ph\,\vert\, w\right)} \subseteq T$ for all $w \in W$.

	Recall that
	\[
		\left(f \comp h\right) \left(y\, \vert \, w\right) = \sum_{x \in X} f\left(y\,\vert\,x\right) h\left(x\,\vert\,w\right)
	\]
	Thus if $\left(f \comp h\right) \left(y\, \vert \, w\right) = 0$, zerosumfreeness implies that
	\[
		f\left(y\,\vert\,x\right) h\left(x\,\vert\,w\right) = 0
	\]
	for all $x \in X$.
	Hence as $R$ is entire, one of the two factors must be zero.
	In particular when $x \in \supp{h\left(\ph\,\vert\, w\right)}$, we must have $f\left(y\,\vert\,x\right) = 0$.

	Consider an $x \in \supp{h\left(\ph\,\vert\, w\right)}$.
	We have just seen that $\supp{f\left(\ph\, \vert\,x\right)} \subseteq \supp{\left(f\comp h\right)\left(\ph\, \vert\,w\right)}$, which is in turn a subset of $T$.
	Thus $x \in S$ by construction, and as $x \in \supp{h\left(\ph\,\vert\, w\right)}$ was arbitrary, we conclude that $h$ factors through $S$ (necessarily uniquely).
	Thus all three conditions have been verified and we conclude that $\Kl\left(D_R\right)$ is partializable.
\end{example}

\begin{remark}
	A consequence of zerosumfreeness in \cref{discDistPartializable} is that for a kernel $f\colon X \to Y$ and a subset $A \subseteq Y$, the condition $\supp{f\left(\ph\,\vert\, x\right)} \subseteq A$ is equivalent to $f\left(A^c \,\vert\, x\right) = 0$ for all $x \in X$, as in \cref{example:BorelStochPartializable}.
	Indeed, the entire argument could be recast in terms of the kernels assigning the complement of a subobject probability $0$.

	However, while (as with $\BorelStoch$) it might be tempting to restate the condition in terms of the subobject having probability $1$, this is not equivalent in general.
	Indeed, for semirings such as the Booleans (sub-example~\ref{setFinMultiPartializable} of \cref{distAndMultiPartializable}) the condition $f\left(A^c \,\vert\, x\right) = 0$ is the assertion that $A$ contains \emph{all} possible images of $x$, whereas $f\left(A \,\vert\, x\right) = 1$ merely requires $A$ to contain \emph{one} of them.
	When $R$ is for instance, cancellative, such a problem is not possible and the two conditions are in fact equivalent.
\end{remark}

\begin{remark}[Subcategories of finite spaces]\label{FinKl(DR)Partializable}
	For a (commutative) semiring $R$, the full subcategory $\FinKl\left(D_R\right)$ spanned by the finite sets is closed under monoidal tensor (as these represented by cartesian products of finite sets), and this subcategory also contains the CD structure data (monoidal coherence maps and comonoid structures).
	Consequently, these are Markov categories as well.\footnote{However they are no longer Kleisli categories (nor representable) in general, as the space of (finite) distributions need not be finite.}

	Furthermore, the argument of \cref{discDistPartializable} shows that $\FinKl\left(D_R\right)$ is partializable as well.
	One need only note that they are positive as sub-Markov categories of positive ones.
	The deterministic monomorphisms remain the finite measurable injections and are hence closed under tensor.
	And finally, the pullback ``$S$'' in \cref{discDistPartializable} is constructed as a subset of $X$, so is finite if $X$ is.
\end{remark}
\begin{example}\label{distAndMultiPartializable}
	Sub-examples of \cref{discDistPartializable,FinKl(DR)Partializable} for two familiar semirings are of particular common interest:
	\begin{enumerate}
		\item\label{distPartializable} The category $\cat{Dist}$ of discrete probability distributions (\cref{CDCatsExamples}~\ref{DistBackground}), which can be seen as the subcategory of the category of measurable spaces and Markov kernels consisting of the discrete spaces and discrete kernels (ones that assign to each point of the source a mixture of point distributions on the target).
			This is the Kleisli category $\Kl\left(D_R\right)$ with $R$ the nonnegative reals $\R_{\ge 0}$, and is thus partializable.
		\item The full subcategory of $\cat{\Dist}$ spanned by the finite sets, $\FinKl\left(\Rnneg\right)$, is precisely $\FinStoch$.
			This supplies another way of seeing that $\FinStoch$ is partializable.
		\item\label{setFinMultiPartializable} The category $\cat{SetFinMulti}$ (\cref{CDCatsExamples}~\ref{SetFinMultiBackground}) is the wide subcategory of $\SetMulti$ comprising the finitely multivalued maps.\footnote{In other words those such that every element of the domain is related to some element of the codomain, and only finitely many.}
			This is the Kleisli category $\Kl\left(D_R\right)$ with $R$ the Booleans $\left(\left\{\bot,\top\right\},\lor,\land\right)$,\footnote{The elements being false and true, with addition and multiplication being disjunction and conjunction respectively.} and is thus partializable as well.
		\item\label{FinSetMultiPartializable} The full subcategory of $\cat{SetFinMulti}$ spanned by the finite sets is precisely $\FinSetMulti$, the category of finite sets and multivalued maps and is thus partializable as well.
	\end{enumerate}
\end{example}

\begin{warning}\label{parCatsExmpsDiff}
	We have seen in \cref{example:BorelStochPartializable,distAndMultiPartializable,SetMultiPartializable} that $\BorelStoch,\cat{Dist}$, and $\cat{SetMulti}$ are partializable Markov categories.
	They thus have partializations $\Par{\BorelStoch}$, $\Par{\cat{Dist}}$, and $\Par{\cat{SetMulti}}$ as in \cref{def:partial_C}, which can intuitively be thought of as categories with morphisms partially defined Markov kernels, discrete partially defined kernels, and partially defined multivalued maps.

	A partially defined multivalued map is nothing but a relation, since every relation defines a multivalued map on its domain.
	Thus, one may suspect that the category $\Par{\cat{SetMulti}}$ is merely the category of relations $\Rel$.
	Similarly, notions of categories of ``partially defined Markov kernels'' have appeared in earlier work on CD categories, such as Di Lavore and Rom\'an's categories $\BorelStoch_{\le 1}$\footnote{Appearing under that name at~\cite[Definition 2.15]{dilavore2023evidential}, although it is the Kleisli category of the ``subprobability measure'' monad $\mathbf G_{\le 1}$ which appeared as $\Pi$ in Panangaden's~\cite[Section 4]{panangaden1999CategoryMarkovKernels} (or more accurately, is the restriction to standard Borel spaces).} and $\Kl{\left(\mathbf D_{\le 1}\right)}$ (the Kleisli category of the finite \emph{sub-distribution} monad~\cite[Definition 2.11]{dilavore2023evidential}).\footnote{$\BorelStoch_{\le 1}$ has also appeared in Lorenz and Tull's~\cite{lorenz2023causalmodels} as $\cat{BorelProb}$. In the later work~\cite{dilavore2024partial}, $\Kl{\left(\mathbf D_{\le 1}\right)}$ is referred to as $\cat{subStoch}$.}

	Both $\BorelStoch$ and $\Dist$ are Kleisli categories for some kind of distribution monad $D$, and it can even be shown that the morphisms $X \rightharpoonup Y$ of $\Par{\BorelStoch}$ and $\Par{\cat{Dist}}$ are precisely the Kleisli morphisms $X \to D\left(Y\sqcup \ast\right)$ that factor across $DY \sqcup \ast$ (with the domain being the subobject that maps into $DY$).\footnote{In other words, the image of each $x\in X$ is either a probability measure or zero/undefined.}
	In light of these observations, the reader may well suspect that $\Par{\BorelStoch}$, $\Par{\cat{Dist}}$, and $\Par{\cat{SetMulti}}$ are at the very least subcategories of $\BorelStoch_{\le 1}$, $\Kl{\left(\mathbf D_{\le 1}\right)}$, and $\Rel$ under the aforementioned intuitive identifications of morphisms.
	However, this is not the case as the composition operations differ.

	For instance, consider a fair coin flip, composed with the partially defined (copyable!) kernel that assigns a value $a$ to heads.
	This can be seen as defining a composable sequence $\ast \xrightarrow{p} \left\{H,T\right\} \xrightarrow{f} \left\{a\right\}$.
	As morphisms of $\Par{\BorelStoch}$ or $\Par{\cat{Dist}}$, these are represented by $p$ assigning probability $\frac{1}{2}$ to both heads and tails, and $f$ having domain of definition $\left\{H\right\}$.
	Following \cref{example:BorelStochPartializable,discDistPartializable}, the domain of definition of the composite is the subspace of $\ast$ comprising the events whose corresponding distribution over $\left\{H,T\right\}$ assigns the domain $\left\{H\right\}$ of $f$ probability $1$, and is thus in fact empty.
	The corresponding substochastic kernel assigns probability $0$ to $a$.

	On the other hand, as subprobability measures $f$ would instead assign probability $1$ to heads and $0$ to tails.
	The composite would then be given by the Chapman-Kolmogorov equation, and in particular the composite would assign probability to $a$ the probability
	\[
		\left(f\comp p\right)\left(a\right) = \sum_{x \in \left\{H,T\right\}} f\left(a\,\vert\,x\right) p\left(x\right) = f\left(a\,\vert\,H\right) p\left(H\right) + f\left(a\,\vert\,T\right) p\left(T\right) = 1 \cdot \frac{1}{2} + 0 \cdot \frac{1}{2} = \frac{1}{2}
	\]

	The same applies to $\Par{\cat{SetMulti}}$ and $\Rel$, where instead of $p$ assigning probabilities $\frac{1}{2}$, we interpret $p$ as having both heads and tails as possible outcomes.
	Then the composite $f\comp p$ in $\Par{\cat{SetMulti}}$ would be defined only on the events such that $f$ is defined on \emph{all} possible outcomes of $p$, hence nowhere.
	On the other hand the composite in $\Rel$ would be defined on the events such that $f$ is defined on \emph{some} possible outcome of $p$, hence on all of $\ast$.
\end{warning}

\begin{remark}\label{parCompDomainsSmall}
	The common pattern in the comparisons of \cref{parCatsExmpsDiff} is that the composition of two morphisms in $\Par{\cC}$ is defined on a ``smaller domain'' than in they would be in the categories it's being compared to.
	Indeed, as $\mathbb R_{\ge 0}$ is entire and zerosumfree, the composition of two morphisms in $\Par{\BorelStoch}$ (and $\Par{\cat{Dist}}$) will always have domain contained in that of the corresponding subprobability measure (the same holds for $\Par{\cat{SetMulti}}$ and $\Rel$).
	Considering the intuition that quasi-totality corresponds to determinism of domains, one may see this less intuitive composition as being required to ensure that the composites have deterministic domains.

	Indeed, Di Lavore and Rom\'an have shown that the quasi-total morphisms of $\BorelStoch_{\le 1}$ and $\Kl{\left(\mathbf D_{\le 1}\right)}$ are the ones that assign each point of the source subprobability measures that are either probability measures or identically zero~\cite[Remark 2.18]{dilavore2023evidential}.
	These are precisely the morphisms of those categories that correspond to ones of $\Par{\BorelStoch}$ and $\Par{\cat{Dist}}$.
	The counterexample of \cref{parCatsExmpsDiff} thus also shows that composites of quasi-total morphisms in a (positive) CD category need not be quasi-total.
	In particular, we cannot construct quasi-Markov categories by simply taking the quasi-total morphisms of an arbitrary CD category.
	However, we show in \cref{partializationsQuasiMarkov} that the partialization of a partializable Markov category is always quasi-Markov, and thus quasi-total morphisms are closed under composition in this class of category.
\end{remark}

\begin{remark}
	The composition law in $\Par{\SetMulti}$ can be seen as a more ``secure'' version of the usual composition in $\Rel$.
	Indeed, one could think of a relation as a nondeterministic program or process, which may fail (to terminate) on some inputs.
	Then the typical composition in $\Rel$ would track merely whether its possible to begin with a given input and reach a particular output.
	However, there may be settings where failing to terminate on some possible input is catastrophic, and it would be preferable to only consider input output pairs such as all possible intermediate states lead to a defined output. That is, only allow inputs that ensure the composite process does not execute the first step and then catastrophically fail.
	This is precisely what the composition in $\Par{\SetMulti}$ achieves.
\end{remark}

\begin{remark}\label{spanPartialOrder}
	As remarked in \cref{backgroundSec}, the hom-sets of $\Par{\cC}$ are partially ordered with the partial order being witnessed by morphisms between the apices of the spans.
Specifically, for a parallel pair $\left(A,i,f\right)$ and $\left(B,j,g\right)$ represented by spans $X \xleftarrow{i} A \xrightarrow{f} Y$ and $X \xleftarrow{j} B \xrightarrow{g} Y$, we have $(A,i,f) \ge \left(B,j,g\right)$ if and only if there is a morphism $B \to A$ filling in the following commutative diagram:\footnote{This morphisms is necessarily a deterministic monomorphism, as in \cref{DetMonoLeftCancellation}.}
\begin{equation}\label{eq:rest_ord}
	\begin{tikzcd}[sep=small]
		& A \ar[tail]{dl}[swap]{i} \ar{dr}{f} \\
		X && Y \\
		& B \ar[tail]{ul}{j} \ar{ur}[swap]{g} \ar{uu}
	\end{tikzcd}
\end{equation}
This partial order features in all the works mentioned in \cref{stableClassAliases}, where it is shown to be equivalent to the order of \cref{def:dom_ext} in the cartesian case.
We show that this equivalence holds for $\Par{\cC}$ as well in \cref{prop:ParExtParOrd}.
\end{remark}

\subsection{Partializations are quasi-Markov}\label{partializationsCDStructure}

Consider a partializable Markov category $\cC$ and the span category $\Par{\cC}$.
Cockett and Lack show that for any stable system of monics $\distMons$ in a category $\cC$, the category of spans $\Par{\cC,\distMons}$ has a restriction structure given by $\overline{\left(D,i,f\right)} \coloneqq \left(D,i,i\right)$~\cite[Proposition 3.1]{cockettlack2002partialmaps}.
While this is defined independently of any monoidal structure, in the cartesian case it can be shown to be equivalent to the restriction structure associated to a $p$-category~\cite[Examples 2.1.3 (6)]{cockettlack2002partialmaps}, which is in particular a special case of that described in \cref{positiveQuasiMarkovRestriction}.

In light of \cref{positiveQuasiMarkovRestriction}, the domain construction defining a restriction structure may appear to be a rather strong condition.
Indeed, even the first condition ``R.$1$'' requires the category to be quasi-Markov, which already excludes plausible candidates for ``partial stochastic map'' categories such as $\BorelStoch_{\le 1}$ and $\Kl{\left(\mathbf D_{\le 1}\right)}$ (discussed in detail in \cref{parCompDomainsSmall}).\footnote{The category $\Rel$ is in fact quasi-Markov, but fails to be a restriction category as the fourth condition ``R.$4$'' fails. In fact this domain structure doesn't even define a poset-enrichment under the restriction partial order of \cref{def:dom_ext}.}

We show that $\Par{\cC}$ becomes a (symmetric) monoidal category upon defining the monoidal structure in terms of the one of $\cC$ in the obvious way such that both the domain inclusions and the restricted morphisms are composed in parallel.
The third condition in \cref{def:partializable} ensures that the resulting span has again a deterministic monomorphism as its domain inclusion. 
We thereby obtain a \emph{monoidal restriction category} in the sense of Heunen and Lemay's~\cite[Definition~4.2]{heunenpacaudlemay2020tensor} (see also \cref{tensorRestrictionDiscussion}).\footnote{The author would like to thank Tom\'a\v{s} Gonda for this observation.}\footnote{However, $\Par{\cC}$ is not a \emph{tensor} restriction category~\cite[Section 5]{heunenpacaudlemay2020tensor}. In particular, the $\Par{\ph}$ construction is not the same as the $\mathcal S\left[\ph\right]$ construction of~\cite[Definition 3.1]{heunenpacaudlemay2020tensor}. Indeed, in~\cite[Examples 4.3, 5.21]{heunenpacaudlemay2020tensor} it is noted that the category of sets and partial functions is a monoidal restriction category that is \emph{not} a tensor restriction category.}

This monoidal structure in fact makes $\Par{\cC}$ a positive quasi-Markov category, and the induced restriction structure as in \cref{positiveQuasiMarkovRestriction} is the same as that as the one defined in terms of spans (and hence the induced poset enrichment is the same as well).
This is shown in \cref{partializationRestrictionStructuresAgree}.

\begin{proposition}\label{proposition:SpanMonStr}
	We define a (bi)functor $\Par{\cC} \times \Par{\cC}\xrightarrow{\otimes}\Par{\cC}$ acting componentwise as the monoidal product of $\cC$.
	That is, we define
	\[
	\left(X \xleftarrow{i}A\xrightarrow{f}Y\right)\otimes \left(X'\xleftarrow{j}B\xrightarrow{g}Y'\right) \coloneqq \left(X \otimes X' \xleftarrow{i \otimes j} A \otimes B \xrightarrow{f \otimes g}Y \otimes Y'\right)
	\]
	This tensor product is well-defined, functorial, and is part of a symmetric monoidal structure extending that of $\cat{C}$.
\end{proposition}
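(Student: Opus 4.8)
The plan is to check three things in order: that the componentwise formula sends spans to spans and descends to equivalence classes; that the resulting bifunctor respects composition (the interchange law); and that the coherence isomorphisms of $\cC$ lift to make $\Par{\cC}$ symmetric monoidal. Note that $\Par{\cC}$ already exists as a category by \cref{spanCatDefinition}, so the only new content is the compatibility of $\otimes$ with this span structure. First I would verify that $i \otimes j$ is again a deterministic monomorphism: determinism is closed under tensoring in any Markov category, and for monicity I would factor $i \otimes j = (\id \otimes j) \comp (i \otimes \id)$, noting that $i \otimes \id$ is monic directly by \cref{def:partializable} and that $\id \otimes j$ is monic by conjugating $j \otimes \id$ with the symmetry, so that their composite is monic. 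Well-definedness on equivalence classes is then immediate: if $\alpha$ and $\beta$ are the isomorphisms of apices witnessing equivalence of the two pairs of spans, then $\alpha \otimes \beta$ witnesses equivalence of the tensored spans.

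The crux is the interchange law $(g \otimes g') \comp (f \otimes f') = (g \comp f) \otimes (g' \comp f')$. Unwinding the pullback definition of composition (\cref{def:partial_C}), this reduces to a single statement in $\cC$: the tensor of the two pullback squares computing $g \comp f$ and $g' \comp f'$ is again a pullback square. Concretely, if $E$ is the pullback of a cospan $D_f \xrightarrow{f} Y \xhookleftarrow{j} D_g$ and $E'$ the pullback of $D_{f'} \xrightarrow{f'} Y' \xhookleftarrow{j'} D_{g'}$ along deterministic monomorphisms $j, j'$, then I must show that $E \otimes E'$ (with legs $u \otimes u'$ and $v \otimes v'$) is the pullback of $(f \otimes f', j \otimes j')$. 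The genuine pullback $P$ of this cospan exists and is a deterministic subobject of $D_f \otimes D_{f'}$ by \cref{def:partializable}, and the commuting tensored square yields $E \otimes E' \le P$ as subobjects of $D_f \otimes D_{f'}$.

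For the reverse containment, which is the heart of the argument, I would exploit that $P \hookrightarrow D_f \otimes D_{f'}$ is deterministic, so that it equals the copy-pairing $(p_1 \otimes p_2) \comp \cop_P$ of its two marginals $p_1 \colon P \to D_f$ and $p_2 \colon P \to D_{f'}$ (a standard fact about deterministic maps into a tensor, cf.\ \cref{sec:par_facts}). Marginalizing the defining equation of $P$, and using that every morphism of the Markov category $\cC$ is total, shows that $f \comp p_1$ factors through $j$ and $f' \comp p_2$ through $j'$; the universal properties of $E$ and $E'$ then factor $p_1$ and $p_2$ through $u$ and $u'$, and recombining through $\cop_P$ exhibits $P \hookrightarrow D_f \otimes D_{f'}$ as factoring through $u \otimes u'$. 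Hence $P \le E \otimes E'$, and antisymmetry of the subobject order (deterministic monomorphisms being left-cancellable, \cref{DetMonoLeftCancellation}) gives $E \otimes E' \cong P$ compatibly with the legs. Preservation of identities being trivial, this completes bifunctoriality.

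It remains to lift the associator, unitors, and symmetry of $\cC$ to $\Par{\cC}$ as total morphisms along the faithful inclusion $\cC \hookrightarrow \Par{\cC}$. Their naturality with respect to arbitrary partial morphisms reduces, after composing the relevant spans with these total isomorphisms, to naturality of the corresponding transformations in $\cC$ applied to the span legs; and the triangle, pentagon, and hexagon identities, being equations between total morphisms valid in $\cC$, transport to $\Par{\cC}$ by faithfulness of the inclusion. The only step I expect to be genuinely nontrivial is the pullback--tensor compatibility of the previous two paragraphs; everything else is bookkeeping that the span formalism of \cref{spanCatDefinition} already organizes.
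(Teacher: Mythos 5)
Your proposal is correct and follows essentially the same route as the paper: well-definedness via tensoring the witnessing isomorphisms, bifunctoriality reduced to the fact that the tensor of pullbacks along deterministic monomorphisms is again a pullback, and the coherence data lifted as total morphisms from $\cC$. Your inline two-sided subobject comparison for that key pullback fact (using determinism of the pullback inclusion to write it as the copy-pairing of its marginals, plus totality to marginalize the defining square) is precisely the paper's proof of \cref{proposition:StablePullbackTensorStability}, which the paper's proof of the proposition simply cites.
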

\begin{proof}
	First note that the claimed product is well-defined, i.e.\ independent of the choice of representatives of the spans.
	This is a consequence of the invertible morphisms being closed under tensoring, thus equivalent spans tensor to equivalent ones.

	Moving to functoriality, the preservation of identities is immediate.
	For the compatibility with composition consider morphisms ${\big(X\xleftarrow{i}A\xrightarrow{f}Y\big)}$, ${\big(Y\xleftarrow{j}B\xrightarrow{g}Z\big)}$, ${\big(X'\xleftarrow{i'}A'\xrightarrow{f'}Y'\big)}$, and a fourth ${\big(Y'\xleftarrow{j'}B'\xrightarrow{g'}Z'\big)}$. 
	The composites $\left(B,j,g\right) \comp \left(A,i,f\right)$ and $\left(B',j',g'\right)\comp \left(A',i',f'\right)$ can be computed by forming the pullbacks
	\[
	\begin{tikzcd}
		C \arrow[r, "v"] \arrow[d, "u"', tail]\arrow[rd, "\lrcorner"{anchor=center, pos=0.125}, draw=none] & B \arrow[d, "j", tail] & C'\arrow[rd, "\lrcorner"{anchor=center, pos=0.125}, draw=none] \arrow[d, "u'"', tail] \arrow[r, "v'"] & B' \arrow[d, "j'", tail] \\
		A \arrow[r, "f"']                      & Y                      & A' \arrow[r, "f'"']                       & Y'                      
	\end{tikzcd}
	\]
	after which the composites are represented by $\left(C,i\comp u,g\comp v\right)$ and $\left(C',i'\comp u',g'\comp v'\right)$ respectively.
	
	The computation of the composite $\left(B\otimes B',j\otimes j',g \otimes g'\right) \comp \left(A\otimes A',i\otimes i',f \otimes f'\right)$ involves first forming the pullback of $f \otimes f'$ along $j \otimes j'$.	
	By \cref{proposition:StablePullbackTensorStability} we may as well use the pullback square
	\[\begin{tikzcd}
		C \otimes C' \arrow[r, "v \otimes v'"] \arrow[d, "u \otimes u'"', tail] \arrow[rd, "\lrcorner"{anchor=center, pos=0.125}, draw=none] & B \otimes B' \arrow[d, "j \otimes j'", tail] \\
		A \otimes A' \arrow[r, "f \otimes f'"']                                 & Y \otimes Y'                                
	\end{tikzcd}\]
	The composite is then computed to be $\bigl(C\otimes C',\left(i\otimes i'\right) \comp \left(u \otimes u'\right),\left(g\otimes g'\right)\comp \left(v\otimes v'\right)\bigr)$, and functoriality follows.
	Furthermore, the unit of $\cC$ is also the unit for this tensor product on $\Par{\cC}$.
	
	The associators, unitors, and braiding of $\cC$ are additionally natural as maps of $\Par{\cC}$.
	For instance consider spans $\bigl(X_1 \xleftarrow{i_1} A_1 \xrightarrow{f_1} Y_1 \bigr)$, $\bigl(X_2 \xleftarrow{i_2} A_2 \xrightarrow{f_2} Y_2 \bigr)$, and $\bigl(X_3 \xleftarrow{i_3} A_3 \xrightarrow{f_3} Y_3 \bigr)$.
	Naturality of the associator $\alpha$ with respect to these three morphisms of $\Par{\cC,\distMons}$ would mean
	\begin{equation}
		\alpha_{Y_1,Y_2,Y_3}\comp \Bigl(\bigl(\left(i_1,f_1\right)\otimes\left(i_2,f_2\right)\bigr)\otimes \left(i_3,f_3\right) \Bigr) = \Bigl(\left(i_1,f_1\right)\otimes\bigl(\left(i_2,f_2\right)\otimes \left(i_3,f_3\right)\bigr) \Bigr)\comp \alpha_{X_1,X_2,X_3}.
	\end{equation}
	The former is $\bigl(\left(A_1\otimes A_2\right)\otimes A_3,\left(i_1\otimes i_2\right)\otimes i_3, \alpha_{Y_1,Y_2,Y_3}\comp \left(f_1\otimes f_2\right)\otimes f_3\bigr)$, while the latter can be computed using the pullback
	\[
	\begin{tikzcd}
		\left(A_1 \otimes A_2\right) \otimes A_3 \arrow[d, "\left(i_1 \otimes i_2\right) \otimes i_3"'] \arrow[rr, "{\alpha_{A_1,A_2,A_3}}"] \arrow[rrd,"\lrcorner"{anchor=center, pos=0.125}, draw=none] & & A_1\otimes \left(A_2 \otimes A_3\right) \arrow[d, "i_1 \otimes \left(i_2\otimes i_3\right)"] \\
		\left(X_1\otimes X_2\right)\otimes X_3 \arrow[rr, "{\alpha_{X_1,X_2,X_3}}"]                                              & & X_1 \otimes \left(X_2\otimes X_3\right)                                          
	\end{tikzcd}
	\]
	The naturality of the associator with respect to the morphisms $f_1$, $f_2$, $f_3$ of $\cC$ then implies that this choice of pullbacks constructs the same span, and consequently that $\alpha$ is natural in $\Par{\cC}$ as well.
	
	Naturality for the unitors and braiding is a similar consequence of them being isomorphisms.
	The coherence conditions hold because they hold in $\cat{C}$.
\end{proof}

The partialization construction thus preserves the symmetric monoidal structure (that is, the subcategory inclusion $\cC \hookrightarrow \Par{\cC}$ is a strong symmetric monoidal functor).
What is more, $\Par{\cC}$ also inherits the copy--discard structure of $\cat{C}$ via the inclusion of $\cC \hookrightarrow \Par{\cC}$.
Concretely, the copy and discard morphisms are represented by spans 
\begin{equation}
	X \xleftarrow{\id} X \xrightarrow{\cop_X} X \otimes X  \qquad  \text{and}  \qquad  X \xleftarrow{\id} X \xrightarrow{\discard_X} I.
\end{equation}
These make $\Par{\cC}$ into a CD category.

\begin{proposition}\label{partializationRestrictionStructuresAgree}
	Consider a partializable Markov category $\cC$ and a map $u \colon X \to Y$ in $\Par{\cC}$ represented by a span $\left(D,i,f\right)$.
	As a category of spans, $\Par{\cC}$ has a restriction structure given by $\overline{\left(D,i,f\right)} \coloneqq \left(D,i,i\right)$.
	The restriction $\bar u$ of $u$ in terms of spans is the same as the domain $\dom u$ of $u$ in the sense of \cref{domainDefinition}.
	In particular, the assignment $u \mapsto \dom u$ defines a restriction structure on $\Par{\cC}$.\footnote{This is the restriction structure we would expect for a positive quasi-Markov category, as in \cref{positiveQuasiMarkovRestriction}. While we do not need positivity here as the restriction structure axioms hold by hypothesis, we will see in \cref{proposition:ParPos} that $\Par{\cC}$ is indeed positive.}
\end{proposition}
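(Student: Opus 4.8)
The plan is to evaluate the CD-categorical domain $\dom u$ of \cref{domainDefinition} directly inside $\Par{\cC}$ and to check that it collapses to the span $(D, i, i)$, which is exactly the restriction $\overline{u}$ assigned by the span restriction structure of \cite[Proposition 3.1]{cockettlack2002partialmaps}. Since that span assignment is already known to satisfy the axioms of \cref{restrictionCategoryDefinition}, the final clause follows at once: once $\dom u = \overline{u}$ is established for every $u$, the map $u \mapsto \dom u$ is literally the same assignment and hence a restriction structure.

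I would compute $\dom u = (\id_X \otimes (\discard_Y \comp u)) \comp \cop_X$ one factor at a time, using that composition in $\Par{\cC}$ is by pullback and that the CD and monoidal structure are inherited componentwise (\cref{proposition:SpanMonStr}). The discard step is routine: composing $u$ with $\discard_Y = (Y, \id_Y, \discard_Y)$ uses only the trivial pullback of $f$ along $\id_Y$, giving $(D, i, \discard_Y \comp f)$, and since $f$ lives in the Markov category $\cC$ it is total, so $\discard_Y \comp f = \discard_D$. Tensoring with $\id_X$ is componentwise, producing the span $X \otimes X \xhookleftarrow{\id_X \otimes i} X \otimes D \to X$, whose right leg is the projection onto $X$ (up to the unitor).

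The substantive step, and where I expect the only real obstacle, is the final composition with $\cop_X$: this is the pullback of $\cop_X \colon X \to X \otimes X$ along $\id_X \otimes i \colon X \otimes D \to X \otimes X$. I claim this pullback is carried by $D$ itself, with legs $i \colon D \to X$ and $(i \otimes \id_D) \comp \cop_D \colon D \to X \otimes D$. The square commutes precisely because $i$ is deterministic, so copy is natural along it. For the universal property, given any cone $(a, b)$ with $\cop_X \comp a = (\id_X \otimes i) \comp b$, discarding the first tensor factor and applying the comonoid counit law exhibits $a$ as factoring through $i$; this factorization is unique because $i$ is a monomorphism, and the matching second leg is then forced because $\id_X \otimes i$ is a monomorphism (\cref{def:partializable}). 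Assembling the three steps, the composite span has left leg $i$ and right leg the projection precomposed with $(i \otimes \id_D) \comp \cop_D$, which reduces to $i$ by the counit law and naturality of the unitors. Hence $\dom u = (D, i, i) = \overline{u}$, completing the proof.
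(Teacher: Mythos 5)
Your proof is correct and follows essentially the same route as the paper: compute $\dom u$ as a composite of spans, reducing everything to the identification of the pullback of $\cop_X$ along $\id_X \otimes i$ as $D$ with legs $i$ and $\left(i \otimes \id_D\right)\comp \cop_D$. The only difference is that you verify this pullback directly (commutativity from determinism of $i$, the factorization from the counit law, and uniqueness from monicity of $i$ and of $\id_X \otimes i$), whereas the paper simply cites it as the second square of \cref{corollary:SPTSUniversality}, derived in the appendix from the fact that $\cC_{\det} \hookrightarrow \cC$ creates pullbacks along monomorphisms.
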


\begin{proof}
	The composite $\discard_Y \comp u$ is represented by the span $\left(D,i,\discard_X\right)$ (here we use that $\cC$ is Markov).
	In light of this, we compute the domain $\dom{u}$
	\[
		\tikzfig{domainOfMapInPar}
	\]
	in terms of spans using the second pullback from \cref{corollary:SPTSUniversality} by forming
	\[
	\begin{tikzcd}[sep=small]
		& & D \arrow[rd, "\left(i \otimes D\right) \comp \cop_D", tail] \arrow[ld, "i"', tail] \arrow["\lrcorner"{anchor=center, pos=0.125, rotate=-45}, draw=none, dd] & & \\
		& X \arrow[ld, Rightarrow, no head] \arrow[rd, "\cop"', tail] & & X \otimes D \arrow[ld, "X \otimes i", tail] \arrow[rd, "X \otimes {} \discard_D"] & \\
		X & & X \otimes X                                              &                                                                           & X
	\end{tikzcd}
	\]
	This is (the class represented by) the span $\left(D,i,i\right)$.
\end{proof}

\begin{corollary}\label{partializationsQuasiMarkov}
	Consider a partializable Markov category $\cC$.
	Every morphism of $\Par{\cC}$ is quasi-total, so that $\Par{\cC}$ is a quasi-Markov category.
\end{corollary}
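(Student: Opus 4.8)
The plan is to derive this directly from \cref{partializationRestrictionStructuresAgree}, as all the substantive work is already done there. The key observation is that quasi-totality of a map $u$ is, by definition, the equation~\eqref{eq:quasi-total}, and that this equation is nothing other than $u \comp \dom u = u$ — this is exactly the identification made when checking axiom $(\mathrm{R.}1)$ in the proof of \cref{positiveQuasiMarkovRestriction}, and it is a purely formal consequence of the shape of the domain, independent of any hypotheses on the category. So it suffices to show $u \comp \dom u = u$ for every morphism $u$ of $\Par{\cC}$.

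To do this I would invoke two facts. First, since the deterministic monomorphisms form a stable system of monics, $\Par{\cC}$ is a category of spans in the sense of Cockett and Lack, and hence carries the span restriction structure $\overline{(D,i,f)} = (D,i,i)$ satisfying the restriction axioms, in particular $(\mathrm{R.}1)$: $u \comp \bar u = u$~\cite[Proposition 3.1]{cockettlack2002partialmaps}. Second, \cref{partializationRestrictionStructuresAgree} identifies this span restriction $\bar u$ with the CD-theoretic domain $\dom u$. Substituting the second into the first yields $u \comp \dom u = u$, i.e.\ quasi-totality; as $u$ is arbitrary, $\Par{\cC}$ is quasi-Markov. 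Note there is no circularity: we use only the span-theoretic implication that a category of spans is a restriction category, not the converse implication of \cref{positiveQuasiMarkovRestriction}.

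For a self-contained alternative I would instead verify $u \comp \dom u = u$ on spans by hand. By \cref{partializationRestrictionStructuresAgree} the domain of $u = (D,i,f)$ is $X \xhookleftarrow{i} D \xhookrightarrow{i} X$, so computing $u \comp \dom u$ amounts to pulling $i \colon D \to X$ back along itself; since $i$ is a monomorphism its kernel pair is trivial (the pullback is $D$ with both projections the identity), and the composite span is again $X \xhookleftarrow{i} D \xrightarrow{f} Y = u$. I do not anticipate any genuine obstacle: the real content lies in the span--CD comparison of \cref{partializationRestrictionStructuresAgree}, and the only point needing care is the translation between the diagrammatic form~\eqref{eq:quasi-total} of quasi-totality and the restriction identity $u \comp \dom u = u$, which has already been recorded.
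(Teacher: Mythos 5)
Your proposal is correct and takes essentially the same route as the paper: the paper's proof likewise observes that quasi-totality of a morphism of $\Par{\cC}$ is exactly the restriction axiom $(\mathrm{R.}1)$ for the restriction structure $\bar u = \dom u$, which holds because $\Par{\cC}$ is a span category in the sense of Cockett--Lack and \cref{partializationRestrictionStructuresAgree} identifies the span restriction with the CD-theoretic domain. Your remark on non-circularity and your direct verification via the trivial kernel pair of the monomorphism $i$ are both sound, but add nothing beyond the paper's one-line argument.
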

\begin{proof}
	As noted in the proof of \cref{positiveQuasiMarkovRestriction}, this is merely the fact that the restriction structure axiom ``R.$1$'' (recall \cref{restrictionCategoryDefinition}) for this restriction structure is the statement of quasi-totality (\cref{def:quasi-total}).
\end{proof}
\begin{corollary}\label{prop:ParExtParOrd}
	Consider a partializable Markov category $\cC$.
	The extension partial order $\domext$ (\cref{def:dom_ext}) on the hom-sets of $\Par{\cC}$ coincides with the partial order $\ge$ defined in terms of spans (\cref{spanPartialOrder}).
\end{corollary}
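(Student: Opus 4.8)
The plan is to translate the extension relation into a span equation using the identification of domains from \cref{partializationRestrictionStructuresAgree}, and then to recognise that span equation as exactly the condition defining $\ge$. Concretely, since $\Par{\cC}$ is quasi-Markov (\cref{partializationsQuasiMarkov}), the relation $\domext$ of \cref{def:dom_ext} is a genuine partial order, and in restriction-category terms (\cref{restrictionCategoryDefinition}) it unwinds to $u \domext v \iff u \comp \bar v = v$ with $\bar v = \dom v$ (this is the CD avatar of the $p$-categorical order recalled in \cref{restricitonPartialOrderHistory}). By \cref{partializationRestrictionStructuresAgree}, for $v = \left(B,j,g\right)$ we have $\bar v = \dom v = \left(B,j,j\right)$, so for $u = \left(A,i,f\right)$ the condition $u \domext v$ becomes the span equation $\left(A,i,f\right) \comp \left(B,j,j\right) = \left(B,j,g\right)$.

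First I would compute the left-hand composite via \cref{spanCatDefinition}: forming the pullback $P$ of the deterministic monomorphisms $B \xrightarrow{j} X \xleftarrow{i} A$, with projections $p_B \colon P \to B$ and $p_A \colon P \to A$ satisfying $i \comp p_A = j \comp p_B$, the composite is the span $\left(P, j\comp p_B, f \comp p_A\right)$. Thus $u \domext v$ holds precisely when there is an isomorphism of spans $\phi \colon B \to P$, i.e.\ one with $j \comp p_B \comp \phi = j$ and $f \comp p_A \comp \phi = g$. For the forward direction, cancelling the monomorphism $j$ in the first equation gives $p_B \comp \phi = \id_B$, so $p_B$ is invertible and $m \coloneqq p_A \comp \phi \colon B \to A$ satisfies $i \comp m = j \comp p_B \comp \phi = j$ and $f \comp m = g$; this $m$ is exactly the witness required by \cref{spanPartialOrder} for $\left(A,i,f\right) \ge \left(B,j,g\right)$. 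Conversely, given such an $m$ with $i\comp m = j$ and $f \comp m = g$, the universal property of the pullback applied to the pair $\left(m, \id_B\right)$ produces $\psi \colon B \to P$ with $p_A\comp\psi = m$ and $p_B \comp \psi = \id_B$; since pullbacks of monomorphisms are monomorphisms, $p_B$ is a split monomorphism and hence an isomorphism, so $\psi$ is an isomorphism of spans witnessing $\left(A,i,f\right)\comp\left(B,j,j\right) = \left(B,j,g\right)$, i.e.\ $u \domext v$.

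Conceptually this is just the statement that the two restriction structures on $\Par{\cC}$ --- the span one $\overline{\left(D,i,f\right)} = \left(D,i,i\right)$ (whose induced poset enrichment is the span order $\ge$, as recalled after \cref{restrictionCategoryDefinition}) and the CD one $\bar u = \dom u$ from \cref{positiveQuasiMarkovRestriction} (whose induced order is $\domext$) --- coincide by \cref{partializationRestrictionStructuresAgree}, so their induced partial orders coincide as well; positivity (\cref{positiveQuasiMarkovEnriched}, \cref{proposition:ParPos}) is not needed for this coincidence, only for the stronger assertion that $\domext$ is an enrichment. I expect the only real point of care to be bookkeeping: keeping the direction conventions aligned (both $\domext$ and $\ge$ make the span with the \emph{larger} domain the larger element) and invoking that $i,j$ are deterministic monomorphisms, so that $P$ is genuinely their intersection as subobjects of $X$, which is what forces the projection $p_B$ to be invertible in each direction above.
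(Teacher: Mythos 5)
Your argument is correct and is essentially the paper's own proof: both reduce the claim via \cref{partializationRestrictionStructuresAgree} to the observation that the two orders are induced by the same restriction operator, and then verify explicitly that the span equation $u \comp \bar v = v$ is equivalent to the existence of the witness $B \to A$ of \cref{spanPartialOrder} by computing the composite through the pullback of the two domain inclusions over $X$. The only blemish is terminological: in the converse direction $p_B$ is a monomorphism (as a pullback of $i$) that is a \emph{split epimorphism} (since $p_B \comp \psi = \id_B$), hence an isomorphism --- calling it a ``split monomorphism'' would not by itself yield invertibility.
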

\begin{proof}
	In light of \cref{partializationRestrictionStructuresAgree}, this is a consequence of the fact that the partial order in terms of spans and that in terms of domain idempotents are the partial orders induced by the same restriction structure.
	That is, both partial orders are defined by $u \le v$ whenever $u = v \comp \bar u$, where $\overline{\left(D,i,f\right)}$ is either $\dom{\left(D,i,f\right)}$ or $\left(D,i,i\right)$.
	The former is the definition of $\domext$ (\cref{def:dom_ext}), while the latter can be shown to be equivalent to the span definition of $\ge$ (\cref{spanPartialOrder}).
	The equivalence of the latter is standard in restriction category theory, but we include a proof for explicitness.

	For this, we must show that for parallel spans $(D_f,i,f),(D_g,j,g) \colon X \to Y$ in $\Par{\cC}$, we have $\left(D_f,i,f\right) = \left(D_g,j,g\right) \comp \left(D_f,i,i\right)$ if and only if
	\[
		\begin{tikzcd}[sep=tiny]
		& D_f \ar[hook]{dl}[swap]{i} \ar{dr}{f} \ar{dd}\\
			X && Y \\
			  & D_g \ar[hook]{ul}{j} \ar{ur}[swap]{g}
		\end{tikzcd}
	\]
Note that the composite $\left(D_g,j,g\right) \comp \left(D_f,i,i\right)$ is computed by forming the pullback
\[
	\begin{tikzcd}[sep=small]
		& & A \arrow[ld, hook, "k"'] \arrow[rd, hook, "h"] \arrow["\lrcorner"{anchor=center, pos=0.125, rotate=-45}, draw=none, dd]& & \\
		& D_f \arrow[ld, "i", hook] \arrow[rd, "i"', hook] & & D_g \arrow[rd, "g"] \arrow[ld, "j", hook] & \\
		X & & X & & Y
	\end{tikzcd}
\]
Thus, if this is equal to $\left(D_f,i,f\right)$, there must be an isomorphism $t\colon D_f \xrightarrow{\simeq} A$ such that $i = i \comp k \comp t$ and $f = g \comp h \comp t$.\footnote{This line of argument will return more generally in \cref{lemma:ParInvState}.}
In particular, $h\comp t$ is a morphism $D_f \to D_g$ witnessing the span order of \cref{spanPartialOrder}.

On the other hand given a map $D_f\xrightarrow{t} D_g$ such that $i = j \comp t$ and $f = g \comp t$, the pullback $A$ computing $\left(D_g,j,g\right) \comp \left(D_f,i,i\right)$ can be computed as
	\[
		\begin{tikzcd}
			D_f \arrow[r, "t"] \arrow[d, Rightarrow, no head] \arrow[rd, "\lrcorner"{anchor=center, pos=0.125}, draw=none] & D_g \arrow[r, Rightarrow, no head] \arrow[d, Rightarrow, no head] \arrow[rd, "\lrcorner"{anchor=center, pos=0.125}, draw=none]& D_g \arrow[d, "j", hook] \\
			D_f \arrow[r, "t"'] & D_g \arrow[r, "j"', hook] & X
		\end{tikzcd}
	\]
	which by construction witnesses the equality $\left(D_f,i,f\right) = \left(D_g,j,g\right) \comp \left(D_f,i,i\right)$.
\end{proof}

The original Markov category sits within its partialization and can be recognized in terms of spans whose domain inclusions are isomorphisms.
Alternatively, it also corresponds to the total morphisms in $\Par{\cC}$.

\begin{proposition}\label{proposition:SpanCDTotal}
	Consider a partializable Markov category $\cC$.
	A morphism of $\Par{\cC}$ is total (commutes with deletion) if and only if it is (the inclusion of) a morphism from $\cC$.\footnote{This is really about the total morphisms in the sense of $p$-categories or restriction categories being the same as the total morphisms in the sense of CD categories. In this sense it is also a corollary of \cref{partializationRestrictionStructuresAgree}.}
\end{proposition}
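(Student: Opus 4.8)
The plan is to detect totality through the domain idempotent and then read off the resulting condition on spans. Recall the observation following \cref{domainDefinition}: a morphism $u$ of a CD category is total if and only if $\dom u = \id$. By \cref{partializationRestrictionStructuresAgree}, for a map $u \colon X \to Y$ of $\Par{\cC}$ represented by a span $\left(D,i,f\right)$ the domain $\dom u$ coincides with the span restriction $\overline{\left(D,i,f\right)} = \left(D,i,i\right)$. Hence $u$ is total precisely when the two endomorphisms $\left(D,i,i\right)$ and $\id_X = \left(X,\id_X,\id_X\right)$ of $X$ agree in $\Par{\cC}$.

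I would then unwind this span equality using the definition of equivalence of spans (\cref{def:partial_C}): the equality $\left(D,i,i\right) = \left(X,\id_X,\id_X\right)$ holds exactly when there is an isomorphism $\phi \colon D \xrightarrow{\simeq} X$ commuting with both legs. Compatibility with the legs forces $\id_X \comp \phi = i$, i.e.\ $\phi = i$, so the condition is equivalent to $i$ being an isomorphism.

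Finally I would translate invertibility of $i$ back into a statement about $\cC$. If $i$ is an isomorphism then $\left(D,i,f\right)$ is equivalent to $\left(X,\id_X,f\comp i^{-1}\right)$, which is exactly the image of the morphism $f \comp i^{-1}\colon X \to Y$ of $\cC$ under the canonical inclusion; conversely, any inclusion of a $\cC$-morphism is represented by a span with $i = \id_X$, trivially an isomorphism. Chaining these equivalences yields the claim. For the $\Leftarrow$ direction one can alternatively argue directly: the inclusion $\cC \hookrightarrow \Par{\cC}$ is a strong monoidal CD functor (\cref{proposition:SpanMonStr}), so it carries the morphisms of $\cC$---all total, since $\cC$ is Markov---to total morphisms of $\Par{\cC}$.

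Since \cref{partializationRestrictionStructuresAgree} already packages the only nontrivial span computation (where it is used that $\cC$ is Markov, so that $\discard_Y \comp f = \discard_D$ and the discarded action map collapses to the domain inclusion), no real obstacle remains. The sole point requiring a little care is verifying that the leg-compatibility of spans genuinely forces $\phi = i$, and hence invertibility of $i$; everything else is bookkeeping.
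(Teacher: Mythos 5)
Your proof is correct. It differs mildly from the paper's displayed argument: the paper proves the forward direction by directly unwinding the totality equation $\discard \comp u = \discard$ at the level of spans (an isomorphism of spans whose left triangle forces $i$ to be invertible), whereas you first reduce totality to $\dom u = \id_X$ (the observation following \cref{domainDefinition}, valid in any CD category) and then invoke \cref{partializationRestrictionStructuresAgree} to identify $\dom u$ with the span restriction $\left(D,i,i\right)$, from which invertibility of $i$ falls out of the span-equivalence bookkeeping. This is precisely the alternative route the paper itself flags in the footnote to the statement (``it is also a corollary of \cref{partializationRestrictionStructuresAgree}''), so it is an anticipated and legitimate path; what it buys is that the only span computation is outsourced to an already-proved proposition, at the cost of routing through the domain idempotent rather than the one-line direct comparison of $\discard \comp u$ with $\discard$. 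Your handling of the backward direction (either as the trivial span with $i = \id$, or via the inclusion being a CD functor out of a Markov category) matches the paper's ``immediate'' dismissal. No gaps.
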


\begin{proof}
	The backward direction is immediate.
	
	For the converse, assume that $u$ is a total morphism of $\Par{\cC}$ represented by a span $\left(D,i,f\right)$.
	The condition $\discard\comp u = \discard$ corresponds to the existence of an isomorphism that makes the diagram
	\[
	\begin{tikzcd}
		& D \arrow[ld, "i"', tail] \arrow[r, "f"] \arrow[d, "\simeq" {anchor = south, rotate=270}] & Y \arrow[d, "\discard"] \\
		X \arrow[r, Rightarrow, no head] & X \arrow[r, "\discard"']                                    & I                      
	\end{tikzcd}
	\]
	commute.
	Commutativity of the triangle on the left is equivalently invertibility of the domain inclusion $i$, and hence $u$ can equivalently be represented by $\left(X,\id, f \comp i^{-1} \right)$, proving the claim.
\end{proof}

We can also identify copyable morphisms in $\Par{\cC}$ as those in the partialization of the deterministic subcategory $\cC_\det$.

\begin{proposition}\label{proposition:ParCDDeterminism}
	Consider a partializable Markov category $\cC$.
	A morphism $X \xrightarrow{u} Y$ of $\Par{\cC}$ represented by a span $\bigl(X \xleftarrow{i} A \xrightarrow{f} Y\bigr)$ is copyable if and only if $f$ is deterministic.
\end{proposition}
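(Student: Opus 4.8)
The plan is to compute both sides of the copyability equation of \cref{definition:copyable}, namely $\cop_Y \comp u = \left(u \otimes u\right)\comp \cop_X$, in the span calculus of $\Par{\cC}$ and to compare the resulting spans directly. Throughout I would use that the copy morphisms of $\Par{\cC}$ are represented by $\cop_Y = \bigl(Y, \id, \cop_Y\bigr)$ and $\cop_X = \bigl(X, \id, \cop_X\bigr)$, that tensoring is componentwise (\cref{proposition:SpanMonStr}), and that composition is by pullback (\cref{def:partial_C}).

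First I would compute the left-hand side. Composing $u = \bigl(A, i, f\bigr)$ with $\bigl(Y, \id, \cop_Y\bigr)$ requires the pullback of $f$ along $\id_Y$, which is trivially $A$ itself, so $\cop_Y \comp u$ is represented by $\bigl(A, i, \cop_Y \comp f\bigr)$. Next I would compute the right-hand side. Since $u \otimes u = \bigl(A \otimes A, i \otimes i, f \otimes f\bigr)$ and $\cop_X = \bigl(X, \id, \cop_X\bigr)$, the composite is obtained by pulling back $\cop_X$ along $i \otimes i$. The key claim is that this pullback is realized by $A$ with projections $i$ and $\cop_A$, i.e.\ that the square
\[
\begin{tikzcd}
	A \arrow[r, "\cop_A"] \arrow[d, "i"', tail] & A \otimes A \arrow[d, "i \otimes i", tail] \\
	X \arrow[r, "\cop_X"'] & X \otimes X
\end{tikzcd}
\]
is a pullback. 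Commutativity is precisely the copyability of the deterministic monomorphism $i$. That it is moreover a pullback follows from the partializability hypotheses: $i \otimes i$ is again a deterministic monomorphism (closure of deterministic monos under tensoring), so any competitor $p \colon P \to X$ with $\cop_X \comp p = \left(i \otimes i\right) \comp q$ must factor through $i$ — one recovers the mediating map as $t = \left(\id_A \otimes \discard_A\right) \comp q$, using the counit law $\left(\id_X \otimes \discard_X\right)\comp \cop_X = \id_X$ together with totality of $i$ (whence $\discard_X \comp i = \discard_A$), and $\cop_A \comp t = q$ then holds because $i \otimes i$ is monic. Alternatively one may simply invoke \cref{corollary:SPTSUniversality}, exactly as in the proof of \cref{partializationRestrictionStructuresAgree}. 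Hence $\left(u \otimes u\right)\comp \cop_X$ is represented by $\bigl(A, i, \left(f \otimes f\right) \comp \cop_A\bigr)$.

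Finally I would compare the two spans. Both have apex $A$ and the same left leg $i$; since $i$ is a monomorphism, any isomorphism of apices compatible with the left legs is forced to be the identity, so the two spans are equal if and only if their right legs agree, i.e.\ $\cop_Y \comp f = \left(f \otimes f\right) \comp \cop_A$. This is exactly the condition that $f$ is copyable in $\cC$, and since $\cC$ is a Markov category every morphism is total, so copyability of $f$ coincides with determinism. I expect the main obstacle to be the pullback identification in the second step — establishing that the copyability square for $i$ is genuinely a pullback — whereas the remaining span computations and the final comparison are routine.
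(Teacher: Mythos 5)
Your proposal is correct and follows essentially the same route as the paper: compute both sides of the copyability equation as spans, identify the pullback of $\cop_X$ along $i\otimes i$ as $A$ with legs $i$ and $\cop_A$ (the paper cites the third square of \cref{corollary:SPTSUniversality}, which you also invoke), and conclude by comparing right legs over the common monic left leg. Your explicit direct verification of the pullback is a harmless elaboration of what the cited corollary already provides.
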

\begin{proof}
	The composite $\cop \comp u$ is represented by the span $\left(A,i,\cop \comp f\right)$.
	Using the third pullback from \cref{corollary:SPTSUniversality}, the composite $\left(u \otimes u\right) \comp \cop$ can be computed by forming the diagram
	\begin{equation}\label{eq:two_copies}
		\begin{tikzcd}[sep=small]
			&                                                       & A \arrow[ld, "i"'] \arrow[rd, "\cop"] \arrow["\lrcorner"{anchor=center, pos=0.125, rotate=-45}, draw=none, dd] &                                                                 &            \\
			& X \arrow[Rightarrow, no head, ld] \arrow[rd, "\cop"'] &                                                        & A \otimes A \arrow[rd, "f \otimes f"] \arrow[ld, "i \otimes i"] &            \\
			X &                                                       & X\otimes X                                             &                                                                 & Y\otimes Y
		\end{tikzcd}
	\end{equation}
	and is thus represented by the span $\bigl(X \xleftarrow{i} A \xrightarrow{\left(f \otimes f\right) \comp \cop} Y\bigr)$.
	Therefore, $u$ is copyable if and only if $f$ is copyable, which, in a Markov category, is equivalent to being deterministic.
\end{proof}

\begin{example}\label{example:ParBorelStochDeterminism}
	For the partializable Markov category $\BorelStoch$, the copyable morphisms of its partialization are precisely the spans whose right way maps are deterministic, i.e.\ measurable maps.
	Thus, copyable morphisms of type $X \to Y$ are given by pairs $(S,f)$ of a measurable subset $S$ of $X$ and a measurable map $f \colon S \to Y$.
\end{example}

\subsection{Positivity}

\begin{proposition}\label{proposition:ParPos}
	Consider a partializable Markov category $\cC$.
	Then, $\Par{\cC}$ is positive as well.
\end{proposition}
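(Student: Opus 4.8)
The plan is to reduce positivity of $\Par{\cC}$ (which makes sense since $\Par{\cC}$ is quasi-Markov by \cref{partializationsQuasiMarkov}) to positivity of $\cC$ via an explicit span computation, in the same spirit as the copyability computation of \cref{proposition:ParCDDeterminism}. First I would take a composable pair $X \xrightarrow{u} Y \xrightarrow{v} Z$ in $\Par{\cC}$ with $v\comp u$ copyable, represented by spans $\bigl(X \xhookleftarrow{i} A \xrightarrow{f} Y\bigr)$ and $\bigl(Y \xhookleftarrow{j} B \xrightarrow{g} Z\bigr)$, and form the pullback of $j$ along $f$,
\[
\begin{tikzcd}[sep=small]
C \arrow[r,"q"] \arrow[d,"p"',hook] \arrow[rd,"\lrcorner"{anchor=center,pos=0.125},draw=none] & B \arrow[d,"j",hook] \\
A \arrow[r,"f"] & Y
\end{tikzcd}
\]
so that $v\comp u$ is represented by $\bigl(X \xhookleftarrow{i\comp p} C \xrightarrow{g\comp q} Z\bigr)$, with $p$ a deterministic monomorphism by \cref{def:partializable}. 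By \cref{proposition:ParCDDeterminism}, the hypothesis that $v\comp u$ is copyable is precisely the statement that $g\comp q\colon C\to Z$ is deterministic in $\cC$. Note that $q$ itself will in general be a genuinely stochastic map.

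The positivity equation of \cref{positivityDefinition} to be checked is the equality of the two morphisms $X\to Y\otimes Z$ given by $(\id_Y\otimes v)\comp\cop_Y\comp u$ and $\bigl(u\otimes(v\comp u)\bigr)\comp\cop_X$. The key step is to compute each of these as a span and observe that both have wrong-way map $i\comp p\colon C\hookrightarrow X$. For the first, $\cop_Y\comp u$ is represented by $(A,i,\cop_Y\comp f)$, and composing with $\id_Y\otimes v$ amounts to pulling the deterministic monomorphism $\id_Y\otimes j$ back along $\cop_Y\comp f$; this pullback is exactly $C$ (the locus where $f$ lands in $B$), and the resulting action map works out to $(j\otimes g)\comp\cop_B\comp q$. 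For the second, composing with $\cop_X$ pulls $i\otimes(i\comp p)$ back along $\cop_X$, and since $\im(i\comp p)\subseteq\im(i)$ this pullback is again $C$ with inclusion $i\comp p$, the action map being $\bigl((j\comp q)\otimes(g\comp q)\bigr)\comp\cop_C$ (using $f\comp p=j\comp q$). These pullback identifications use the universality and tensor-stability of the chosen pullbacks exactly as in \cref{corollary:SPTSUniversality,proposition:StablePullbackTensorStability}.

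Once both sides share the domain $C$, the positivity equation reduces to the identity of action maps
\[
(j\otimes g)\comp\cop_B\comp q \;=\; \bigl((j\comp q)\otimes(g\comp q)\bigr)\comp\cop_C
\]
in $\cC$. I would then recognize this as the image under postcomposition with $j\otimes\id_Z$ of the two sides of the positivity equation of $\cC$ applied to the composable pair $C\xrightarrow{q}B\xrightarrow{g}Z$, whose composite $g\comp q$ is deterministic. Since $\cC$ is positive, these two sides agree, which yields the required equation and hence positivity of $\Par{\cC}$.

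The main obstacle I anticipate is the bookkeeping in the middle step: verifying that the two composites in $\Par{\cC}$ really do collapse to the single domain $C$ and extracting the correct action legs, which requires carefully choosing pullback representatives (rather than merely abstract isomorphisms) so that the two action maps land on the nose as stated. It is worth emphasizing that positivity of $\cC$ is genuinely used and not merely cosmetic: because $q$ is stochastic, the left action map samples $q$ once and then copies, while the right action map samples $q$ twice independently, and it is exactly positivity (with $g\comp q$ deterministic) that forces these to coincide.
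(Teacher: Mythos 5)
Your proposal is correct and follows essentially the same route as the paper's proof: both form the pullback $C$ computing $v\comp u$, identify $C$ as the common domain of the two sides of the positivity equation via the pullback lemmas of \cref{corollary:SPTSUniversality,proposition:StablePullbackTensorStability}, and then reduce to positivity of $\cC$ applied to the pair $C \to B \to Z$ whose composite is deterministic. The only differences are presentational (ordering of tensor factors and how the second side is decomposed before pulling back).
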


\begin{proof}
	We must show that for morphisms $u = \left(X \xleftarrow{i} A \xrightarrow{f} Y\right), v = \left(Y \xleftarrow{j} B \xrightarrow{g} Z\right)$ of $\Par{\cC}$ such that $v \comp u$ is copyable, we have
	\begin{equation}\label{equation:posDefReq}
		\tikzfig{parPosDefReq}
	\end{equation}
	Form the pullback
	\[
	\begin{tikzcd}
		C \arrow[d, "\iota"', tail] \arrow[r, "h"] \arrow[rd, "\lrcorner"{anchor=center, pos=0.125}, draw=none]& B \arrow[d, "j", tail] \\
		A \arrow[r, "f"']                          & Y                     
	\end{tikzcd}
	\]
	So that $v \comp u$ can be computed as $(X \xleftarrow{i \comp \iota} C \xrightarrow{g \comp h} Z)$.
	As this is copyable, so is $g \comp h$ (in $\cC$, where copyability and determinism coincide).
	
	The left morphism of \cref{equation:posDefReq} can be computed as
	\[
	\begin{tikzcd}[sep=small]
		&                                              &                                             & C \arrow[ld, "\iota"', tail] \arrow[rd, "(C \otimes \iota)\comp \cop_C"] \arrow["\lrcorner"{anchor=center, pos=0.125, rotate=-45}, draw=none, dd]&                                                                            &                                                                        &             \\
		&                                              & A \arrow[ld, "i"', tail] \arrow[rd, "\cop"] \arrow["\lrcorner"{anchor=center, pos=0.125, rotate=-45}, draw=none, dd]&                                                                        & C \otimes A \arrow[rd, "h \otimes f"] \arrow[ld, "\iota \otimes A"', tail] \arrow["\lrcorner"{anchor=center, pos=0.125, rotate=-45}, draw=none, dd]&                                                                        &             \\
		& X \arrow[ld, Rightarrow, no head] \arrow[rd, "\cop"'] &                                             & A \otimes A \arrow[ld, "i \otimes i", tail] \arrow[rd, "f \otimes f"'] &                                                                            & B \otimes Y \arrow[ld, "j \otimes Y", tail] \arrow[rd, "g \otimes Y"'] &             \\
		X &                                              & X \otimes X                                 &                                                                        & Y \otimes Y                                                                &                                                                        & Z \otimes Y
	\end{tikzcd}
	\]
	On the other hand, the right morphism of \cref{equation:posDefReq} can be computed as
	\[
	\begin{tikzcd}[row sep=small]
		&                                          &                                          & C \arrow[ld, "\iota"', tail] \arrow[rd, "h"] \arrow["\lrcorner"{anchor=center, pos=0.125, rotate=-45}, draw=none, dd]&                                                                 &                                                                        &             \\
		&                                          & A \arrow[ld, Rightarrow, no head] \arrow[rd, "f"] \arrow["\lrcorner"{anchor=center, pos=0.125, rotate=-45}, draw=none, dd]&                                              & B \arrow[ld, "j"', tail] \arrow[rd, "(B \otimes j) \comp \cop_B"] \arrow["\lrcorner"{anchor=center, pos=0.125, rotate=-45}, draw=none, dd]&                                                                        &             \\
		& A \arrow[ld, "i", tail] \arrow[rd, "f"'] &                                          & Y \arrow[ld, Rightarrow, no head] \arrow[rd, "\cop"'] &                                                                 & B \otimes Y \arrow[ld, "j \otimes Y", tail] \arrow[rd, "g \otimes Y"'] &             \\
		X &                                          & Y                                        &                                              & Y \otimes Y                                                     &                                                                        & Z \otimes Y
	\end{tikzcd}
	\]
	
	Thus (as $f \comp \iota = j \comp h$), the required equality \cref{equation:posDefReq} is immediate if it is shown that
	\[
	\tikzfig{parPosMid}
	\]
	This is a consequence of
	\[
	\tikzfig{parPosCons}
	\]
	which is the result of applying the positivity of $\cC$ to the deterministic $g \comp h$.
\end{proof}

\section{Transfer of properties from a Markov category to its partialization}\label{section:transferProperties}
We now consider notions of use in categorical probability theory that can be transferred from a Markov category to its partialization.

\subsection{Representability}\label{sec:representability}
Representability is a key property of many Markov categories of interest.
Intuitively, it lets us convert between random maps and deterministic maps that instead return the distribution of the random map's output.
In a partially defined context, the intuition is the same, except only on the domain of definition of the random map.

\begin{construction}\label{construction:ParSharps}
	Consider a representable and partializable Markov category $\cC$ with distribution functor $P$.
	Then for fixed objects $X$ and $Y$, we have a well defined map
	\begin{align}
		\Par{\cC}\left(X,Y\right) \to &\,\, \Par{\cC}\left(X,PY\right)\\
		\left(X \xleftarrow{i}A\xrightarrow{f} Y\right) \mapsto & \left(X \xleftarrow{i} A \xrightarrow{f^\sharp} PY\right)
	\end{align}
	as a consequence of isomorphisms being deterministic, as then we have
	\[
	\begin{tikzcd}[row sep=small]
		A \arrow[dd, "\simeq"' {anchor = south, rotate=90}] \arrow[rd, "f"] &   &          & A \arrow[dd, "\simeq"' {anchor = south, rotate = 90}] \arrow[rd, "f^\sharp"] &    \\
		& Y & \implies &                                            & PY \\
		A' \arrow[ru, "f'"']                    &   &          & A' \arrow[ru, "{f'}^\sharp"']                  &   
	\end{tikzcd}
	\]
	As $f^\sharp$ is deterministic it follows that this map actually factors to a map
	\[
	\Par{\cC}(X,Y) \to \Par{\cC}_{\copyable}(X,PY)
	\]
\end{construction}

\begin{proposition}\label{proposition:ParDistObs}
	Consider a partializable Markov category $\cC$ that is additionally representable.
	The sampling maps of $\cC$ include to morphisms of $\Par{\cC}$ that define distribution objects, i.e.\ natural isomorphisms
	\[
		\Par{\cC}_{\copyable}\left(\ph,PY\right) \xrightarrow{\samp_*} \Par{\cC}\left(\ph,Y\right)
	\]
	for every object $Y$.
	Consequently, $\Par{\cC}$ is representable.
\end{proposition}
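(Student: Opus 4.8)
The plan is to show that the distribution object $PY$ of $\cC$ already serves as a distribution object in $\Par{\cC}$, with the representing isomorphism induced by the total inclusion of the sampling map $\samp_Y$. First I would observe that $\samp_Y$, being a morphism of $\cC$, includes as a \emph{total} morphism of $\Par{\cC}$ by \cref{proposition:SpanCDTotal}, so its span $\bigl(PY = PY \xrightarrow{\samp_Y} Y\bigr)$ has the identity as wrong-way leg. Post-composition with it is then computed by a trivial pullback, sending a span $\bigl(X \xleftarrow{i} A \xrightarrow{h} PY\bigr)$ to $\bigl(X \xleftarrow{i} A \xrightarrow{\samp_Y \comp h} Y\bigr)$. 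This is the candidate natural transformation $\samp_*$, and it manifestly lands in $\Par{\cC}(\ph, Y)$.

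For the inverse, I would use the assignment of \cref{construction:ParSharps}, sending $\bigl(X \xleftarrow{i} A \xrightarrow{f} Y\bigr)$ to $\bigl(X \xleftarrow{i} A \xrightarrow{f^\sharp} PY\bigr)$; since $f^\sharp$ is deterministic, \cref{proposition:ParCDDeterminism} guarantees this lands in $\Par{\cC}_\copyable(\ph, PY)$. Mutual invertibility of the two assignments then reduces, leg by leg of the spans, to the two triangle identities $\samp_Y \comp f^\sharp = f$ (for all $f$) and $(\samp_Y \comp h)^\sharp = h$ (for copyable $h$), which already hold in $\cC$ by representability of the adjunction $\cC_\copyable \hookrightarrow \cC$. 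Crucially, both operations leave the deterministic monomorphism $i$ untouched and act only on the right-hand leg, so the resulting equivalence classes of spans coincide and the two maps are inverse bijections on hom-sets.

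Naturality in $A \in \Par{\cC}_\copyable$ is then formal: since $\samp_*$ is post-composition with the \emph{fixed} morphism $\samp_Y$, it commutes with precomposition by any copyable $\alpha$ purely by associativity of composition in $\Par{\cC}$, so the naturality square commutes strictly. Assembling these points exhibits for each $Y$ a natural isomorphism $\Par{\cC}_\copyable(\ph, PY) \cong \Par{\cC}(\ph, Y)$, which is exactly the data making $PY$ a distribution object of $\Par{\cC}$ in the sense of \cref{RepresentabilityDefinition}; hence $\Par{\cC}$ is representable. The only genuinely substantive step is the mutual-inverse verification, but this is carried entirely by the $\cC$-level triangle identities: everything else — triviality of pullbacks against total maps, determinism of $f^\sharp$, and naturality — is either already supplied by the cited results or immediate from associativity, so the main ``obstacle'' is really careful bookkeeping rather than a conceptual difficulty.
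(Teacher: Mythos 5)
Your proposal is correct and follows essentially the same route as the paper's own proof: it exhibits the assignment of \cref{construction:ParSharps} as a two-sided inverse to post-composition with the total inclusion of $\samp_Y$, reducing both composites to the $\cC$-level identities $\samp \comp f^\sharp = f$ and $(\samp\comp g)^\sharp = g$ for deterministic $g$, with the observation that both operations fix the domain leg of the span. The only cosmetic difference is that you spell out naturality and the triviality of the pullback explicitly, which the paper leaves implicit.
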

\begin{proof}
	This argument is more or less a reduction to the bijection on the morphisms of $\cC$ combined with the fact that two spans with the same domain inclusion are equivalent if and only if they have the same ``right way map''.
	
	It suffices to check that the component at $X$ is invertible for arbitrary $X$.
	This map has a section given by \cref{construction:ParSharps} as for any map $f$ of $\cC$, we have $\samp \comp f^\sharp = f$.

	However, this section is also a retraction.
	Indeed, a copyable morphism $X \to PY$ in $\Par{\cC}$ is represented by a span $\left(X \xhookleftarrow{i} A \xrightarrow{g} PY\right)$ with $g$ deterministic.
	For a \emph{deterministic} $A \xrightarrow{g} PY$, we have $\left(\samp \comp g\right)^\sharp = g$.
	Thus, applying the operation of \cref{construction:ParSharps} to $\samp_* \left(X \xhookleftarrow{i} A \xrightarrow{g} PY\right) = \left(X \xhookleftarrow{i} A \xrightarrow{\samp \comp g} Y\right)$ returns $\left(X \xhookleftarrow{i} A \xrightarrow{g} PY\right)$ itself.
\end{proof}

\begin{example}
	The quasi-Markov categories $\Par{\BorelStoch}$, $\Par{\Dist}$  (or more generally, $\Par{\Kl{\left(D_R\right)}}$ for $R$ an entire zerosumfree semiring), and $\Par{\SetMulti}$ are all representable since the underlying partializable Markov categories are.\footnote{Representability of the Markov categories is a consequence of~\cite[Example 3.2, Propositions 3.4 and 3.6]{fritz2023representable}.}
	On the other hand, $\Par{\FinStoch}$ (like the underlying Markov category) is not representable, as the collection of probability measures on a finite set is not finite in general.
\end{example}

\begin{lemma}\label{ParSampPullback}
	Consider a deterministic monomorphism $i \colon A \rightarrowtail X$ in a partializable Markov category $\cC$.
	If $\cC$ is representable, the following naturality square is a pullback\footnote{The author would particularly like to thank Antonio Lorenzin for suggestions here.}
	\[
	\begin{tikzcd}
		PA \arrow[r, "\samp"] \arrow[d, "Pi"', tail] \arrow[rd, "\lrcorner"{anchor=center, pos=0.125}, draw=none] & A \arrow[d, "i", tail] \\
		PX \arrow[r, "\samp"']                       & X.                    
	\end{tikzcd}	
	\]
\end{lemma}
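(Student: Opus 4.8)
The plan is to recognize the square as a pullback that already exists by hypothesis and to identify it with $PA$. First I would note the square commutes: since $i\colon A\to X$ is a morphism of $\cC$, naturality of the counit gives $i\comp\samp_A=\samp_X\comp Pi$. By the pullback condition \cref{def:partializable}~\ref{it:pullback_cond}, the pullback of the deterministic monomorphism $i$ along $\samp_X$ exists and its left leg is again a deterministic monomorphism; write it as $\iota\colon S\rightarrowtail PX$ with second projection $t\colon S\to A$, so that $\samp_X\comp\iota=i\comp t$. Applying the universal property of this pullback to the commuting cone $(Pi,\samp_A)$ produces a unique $\phi\colon PA\to S$ with $\iota\comp\phi=Pi$ and $t\comp\phi=\samp_A$. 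Since $P$ takes values in $\cC_\copyable=\cC_\det$, the map $Pi$ is deterministic, and as $\iota$ is a deterministic monomorphism, \cref{DetMonoLeftCancellation} shows $\phi$ is itself deterministic. It then suffices to prove $\phi$ is an isomorphism, for then the $PA$-cone is a pullback as well.

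For the inverse I would take $\psi\coloneqq t^\sharp\colon S\to PA$, the copyable counterpart of $t$ (as in \cref{construction:ParSharps}), recalling $t^\sharp=(Pt)\comp\delta_S$. To check $\psi\comp\phi=\id_{PA}$, I compute using naturality of $\delta$ at the deterministic $\phi$ and then the triangle identity $(P\samp_A)\comp\delta_{PA}=\id_{PA}$:
\[
\psi\comp\phi=(Pt)\comp\delta_S\comp\phi=(Pt)\comp(P\phi)\comp\delta_{PA}=P(t\comp\phi)\comp\delta_{PA}=(P\samp_A)\comp\delta_{PA}=\id_{PA}.
\]
For $\phi\comp\psi=\id_S$, since $\iota$ is monic it is enough to check the identity after composing with $\iota$; using $\iota\comp\phi=Pi$, then $i\comp t=\samp_X\comp\iota$, naturality of $\delta$ at $\iota$, and the triangle identity $(P\samp_X)\comp\delta_{PX}=\id_{PX}$:
\[
\iota\comp\phi\comp\psi=Pi\comp(Pt)\comp\delta_S=P(\samp_X\comp\iota)\comp\delta_S=(P\samp_X)\comp\delta_{PX}\comp\iota=\iota.
\]
Thus $\phi\comp\psi=\id_S$, so $\phi$ is invertible with inverse $\psi$, and the given square, being isomorphic over its legs to the genuine pullback $S$, is a pullback.

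The main obstacle I anticipate is purely bookkeeping rather than conceptual: one must ensure $\phi$ is deterministic so that naturality of $\delta$ is legitimately available (this is where \cref{DetMonoLeftCancellation} and the fact that $P$ lands in $\cC_\det$ enter), and one must invoke the two triangle identities of the representability adjunction in the correct variance. Everything else — existence of the comparison pullback, commutativity via naturality of $\samp$, and the concluding identification of isomorphic cones — is routine given partializability and representability.
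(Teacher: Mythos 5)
Your proposal is correct and takes essentially the same route as the paper: both form the pullback of $i$ along $\samp_X$ guaranteed by partializability and identify it with $PA$ via the copyable counterpart $t^\sharp$ of the projection to $A$ (the paper's $f^\sharp$), differing only in that the paper deduces invertibility from ``split mono $+$ mono'' using the bijection $\samp_*$ on deterministic morphisms, while you verify both composites directly via naturality of $\delta$ and the triangle identities. The bookkeeping you flag (determinism of the comparison map via \cref{DetMonoLeftCancellation} so that naturality of $\delta$ applies) is handled correctly.
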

\begin{proof}
		By the axioms of a partializable Markov category, there is a pullback in $\cC$
		\[
	\begin{tikzcd}
		B \arrow[r, "f"] \arrow[d, "j"', tail] \arrow[rd, "\lrcorner"{anchor=center, pos=0.125}, draw=none] & A \arrow[d, "i", tail] \\
		PX \arrow[r, "\samp"']                       & X.                    
	\end{tikzcd}	
	\]
	with $j$ a deterministic monomorphism.

By the commutativity of the diagram, $\samp \comp j = i \comp f$. 
	From representability, we also have $i \comp f = \samp \comp Pi \comp f^{\sharp}$. 
	Since both $j$ and $Pi \comp f^{\sharp}$ are deterministic, and $\samp$ induces a bijection between deterministic morphisms $B \to PX$ and morphisms $B\to X$, we conclude that $Pi\comp f^{\sharp}= j$. 
	In particular, we have the following commutative diagram
	\[
	\begin{tikzcd}
		PA \ar[rrd,bend left,"\samp"]\ar[ddr,bend right,"Pi"', tail] & & \\
		&B \arrow[r, "f"] \arrow[d, "j"', tail] \arrow[rd, "\lrcorner"{anchor=center, pos=0.125}, draw=none] \ar[lu,"f^{\sharp}"'] & A \arrow[d, "i", tail] \\
		&PX \arrow[r, "\samp"']                       & X.                    
	\end{tikzcd}
	\]
	The fact that $Pi$ is a monomorphism\footnote{$P$, being a right adjoint, preserves monomorphisms.} shows that $f^{\sharp}$ has a section given by applying the universal property of the pullback to the pair $\left(Pi,\samp\right)$\footnote{As for such an induced map $s$, one would have $Pi \comp f^\sharp \comp s = j\comp s = Pi$.}.
	However $f^\sharp$ is itself monic as $j$ is, and is consequently invertible.	
\end{proof}

\begin{proposition}\label{proposition:ParPushforwardComputation}
	Consider a partializable Markov category $\cC$.
	The ``pushforward'' $P\left(A,i,f\right)$ of a morphism of $\Par{\cC}$ represented by a span $\left(X \xhookleftarrow{i} A \xrightarrow{f} Y\right)$ can be computed as $PX \xhookleftarrow{Pi} PA \xrightarrow{Pf} PY$.
\end{proposition}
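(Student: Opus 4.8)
The plan is to use the representable structure of $\Par{\cC}$ established in \cref{proposition:ParDistObs} (so we work under the standing assumption that $\cC$ is representable, as the pushforward $P$ presupposes this). Under this structure the pushforward functor $P$ is right adjoint to the inclusion $\Par{\cC}_\copyable \hookrightarrow \Par{\cC}$, with counit $\samp$. By the general description of the action of such a right adjoint on morphisms, the pushforward $Pu$ of a morphism $u \colon X \to Y$ is the unique copyable morphism $PX \to PY$ whose composite with $\samp_Y$ recovers $u \comp \samp_X$; equivalently, $Pu$ is the copyable counterpart $(u \comp \samp_X)^\sharp$. It therefore suffices to compute $u \comp \samp_X$ in $\Par{\cC}$ and then apply the $\sharp$-operation of \cref{construction:ParSharps}.

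First I would compute the composite $u \comp \samp_X \colon PX \to Y$, where $u = (A,i,f)$ and $\samp_X$ is the total morphism represented by $(PX \xhookleftarrow{\id} PX \xrightarrow{\samp_X} X)$. By the pullback definition of composition (\cref{def:partial_C}), this requires the pullback of $\samp_X \colon PX \to X$ along $i \colon A \rightarrowtail X$. Here \cref{ParSampPullback} does the essential work: it identifies this pullback as the apex $PA$ with legs $Pi \colon PA \rightarrowtail PX$ and $\samp_A \colon PA \to A$. Consequently $u \comp \samp_X$ is represented by the span $(PX \xhookleftarrow{Pi} PA \xrightarrow{f \comp \samp_A} Y)$.

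Applying \cref{construction:ParSharps} to this span then yields $Pu = (PX \xhookleftarrow{Pi} PA \xrightarrow{(f \comp \samp_A)^\sharp} PY)$, and it remains only to identify $(f \comp \samp_A)^\sharp$ with $Pf$. This is immediate from the definition of the functor action of $P$ in the representable Markov category $\cC$: both $(f \comp \samp_A)^\sharp$ and $Pf$ are copyable morphisms $PA \to PY$ characterized by having $f \comp \samp_A$ as their composite with $\samp_Y$, and so they coincide by the bijection of \cref{RepresentabilityDefinition}. This produces the claimed representative $(PX \xhookleftarrow{Pi} PA \xrightarrow{Pf} PY)$.

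The only non-formal ingredient is the pullback computation, for which \cref{ParSampPullback} is doing all the heavy lifting; the remaining steps are bookkeeping with the adjunction and the copyable-counterpart operation. Thus I do not anticipate a genuine obstacle beyond correctly matching up the span legs in the composition and verifying that the two descriptions of $Pf$ agree.
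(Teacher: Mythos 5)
Your proposal is correct and follows essentially the same route as the paper's proof: the paper likewise traces $\id_{PX}$ through the naturality square of the representation isomorphism (which amounts to your formula $Pu = (u \comp \samp_X)^\sharp$), invokes \cref{ParSampPullback} to identify the pullback apex as $PA$ with legs $Pi$ and $\samp_A$, and concludes via $(f \comp \samp)^\sharp = Pf$ in $\cC$. The standing representability assumption you flag is indeed implicit in the paper's statement, being inherited from the surrounding subsection.
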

\begin{proof}
	First, $P\left(A,i,f\right)$ represents the left vertical morphism in
	\[
	\begin{tikzcd}
		{\Par{\cC}_{\copyable}(PX,PX)} \arrow[r, "\simeq"] \arrow[d, "{P\left(A,i,f\right)_*}"'] & {\Par{\cC}(PX,X)} \arrow[d, "{\left(A,i,f\right)_*}"] \\
		{\Par{\cC}_{\copyable}(PX,PY)} \arrow[r, "\simeq"]                          & {\Par{\cC}(PX,Y)}                       
	\end{tikzcd}
	\]
	By tracing $\id_{PX}$ through the square, $P\left(A,i,f\right)\colon PX \to PY$ can be computed in terms of the diagram
	\[
	\begin{tikzcd}[sep=small]
		&                                                & B \arrow[rd, "g"] \arrow[ld, "j"', hook] \arrow["\lrcorner"{anchor=center, pos=0.125, rotate=-45}, draw=none, dd] &                                         &   \\
		& PX \arrow[ld, Rightarrow, no head] \arrow[rd, "\samp"'] &                                          & A \arrow[ld, "i", hook] \arrow[rd, "f"] &   \\
		PX &                                                & X                                        &                                         & Y
	\end{tikzcd}
	\]
	as $PX \xleftarrow{j} B \xrightarrow{\left(f\comp g\right)^\sharp} PY$.
	However, in light of \cref{ParSampPullback} such a pullback can be constructed as
	\[
		\begin{tikzcd}[sep = small]
		& PA \arrow[rd, "\samp"] \arrow[ld, "Pi"', hook] \arrow["\lrcorner"{anchor=center, pos=0.125, rotate=-45}, draw=none, dd] & \\
		PX \arrow[rd, "\samp"'] & & A \arrow[ld, "i", hook] \\
		& X &
	\end{tikzcd}
	\]
	Consequently, $P\left(A,i,f\right)$ can be computed as $\left(PA,Pi,\left(f\comp \samp\right)^\sharp\right)$.
	The claim is now a consequence of the fact that $\left(f\comp \samp\right)^\sharp = Pf$ in $\cC$.
\end{proof}

\begin{remark}\label{totalRepresentability}
	Note that the unit and counit of the representability adjunction are total morphisms.
	Thus, the adjunction $\Par{\cC}_{\copyable} \rightleftarrows \Par{\cC}$ restricts to the representability adjunction of $\cC$,
	\[
		\cC_{\det}\cong \Par{\cC}_{\det} \rightleftarrows \Par{\cC}_{\mathrm{tot}} \cong \cC
	\]
\end{remark}

\subsubsection{Partial algebras}
Consider a representable partializable Markov category $\cC$ with distribution monad $P$.
We have seen in \cref{proposition:ParDistObs} that the unit $\delta$ and counit $\samp$ of the representability adjunction for $\cC$ include to define representability data for $\Par{\cC}$.

Thus in particular there is a distribution monad on $\Par{\cC}_{\copyable}$, which acts the same as $P$ on objects and has the same unit and multiplication maps.
Furthermore, it acts on morphisms componentwise
\[
\begin{tikzcd}
	A & D \ar{l}[swap]{d} \ar{r}{f} & B
\end{tikzcd}
\qquad\longmapsto\qquad
\begin{tikzcd}
	PA & PD \ar{l}[swap]{Pd} \ar{r}{Pf} & PB
\end{tikzcd}
\]

\begin{proposition}\label{parAlgChar}
	An algebra for this monad on $\Par{\cC}_{\copyable}$ consists of an object $A$ together with a partial morphism $PA\to A$, represented by a span in $\cat{C}_{\det}$ as 
\[
\begin{tikzcd}
	PA & D \ar[swap,hook']{l}{d} \ar{r}{a} & A
\end{tikzcd}
\]
such that in $\cC_\det$
	\begin{equation}\label{alg_diags}
		\begin{tikzcd}
			A & A \arrow[l, Rightarrow,no head] \arrow[r, tail, "\eta"] & PA \\
			  & |[label={[label distance=-3mm]20:\urcorner},label={[label distance=-3mm]200:\phantom{\llcorner}}]| A \arrow[ul, Rightarrow, no head] \arrow[rd, Rightarrow, no head] \ar[u,Rightarrow, no head] \ar[r, tail, "s"] & D \ar[u, tail, "d"'] \ar{d}{a} \\
			&& A
		\end{tikzcd}
		\qquad\qquad
		\begin{tikzcd}
			P^2A & PD \arrow[l, tail, "Pd"'] \ar{r}{Pa} & PA \\
			P^2A \arrow[u,Rightarrow, no head] \ar{d}[swap]{\mu} & |[label={[label distance=-3mm]20:\urcorner},label={[label distance=-3mm]200:\llcorner}]| D' \arrow[u,tail] \ar{r} \ar{d} \arrow[l,tail] & D \arrow[u,tail,"d"'] \ar{d}{a} \\
			PA & D \arrow[l,tail,"d"] \ar{r}[swap]{a} & A
		\end{tikzcd}
	\end{equation}
	Or in other words:
\begin{enumerate}
	\item\label{parAlgUnitCond} The unit map $\eta\colon A\to PA$ factors (uniquely) through the domain map $d$ into a section $s$ of $a$;\footnote{In this case $\eta$ and $d\colon D\to PA$ necessarily have $A$ as a pullback.}
	\item\label{parAlgMultCond} The multiplication $\mu\colon P^2A\to PA$ and the domain map $d\colon D\to PA$ have the same pullback $D'$ as $Pa$ and $d$;
	\item\label{parAlgCommCond} The diagrams of \cref{alg_diags} commute.
\end{enumerate}
We call an algebra for $P$ on $\Par{\cC}_\copyable$ a \newterm{partial algebra} (in $\Par{\cC}$).
\end{proposition}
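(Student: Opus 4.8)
The plan is to unpack the abstract definition of a $P$-algebra on $\Par{\cC}_{\copyable}$ and translate each of its two structure equations into span-level data, reading off conditions \ref{parAlgUnitCond}--\ref{parAlgCommCond} as we go. First I would record the shape of the structure map: by \cref{proposition:ParCDDeterminism} a copyable morphism $\alpha \colon PA \to A$ in $\Par{\cC}$ is a span $PA \xhookleftarrow{d} D \xrightarrow{a} A$ whose right leg $a$ is deterministic, and since the domain inclusion $d$ is always a deterministic monomorphism, the whole span lives in $\cC_\det$. Thus an algebra structure is exactly such a span subject to the unit and associativity laws $\alpha \comp \eta_A = \id_A$ and $\alpha \comp P\alpha = \alpha \comp \mu_A$, and the task reduces to rephrasing these two equalities.

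For the unit law I would use that $\eta_A$ is total, hence represented by $A \xleftarrow{\id} A \xrightarrow{\eta} PA$, and compute $\alpha \comp \eta_A$ by pulling $d$ back along $\eta$. Writing the pullback apex as $E$ with legs $p \colon E \to A$ (into the domain) and $q \colon E \to D$, the composite is the span $A \xleftarrow{p} E \xrightarrow{a \comp q} A$. Equality with $\id_A = (A \xleftarrow{\id} A \xrightarrow{\id} A)$ forces the domain inclusion $p$ to be invertible, which (as $d$ is monic) is exactly the assertion that $\eta$ factors through $d$; the induced factorization $s \coloneqq q \comp p^{-1}$ is the unique map with $d \comp s = \eta$, and the remaining condition $a \comp q \comp p^{-1} = \id_A$ says precisely that $s$ is a section of $a$. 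This is condition \ref{parAlgUnitCond}, and the footnote's claim that $\eta$ and $d$ admit $A$ as a pullback is just the observation $E \cong A$.

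For the associativity law I would invoke the componentwise formula for $P$ recorded before the statement, so that $P\alpha$ is the span $P^2A \xleftarrow{Pd} PD \xrightarrow{Pa} PA$ and $\mu_A$ is the total span $P^2A \xleftarrow{\id} P^2A \xrightarrow{\mu} PA$. Composing each with $\alpha$ then amounts to pulling $d \colon D \to PA$ back against $Pa$ and against $\mu$ respectively; denoting their apices $Q_1$ and $Q_2$, the two sides are represented by $(P^2A \xleftarrow{Pd \comp r} Q_1 \xrightarrow{a \comp t} A)$ and $(P^2A \xleftarrow{m} Q_2 \xrightarrow{a \comp n} A)$. The crucial observation is that both left legs are deterministic monomorphisms (using that $P$ preserves monomorphisms as a right adjoint and restricts to $\cC_\det$, together with the pullback axiom of \cref{def:partializable}), so they present subobjects of $P^2A$; hence an equality of spans is an equality of these subobjects plus agreement of the right-way maps. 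Identifying the common subobject as $D'$ makes $D'$ simultaneously the pullback of $(Pa, d)$ and of $(\mu, d)$ --- this is condition \ref{parAlgMultCond} --- while the matching of legs ($m = Pd \comp r$ and $a \comp t = a \comp n$ under the identification) is precisely the commutativity of the right-hand diagram of \cref{alg_diags}, i.e.\ condition \ref{parAlgCommCond}. Running the argument backwards reconstructs the span equality from \ref{parAlgMultCond} and \ref{parAlgCommCond}.

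The main obstacle I anticipate is the bookkeeping in this last step: one must check that the single equation $\alpha \comp P\alpha = \alpha \comp \mu_A$ genuinely separates into the statement that two \emph{a priori} distinct pullbacks (over the different legs $Pa$ and $\mu$) coincide as subobjects of $P^2A$, together with the commutativity of the right-way maps, and that no further conditions are hidden. Fixing the orientation of the four legs of $D'$ in the $3 \times 3$ diagram, and verifying that it is exactly determinism of the left legs that upgrades ``isomorphism of span apices'' to ``equality of subobjects'', are the delicate points; everything else is a routine translation between the monad laws and the span calculus already established in \cref{partializationRestrictionStructuresAgree,proposition:ParCDDeterminism,proposition:ParPushforwardComputation}.
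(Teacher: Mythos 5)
Your proposal is correct and follows essentially the same route as the paper: identify the structure map as a span in $\cC_\det$ via \cref{proposition:ParCDDeterminism}, translate the unit law into the factorization of $\eta$ through $d$ by a section of $a$, and translate the associativity law into the identification of the two pullback apices as a common subobject $D'$ of $P^2A$ together with commutativity of the right-way legs. The only cosmetic difference is that you re-derive the unit-law step by an explicit pullback computation where the paper simply cites \cref{lemma:PartialRetracts}, whose proof is that very computation.
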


Intuitively this means:
\begin{enumerate}
	\item The domain $D$ of the partial algebra contains the $\delta$-distributions;
	\item The distributions on distributions whose means are in $D$ are the random distributions on $D$ for which the distribution of its ``values'' under $a$ (which will typically be some sort of expectation) lies in $D$;
	\item The algebra acts on the mean of such a random distribution in a manner identical to its action on the pushforward of such a random distribution (under the algebra map itself).
\end{enumerate}
\begin{proof}
	The two diagrams correspond to the unit triangle and associativity square of the algebra axioms respectively.
	\[
		\begin{tikzcd}
A \arrow[r, "\delta", hook] \arrow[rd, Rightarrow, no head] & PA \arrow[d, "{\left(D,d,a\right)}"] \\
															& A
\end{tikzcd}
\qquad \qquad
\begin{tikzcd}
P^2A \arrow[d, "\mu"'] \arrow[r, "{P\left(D,d,a\right)}"] & PA \arrow[d, "{\left(D,d,a\right)}"] \\
PA \arrow[r, "{\left(D,d,a\right)}"']                     & A                        
\end{tikzcd}
	\]
	The equivalence of the unit triangle with the first diagram is an instance of \cref{lemma:PartialRetracts} (with $s$ being the section of the ``right way map'' $a$ that $\delta$ factors through).
	This shows claim~\ref{parAlgUnitCond}.
	So consider the associativity square, keeping in mind that $P\left(PA \xleftarrow{d} D \xrightarrow{a} A\right) = P^2A \xleftarrow{Pd} PD \xrightarrow{Pa} PA$.
	The composites $\left(D,d,a\right) \comp \mu$ and $\left(D,d,a\right) \comp \left(PD,Pd,Pa\right)$ are represented by the spans
	\[
		\begin{tikzcd}[sep=small]
     &                                                & D_1 \arrow[ld, hook] \arrow[rd] \arrow["\lrcorner"{anchor=center, pos=0.125, rotate=-45}, draw=none, dd]&                                         &   \\
     & P^2A \arrow[rd, "\mu"'] \arrow[ld, Rightarrow, no head] &                                 & D \arrow[rd, "a"] \arrow[ld, "d", hook] &   \\
P^2A &                                                & PA                              &                                         & A
\end{tikzcd}
\qquad \qquad
\begin{tikzcd}[sep=small]
     &                                              & D_2 \arrow[ld, hook] \arrow[rd] \arrow["\lrcorner"{anchor=center, pos=0.125, rotate=-45}, draw=none, dd]&                                         &   \\
     & PD \arrow[rd, "Pa"'] \arrow[ld, "Pd"', hook] &                                 & D \arrow[rd, "a"] \arrow[ld, "d", hook] &   \\
P^2A &                                              & PA                              &                                         & A
\end{tikzcd}
\]
Thus, their equivalence implies the existence of an isomorphism $D_1 \xrightarrow{\simeq} D_2$ commuting with the composite span legs.
Thus we may replace $D_1$ with $D_2$ using this isomorphism, with $D_2$ then playing the role of $D'$ in the statement of the proposition.
This is precisely claim~\ref{parAlgMultCond}.
Furthermore, once this identification is made (so that the span legs are equal on the nose, not just up to the isomorphism between the apices), the equality of span legs is precisely claim~\ref{parAlgCommCond}.
\end{proof}

\begin{proposition}\label{IntParAlg}
	In $\Par{\cat{BorelStoch}}$, the integration map gives $\Rnneg$ the structure of a partial algebra (defined on the distributions where this integral is finite).\footnote{The author would like to thank Paolo Perrone for suggesting this line of example.}
\end{proposition}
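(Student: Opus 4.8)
The plan is to instantiate the characterization of partial algebras in \cref{parAlgChar} with $A = \Rnneg$. I would take as domain the set
\[
	D \coloneqq \left\{\, p \in P\Rnneg : \int_{\Rnneg} x \, dp(x) < \infty \,\right\},
\]
with $d\colon D \hookrightarrow P\Rnneg$ the inclusion and $a\colon D \to \Rnneg$ the expectation map $p \mapsto \int_{\Rnneg} x\, dp(x)$. First I would check that this is genuinely a span in $\BorelStoch_\det$: the map $P\Rnneg \to [0,\infty]$, $p \mapsto \int_{\Rnneg} x\, dp(x)$, is measurable (being the increasing limit of the measurable maps $p \mapsto \int_{\Rnneg} \min(x,n)\, dp(x)$), so $D$ is a measurable subset and $a$ is measurable, hence deterministic. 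It then remains to verify the three conditions of \cref{parAlgChar}.

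The unit condition~\ref{parAlgUnitCond} is immediate. Since $\int_{\Rnneg} y\, d\delta_x(y) = x < \infty$, every Dirac distribution $\delta_x$ lies in $D$, so the unit $\eta = \delta$ factors through $d$ via the map $s\colon \Rnneg \to D$, $x \mapsto \delta_x$; and $a \comp s = \id_{\Rnneg}$ by the same computation, so $s$ is a section of $a$ as required.

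The substance of the proof lies in conditions~\ref{parAlgMultCond} and~\ref{parAlgCommCond}, and the engine driving both is Tonelli's theorem. Writing $e(p) \coloneqq \int_{\Rnneg} x\, dp(x) \in [0,\infty]$ and recalling that the induced monad multiplication on $\BorelStoch$ is the Giry averaging $\mu(\xi) = \int p\, d\xi(p)$, I would first establish the identity
\[
	\int_{\Rnneg} x \, d\mu(\xi)(x) = \int_{P\Rnneg} e(p) \, d\xi(p) \in [0,\infty],
\]
valid for all $\xi \in P^2\Rnneg$ since the integrand is nonnegative. Because $e(p) = \infty$ exactly for $p \notin D$, finiteness of the right-hand side forces $\xi\bigl(P\Rnneg \setminus D\bigr) = 0$; such $\xi$ are exactly those of the form $Pd(\zeta) = d_*\zeta$ for a unique $\zeta \in PD$ (recall $P(D,d,a) = (PD, Pd, Pa)$ by \cref{proposition:ParPushforwardComputation}), and conversely every such $\xi$ arises this way. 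Under this bijection one has $\int_{P\Rnneg} e\, d\xi = \int_D a\, d\zeta = \int_{\Rnneg} x\, d(a_*\zeta)(x)$, so that $\mu(\xi) \in D$ if and only if $a_*\zeta = Pa(\zeta) \in D$. This shows that $Pd$ carries the pullback of $Pa$ and $d$ isomorphically onto the pullback of $\mu$ and $d$, yielding the common apex $D'$ of condition~\ref{parAlgMultCond}; and the equality $\int_{\Rnneg} x\, d\mu(\xi)(x) = \int_{\Rnneg} x\, d(a_*\zeta)(x)$ read off from the same identity is exactly the commutativity of the associativity square, condition~\ref{parAlgCommCond}.

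The main obstacle I anticipate is the measure-theoretic bookkeeping underlying the previous paragraph, rather than anything categorical: justifying the interchange of integrals via Tonelli on a nested expectation that may be infinite, and carefully arguing that finiteness of $\int_{P\Rnneg} e\, d\xi$ is equivalent to $\xi$ being concentrated on $D$ together with finiteness of the expectation of the pushforward $a_*\zeta$. Once this Tonelli identity is secured, the identification of the two pullbacks and the commutativity of the associativity square both fall out of a single computation.
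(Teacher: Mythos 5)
Your proposal is correct and follows essentially the same route as the paper's proof: the same domain $D$ and expectation map, measurability via an increasing sequence of measurable evaluation maps (the paper uses simple-function approximations where you use truncations $\min(x,n)$, an immaterial difference), the same verification of the unit condition via Dirac measures, and the same Tonelli identity $\int x\, d\mu(\xi)(x) = \int e(p)\, d\xi(p)$ used both to identify the two pullbacks of condition~\ref{parAlgMultCond} (via concentration of $\xi$ on $D$) and to establish the commutativity of the associativity square.
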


\begin{proof}
	Consider the following subset of $P\Rnneg$:
	\[
		D \coloneqq \left\{ p\in P\Rnneg \sot \int_{\Rnneg}  x\,p\left(dx\right) < \infty \right\}
	\]
	For $p\in D$, set
	\[
		r\left(p\right) \coloneqq \E{p} = \int_{\Rnneg} x\,p\left(dx\right)
	\] 
	We must show that $\left(P\Rnneg \hookleftarrow D \xrightarrow{r} \Rnneg\right)$ equips $\Rnneg$ with a $P$-algebra structure. 

	Recall that the Giry $\sigma$-algebra~\cite[Section 1.2]{giry} on $P\Rnneg$ is such that for all measurable $A \subseteq \Rnneg$, the assignment $\mathrm{ev}_A\colon P\Rnneg \to \Rnneg$, $p \mapsto p\left(A\right)$ is measurable.
	Consequently for any simple function $f = \sum_{i=1}^n x_i \IF_{A_i}$,\footnote{Where $\IF_A$ is the indicator function at $A$.} we have a measurable map
	\begin{align*}
		r_f \colon P\Rnneg \to \Rnneg & &p \mapsto \int_{\Rnneg} f\left(x\right)\, p\left(dx\right) = \sum_{i=1}^n x_ip\left(A_i\right)
\end{align*}

	Now if $f_n \uparrow \id_{\Rnneg}$ is an approximation of the identity of $\Rnneg$ by simple functions, then we also have $\int_{\Rnneg} f_n p\left(dx\right) \uparrow \int_{\Rnneg} x\,p\left(dx\right)$.
	In particular, this monotone sequence converges if and only if it is bounded, so
	\[
		D = \bigcup_{m \in \N}\bigcap_{n \in \N} \left\{p \sot \int_{\Rnneg} f_n p\left(dx\right) < m\right\}
	\]
	Here each innermost set $\left\{p \sot \int_{\Rnneg} f_n p\left(dx\right) < m\right\} = r_{f_n}^{-1}\left(-\infty,m\right]$ is measurable, so $D$ is as well.
	Furthermore, on $D$ we deduce from $r_{f_n} \uparrow r$ that $r$ is measurable as well.

	Thus $\left(P\Rnneg \hookleftarrow D \xrightarrow{r} \Rnneg\right)$ is a span in $\mathsf{BorelMeas} \cong \BorelStoch_\det$.
	We verify the conditions of \cref{parAlgChar}.
	For condition~\ref{parAlgUnitCond}, note that for all $x \in \Rnneg$, $\int_{\Rnneg} y\, \delta_x\left(dy\right) = x$.
	Thus, $\delta_{\Rnneg}$ factors across $D$ through a section of $r$.

	For condition~\ref{parAlgMultCond}, we compute the pullbacks in question
\[
	\begin{tikzcd}
C_1 \arrow[d, hook] \arrow[r] \arrow[rd, "\lrcorner"{anchor=center, pos=0.125}, draw=none]& D \arrow[d, "d", hook] \\
P^2\Rnneg \arrow[r, "\mu"']   & P\Rnneg               
\end{tikzcd}
\qquad \qquad
\begin{tikzcd}
C_2 \arrow[d, hook] \arrow[r] \arrow[rd, "\lrcorner"{anchor=center, pos=0.125}, draw=none]& D \arrow[d, "d", hook] \\
PD \arrow[r, "Pr"']           & P\Rnneg               
\end{tikzcd}
\]
As in \cref{example:BorelStochPartializable}, we may compute the pullbacks $C_1$ and $C_2$ explicitly as measurable subspaces of $P^2\Rnneg$ and $PD$ respectively.
Explicitly,
\begin{align*}
	C_1 = \left\{\pi \in P^2\Rnneg \sot \mu\left(\pi\right) \in D\right\} & & C_2 = \left\{\pi \in PD \sot r_*\pi \in D\right\}
\end{align*}

The first is the $\pi \in P^2\Rnneg$ such that
\[
	\int_{x \ge 0} x\, \mu\left(\pi\right)\left(dx\right) = \int_{x \ge 0} x \int_p \pi\left(dp\right)\, p\left(dx\right) = \int_p \left(\int_{x \ge 0} x\, p\left(dx\right)\right)\, \pi\left(dp\right)< \infty
\]

Note that a necessary condition for this to hold is that $\int_{x \ge 0}x \, p\left(dx\right)$ is almost surely finite, or in other words that $\pi\left(P^2\Rnneg\setminus D\right) = 0$.
Such a $\pi$ can therefore be identified with a distribution on $D$.

It now therefore suffices to show that for $\pi \in PD$, $\left(r\comp \mu\right)\left(\pi\right) = \left(r\comp r_*\right)\left(\pi\right)$.
Indeed, that would imply that $C_1 = C_2$ (as they are defined by finiteness of one and the other respectively), as well as demonstrate condition~\ref{parAlgCommCond}.
For this, we need only observe
\begin{align*}
	r\left(\mu\left(\pi\right)\right) =& \int_{x\ge 0} x\, \mu\left(\pi\right)\left(dx\right) = \int_{x \ge 0} x \int_{p} p\left(dx\right)\, \pi\left(dp\right) = \int_{p} \left(\int_{x \ge 0} x\, p\left(dx\right)\right)\, \pi\left(dp\right)\\
	=& \int_{p} r\left(p\right)\, \pi\left(dp\right) = \int_{y \ge 0} y\, \left(r_*\pi\right)\left(dy\right) = r\left(r_*\left(\pi\right)\right) \qedhere
\end{align*}
\end{proof}

\begin{warning}\label{BochnerIntNotStrictAlg}
	One might hope that the integration map, perhaps defined on the distributions with finite first moment would make $\R$ a partial algebra as well.
	That is, one might conjecture that the analogous span $\left(P\R \hookleftarrow D' \xrightarrow{r} \R\right)$, where $D' \coloneqq \left\{ p\in P\R \sot \int_{\R}  \vert x\vert \,p\left(dx\right) < \infty \right\}$ (and $r\left(p\right) \coloneqq \E{p}$) would satisfy the conditions of \cref{parAlgChar}.
	However, this is false.\footnote{We leave open the question of whether there is a canonical way to make $\R$ a \emph{strict} partial algebra.}

	This span does indeed define a map of $\Par{\mathsf{BorelMeas}}$, and even satisfies the unit triangle condition. But the multiplication square only commutes up to restriction of domain.
	To be precise, we only have
	\[
		\begin{tikzcd}[sep=small]
& PPX \arrow[dr, "Pr"] \arrow[dl, "\mu"'] & \\
PX \arrow[dr, "r"'] & \le & PX \arrow[dl, "r"] \\ 
& X &
\end{tikzcd}
\]

Here, the left composite is defined on those $\pi \in PD'$ such that $\int_p\int_x \left|x\right|\, p\left(dx\right)\pi\left(dp\right) < \infty$, while the right composite is defined on those where $\int_p\left|\int_x x\, p\left(dx\right)\right| \pi\left(dp\right) < \infty$.
And indeed, the latter condition does not imply the former.

For instance, if $U_n$ is the uniform distribution on $\left[-n,n\right]$ for each $n$, we may consider mixtures $\pi \coloneqq \sum_n c_n\delta_{U_n} \in PD'$ for weights $c_n$ summing to $1$.
This satisfies the latter condition as each $\E{U_n} = 0$.
However $\int_x \left|x\right|\, U_n\left(dx\right) = n$, so $\int_p\int_x \left|x\right|\, p\left(dx\right) \pi\left(dp\right) = \sum_n nc_n$ which need not be finite (for instance, when $c_n \propto n^{-2}$).
\end{warning}

\subsection{Conditionals}
Conditionals also transfer naturally from a partializable Markov category to its partialization, and can in a sense be constructed on the ``maximum possible domain''.

\begin{proposition}\label{ConditionalsExist}
	Consider a partializable Markov category $\cC$ with conditionals.
	Given a morphism $u \colon A \to X\otimes Y$ in $\Par{\cC}$ represented by a span $\bigl(A \xhookleftarrow{i} D \xrightarrow{f} X \otimes Y\bigr)$, the conditional $u_{\vert X} \colon X \otimes A \to Y$ exists and is represented by the span $\bigl(X \otimes A \xhookleftarrow{X \otimes i} X \otimes D \xrightarrow{f_{\vert X}} Y\bigr)$.
	In particular, $\Par{\cC}$ has conditionals.
\end{proposition}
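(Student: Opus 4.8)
The plan is to verify directly that the proposed span satisfies the defining equation of a conditional (\cref{ConditionalsDefn}), now read inside $\Par{\cC}$, by reducing it on the domain to the corresponding equation for $f$ in $\cC$. First I would check that the proposed span is a legitimate morphism of $\Par{\cC}$. Since $\cC$ has conditionals and $f \colon D \to X \otimes Y$ is a morphism of $\cC$, a conditional $f_{\vert X} \colon X \otimes D \to Y$ exists in $\cC$. Moreover $X \otimes i = \id_X \otimes i$ is again a deterministic monomorphism: it is monic by the final axiom of \cref{def:partializable}, and deterministic because deterministic maps are closed under $\otimes$. Thus the span $\bigl(X \otimes A \xhookleftarrow{X \otimes i} X \otimes D \xrightarrow{f_{\vert X}} Y\bigr)$ represents a genuine morphism $X \otimes A \to Y$ of $\Par{\cC}$.

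Next I would compute the $X$-marginal of $u$ in $\Par{\cC}$. As $\id_X \otimes \discard_Y$ is total (it comes from $\cC$), post-composition only alters the right leg, so $(\id_X \otimes \discard_Y) \comp u$ is represented by $\bigl(A \xhookleftarrow{i} D \xrightarrow{m} X\bigr)$ with $m \coloneqq (\id_X \otimes \discard_Y) \comp f$ the $X$-marginal of $f$ in $\cC$. I would then substitute $u$, this marginal, and the proposed $u_{\vert X}$ into the right-hand side of the equation of \cref{ConditionalsDefn} interpreted in $\Par{\cC}$, and evaluate it as a span using the pullback composition rule of \cref{def:partial_C}. The copy maps $\cop_A$, $\cop_X$ and the intervening identities are all total, so the only genuine pullbacks are taken against the deterministic monomorphisms $i \otimes \id_A$ and $\id_{X \otimes X} \otimes i$. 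Using the canonical pullback squares supplied by \cref{corollary:SPTSUniversality}, each such pullback collapses back to $D$: the key point is that the copied $A$-datum eventually fed into $u_{\vert X}$ is produced through $i$, hence lands deterministically inside $D$ and factors through the monomorphism $\id_X \otimes i$. Consequently the apex of the right-hand composite is $D$ with left leg $i$, matching that of $u$.

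It then remains to identify the right leg. Tracing through the composite (now supported on the apex $D$), the right leg is exactly the $\cC$-morphism $(\id_X \otimes f_{\vert X}) \comp (\cop_X \otimes \id_D) \comp (m \otimes \id_D) \comp \cop_D$, which is precisely the equation of \cref{ConditionalsDefn} witnessing $f_{\vert X}$ as a conditional of $f$ in $\cC$, and so equals $f$. Since two spans with the same deterministic-monomorphism left leg are equal if and only if their right legs agree, the right-hand side equals $\bigl(A \xhookleftarrow{i} D \xrightarrow{f} X \otimes Y\bigr) = u$. Hence the conditional equation holds in $\Par{\cC}$ and $u_{\vert X}$ is a conditional. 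That $\Par{\cC}$ has conditionals follows, as every morphism into a tensor product is of the assumed form.

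The main obstacle is the pullback bookkeeping of the penultimate step: one must confirm that every pullback appearing in the composite collapses to the original domain $D$ rather than cutting it down. This is exactly where determinism of $i$ (and its closure under tensoring) enters — it guarantees that copying and re-feeding the $A$-input does not lose the information that the input lies in $D$, so that composing with $u_{\vert X}$ over the domain $X \otimes D$ leaves the domain of definition unchanged.
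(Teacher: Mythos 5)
Your proposal is correct and follows essentially the same route as the paper: both verify the conditional equation in $\Par{\cC}$ directly by computing the right-hand side as a composite of spans, using the pullback squares of \cref{corollary:SPTSUniversality} (the paper also invokes \cref{lemma:factorThroughMonoCart} and \cref{proposition:StablePullbackTensorStability} for two of the squares) to show the domain collapses to $D \xhookrightarrow{i} A$, and then identifying the right leg with the $\cC$-level conditional composite, which equals $f$.
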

\begin{proof}
	We compute the right side of \cref{ConditionalDefnEqn} (with $u$ substituted for $f$).
	This can be done by forming the diagram of pullback squares\footnote{We rotate the diagram for ease of reading. The domain inclusions run vertically whereas the ``right way maps'' run horizontal.}
	\[
		\begin{tikzcd}
			D \arrow[d, Rightarrow, no head] \arrow[r, "\cop", hook] \arrow[rd, "\lrcorner"{anchor=center, pos=0.125}, draw=none] & D\otimes D \arrow[d, "D \otimes i"', hook] \arrow[r, "f \otimes D"] \arrow[rd, "\lrcorner"{anchor=center, pos=0.125}, draw=none] & X \otimes Y \otimes D \arrow[d, "(X\otimes Y) \otimes i"', hook] \arrow[r, "\pi_{1,3}"] \arrow[rd, "\lrcorner"{anchor=center, pos=0.125}, draw=none] & X \otimes D \arrow[d, "X \otimes i"', hook] \arrow[r, "\cop \otimes D", hook] \arrow[rd, "\lrcorner"{anchor=center, pos=0.125}, draw=none] & X \otimes X \otimes D \arrow[d, "(X \otimes X) \otimes i", hook] \arrow[r, "X\otimes f_{\vert X}"] & X \otimes Y \\
			D \arrow[d, "i"', hook] \arrow[r, "(D \otimes i) \comp \cop", outer sep=2pt, hook] \arrow[rd, "\lrcorner"{anchor=center, pos=0.125}, draw=none] & D\otimes A \arrow[d, "i\otimes A", hook] \arrow[r, "f\otimes A"] & X \otimes Y \otimes A \arrow[r, "\pi_{1,3}"'] & X \otimes A \arrow[r, "\cop \otimes A"', hook] & X\otimes X \otimes A & \\
A \arrow[d, Rightarrow, no head] \arrow[r, "\cop"', hook] & A \otimes A & & & & \\
A & & & & &            
\end{tikzcd}
	\]
	Here the first square is a pullback by \cref{lemma:factorThroughMonoCart}, the second by \cref{proposition:StablePullbackTensorStability}, and the rest by \cref{corollary:SPTSUniversality}.

	Thus, the domain of the right hand side can be identified with $i$.
	Similarly, the ``right way map'' can be identified with the composite of the right side of \cref{ConditionalDefnEqn} (with $f$ as itself).
	So the ``right way map'' is $f$, and thus the right hand side is the morphism $\left(D,i,f\right) = u$ as desired.
\end{proof}

\begin{corollary}
	Consider a partializable Markov category $\cC$ with conditionals.
	Then $\Par{\cC}$ is a partial Markov category in the sense of Di Lavore and Rom\'an in~\cite[Definition 3.2]{dilavore2023evidential} (and the weaker sense of~\cite[Definition 3.1]{dilavore2024partial}).

	In particular, $\Par{\BorelStoch}$, $\Par{\FinStoch}$, $\Par{\cat{Dist}}$, and $\Par{\cat{SetMulti}}$ are all partial Markov categories (although as mentioned in \cref{parCatsExmpsDiff}, different from the examples of~\cite{dilavore2023evidential,dilavore2024partial}).\footnote{Fritz noted in~\cite[Examples 11.6 and 11.7]{fritz2019synthetic} that $\FinStoch$ and $\BorelStoch$ have conditionals (although for the special case of states Cho and Jacobs had already pointed out in~\cite[Theorem 3.11]{chojacobs2019strings} that existence was a consequence of classical results on regular conditional probabilities~\cite{faden1985conditional}). Di Lavore and Rom\'an remark in~\cite[Remark A.3]{dilavore2023evidential} that conditionals exist in $\Dist$, extending the familiar construction for states that appeared in Cho and Jacobs'~\cite[Example 3.6]{chojacobs2019strings}. Fritz and Klingler have demonstrated that the analogous construction produces conditionals in $\SetMulti$ in~\cite[Proposition 13]{fritz2022dseparation} (this is Proposition 16 in the arXiv version).}
\end{corollary}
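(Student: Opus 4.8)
The plan is to read this off from the preceding \cref{ConditionalsExist} together with the definition of a partial Markov category recalled in \cref{partialMarkovCats}. A partial Markov category, in either cited sense, is nothing more than a CD category equipped with conditionals; since \cref{proposition:SpanMonStr} and the discussion following it already exhibit $\Par{\cC}$ as a CD category, everything reduces to the existence of conditionals, which is exactly what \cref{ConditionalsExist} provides. For the weaker notion of~\cite[Definition 3.1]{dilavore2024partial} this already suffices.

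The one point that genuinely requires care — and the closest thing to an obstacle — is that Di Lavore and Rom\'an phrase conditionals more liberally than our \cref{ConditionalsDefn}, allowing the marginal in \cref{ConditionalDefnEqn} to be replaced by an arbitrary morphism of the appropriate type. The key step is therefore to invoke the equivalence of the two notions in a quasi-Markov category, recorded in the remark after \cref{ConditionalsDefn} via~\cite[Theorem 3.14]{dilavore2023evidential}. As $\Par{\cC}$ is quasi-Markov by \cref{partializationsQuasiMarkov}, the conditionals produced by \cref{ConditionalsExist} are conditionals in their sense as well, so $\Par{\cC}$ is a partial Markov category in the stronger sense of~\cite[Definition 3.2]{dilavore2023evidential}, and a fortiori in the weaker one.

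For the named examples I would simply check the two hypotheses of \cref{ConditionalsExist}, namely partializability and the existence of conditionals in the underlying Markov category. Partializability of $\BorelStoch$, $\FinStoch$, $\Dist$, and $\SetMulti$ has already been established in \cref{example:BorelStochPartializable,FinStochPartializable,distAndMultiPartializable,SetMultiPartializable}, while the existence of conditionals is classical and collected from the literature: $\FinStoch$ and $\BorelStoch$ in~\cite{fritz2019synthetic}, $\Dist$ in~\cite{dilavore2023evidential}, and $\SetMulti$ in~\cite{fritz2022dseparation}. Applying \cref{ConditionalsExist} to each then transfers conditionals to the corresponding partialization, and the first part of the corollary identifies these as partial Markov categories. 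I would keep the write-up short, essentially a chain of citations, since all the mathematical content lives in \cref{ConditionalsExist} and in the earlier verifications of partializability.
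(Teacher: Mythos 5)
Your proposal is correct and follows exactly the route the paper intends: the corollary is an immediate consequence of \cref{ConditionalsExist} once one knows that a partial Markov category is a CD category with conditionals, with the subtlety about Di Lavore and Rom\'an's more liberal notion of conditional resolved by the equivalence in quasi-Markov categories (the remark after \cref{ConditionalsDefn}, applicable since $\Par{\cC}$ is quasi-Markov by \cref{partializationsQuasiMarkov}), and the examples handled by the same literature citations the paper collects in its footnote.
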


\begin{remark}
	Another quasi-Markov category with conditionals (and hence a quasi-Markov partial Markov category) is the usual category of relations $\Rel$ (or more generally, bicategories of relations~\cite[Proposition 2.14]{dilavore2025OrderPartialMarkov}).
	However as remarked earlier (\cref{restPositivityRequirement}), $\Rel$ is not poset-enriched under the restriction partial order of \cref{def:dom_ext}, and thus cannot be positive (\cref{positiveQuasiMarkovEnriched}).
	In particular, unlike Markov categories~\cite[Lemma 11.24]{fritz2019synthetic}, quasi-Markov (and hence CD) categories with conditionals need not be positive!

On the other hand, a property equivalent to positivity for Markov categories is ``deterministic marginal independence''~\cite[Definition 2.4 and Theorem 2.8]{fritz2022dilations}.
For quasi-Markov categories, ``copyable marginal independence'' (the direct generalization of the aforementioned DMI) is still implied by positivity (the argument of~\cite[Proposition 12.4]{fritz2019synthetic} still applies).
However, it can also be shown to be a consequence of the existence of conditionals (and thus can no longer imply positivity, as it would hold for categories like $\Rel$).

	Fortunately, $\Par{\cC}$ is positive irrespective of conditionals.
\end{remark}

\subsection{Idempotent partial morphisms}
We conclude this section with a characterization of idempotent morphisms in $\Par{\cC}$, and a characterization of those idempotent morphisms with properties of importance in categorical probability.
In particular, we are interested in splitting, balance, strength, and being static.

\begin{proposition}\label{proposition:IdempSpans}
	Consider a partializable Markov category $\cC$.
	A morphism $\left(X \xhookleftarrow{i} A \xrightarrow{f} X\right)$ is idempotent if and only if it can be represented by a span $X \xhookleftarrow{i} A \xrightarrow{i\comp e} X$ for an idempotent $e$ of $A$ in $\cC$.
	In other words, the idempotent partial maps are those that act as idempotents on their domain.
\end{proposition}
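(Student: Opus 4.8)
The plan is to reduce both directions to a single pullback computation, the key input being that the domain inclusion $i$ is a (deterministic) monomorphism, so that it can be left-cancelled (\cref{DetMonoLeftCancellation}). First I would record the following observation: \emph{if} the right-way map already factors through $i$, say $f = i \comp e$ for some $e \colon A \to A$, then the pullback of $f$ along $i$ needed to compute $u \comp u$ is represented by $A$ itself, with the two projections being $\id_A$ (onto the first apex) and $e$ (onto the second). Indeed, given a competing cone $T \to A$, $T \to A$ with $f$ and $i$ agreeing after postcomposition, cancelling $i$ shows the cone is determined by its second component, and $A$ with projections $(\id_A, e)$ has the required universal property. Composing the legs as in \cref{def:partial_C} then yields $u \comp u = \bigl(A,\ i,\ i\comp e\comp e\bigr)$. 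This settles the backward direction at once: if moreover $e \comp e = e$, then $u\comp u = (A,i,i\comp e) = u$.

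For the forward direction the real content is to show that idempotency forces $f$ to factor through $i$. I would form $u\comp u$ via the pullback of $f$ along $i$, obtaining a span $\bigl(E,\ i\comp p,\ f\comp q\bigr)$, where $p \colon E \to A$ is the base change of $i$ along $f$ (a deterministic monomorphism by the partializability axioms, \cref{def:partializable}) and the pullback relation reads $i \comp q = f \comp p$. The hypothesis $u\comp u = u$ supplies an isomorphism $\alpha$ of apices commuting with the legs; comparing the wrong-way legs gives $i \comp p = i \comp \alpha$, and cancelling the monomorphism $i$ yields $p = \alpha$. Hence $p$ is invertible. Setting $e \coloneqq q \comp p^{-1} \colon A \to A$, the pullback relation gives $i \comp e = i \comp q \comp p^{-1} = f \comp p \comp p^{-1} = f$, so indeed $f = i \comp e$. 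Finally, applying the computation of the first paragraph, $u\comp u = (A, i, i\comp e\comp e)$; equating this with $u = (A,i,i\comp e)$ produces an isomorphism $\theta$ of apices with $i\comp\theta = i$ and $i\comp e\comp\theta = i\comp e\comp e$, and two applications of left-cancellation of $i$ give first $\theta = \id_A$ and then $e\comp e = e$.

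The single nontrivial step is thus the extraction of the factorization $f = i\comp e$ in the forward direction, which hinges entirely on $p$ being an isomorphism; everything else is routine pullback bookkeeping, with the existence of the relevant pullbacks and the fact that $p$ is again a deterministic monomorphism (needed only so that the composite is a legitimate span) guaranteed by \cref{def:partializable}. I would also remark in passing that $e$ need not be deterministic: it is obtained from the stochastic right-way leg $f$, matching the intuition (as in $\Par{\BorelStoch}$) that an idempotent partial endomorphism must send its domain back into its domain, but may do so stochastically.
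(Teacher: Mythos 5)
Your proof is correct and follows essentially the same route as the paper: the paper packages your pullback computation into an auxiliary lemma (\cref{lemma:ParInvState}, showing $(B,j,g)\comp(A,i,f)=(A,i,f)$ iff $f$ factors as $j\comp h = g\comp h$, proved by exactly your argument that left-cancelling the monomorphism $i$ against the witnessing isomorphism forces the left pullback projection to be invertible) and then specializes it, deriving $e\comp e = e$ from $f = f\comp e$ by the same two cancellations of $i$. The only cosmetic difference is that the paper invokes \cref{lemma:factorThroughMonoCart} to recognize your explicit cone $(\id_A, e)$ as a pullback, rather than verifying the universal property by hand.
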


We prove the statement by means of the following lemma.

\begin{lemma}\label{lemma:ParInvState}
	Consider a partializable Markov category $\cC$ and morphisms $X \xhookleftarrow{i} A \xrightarrow{f} Y$, $Y \xhookleftarrow{j} B \xrightarrow{g} Y$.
	Then, $\left(B,j,g\right) \comp \left(A,i,f\right) = \left(A,i,f\right)$ if and only if $f$ factors through $j$ via a $A \xrightarrow{h} B$ in $\cC$ such that $f = j \comp h = g \comp h$.
\end{lemma}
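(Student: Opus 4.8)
The plan is to work directly with the pullback defining the composite $\left(B,j,g\right) \comp \left(A,i,f\right)$. Writing the pullback of $f$ along $j$ as
\[
\begin{tikzcd}[sep=small, ampersand replacement=\&]
	\& C \arrow[ld, "u"', hook] \arrow[rd, "v"] \arrow["\lrcorner"{anchor=center, pos=0.125, rotate=-45}, draw=none, dd] \& \\
	A \arrow[rd, "f"'] \& \& B \arrow[ld, "j", hook] \\
	\& Y \&
\end{tikzcd}
\]
the composite is the span $\left(C, i \comp u, g \comp v\right)$, and by condition~\ref{it:pullback_cond} of \cref{def:partializable} the map $u$ is again a deterministic monomorphism, being a pullback of the deterministic monomorphism $j$. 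Both directions of the biconditional then reduce to translating the span equivalence $\left(C, i \comp u, g \comp v\right) = \left(A,i,f\right)$ into the existence of the factorization $h$, which I would realize as $v$ composed with the equivalence-witnessing isomorphism.

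For the backward direction, suppose $h \colon A \to B$ satisfies $f = j \comp h = g \comp h$. Then $f \comp \id_A = j \comp h$ is a commuting cone over the cospan $A \xrightarrow{f} Y \xhookleftarrow{j} B$, so the universal property of the pullback yields a unique $\psi \colon A \to C$ with $u \comp \psi = \id_A$ and $v \comp \psi = h$. I would then check that $\psi$ commutes with both legs: $\left(i \comp u\right) \comp \psi = i$ and $\left(g \comp v\right) \comp \psi = g \comp h = f$. It remains to upgrade $\psi$ to an isomorphism, which is exactly where I would use that $u$ is a monomorphism possessing a section: from $u \comp \left(\psi \comp u\right) = \left(u \comp \psi\right) \comp u = u = u \comp \id_C$ and left-cancellation of the monomorphism $u$ we get $\psi \comp u = \id_C$, so $\psi$ is inverse to $u$. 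Hence $\psi$ witnesses $\left(C, i \comp u, g \comp v\right) = \left(A,i,f\right)$.

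For the forward direction, suppose the composite equals $\left(A,i,f\right)$, witnessed by an isomorphism $\phi \colon A \to C$ (inverting the given witness if it points the other way) with $\left(i \comp u\right) \comp \phi = i$ and $\left(g \comp v\right) \comp \phi = f$. Left-cancelling the monomorphism $i$ in the first equation gives $u \comp \phi = \id_A$. I would then set $h \coloneqq v \comp \phi \colon A \to B$. The identity $g \comp h = \left(g \comp v\right) \comp \phi = f$ is immediate from the second equation, while $j \comp h = \left(j \comp v\right) \comp \phi = \left(f \comp u\right) \comp \phi = f \comp \id_A = f$ follows from the defining commutativity $j \comp v = f \comp u$ of the pullback square together with $u \comp \phi = \id_A$. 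This produces the desired $h$ with $f = j \comp h = g \comp h$.

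The only genuinely delicate point is the isomorphism argument in the backward direction: span equivalence demands an honest isomorphism of apices, not merely a mediating map commuting with the legs, and this is supplied precisely by the ``a monomorphism with a section is invertible'' observation, leaning on the fact that $u$ is a deterministic \emph{monomorphism} guaranteed by the pullback-stability axiom. Everything else is a routine diagram chase using left-cancellation of the deterministic monomorphisms $i$ and $u$ together with the defining square of the pullback.
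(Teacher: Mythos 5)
Your proof is correct and follows essentially the same route as the paper: both work directly with the defining pullback of the composite and use monicity of the left leg $i$ to force the equivalence-witnessing isomorphism to be (inverse to) the pullback projection $u$. The only cosmetic difference is that the paper, having observed that $u$ is invertible, re-presents the pullback with identity left leg and invokes \cref{lemma:factorThroughMonoCart}, whereas you reconstruct the mediating isomorphism by hand via the universal property and the ``mono with a section is invertible'' observation — the content is the same.
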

\begin{proof}[Proof of \Cref{lemma:ParInvState}]
	The composite $\left(B,j,g\right) \comp \left(A,i,f\right)$ can be computed by forming the following pullback.
	\[
	\begin{tikzcd}[row sep=small]
		& & C \arrow[ld, "j'"', hook] \arrow[rd, "f'"] \arrow["\lrcorner"{anchor=center, pos=0.125, rotate=-45}, draw=none, dd]& & \\
		& A \arrow[rd, "f"'] \arrow[ld, "i", hook] & & B \arrow[rd, "g"] \arrow[ld, "j", hook] & \\
		X & & Y & & Y
	\end{tikzcd}
	\]
	The resulting span is equivalent to $\left(A,i,f\right)$ if and only if there is an invertible $t\colon C \to A$ such that $i\comp t = i \comp j'$ and $f \comp t = g \comp f'$.
	In such a scenario, $j' = t$ is invertible, so we can equivalently replace $\left(C,j',f'\right)$ with a pullback
	\[
		\begin{tikzcd}[sep = small]
		& A \arrow[rd, "h"] \arrow[ld, Rightarrow, no head] \arrow["\lrcorner"{anchor=center, pos=0.125, rotate=-45}, draw=none, dd] & \\
		A \arrow[rd, "f"'] & & B \arrow[ld, "j", hook] \\
		& Y &
	\end{tikzcd}
	\]
		for some $h$.
	Since any such commuting square is a pullback (\cref{lemma:factorThroughMonoCart}), we see that the equation in the hypothesis is equivalent to the existence of a $A \xrightarrow{h} B$ such that $f = j\comp h$ and $f = g\comp h$.
\end{proof}

\begin{proof}[Proof of \Cref{proposition:IdempSpans}]
	Applying \cref{lemma:ParInvState}, we see that the idempotence of $\left(A,i,f\right)$ is equivalent to the existence of a $A \xrightarrow{h} A$ such that $f = i\comp h$ and $f = f\comp h$.
	Given the first (and the monicity of $i$), the second equation merely expresses the idempotence of $h$.
\end{proof}

Recall now that an idempotent morphism $e\colon X\to X$ in a category is \emph{split} if it can be written in the form $e=s\comp r$ for some $s\colon E\to X$ and $r\colon X\to E$ such that $r\comp s=\id_E$.

\begin{remark}[(i) of~{\cite[Lemma 2.2]{cockettlack2002partialmaps}}]\label{remark:MonomorphismsTotal}
	Every monomorphism in a restriction category is total.
	In particular, every monomorphism of $\Par{\cC}$ is total, and therefore the inclusion of a (mono)morphism of $\cC$.
	In fact, this is true of any quasi-Markov category as has been demonstrated in the companion work on empirical sampling~\cite[Lemma 2.6]{fritz2025empirical}.

	Consequently, if a map in $\Par{\cC}$ has a section, the section is the inclusion of a map in $\cC$.
	In particular, if an idempotent in $\Par{\cC}$ splits as a section--retraction pair, the section is the inclusion of a map in $\cC$.
\end{remark}

\begin{lemma}\label{lemma:PartialRetracts}
	Consider a morphism $\left(Y \xhookleftarrow{i} A \xrightarrow{r} X\right)$ in $\Par{\cC}$ for a partializable Markov category $\cC$.
	Then, a morphism $s\colon X \to Y$ of $\cC$ includes to a section\footnote{By \cref{remark:MonomorphismsTotal}, this describes all sections in $\Par{\cC}$.} of $\left(A,i,r\right)$ if and only if $s$ factors through $i$ via a section $s'\colon X \to A$ of $r\colon A \to X$.
\end{lemma}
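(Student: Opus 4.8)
The plan is to reduce the statement to the computation of a single pullback, mirroring the argument of \cref{lemma:ParInvState}. Since $\left(Y \xhookleftarrow{i} A \xrightarrow{r} X\right)$ is a morphism $Y \to X$ of $\Par{\cC}$, a section of it is by definition a morphism $s \colon X \to Y$ with $\left(A,i,r\right) \comp s = \id_X$; by \cref{remark:MonomorphismsTotal} any such section is total, so it is legitimate to take $s$ to be (the inclusion of) a morphism of $\cC$ as in the statement. First I would compute the composite $\left(A,i,r\right) \comp \left(X,\id,s\right)$ by forming the pullback of the right leg $s \colon X \to Y$ of the first span along the domain inclusion $i \colon A \hookrightarrow Y$ of the second.

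This pullback exists and its leg into $X$ is again a deterministic monomorphism, by the defining properties of a partializable Markov category; denote its apex by $P$ and its legs by $u \colon P \to X$ and $v \colon P \to A$, so that $i \comp v = s \comp u$. The composite is then represented by the span $\left(P, u, r\comp v\right)$. The next step is to record when this equals the identity span $\left(X,\id,\id\right)$: by the definition of equivalence of spans this holds precisely when $u$ is an isomorphism and $r \comp v = u$.

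The two implications then follow quickly. For the forward direction, assuming the composite is $\id_X$, I would set $s' \coloneqq v \comp u^{-1} \colon X \to A$; the pullback relation $i \comp v = s \comp u$ gives $i \comp s' = s$, while $r \comp v = u$ gives $r \comp s' = \id_X$, exhibiting $s'$ as a section of $r$ through which $s$ factors. For the converse, given a section $s' \colon X \to A$ of $r$ with $i \comp s' = s$, I would observe that the commuting square witnessing $i \comp s' = s \comp \id_X$ is a pullback by \cref{lemma:factorThroughMonoCart} (since $i$ is monic and $s$ factors through it), and that using this choice of pullback to compute the composite yields the span $\left(X, \id, r\comp s'\right) = \left(X,\id,\id\right) = \id_X$. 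The only real bookkeeping obstacle is this backward appeal to \cref{lemma:factorThroughMonoCart} to identify the factoring square as the relevant pullback, together with the usual care over span-composition and span-equivalence conventions; once the composite span is written down, everything else is immediate.
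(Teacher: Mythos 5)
Your proposal is correct and follows essentially the same route as the paper: compute $\left(A,i,r\right)\comp s$ via the pullback of $s$ along $i$, then observe that the composite equals $\id_X$ precisely when that pullback can be replaced (using \cref{lemma:factorThroughMonoCart}) by a square with apex $X$, legs $\id_X$ and $s'$, satisfying $i\comp s' = s$ and $r\comp s' = \id_X$. The only difference is presentational — you spell out the isomorphism-of-spans bookkeeping that the paper delegates to the analogous step in \cref{lemma:ParInvState}.
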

\begin{proof}
	The composition $\left(A,i,r\right) \comp \left(X,\id_X,s\right)$ can be performed by forming the pullback 
	\[
	\begin{tikzcd}[row sep=small]
		& & B \arrow[ld, "j"', hook] \arrow[rd, "g"] \arrow["\lrcorner"{anchor=center, pos=0.125, rotate=-45}, draw=none, dd]& & \\
		& X \arrow[rd, "s"'] \arrow[ld, Rightarrow, no head] & & A \arrow[rd, "r"] \arrow[ld, "i", hook] & \\
		X & & Y & & X
	\end{tikzcd}
	\]
	As with the proof of \cref{lemma:ParInvState}, the equation $\left(A,i,r\right) \comp s = \id_X$ can be restated as the existence of a commutative square
	\[
		\begin{tikzcd}[sep = small]
		& X \arrow[rd, "s'"] \arrow[ld, Rightarrow, no head] \arrow["\lrcorner"{anchor=center, pos=0.125, rotate=-45}, draw=none, dd] & \\
		X \arrow[rd, "s"'] & & A \arrow[ld, "i", hook] \\
		& Y &
	\end{tikzcd}
	\]
	which is always a pullback, such that $r\comp s' = \id_A$.
\end{proof}

\begin{proposition}\label{proposition:SplitParIdemps}
	Consider a partializable Markov category $\cC$ and an idempotent $m$ of $\Par{\cC}$, represented by $X \xhookleftarrow{i} A \xrightarrow{i\comp e} X$ for an idempotent $e$ as in \cref{proposition:IdempSpans}.
	The idempotent $m$ splits if and only if $e$ does.
	
	If $e$ splits as $s\comp r$ for a section-retraction pair $\left(s,r\right)$ in $\cC$, the splitting of $m$ is given by the section-retraction pair $\big(\left(Y,\id_Y,i\comp s\right),\left(A,i,r\right)\big)$.
\end{proposition}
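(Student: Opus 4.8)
The plan is to prove the biconditional in two directions, dispatching the explicit ``if'' direction by a direct pullback computation and the ``only if'' direction by reducing to the characterization of sections in \cref{lemma:PartialRetracts} (in the spirit of \cref{lemma:ParInvState}).

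First I would verify that the proposed pair $\big((Y,\id_Y,i\comp s),(A,i,r)\big)$ splits $m$ whenever $e=s\comp r$ with $r\comp s=\id_Y$ in $\cC$. Both composites are formed by pullback as in \cref{def:partial_C}. For $(A,i,r)\comp(Y,\id_Y,i\comp s)\colon Y\to Y$ one pulls back the right leg $i\comp s$ along the left leg $i$; since $i\comp s$ factors through the monomorphism $i$ via $s$, \cref{lemma:factorThroughMonoCart} identifies this pullback with apex $Y$, so the composite span is $(Y,\id_Y,r\comp s)=(Y,\id_Y,\id_Y)=\id_Y$. For $(Y,\id_Y,i\comp s)\comp(A,i,r)\colon X\to X$ one pulls back $r$ along $\id_Y$, which is trivial, so the composite is $(A,i,(i\comp s)\comp r)=(A,i,i\comp e)=m$. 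This gives the stated splitting.

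For the converse, suppose $m$ splits in $\Par{\cC}$ as $m=\sigma\comp\rho$ with $\rho\comp\sigma=\id_E$, where $\rho\colon X\to E$ and $\sigma\colon E\to X$. By \cref{remark:MonomorphismsTotal} the section $\sigma$ is total, hence the inclusion of some $\sigma_0\colon E\to X$ in $\cC$; write $\rho=(X\xhookleftarrow{p}P\xrightarrow{q}E)$. Applying \cref{lemma:PartialRetracts} to the equation $\rho\comp\sigma=\id_E$ (which asserts that $\sigma$ includes to a section of $\rho$) produces a section $\sigma_0'\colon E\to P$ of $q$ with $\sigma_0=p\comp\sigma_0'$. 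Separately, computing $\sigma\comp\rho$ as a pullback along an identity yields $(P,p,\sigma_0\comp q)$, and equating this with $m=(A,i,i\comp e)$ gives an isomorphism $\phi\colon P\xrightarrow{\simeq}A$ with $p=i\comp\phi$ and $\sigma_0\comp q=i\comp e\comp\phi$. Setting $w:=\phi\comp\sigma_0'\colon E\to A$ and $\tilde r:=q\comp\phi^{-1}\colon A\to E$, I would then check $\tilde r\comp w=q\comp\sigma_0'=\id_E$, and cancel the monomorphism $i$ in the identity $i\comp\phi\comp\sigma_0'\comp q=\sigma_0\comp q=i\comp e\comp\phi$ to conclude $w\comp\tilde r=e$. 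Thus $e$ splits in $\cC$.

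The main obstacle is the bookkeeping in the converse: one must choose representatives so that the span apices and legs agree on the nose (exactly the manoeuvre used in \cref{lemma:ParInvState}), and keep careful track of which pullback legs are monomorphisms — using that legs obtained as pullbacks of deterministic monomorphisms are again (deterministic) monic, so that a leg which is in addition split epic is invertible. Once these identifications are in place, every remaining equation reduces to cancelling a monomorphism in $\cC$, and the splitting $e=w\comp\tilde r$ with $\tilde r\comp w=\id_E$ falls out directly.
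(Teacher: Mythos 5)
Your proof is correct and follows essentially the same route as the paper: the forward direction is the same direct pullback computation (using that $i\comp s$ factors through the monomorphism $i$, so the pullback apex is $Y$), and the converse likewise reduces to \cref{remark:MonomorphismsTotal} and \cref{lemma:PartialRetracts}, with your explicit isomorphism $\phi$ merely spelling out the identification of representatives that the paper performs implicitly before cancelling the monomorphism $i$.
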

\begin{proof}
	If $e$ splits as in the hypotheses, it is an immediate consequence of the fact that monomorphisms pull back along themselves to the identity that the proposed section-retraction pair do indeed form a splitting of $m$.
	
	Conversely, if $m$ splits, then as in \cref{lemma:PartialRetracts} the splitting is given by a retraction $\left(A,i,r\right)$ and section $i \comp s$ in $\Par{\cC}$ such that $s$ is a section of $r$ in $\cC$.
	Then, the equality $\left(A,i,i\comp e\right) = i\comp s \comp \left(A,i,r\right) = \left(A,i, i\comp s\comp r\right)$ implies that $e = s\comp r$, as claimed.
\end{proof}
\begin{example}\label{Par(BorelStoch)Split}
	Every idempotent in $\Par{\BorelStoch}$ splits, with splittings induced by those in $\BorelStoch$~\cite[Corollary 4.4.5]{fritz2023supports}.
\end{example}

\begin{lemma}\label{lemma:PartialIdemp2trans}
	Consider a partializable Markov category $\cC$ and an idempotent $m$ of $\Par{\cC}$ represented by $\left(A,i,i\comp e\right)$ for an idempotent $e$ of $\cC$.
	Then
	\[
		\tikzfig{varepsilon2transitions}
	\]
	is represented by $\big(A, i,\left(i \otimes i\right)\comp \tilde e\big)$, where $\tilde e$ is the corresponding
	\[
		\tikzfig{e2transitions}
	\]
\end{lemma}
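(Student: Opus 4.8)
The plan is to compute the displayed composite directly in terms of spans, using the explicit span descriptions of the copy--discard structure and of composition in $\Par{\cC}$ established in \cref{proposition:SpanMonStr,proposition:ParCDDeterminism}. The construction $\tilde e = \tikzfig{e2transitions}$ is assembled from an endomorphism together with the copy and tensor operations, landing in a twofold tensor; correspondingly $\tikzfig{varepsilon2transitions}$ is the same assembly applied to the idempotent $m = \left(A, i, i \comp e\right)$ inside $\Par{\cC}$. Since $m$ has domain inclusion $i$, the guiding principle is that each copy or tensor operation preserves this domain inclusion, so that the apex of the resulting span remains $A$ with inclusion $i$, while only the ``right way map'' records the computation.

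First I would peel off the operations that are total (the copies and the parallel applications of $m$ producing the doubled output). Post-composing a span $\left(A, i, g\right)$ with a total morphism $t$ of $\cC$ yields $\left(A, i, t \comp g\right)$, so these steps leave the apex and domain inclusion untouched and act only on the right way map. Crucially, because $i$ is a \emph{deterministic} monomorphism it is both copyable and total, so the copy map commutes past it: $\cop_X \comp i = \left(i \otimes i\right) \comp \cop_A$. This is precisely the identity that lets me migrate each occurrence of $i$ out to the output side of the diagram.

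The one genuinely non-trivial step is the internal composition of $m$ (or $m \otimes m$) against an intermediate span whose right way map has already been arranged to factor through the relevant deterministic monomorphism ($i$ or $i \otimes i$). Here the pullback defining composition in $\Par{\cC}$ (as in \cref{corollary:SPTSUniversality}) collapses: by \cref{lemma:factorThroughMonoCart} a morphism factoring through a monomorphism exhibits the commuting square as a pullback with apex the source, so the composite span again has apex $A$ and domain inclusion $i$; the tensor-stability of \cref{proposition:StablePullbackTensorStability} handles the doubled wires identically. I expect this pullback bookkeeping to be the main obstacle, since at each stage one must verify that the accumulated right way map indeed factors through $i$ (respectively $i \otimes i$) --- but this factorization is exactly what the determinism of $i$ supplies.

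Once every intermediate pullback has been identified with $A$ and every copy pushed past $i$, the right way map of the total composite is the $\cC$-internal construction $\tilde e$ performed on the endomorphism $e \colon A \to A$, post-composed with $i \otimes i$, yielding the claimed representative $\bigl(A, i, \left(i \otimes i\right) \comp \tilde e\bigr)$. Concretely, for a shape such as $\left(m \otimes m\right) \comp \cop \comp m$ one first computes $\cop_X \comp m = \left(A, i, \left(i \otimes i\right) \comp \cop_A \comp e\right)$, and then composing against $m \otimes m$ collapses to apex $A$ with right way map $\left(i \otimes i\right) \comp \left(e \otimes e\right) \comp \cop_A \comp e$, exhibiting $\tilde e = \left(e \otimes e\right) \comp \cop_A \comp e$ as required.
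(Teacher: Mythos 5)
Your proposal is correct and follows essentially the same route as the paper: the composite is evaluated by iterated span pullbacks, with \cref{lemma:factorThroughMonoCart} collapsing the pullback whenever the accumulated right way map factors through the deterministic monomorphism $i$ (or $i\otimes i$), and \cref{corollary:SPTSUniversality}/\cref{proposition:StablePullbackTensorStability} handling the copy/tensor legs, so that the apex stays $A$ with inclusion $i$ throughout. The only discrepancy is in your guess of the hidden figure: the ``two transitions'' diagram is $\left(m\otimes\id_X\right)\comp\cop_X\comp m$ with $\tilde e=\left(e\otimes\id_A\right)\comp\cop_A\comp e$ (the term relevant to the balance characterization), not $\left(m\otimes m\right)\comp\cop\comp m$, but your computation adapts verbatim to this shape and yields the stated representative $\bigl(A,i,\left(i\otimes i\right)\comp\tilde e\bigr)$.
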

\begin{proof}
	\[
		\tikzfig{varepsilon2transitions}
	\]
	can be computed by forming the pullbacks
	\[
		\begin{tikzcd}[sep= small]
 & & & A \arrow[ld, Rightarrow, no head] \arrow["\lrcorner"{anchor=center, pos=0.125, rotate=-45}, draw=none, dd] \arrow[rd, "e"] & & & \\
 & & A \arrow[ld, Rightarrow, no head] \arrow["\lrcorner"{anchor=center, pos=0.125, rotate=-45}, draw=none, dd]\arrow[rd, "i\comp e"] & & A \arrow[ld, "i", hook] \arrow[rd, "\left(A\otimes i\right) \comp \cop"] \arrow["\lrcorner"{anchor=center, pos=0.125, rotate=-45}, draw=none, dd] & & \\
 & A \arrow[ld, "i"', hook] \arrow[rd, "i\comp e"'] & & X \arrow[rd, "\cop"'] \arrow[ld, Rightarrow, no head] & & A \otimes X \arrow[ld, "i \otimes X", hook] \arrow[rd, "\left(i\comp e\right)\otimes X"] & \\
X & & X & & X \otimes X & & X
\end{tikzcd}
	\]
	the last pullback being an instance of \cref{corollary:SPTSUniversality}.
\end{proof}

\begin{proposition}\label{proposition:PartialIdempStaticStrongBalanced}
	Consider a partializable Markov category $\cC$, and an idempotent $m$ in $\Par{\cC}$ represented by $\left(A,i,i\comp e\right)$ for an idempotent $e$ of $\cC$.
	Then, $m$ is static/strong/balanced (in the sense of~\cite[Definition 4.1.1]{fritz2023supports}) if and only if $e$ is.
\end{proposition}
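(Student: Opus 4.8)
The plan is to compute both sides of each defining equation as spans and reduce the identity in $\Par{\cC}$ to the corresponding identity for $e$ in $\cC$ by cancelling a monomorphism. By \cref{proposition:IdempSpans} I may take $m = \bigl(X \xhookleftarrow{i} A \xrightarrow{i\comp e} X\bigr)$ for an idempotent $e$ of $A$ in $\cC$. The term common to the left-hand side of all three conditions is exactly the two-transition diagram, so by \cref{lemma:PartialIdemp2trans} it is represented by the span $\bigl(A, i, (i\otimes i)\comp\tilde e\bigr)$, where $\tilde e$ is the two-transition of $e$.

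First I would compute the right-hand term of each of the balanced, static, and strong conditions for $m$ as a span, using the same iterated-pullback bookkeeping as in the proof of \cref{lemma:PartialIdemp2trans}. Each such diagram is assembled from $m$ together with copy and discard maps; since copy and discard are total and $i$, being deterministic, is a comonoid homomorphism (so that $\cop_X \comp i = (i\otimes i)\comp\cop_A$), every map feeding into an occurrence of $m$ factors through the appropriate tensor power of $i$. Consequently each intermediate square has a leg equal to $i$ (or $i \otimes i$) and is automatically a pullback by \cref{lemma:factorThroughMonoCart}, with the requisite stability and universality supplied by \cref{proposition:StablePullbackTensorStability,corollary:SPTSUniversality}. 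Because $e$ is an endomorphism of $A$, the apex never shrinks below $A$, so each right-hand term for $m$ is again represented by a span with apex $A$, domain inclusion $i$, and right-way map $(i\otimes i)$ post-composed with the corresponding right-hand term of the condition applied to $e$.

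It then remains to observe that two spans sharing the domain inclusion $i$ are equal exactly when their right-way maps coincide. Hence $m$ satisfies any one of the three conditions if and only if $(i\otimes i)\comp\tilde e$ equals $(i\otimes i)$ composed with the corresponding term for $e$; and since $i$ is a deterministic monomorphism closed under tensoring (\cref{def:partializable}), the map $i\otimes i$ is itself monic and may be cancelled, leaving precisely the defining identity for $e$. The main obstacle will be the span computations for the static and strong right-hand terms, which contain more occurrences of $m$ and of copying than the two-transition diagram and therefore demand longer (but entirely routine) pullback diagrams; no idea beyond \cref{lemma:PartialIdemp2trans} is needed.
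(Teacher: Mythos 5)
Your proposal is correct and follows essentially the same route as the paper: the paper likewise computes the right-hand term of the strong condition as a pullback yielding the span $\bigl(A, i, ((i\comp e)\otimes(i\comp e))\comp\cop_A\bigr)$, invokes \cref{lemma:PartialIdemp2trans} for the left-hand term, and concludes by cancelling the monomorphism $i\otimes i$, treating the static and balanced cases as analogous. The only cosmetic difference is that the paper writes out just the strong case in detail and notes that balance reduces to \cref{lemma:PartialIdemp2trans} and its symmetric counterpart via an equivalent characterization, whereas you describe the uniform pattern for all three.
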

\begin{proof}
	We show the claim for strong idempotents, the cases for the other two are analogous\footnote{The case for static idempotents is arguably even simpler. In light of the equivalent characterization of balance in~\cite[Proposition 4.1.10 (ii)]{fritz2023supports} the claim for balanced idempotents reduces to an application of \cref{lemma:PartialIdemp2trans} and its symmetric counterpart.}.

	\[
		\tikzfig{varepsilonStrongTerm}
	\]
	can be computed by forming the pullback (as in \cref{corollary:SPTSUniversality})
	\[
		\begin{tikzcd}[sep= small]
 & & A \arrow[ld, "i"', hook] \arrow[rd, "\cop"] \arrow["\lrcorner"{anchor=center, pos=0.125, rotate=-45}, draw=none, dd] & & \\
 & X \arrow[ld, Rightarrow, no head] \arrow[rd, "\cop"'] & & A \otimes A \arrow[rd, "\left(i\comp e\right)\otimes\left(i\comp e\right)"] \arrow[ld, "i \otimes i", hook] & \\
X & & X\otimes X & & X \otimes X
\end{tikzcd}
	\]
	In light of this computation and monicity of $i \otimes i$, the strongness of $e$ corresponds to that of $m$.
\end{proof}

\begin{example}\label{Par(BorelStoch)balanced}
	As every idempotent in $\BorelStoch$ is balanced~\cite[Corollary 4.1.9]{fritz2023supports}, every idempotent in $\Par{\BorelStoch}$ is as well.
\end{example}

\begin{example}\label{domainIdempotentsStaStrBal}
	In a quasi-Markov category $\cC$, the domain idempotents are copyable, and hence static, strong, and balanced.

	In those of the form $\Par{\cC}$ for partializable $\cC$, these domain idempotents split as well.
\end{example}

\section{Kolmogorov products}\label{section:KolmogorovProducts}

Let us now turn to Kolmogorov products.
In contrast to the previous section, some aspects of the theory of Kolmogorov products change in the passage from the Markov (semicartesian) to the more general quasi-Markov setting.

In general, given a partializable Markov category $\cat{C}$, a Kolmogorov product in $\cat{C}$ induces (a strict) one in $\Par{\cC}$ (defined in the sense of \cref{InfTensorProductDefn}, see \cref{proposition:ParLaxKolProds}).
However as alluded to in \cref{KolProdsNonFunctorialWarning}, an arbitrary family of maps $\bigl(X_k \to Y_k\bigr)_{k \in K}$ in a quasi-Markov category such as $\Par{\cC}$ need not define a cone for the universal property of the (strict) Kolmogorov product to apply to.
In particular (when $\cC$ has $K$-sized Kolmogorov products), this means that the (strict) Kolmogorov product would not define a \emph{functor} $\Par{\cC}^K \to \Par{\cC}$, merely an assignment on objects.
In order to recover this functoriality we define a lax version of Kolmogorov products.

To illustrate explicitly why this is necessary, consider partial maps 
\[
\begin{tikzcd}
	X & D \ar[tail]{l}[swap]{i} \ar{r}{f} & Y 
\end{tikzcd}
\quad\mbox{and}\quad 
\begin{tikzcd}
	A & E \ar[tail]{l}[swap]{j} \ar{r}{g} & B .
\end{tikzcd}
\]
Their tensor product has domain $D\otimes E$:
\[
\begin{tikzcd}
	X\otimes A & D\otimes E \ar[tail]{l}[swap]{i\otimes j} \ar{r}{f\otimes g} & Y\otimes B .
\end{tikzcd}
\]
If we now project onto the first component by marginalizing, we have the following diagram,
\[
\begin{tikzcd}
	X\otimes A \ar{d}{\pi_1} & D\otimes E \ar{d}{\pi_1} \ar[tail]{l}[swap]{i\otimes j} \ar{r}{f\otimes g} & Y\otimes B \ar{d}{\pi_1} \\
	X & D \ar[tail]{l}[swap]{i} \ar{r}{f} & Y 
\end{tikzcd}
\]
which, as a diagram in $\Par{\cC}$, only commutes \emph{laxly}:
\[
\begin{tikzcd}[sep=small]
	& X\otimes A \ar{dl}[swap]{\pi_1} \ar{dr}{\left(D,i,f\right)\otimes\left(E,j,g\right)} & \\
	X \ar{dr}[swap]{\left(D,i,f\right)} & \ge & Y\otimes B \ar{dl}{\pi_1} \\
						   & Y &
\end{tikzcd}
\]
Indeed, the right path has domain $D\otimes E$, while the left path has the larger domain $D\otimes A$.
(This is part of the reason why $\Par{\cC}$ is not a Markov category: the discard maps are not natural.)

More generally now, let $K$ be a set and consider a $K$-indexed tuple of partial maps $\big( X_k \xrightarrow{(D_k,i_k,f_k)} Y_k \big)_{k\in K}$. 
One can form the diagrams of finite tensor products $X^F$ and $Y^F$ and their marginalizations, but because of the reason above, the maps $f_k$ do \emph{not} form a natural transformation between these two diagrams, they only form a \emph{lax} natural transformation. 
These, in general, do \emph{not} induce a morphism between the Kolmogorov products $X^K\to Y^K$ by the usual universal property of~\cite{fritzrischel2019zeroone}. 
In order to have such a map, and so to make Kolmogorov products \emph{functorial on arbitrary partial maps} by their universal property, we need to slightly extend the universal property. 

We therefore define \emph{lax Kolmogorov products}.\footnote{The author would like to thank Tobias Fritz for suggesting a lax approach.}
It can then be shown (see below) that:
\begin{itemize}
	\item Kolmogorov products in $\cC$ include to strict Kolmogorov products in $\Par{\cC}$;
	\item The inclusion of a Kolmogorov product in $\cC$ is also a lax Kolmogorov product in $\Par{\cC}$;
	\item The map $X^K \to Y^K$ induced by a family of morphisms $(X_k \to Y_k)_{k \in K}$ and the universal property of the lax Kolmogorov product is represented componentwise/leg-wise\footnote{That is, having domain the tensor of the domain inclusions in $\cC$, and similarly with the ``right way maps''.} by the corresponding induced maps in $\cC$.
\end{itemize}

Recall the definition of lax cone, which we instantiate directly for our purposes.
\begin{definition}\label{def:lax-cone}
	Let $K$ be a set, and let $\left(X_k\right)_{k\in K}$ be a $K$-indexed tuple of objects in a quasi-Markov category $\cat{C}$.
	Consider (again) the diagram $X^{\left(\phsm\right)}\colon \FinSub{K}^\op\to\cat{C}$ formed by finite products and marginalizations (as in~\cite[Section 3]{fritzrischel2019zeroone}).
	
	A \newterm{lax cone} over the diagram $X^{\left(\phsm\right)}$ is an object $A$ of $\cat{C}$ together with arrows $f_F\colon A\to X^F$ for all finite $F\subseteq K$, such that for all subsets $G\subseteq F\subseteq K$, the following diagram commutes laxly,
	\[
	\begin{tikzcd}[sep=small]
		&&& X^F \ar{dd}{\pi_{F,G}} \\ 
		A \ar{urrr}{f_F} \ar{drrr}[swap]{f_G} && |[overlay,xshift=2mm]| \ge \\
		&&& X^G
	\end{tikzcd}
	\]
	where $\pi_{F,G}$ denotes the functor action (marginalization) of $X^{\left(\phsm\right)}$ on the inclusion $G\subseteq F$.
\end{definition}

\begin{definition}\label{def:lax-kolmogorov}
	In the setting of the definition above, a \newterm{lax infinite tensor product} is a lax cone $\big(X^K \xrightarrow{\pi_F} X^F\big)_{F \subseteq K \mathrm{ finite}}$ which is universal in the following sense: for any other lax cone $\big(A \xrightarrow{f_F} X^F\big)_{F \subseteq K \mathrm{ finite}}$ there is a greatest morphism $A \xrightarrow{g} X^K$ such that $\pi_{F}\comp g \le f_F$ for each finite $F \subseteq K$, and this lax limit is further preserved by tensoring with an arbitrary object $Y$.
	
	We call such a lax infinite tensor product a \newterm{lax Kolmogorov product} when the projections $\pi_F$ are deterministic.\footnote{It can be shown that in such a case, a cone with copyable components defines a copyable induced map.}
\end{definition}

Intuitively, this can be read as asserting that a compatible (up to possibly restricting the domain) family of partial kernels to the finite products corresponds to a kernel to the infinite product, defined on the greatest possible domain. 

\begin{remark}\label{laxLimitTerminology}
	We are following here the convention in enriched or $2$-category (or higher) theory of ``strictness'' corresponding to literal equality, ``strongness'' corresponding to equality up to an invertible $2$-cell,\footnote{The prefix ``pseudo'' is often used for this notion in a $2$-categorical setting, but the distinction will be irrelevant for us as the invertible $2$-cells we consider will always be identities.} and ``laxness'' corresponding to equality up to a $2$-morphism.
	Of course, as we only consider here enrichment in posets, the distinction between strictness and strongness vanishes.

	In the case of \emph{Markov} categories, the distinction between lax and strict vanishes completely, as there are no non-identity $2$-cells (the partial order of \cref{def:dom_ext} collapses).
	There is thus only the notion introduced by Fritz and Rischel in~\cite{fritzrischel2019zeroone}.
\end{remark}

\begin{lemma}\label{lemma:ParFinSubIndLaxLim}
	Consider a partializable Markov category $\cC$ and a diagram\footnote{Note that we do not require that the diagram $J$ be that of finite tensor products and marginalizations.}
	\[
	\FinSub{K}^\op \xrightarrow{J} \cC
	\]
	for some set $K$.
	Assume that the deterministic subobject posets $\Sub_{\det}(A)$ of each object $A$ of $\cC$ have $K$-sized meets.
	If $J$ has a limit in $\cC$, then the inclusion of this limit cone defines both a lax and a strict limit in $\Par{\cC}$.
\end{lemma}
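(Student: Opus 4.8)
The plan is to reduce both universal properties to the universal property of the limit $L$ of $J$ in $\cC$. Its cone legs $\lambda_F \colon L \to J(F)$ include as total maps of $\Par{\cC}$, and since total maps (in particular the transition maps $\pi_{F,G} = J(G\subseteq F)$, which are morphisms of the Markov category $\cC$) compose without altering domains, the $\lambda_F$ assemble into a strict, hence also lax, cone over $J$ in $\Par{\cC}$.

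For the \emph{strict} limit I would start from an arbitrary strict cone $\bigl(A \xrightarrow{f_F} J(F)\bigr)$ with $f_F = (D_F, i_F, \varphi_F)$. Because each $\pi_{F,G}$ is total, the composite $\pi_{F,G}\comp f_F$ is $(D_F, i_F, \pi_{F,G}\comp\varphi_F)$, so strict commutativity forces an isomorphism of apices $D_F \cong D_G$ over $A$ whenever $G \subseteq F$; directedness of $\FinSub{K}$ under unions then shows all the $D_F$ represent a single deterministic subobject $D \rightarrowtail A$ with inclusion $i$. Transporting the $\varphi_F$ to this common domain yields a genuine cone $\bigl(D \xrightarrow{\varphi_F} J(F)\bigr)$ in $\cC$, to which the universal property of $L$ associates a unique $\psi\colon D \to L$; the span $(D, i, \psi)$ is then the unique factorization, establishing the strict limit.

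The \emph{lax} limit is the substantive part, and is where the meet hypothesis enters. Given a lax cone $\bigl(A \xrightarrow{f_F} J(F)\bigr)$ with $f_F = (D_F, i_F, \varphi_F)$, I would first unwind the lax condition $\pi_{F,G}\comp f_F \le f_G$ via the span order of \cref{spanPartialOrder}: it says precisely that $D_F$ is contained in $D_G$ as deterministic subobjects of $A$ (note the reversal, the smaller index yields the larger subobject) and that $\varphi_G$ restricted along this inclusion equals $\pi_{F,G}\comp\varphi_F$. I would then form the meet $D \coloneqq \bigwedge_{F} D_F$ in $\Sub_{\det}(A)$, which exists by the $K$-sized-meets hypothesis since the family is indexed by $\FinSub{K}$. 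Restricting each $\varphi_F$ along $D \rightarrowtail D_F$ produces maps that, by the compatibility just extracted, form a \emph{strict} cone over $J$ in $\cC$; the universal property of $L$ gives a unique $\psi\colon D \to L$, and I would set $g \coloneqq (D, i_D, \psi)$ with $i_D \colon D\rightarrowtail A$ the meet inclusion.

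It remains to verify that $g$ is the greatest morphism with $\lambda_F\comp g \le f_F$, which I expect to be the main obstacle. The inequality itself is immediate, as $D \subseteq D_F$ and $\lambda_F\comp\psi = \varphi_F|_D$. For maximality, given any $h = (E, j, \chi)$ with $\lambda_F\comp h \le f_F$ for all $F$, the span order forces $E \subseteq D_F$ for every $F$, hence $E \subseteq D$ by the universal property of the meet; writing the resulting inclusion as $k\colon E \rightarrowtail D$, the equalities $\lambda_F\comp\chi = \varphi_F|_E = \lambda_F\comp\psi\comp k$ holding for all $F$, together with the uniqueness clause of the limit $L$, give $\chi = \psi\comp k$, so $h \le g$. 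The delicate points are thus the careful bookkeeping of the nested subobjects and the use of the meet hypothesis exactly once, to produce a domain small enough that the lax cone becomes strict after restriction yet large enough to remain maximal.
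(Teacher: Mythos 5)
Your proposal is correct and follows essentially the same route as the paper: include the limit cone from $\cC$, unwind the lax condition into a nested family of deterministic subobjects, take their meet, restrict to obtain a genuine cone in $\cC$, and verify maximality via the meet's universal property together with joint monicity of the limit projections. The only cosmetic difference is in the strict case, which you handle directly (all domains coincide by directedness) rather than as a degenerate instance of the lax construction as the paper does; your assertion that $(D,i,\psi)$ is the \emph{unique} strict factorization could be spelled out slightly more (using totality of the projections to pin down the domain of any competing factorization), but the argument is the same.
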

\begin{proof}
	Let $\lim J \xrightarrow{p_F} J_F$ be the limit projections in $\cC$.
	We will also denote for an inclusion $F' \subseteq F$ of finite subsets of $K$, the induced map $J_{F,F'} \colon J_F \to J_{F'}$.
	We also denote $\mathcal F \coloneqq \FinSub{K}$.
	
	Now consider a lax cone $\left(A \xrightarrow{u_F} J_F\right)_{F \in \mathcal F}$ in $\Par{\cC}$.
	If the $u_F$ are represented by $A \xleftarrow{i_F} B_F \xrightarrow{f_F} J_F$, then the lax commutation diagrams $J_{F,F'} \comp u_F \le u_{F'}$ for $F' \subseteq F \subseteq K$ correspond to diagrams in $\cC$
	\[
	\begin{tikzcd}[row sep=small]
		& B_F \arrow[dd, "{t_{F,F'}}"] \arrow[r, "f_F"] \arrow[ld, "i_F"', tail] & J_F \arrow[dd, "{J_{F,F'}}"] \\
		A &                                                                        &                              \\
		& B_{F'} \arrow[r, "f_{F'}"'] \arrow[lu, "i_{F'}", tail]                       & J_{F'}                      
	\end{tikzcd}
	\]
	With the $t_{F,F'}$'s deterministic monomorphisms uniquely determined by the triangles $i_{F'} \comp t_{F,F'} = i_F$.
	Consequently for $F'' \subseteq F' \subseteq F$, $t_{F,F''} = t_{F',F''} \comp t_{F,F'}$.\footnote{This is merely reflecting the fact that $\Sub_{\det}(A)$ is a poset}
	
	Let $B \xrightarrow{i} A$ be the meet\footnote{This exists by assumption when $K$ is infinite, as then $\vert \FinSub{K}\vert = \vert K \vert$. $\Sub_{\det}\left(A\right)$ always has finite meets, as can be seen from \cref{proposition:ParSubobMeets}.} of the various $i_F$ in $\Sub_{\det}\left(K\right)$, and let $\left(B \xrightarrow{t_F} B_F\right)_{F \in \mathcal F}$ be the limit cone maps (note that for an inclusion $F \subseteq F'$, one necessarily has $t_{F'} = t_{F,F'}t_F$).
	Then, if for $F \in \FinSub{K}$ we define $g_F \coloneqq f_F \comp t_F$, the family of maps $\left(B \xrightarrow{g_F} J_F\right)_{F \in \mathcal F}$ is such that (for $F' \subseteq F$)
	\[
	J_{F,F'}\comp g_F = f_{F'}\comp t_{F,F'} \comp t_F = g_{F'}
	\]
	In other words, the family of maps $g_F$ is compatible with restriction and thus factors through an induced map $g \colon B \to \lim J$.
	Then, $A \xleftarrow{i} B \xrightarrow{g} \lim J$ is a morphism of $\Par{\cC}$ such that for each $F \in \FinSub{K}$
	\[
	\begin{tikzcd}[row sep=small]
		& B \arrow[dd, "t_{F}"] \arrow[r, "g"] \arrow[ld, "i"', tail] & \lim J \arrow[dd, "p_F"] \\
		A &                                                             &                            \\
		& B_{F} \arrow[r, "f_{F}"'] \arrow[lu, "i_{F}", tail]         & J_{F}                     
	\end{tikzcd}
	\]
	so that $p_F \comp \left(B,i,g\right) \le u_F$ for each $F$.
	
	Further, if we have another morphism $A \xleftarrow{i'} B' \xrightarrow{g'} \lim J$ such that for each $F$, $p_F \comp \left(B',i',g'\right) \le u_F$, we have diagrams in $\cC$ 
	\[
	\begin{tikzcd}[row sep=small]
		& B' \arrow[dd, "s_F"] \arrow[r, "g'"] \arrow[ld, "i'"', tail] & \lim J \arrow[dd, "p_F"] \\
		A &                                                              &                            \\
		& B_{F} \arrow[r, "f_{F}"'] \arrow[lu, "i_{F}", tail]          & J_{F}                     
	\end{tikzcd}
	\]
	So that $i'$ is also a lower bound for the $i_F$ and factors through their meet $i$ via some $B' \xrightarrow{s} B$.
	Then, the two maps $B' \xrightarrow{s} B \xrightarrow{g}\lim J$ and $g'$ have the same projections to arbitrary $J_F$, and are thus equal as well.
	Thus, $s$ witnesses $\left(B',i',g'\right) \le \left(B,i,g\right)$.
	Consequently, $\left(B,i,g\right)$ is the greatest map $A \to \lim J$ such that $p_F \comp \left(B,i,g\right) \le u_F$ for each $F$.
	In other words, we have the claimed lax limit.
	
	Now if the lax cone $\left(u_F\right)_{F \in \mathcal F}$ is strict, the $t_{F,F'}$ are all invertible (identity, even).
	So, the diagram in $\Sub_{\det}\left(A\right)$ whose meet is $i$ is contractible, and hence $i$ can just be constructed as one of the $i_F$.
	In any case, the meet projections $t_F$ are invertible as well, so we have strict equality $p_F \comp \left(B,i,g\right) = u_F$ for arbitrary $F$.
	As in the lax case, any factorization $\left(B',i',g'\right)$ of the cone $\left(u_F\right)_{F \in \mathcal F}$ through $\lim J$ is bounded above by $\left(B,i,g\right)$.
	If this factorization is strict (i.e.\ $p_F \comp \left(B',i',g'\right) = u_F$) then as $p_F$ is total, $\left(B',i',g'\right)$ has the same domain (as a subobject) as $u_F$ and hence $\left(B,i,g\right)$.
	Thus the inequality $\left(B',i',g'\right)\le \left(B,i,g\right)$ is an equality.
	Consequently, the inclusion is a strict limit as well.
\end{proof}

\begin{proposition}\label{proposition:ParLaxKolProds}
	Consider a partializable Markov category $\cC$ admitting Kolmogorov products of size $K$.
	Given a family of objects $\left(X_k\right)_{k \in K}$ in $\cC$, the inclusion of the Kolmogorov product projections into $\Par{\cC}$ define both a lax and a strict Kolmogorov product.
\end{proposition}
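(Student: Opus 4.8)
The plan is to deduce the whole statement from \cref{lemma:ParFinSubIndLaxLim}. Take $J = X^{\left(\phsm\right)}\colon \FinSub{K}^\op \to \cC$ to be the diagram of finite tensor products and marginalizations. By \cref{InfTensorProductDefn} the Kolmogorov product $X^K$ of $\cC$ is exactly a limit cone over $J$ whose marginalization projections $\pi_F$ are deterministic and which is preserved by tensoring; in particular $J$ has a limit in $\cC$. Thus, once the hypothesis of \cref{lemma:ParFinSubIndLaxLim}---that each $\Sub_{\det}(A)$ has $K$-sized meets---is verified, the lemma immediately gives that the inclusion of the cone $\bigl(X^K \xrightarrow{\pi_F} X^F\bigr)$ is both a lax and a strict limit in $\Par{\cC}$.

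Two routine promotions then turn these limits into Kolmogorov products. First, to obtain infinite tensor products I would check preservation under tensoring with an arbitrary $Y$ (an object of $\Par{\cC}$, hence of $\cC$): reapplying \cref{lemma:ParFinSubIndLaxLim} to the diagram $Y \otimes X^{\left(\phsm\right)}$, whose limit in $\cC$ is $Y \otimes X^K$ because Kolmogorov products in $\cC$ are preserved by tensoring, shows that $Y \otimes X^K$ is again a lax and a strict limit in $\Par{\cC}$; since the inclusion functor is strong monoidal by \cref{proposition:SpanMonStr}, this is exactly the tensored cone. Second, to pass from infinite tensor products to Kolmogorov products I must see that the projections are deterministic in $\Par{\cC}$: each $\pi_F$ includes as the total span $\left(X^K,\id,\pi_F\right)$, which is total by \cref{proposition:SpanCDTotal} and copyable precisely because $\pi_F$ is deterministic in $\cC$ by \cref{proposition:ParCDDeterminism}.

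The main obstacle is the $K$-sized meets hypothesis, and it is worth noting this is needed only for the lax statement: in the strict case the proof of \cref{lemma:ParFinSubIndLaxLim} takes the witnessing subobject to be one of the $i_F$ directly, so the strict Kolmogorov product follows with no meets at all. For the lax case, given a $K$-indexed family of deterministic subobjects $\left(i_\alpha \colon B_\alpha \rightarrowtail A\right)_{\alpha}$, I would build their meet as the pullback of the deterministic monomorphism $\bigotimes_\alpha i_\alpha \colon \bigotimes_\alpha B_\alpha \rightarrowtail A^K$ along the deterministic infinite-copy map $A \to A^K$ induced by the finite copy maps, using the pullback condition of \cref{def:partializable} to guarantee that this pullback exists and is again a deterministic subobject of $A$. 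One then checks that a morphism factors through this pullback exactly when it factors through every $B_\alpha$, so that it is the meet; the delicate points are that the infinite tensor $\bigotimes_\alpha i_\alpha$ remains a monomorphism and that factorization through $\bigotimes_\alpha B_\alpha$ is detected componentwise through the Kolmogorov projections. As $\abs{\FinSub{K}} = \abs{K}$ for infinite $K$ (while finite meets always exist in a partializable category), the meets required are precisely of the size supplied by the $K$-sized Kolmogorov product, which completes the verification and hence the proposition.
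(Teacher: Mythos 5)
Your proposal is correct and follows essentially the same route as the paper: both reduce to \cref{lemma:ParFinSubIndLaxLim} applied to the diagrams $Y \otimes X^{\left(\phsm\right)}$, and both discharge the $K$-sized-meets hypothesis by constructing the meet of a $K$-indexed family of deterministic subobjects as the pullback of $\bigotimes_k i_k$ along the infinite copy map (this is exactly the paper's \cref{proposition:ParSubobMeets}, with the monomorphism point you flag being \cref{proposition:InfProdDetMonos}). Your additional observation that the strict case needs no meets at all is consistent with the paper's proof of the lemma but is not something the paper exploits.
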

\begin{proof}
	Note first that for an arbitrary object $A$ of $\cC$, the deterministic subobject poset $\Sub_{\det}\left(A\right)$ has $K$-sized meets by \cref{proposition:ParSubobMeets}.
	
	Consequently for an arbitrary object $Y$, we can apply \cref{lemma:ParFinSubIndLaxLim} to the diagram
	\[
	\FinSub{K}^{\op} \xrightarrow{Y \otimes X^{\left(\phsm\right)}} \cC\colon  F \mapsto Y \otimes X^F
	\]
	which has by hypothesis limit $Y \otimes X^K$ in $\cC$.
	This means that the Kolmogorov product $X^K$ (and its projections) tensored with $Y$ in $\cC$ include to both a lax and a strict limit cone.
	Thus, the inclusion of the Kolmogorov product projections (the particular case where $Y$ is the unit) and their tensor with an arbitrary object $Y$ are both lax and strict limit cones.
	In other words, $X^K$ is both a lax and strict infinite product.	
	To see that this is further both a lax and a strict Kolmogorov product it suffices to observe that the projections $X^K \to X^F$ are deterministic by construction.
\end{proof}

\begin{proposition}\label{proposition:ParMorInfProd}
	Consider a partializable Markov category $\cC$ with Kolmogorov products of size $K$.
	Given a $K$-indexed family of morphisms $\left(u_k\right)_{k \in K} = \big(X_k \xleftarrow{i_k} A_k \xrightarrow{f_k} Y_k\big)_{k \in K}$ of $\Par{\cC}$, the family of maps $\big(X^K \xrightarrow{\pi_F} X^F \xrightarrow{u^F} Y^F\big)_{F \subseteq K \text{ finite}}$ defines a lax cone over the diagram of finite products $Y^F$ and marginalizations.
	Furthermore, the morphism $X^K \to Y^K$ induced by the universal property of the lax Kolmogorov product is $\big(X^K \xleftarrow{i^K} A^K \xrightarrow{f^K} Y^K\big)$ (that is, computed by the $K$-sized product componentwise).
\end{proposition}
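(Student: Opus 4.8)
The plan is to work entirely with spans, using \cref{proposition:SpanMonStr} to write each finite tensor $u^F$ as the span $X^F \xhookleftarrow{i^F} A^F \xrightarrow{f^F} Y^F$ with $i^F = \bigotimes_{k\in F} i_k$, $f^F = \bigotimes_{k\in F} f_k$, $A^F = \bigotimes_{k\in F} A_k$, and \cref{proposition:ParLaxKolProds} for the universal property (a greatest lower bound exists). Write $\pi^X_F, \pi^A_F, \pi^Y_F$ for the Kolmogorov projections and $\pi^X_{F,G}$, etc.\ for the marginalizations ($G\subseteq F$). Two naturality facts in the Markov category $\cC$ are used repeatedly: since the $i_k$ and $f_k$ are total, $\pi^X_{F,G}\comp i^F = i^G\comp\pi^A_{F,G}$ and $\pi^Y_{F,G}\comp f^F = f^G\comp\pi^A_{F,G}$; and the induced maps $i^K = \bigotimes_k i_k$ and $f^K = \bigotimes_k f_k$ on the Kolmogorov products satisfy $\pi^X_F\comp i^K = i^F\comp\pi^A_F$ and $\pi^Y_F\comp f^K = f^F\comp\pi^A_F$. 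Composing spans, $u^F\comp\pi^X_F$ is represented by $X^K \xhookleftarrow{u_F} P_F \xrightarrow{f^F\comp v_F} Y^F$, where $P_F$ is the pullback of $i^F$ along the deterministic $\pi^X_F$ (the ``preimage'' $(\pi^X_F)^{-1}(A^F)$) with projections $u_F, v_F$.

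For the lax cone claim I must verify $\pi^Y_{F,G}\comp(u^F\comp\pi^X_F) \le u^G\comp\pi^X_G$ for $G\subseteq F$. The inclusion $P_F \hookrightarrow P_G$ over $X^K$ witnessing the inequality is produced from the pullback property of $P_G$: the pair $(u_F,\, \pi^A_{F,G}\comp v_F)$ satisfies $\pi^X_G\comp u_F = \pi^X_{F,G}\comp i^F\comp v_F = i^G\comp(\pi^A_{F,G}\comp v_F)$, inducing a (necessarily deterministic monic) map $P_F\to P_G$, and its compatibility with the right legs is exactly $f^G\comp\pi^A_{F,G} = \pi^Y_{F,G}\comp f^F$.

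Next I check that $g := \bigl(X^K \xhookleftarrow{i^K} A^K \xrightarrow{f^K} Y^K\bigr)$ is a valid morphism and a lower bound. Validity needs $i^K$ to be a deterministic monomorphism: finite tensors of deterministic monos are monic (condition (iii) of \cref{def:partializable} with closure under composition, so $i^F$ and $i^F\otimes i^F$ are monic), the projections $\pi^A_F$ are jointly monic so $i^K$ is monic, and $i^K$ is deterministic as the induced map of the copyable cone $(i^F\comp\pi^A_F)_F$ (\cref{InfTensorProductDefn}). That $\pi^Y_F\comp g \le u^F\comp\pi^X_F$ is witnessed by the map $r_F\colon A^K\to P_F$ coming from $\pi^X_F\comp i^K = i^F\comp\pi^A_F$ via the pullback property of $P_F$ (so $u_F\comp r_F = i^K$ and $v_F\comp r_F = \pi^A_F$), with matching right legs by $\pi^Y_F\comp f^K = f^F\comp\pi^A_F$.

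The crux is that $g$ is the \emph{greatest} lower bound, and the key step is identifying $i^K$, as a deterministic subobject of $X^K$, with the meet $\bigwedge_F P_F$: the previous paragraph gives $i^K \le P_F$ for all $F$, and conversely, for the meet $M = \bigwedge_F P_F$ (with inclusion $m$) the maps $c_F\colon M\to A^F$ determined by $i^F\comp c_F = \pi^X_F\comp m$ are deterministic (since $i^F$ is monic with $i^F\otimes i^F$ monic, determinism left-cancels) and form a cone over the $A$-diagram, hence induce $c\colon M\to A^K$ with $i^K\comp c = m$, giving $M \le i^K$. Granting this, for any lower bound $g' = \bigl(X^K \xhookleftarrow{j} B \xrightarrow{h} Y^K\bigr)$ the inequalities $\pi^Y_F\comp g'\le u^F\comp\pi^X_F$, with witnesses $b_F\colon B\to P_F$, force $j$ to factor through every $P_F$, hence through $M = i^K$ by some $b\colon B\to A^K$; and matching right legs $F$-by-$F$ via $\pi^Y_F\comp f^K\comp b = f^F\comp\pi^A_F\comp b = f^F\comp v_F\comp b_F = \pi^Y_F\comp h$ (using $b_F = r_F\comp b$, which holds by $u_F\comp r_F = i^K = u_F\comp(r_F\comp b)^{\mathrm{cod}}$ and monicity of $u_F$) together with joint monicity of the $\pi^Y_F$ yields $f^K\comp b = h$, so $g'\le g$. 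The main obstacle is precisely this meet identification: it is the one place that genuinely uses the Kolmogorov (rather than merely finite-product) structure, namely that a cone with copyable components induces a map into $A^K$, and the passage from $F$-indexed equalities to equalities of maps into $Y^K$ relies throughout on joint monicity of the projections.
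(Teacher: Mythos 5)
Your proposal is correct, and the first half (representing $u^F\comp\pi^X_F$ by the pullback $P_F$ of $i^F$ along $\pi^X_F$ and producing the comparison map $P_F\to P_G$ from the pullback property of $P_G$) is essentially identical to the paper's argument for the lax cone claim. For the identification of the induced map, however, you take a genuinely different route. The paper feeds the lax cone into the general recipe of \cref{lemma:ParFinSubIndLaxLim} (domain of the induced map $=$ meet of the cone-leg domains, right leg induced by the Kolmogorov universal property), then computes each pullback $P_F$ explicitly as the ``box'' $\bigotimes_k B_{F,k}$ with $A_k$ in the $F$-slots and $X_k$ elsewhere (\cref{lemma:ParFiniteMarginalFibre}), and finally computes the meet of these boxes to be $i^K$ (\cref{lemma:ParSubobBoxCutout}). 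You instead verify the greatest-lower-bound property of $\bigl(A^K,i^K,f^K\bigr)$ directly: you show $i^K\le P_F$ for all $F$ via the pullback property, and conversely that any common lower bound of the $P_F$ factors through $i^K$ by assembling the deterministic maps $c_F$ (deterministic by \cref{DetMonoLeftCancellation}, compatible with marginalization by monicity of $i^G$) into a cone and applying the universal property of the Kolmogorov product $A^K$; joint monicity of the $\pi^Y_F$ then pins down the right leg. This bypasses \cref{lemma:ParFiniteMarginalFibre} and \cref{lemma:ParSubobBoxCutout} entirely (you never need to know what $P_F$ \emph{is}, only how it maps), at the cost of redoing, for this particular cone, the meet-and-factorization bookkeeping that \cref{lemma:ParFinSubIndLaxLim} packages once and for all; the paper's version buys reusable lemmas, yours buys a shorter self-contained argument. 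Two cosmetic points: the expression ``$u_F\comp r_F = i^K = u_F\comp(r_F\comp b)^{\mathrm{cod}}$'' is garbled --- the intended computation is $u_F\comp b_F = j = i^K\comp b = u_F\comp r_F\comp b$ followed by cancellation of the monic $u_F$; and the determinism of the $c_F$ is not actually needed for the cone condition or for the induced map to exist (the strict limit property suffices, and $c$ is automatically a deterministic monomorphism afterwards by \cref{DetMonoLeftCancellation}). Neither affects correctness.
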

\begin{proof}
	First, for an inclusion $F' \subseteq F$ of finite subsets of $K$, we can compute $\pi_{F,F'} \comp u^F \comp \pi_F$ by forming
	\[
	\begin{tikzcd}[sep=small]
		&                                                 & B_F \arrow[ld, "\iota_F"', tail] \arrow[rd, "p_F"] \arrow["\lrcorner"{anchor=center, pos=0.125, rotate=-45}, draw=none, dd]&                                                                                     &        \\
		& X^K \arrow[ld, Rightarrow, no head] \arrow[rd, "\pi_F"'] &                                                    & A^F \arrow[rd, "{\pi_{F,F'}f^F = f^{F'} \pi_{F,F'}}"] \arrow[ld, "i^F", tail] &        \\
		X^K &                                                 & X^F                                                &                                                                                     & Y^{F'}
	\end{tikzcd}
	\]
	On the other hand, $u^{F'} \comp \pi_{F'}$ is computed simply as
	\[
	\begin{tikzcd}[sep=small]
		&                                                    & B_{F'} \arrow[ld, "\iota_{F'}"', tail] \arrow[rd, "p_{F'}"] \arrow["\lrcorner"{anchor=center, pos=0.125, rotate=-45}, draw=none, dd]&                                                           &        \\
		& X^K \arrow[ld, Rightarrow, no head] \arrow[rd, "\pi_{F'}"'] &                                                             & A^{F'} \arrow[rd, "f^{F'}"] \arrow[ld, "i^{F'}", tail] &        \\
		X^K &                                                    & X^{F'}                                                      &                                                           & Y^{F'}
	\end{tikzcd}
	\]
	Consequently, the universal factorization $s_{F,F'}$ in
	\[
		\begin{tikzcd}
		B_F \arrow[rdd, "\iota_F"', tail, bend right] \arrow[rr, "p_F"] \arrow[rd, "{s_{F,F'}}", dashed] &                                                           & A^F \arrow[d, "{\pi_{F,F'}}"] \arrow[rd, "i^F", tail, bend left] &                                \\
		& B_{F'} \arrow[d, "\iota_{F'}"', tail] \arrow[r, "p_{F'}"] & A^{F'} \arrow[d, "i^{F'}", tail]                                 & X^F \arrow[ld, "{\pi_{F,F'}}"] \\
		& X^K \arrow[r, "\pi_{F'}"']                                & X^{F'}                                                           &                               
	\end{tikzcd}
	\]
	witnesses $\pi_{F,F'}\comp u^F \comp \pi_F \le u^{F'} \comp \pi_{F'}$.
	To see that such a factorization exists, note that the two outer paths $B_F \rightsquigarrow X^{F'}$ in the diagram commute by the construction of the two spans.
	The compatibility of the finite products $i^F,i^{F'}$ with marginalization then means that this translates to the given maps defining a cone over the cospan $\big(X^K \xrightarrow{\pi_{F'}} X^{F'} \xleftarrow{i^{F'}} A^{F'}\big)$.
	Thus, the given family of maps defines a lax cone, and therefore corresponds to a map $X^K \to Y^K$.
	
	Following the construction of this map in the proof of \cref{lemma:ParFinSubIndLaxLim}, this map is represented by a span $\big(X^K \xleftarrow{\iota} B \xrightarrow{g} Y^K\big)$, where $\iota$ is the meet of the various $\iota_F$ (with projections denoted $B \xrightarrow{t_F} B_F$), and the map $g$ is the map induced by the universal property of the Kolmogorov product applied to the family of maps $\big(g_F \coloneqq B \xrightarrow{t_F} B_F \xrightarrow{f^Fp_F} Y^F\big)_{F \subseteq K \text{ finite}}$.
	
	From \cref{lemma:ParFiniteMarginalFibre}, we see that $X^K \xleftarrow{\iota_F} B_F \xrightarrow{p_F} A^F$ can be computed as $K$-sized products $X^K \xleftarrow{\iota^K} B^K \xrightarrow{p^K} A^F$.
	Here, when $k \in F$, $B_k = A_k$, $p_k$ is the identity and $\iota_k = i_k$.
	Otherwise, $B_k = X_k$, $p_k$ is the deletion map and $\iota_k$ is the identity.
	Furthermore, from \cref{lemma:ParSubobBoxCutout}, we see that the meet $\iota$ is then given by $A^K \xrightarrow{i^K} X^K$.
	In other words, the induced map $X^K \to Y^K$ has the claimed domain inclusion.
	
	Using this presentation, $g_F$ can be identified as the map $A^K \xrightarrow{\pi_F} A^F \xrightarrow{f^F} Y^F$. Thus the induced map $g$ induced by the universal property of the product applied to the family of maps $g_F$ is precisely $f^K$.
	In other words, the induced map has the claimed ``right way map'' as well.
\end{proof}

To summarise, while we cannot in general construct infinite tensor products of morphisms in $\Par{\cC}$ by following the construction used for $\cC$ verbatim, we can still construct an infinite tensor product functor componentwise.
This infinite tensor product of morphisms is still a universal property induced map, only now arising from a lax universal property.

\begin{remark}\label{remark:NormInfCopy}
	Consider a quasi-Markov category $\cC$ with $K$-sized \emph{strict} Kolmogorov products and a morphism $f\colon X \to Y$.
	One may attempt to define an infinite copy $f^{\left(K\right)}\colon X \to Y^K$, intuitively given by ``$K$ independent samples of $f$'' by the universal property of the strict Kolmogorov product, as with infinite products in the Markov case.
	For instance, when $\cC$ is representable and $f$ is the sampling map, the infinite copy $\samp^{\left(K\right)}$ can be seen as encoding ``iterated sampling'', which comes up in a variety of contexts such as de-Finetti's theorem~\cite[Theorem 4.4]{fritz2021definetti} (where exchangeable morphisms are characterized as those that can be given by iterated sampling from some prior), or a strengthening of representability called ``observational representability'' where one can intuitively distinguish distributions by iteratively sampling from them~\cite{moss2022probability}.
	
	Explicitly, one would define $f^{\left(K\right)}\colon X \to Y^K$ to be the unique morphism whose finite projections $\pi_F \comp f^{\left(K\right)}$ are given by $f^{\left(F\right)}\coloneqq f^F \comp \id^{\left(F\right)}$, where $\id^{\left(F\right)}$ is the $\vert F\vert$-output morphism given by iterated copying.
	
	One might expect this to produce similar problems as with infinite products, that is, that this family of morphisms might not satisfy the marginalization coherence that would be required in order to apply the universal property of the (strict) infinite product.
	While this is a point that would have to be addressed in a general CD category, this family of maps in the quasi-Markov $\cC$ is indeed compatible with marginalization.
	Repeated application (to arbitrary but finite degree) of \cref{eq:quasi-total} shows that the family of morphisms $\{f^{\left(F\right)}\}$ is compatible with marginalization (where we define $f^{\left(\emptyset\right)}\coloneq \discard \comp f$).
	
	In fact, given \emph{lax} Kolmogorov products we have further the equation $f^{(K)} = f^K \comp \id^{(K)}$, with the caveat that $f^K$ is only the map produced by the \emph{lax} universal property (and computed componentwise).
\end{remark}

\begin{proposition}\label{proposition:ParCDInfCopies}
	Consider a partializable Markov category $\cC$.
	Assume that $\cC$ has $K$-sized Kolmogorov products (so that $\Par{\cC}$ has them induced from $\cC$).
	Let $\left(D,i,f\right)$ represent a morphism of $\Par{\cC}$.
	Then, the infinite copy $\left(D,i,f\right)^{\left(K\right)}$ of $\left(D,i,f\right)$ is represented by $\left(D,i,f^{\left(K\right)}\right)$.
\end{proposition}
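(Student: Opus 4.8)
The plan is to compute the infinite copy through its explicit description in \cref{remark:NormInfCopy}: for the morphism $u = \left(D,i,f\right)\colon X \to Y$ of $\Par{\cC}$ one has $u^{\left(K\right)} = u^K \comp \id^{\left(K\right)}$, where $u^K \colon X^K \to Y^K$ is the map induced by the \emph{lax} Kolmogorov product applied to the constant family $\left(u\right)_{k \in K}$, and $\id^{\left(K\right)}\colon X \to X^K$ is the infinite copy of the identity. Since $\id_X$ is total, $\id^{\left(K\right)}$ is total and is simply the inclusion of the $\cC$-morphism $\id_X^{\left(K\right)}$, so it is represented by $\bigl(X \xhookleftarrow{\id} X \xrightarrow{\id_X^{\left(K\right)}} X^K\bigr)$. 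By \cref{proposition:ParMorInfProd} (with every $u_k = u$) the map $u^K$ is represented componentwise by $\bigl(X^K \xhookleftarrow{i^K} D^K \xrightarrow{f^K} Y^K\bigr)$. Thus the whole computation reduces to composing these two spans, i.e.\ to a single pullback in $\cC$.

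First I would form the pullback of the right leg $\id_X^{\left(K\right)}\colon X \to X^K$ of $\id^{\left(K\right)}$ against the left leg $i^K \colon D^K \to X^K$ of $u^K$, and claim this pullback is $D$ itself, sitting in the square
\[
\begin{tikzcd}
D \arrow[r, "\id_D^{\left(K\right)}"] \arrow[d, "i"', tail] \arrow[rd, "\lrcorner"{anchor=center, pos=0.125}, draw=none] & D^K \arrow[d, "i^K", tail] \\
X \arrow[r, "\id_X^{\left(K\right)}"'] & X^K
\end{tikzcd}
\]
This commutes because $i$ is deterministic, hence a comonoid homomorphism, so iterated copying is natural along it, giving $\id_X^{\left(K\right)} \comp i = i^K \comp \id_D^{\left(K\right)}$. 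As $i^K$ is a deterministic monomorphism, its pullback along $\id_X^{\left(K\right)}$ is the largest deterministic subobject $S \rightarrowtail X$ for which $\id_X^{\left(K\right)} \comp \left(S \rightarrowtail X\right)$ factors through $i^K$; the key step is to identify this subobject with $D$.

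I expect this identification to be the main obstacle, but it follows from a single-coordinate projection argument. If $S \rightarrowtail X$ is such that $\id_X^{\left(K\right)} \comp \left(S \rightarrowtail X\right)$ factors through $i^K$ via some $t\colon S \to D^K$, then post-composing with a single-coordinate projection $\pi_{\left\{k\right\}}\colon X^K \to X$ and using $\pi_{\left\{k\right\}} \comp \id_X^{\left(K\right)} = \id_X$ exhibits $S \rightarrowtail X$ as a factor through $i$, so $S \le D$ in $\Sub_{\det}\left(X\right)$. Since $D$ itself manifestly satisfies the factorization condition, it is the largest such subobject and the square is a pullback. The requisite facts about box subobjects and their meets needed to make this rigorous are already available via \cref{proposition:ParSubobMeets,lemma:ParSubobBoxCutout}.

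Finally I would read off the composite span: its left leg is $\id_X \comp i = i$ and its right leg is $f^K \comp \id_D^{\left(K\right)}$, so $u^{\left(K\right)}$ is represented by $\bigl(X \xhookleftarrow{i} D \xrightarrow{f^K \comp \id_D^{\left(K\right)}} Y^K\bigr)$. Because $\cC$ is Markov, the infinite copy of $f\colon D \to Y$ in $\cC$ satisfies $f^{\left(K\right)} = f^K \comp \id_D^{\left(K\right)}$ by the formula of \cref{remark:NormInfCopy} instantiated in $\cC$, whence $u^{\left(K\right)} = \left(D,i,f^{\left(K\right)}\right)$ as claimed. As a consistency check one can instead verify directly that $\left(D,i,f^{\left(K\right)}\right)$ factors the strict cone $\bigl(u^{\left(F\right)}\bigr)_F$: since each marginalization $\pi_F$ is total, $\pi_F \comp \left(D,i,f^{\left(K\right)}\right) = \left(D,i,\pi_F \comp f^{\left(K\right)}\right) = \left(D,i,f^{\left(F\right)}\right) = u^{\left(F\right)}$, which matches the unique factorization guaranteed by the strict Kolmogorov product of \cref{proposition:ParLaxKolProds}.
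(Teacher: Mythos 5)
Your argument is correct, but it takes a genuinely different route from the paper's, so let me compare the two. The paper works directly on $u^{\left(K\right)}$: it first shows, by iterating quasi-totality, that $\discard \comp u^{\left(K\right)} = \discard \comp u$, so that $u^{\left(K\right)}$ must be of the form $\left(D,i,h\right)$; it then computes $\pi_F \comp h = f^{\left(F\right)}$ for every finite $F$ and concludes $h = f^{\left(K\right)}$ from the universal property of the Kolmogorov product in $\cC$. You instead compute the composite $u^K \comp \id^{\left(K\right)}$ as an explicit span composition, identifying the pullback of $i^K$ along $\id_X^{\left(K\right)}$ with $D$ via the single-coordinate projection argument; that identification is sound (the pullback exists and is a deterministic subobject by partializability, $D$ satisfies the factorization property, and your projection argument bounds any such subobject by $D$). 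The one caveat is that your main route leans on the identity $u^{\left(K\right)} = u^K \comp \id^{\left(K\right)}$, which \cref{remark:NormInfCopy} asserts but does not prove, and which is not obviously cheaper than the proposition itself: lax naturality only gives $\pi_F \comp u^K \comp \id^{\left(K\right)} \le u^{\left(F\right)}$, an inequality of finite marginals, so taken as a black box the remark would make the argument borderline circular. What rescues you is your closing ``consistency check'', which is self-contained: verifying $\pi_F \comp \left(D,i,f^{\left(K\right)}\right) = \left(D,i,f^{\left(F\right)}\right) = u^{\left(F\right)}$ and invoking the uniqueness of strict factorizations from \cref{lemma:ParFinSubIndLaxLim} is essentially the paper's proof run in reverse (verify the candidate rather than solve for the unknown), and together with your explicit pullback computation it in fact supplies a proof of the remark's formula as a bonus.
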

\begin{proof}
	Let $\left(D,i,f\right)^{\left(K\right)}$ be represented by a span $\left(E,j,g\right)$.
	We first identify the domain inclusions, i.e $j$ with $i$.
	By repeated application (to any arbitrary finite degree) of the copyability of domains (quasi-totality), the domains of $\left(D,i,f\right)$ and $\left(D,i,f\right)^{\left(K\right)}$ coincide.
	Thus, we have $\left(D,i,\discard{} \comp f\right) = \left(E,j,\discard{} \comp g\right)$ and therefore $i$ and $j$ are equal as subobjects.
	Consequently, there is a $h$ such that $\left(D,i,f\right)^{\left(K\right)} = \left(D,i,h\right)$.
	To understand $h$, for finite $F \subseteq K$ we can compute
	\[\left(D,i,\pi_F \comp h\right) = \pi_F \comp \left(D,i,h\right) = \pi_F \comp \left(D,i,f\right)^{\left(K\right)} = \left(D,i,f\right)^{\left(F\right)} = \left(D,i,f^{\left(F\right)}\right)\]
	Consequently $\pi_F \comp h = f^{(F)}$ for arbitrary finite $F$, and therefore $h = f^{(K)}$ as claimed.
\end{proof}

\appendix

\section{Some results on partializable Markov categories}\label{sec:par_facts}

\begin{lemma}\label{lemma:factorThroughMonoCart}
	In any category, any commutative square
	\[
	\begin{tikzcd}
		A \arrow[d, Rightarrow, no head] \arrow[r, "f"] & X \arrow[d, "i", tail] \\
		A \arrow[r, "g"']                      & Y                     
	\end{tikzcd}
	\]
	with $i$ monic is a pullback.
\end{lemma}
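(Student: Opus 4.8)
The plan is to verify the universal property of the pullback directly, exploiting that the left-hand leg of the square is the identity on $A$. First I would record that, since that left vertical map is the identity, commutativity of the square is simply the equation $g = i \comp f$.

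To check that the square exhibits the top-left copy of $A$ as a pullback of the cospan $X \xrightarrow{i} Y \xleftarrow{g} A$, I would take an arbitrary test object $W$ equipped with maps $p \colon W \to X$ and $q \colon W \to A$ satisfying the cone condition $i \comp p = g \comp q$, and produce a unique mediating morphism $W \to A$ into the apex. The leg into the bottom-left corner being the identity forces any mediating morphism to be equal to $q$, so the only candidate is $u \coloneqq q$; this observation simultaneously disposes of the uniqueness requirement.

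It then remains to confirm that $u = q$ is a genuine cone morphism, that is, that $f \comp q = p$. Here I would compute $i \comp f \comp q = g \comp q = i \comp p$, using the square's commutativity for the first equality and the cone condition on $W$ for the second. Left-cancelling the monomorphism $i$ yields $f \comp q = p$, as needed. There is no real obstacle in this argument: the single nontrivial step is the cancellation of $i$, which is precisely where the monicity hypothesis enters, and every other part is forced by the presence of the identity leg.
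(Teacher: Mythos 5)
Your proof is correct: the mediating map is forced to be $q$ by the identity leg, and the cone condition $f\comp q = p$ follows by left-cancelling the monomorphism $i$, which is exactly where the hypothesis is used. The paper proves the same thing wordlessly by pasting two pullback squares (the square with identity vertical legs, followed by the square exhibiting $i$ as the pullback of $i$ along the identity), and unwinding that pasting argument yields precisely your direct verification of the universal property, so the two proofs are essentially the same.
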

\begin{proof}
	\[
	\begin{tikzcd}
		A \arrow[d, Rightarrow, no head] \arrow[r, "f"] \arrow[rd, "\lrcorner"{anchor=center, pos=0.125}, draw=none]& X \arrow[d, Rightarrow, no head] \arrow[r, Rightarrow, no head] \arrow[rd, "\lrcorner"{anchor=center, pos=0.125}, draw=none]& X \arrow[d, "i", tail] \\
		A \arrow[r, "f"']                      & X \arrow[r, "i"', tail]                       & Y                     
	\end{tikzcd}
	\]
\end{proof}

See also the remarks at the start of Cockett and Lack's~\cite[Section 3.1]{cockettlack2002partialmaps}.

\begin{lemma}\label{DetMonoLeftCancellation}
	Consider a partializable Markov category $\cC$, a monomorphism $m$ and a deterministic $u$ such that $u = m \comp f$.
	Assume that either:
	\begin{enumerate}
		\item\label{triangleMonicCond} $u$ is monic;
		\item\label{triangleDeterministicCond} $m$ is deterministic.
	\end{enumerate}
	Then, $f$ is deterministic as well (and in case~\ref{triangleMonicCond} a deterministic monomorphism).
\end{lemma}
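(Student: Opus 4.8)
The plan is to treat the two cases by different methods, since the two extra hypotheses bring in different structure. In both cases I would note first the easy observation that if $u$ is monic then $f$ is monic as well, because any right factor of a monomorphism is a monomorphism: if $f \comp a = f \comp b$ then $u \comp a = m \comp f \comp a = m \comp f \comp b = u \comp b$, so $a = b$. This already disposes of the ``monomorphism'' half of the claim in case~\ref{triangleMonicCond}, leaving only determinism to be established in each case.

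For case~\ref{triangleDeterministicCond}, where $m$ is deterministic, I would argue directly with the copy maps. Since $u$ is deterministic, $\cop \comp u = (u \otimes u) \comp \cop = (m \otimes m) \comp (f \otimes f) \comp \cop$, whereas determinism of $m$ gives $\cop \comp u = \cop \comp m \comp f = (m \otimes m) \comp \cop \comp f$. Equating the two expressions leaves $(m \otimes m) \comp \cop \comp f = (m \otimes m) \comp (f \otimes f) \comp \cop$. The crucial point is then that $m \otimes m$ is a monomorphism: writing $m \otimes m = (m \otimes \id) \comp (\id \otimes m)$ and invoking the third condition of \cref{def:partializable} (together with the braiding, to transport the identity factor to the other side) exhibits $m \otimes m$ as a composite of monomorphisms. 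Cancelling it yields $\cop \comp f = (f \otimes f) \comp \cop$, i.e.\ copyability, which in a Markov category is the same as determinism (\cref{definition:copyable}).

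For case~\ref{triangleMonicCond} this cancellation is unavailable, since $m$ need not be deterministic and hence $m \otimes m$ need not be monic; circumventing this is the main obstacle, and I would replace the copy-map computation with a pullback argument. Here $u$ is a \emph{deterministic} monomorphism, so by condition~\ref{it:pullback_cond} of \cref{def:partializable} its pullback along $m$ exists, giving a square with legs $p_Y \colon P \to Y$ and $p_X \colon P \to X$, where $p_Y$ (the pullback of $u$) is again a deterministic monomorphism and $p_X$ (the pullback of the monomorphism $m$) is a monomorphism. The commuting square $u \comp \id_X = m \comp f$ induces a map $\iota \colon X \to P$ with $p_X \comp \iota = \id_X$ and $p_Y \comp \iota = f$. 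Thus $p_X$ is a split epimorphism as well as a monomorphism; a standard cancellation (from $p_X \comp \iota \comp p_X = p_X$ and monicity of $p_X$ one gets $\iota \comp p_X = \id_P$) makes $p_X$ an isomorphism, so $\iota = p_X^{-1}$ is an isomorphism and in particular deterministic, isomorphisms being deterministic by positivity (\cref{def:partializable}~\ref{it:isos_det}). Therefore $f = p_Y \comp \iota$ is a composite of a deterministic monomorphism and an isomorphism, hence itself a deterministic monomorphism, as claimed. The only point requiring care is that the pullback leg $p_Y$ genuinely inherits both determinism and monicity from $u$, which is exactly what condition~\ref{it:pullback_cond} guarantees.
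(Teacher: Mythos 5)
Your proof is correct and follows essentially the same route as the paper: case~\ref{triangleDeterministicCond} is the identical copy-map computation with cancellation of the monomorphism $m \otimes m$, and case~\ref{triangleMonicCond} rests on the same key fact that $f$ is (up to isomorphism) the pullback of the deterministic monomorphism $u$ along $m$, hence deterministic by condition~\ref{it:pullback_cond} of \cref{def:partializable}. The only cosmetic difference is that the paper recognizes the given commuting square as already being a pullback via \cref{lemma:factorThroughMonoCart}, whereas you construct the pullback from the axiom and show the comparison map is an isomorphism --- the same argument in substance.
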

\begin{proof}
	Case~\ref{triangleMonicCond} is an immediate corollary of \cref{lemma:factorThroughMonoCart} as $f$ is then a pullback of the deterministic monomorphism $u$.

	Consider case~\ref{triangleDeterministicCond}.
	The fact that $m \otimes m$ is assumed to be monic lets us deduce the claim from
	\[
	\tikzfig{DetMonoLeftCancellation}
	\]
	The second equality is a consequence of the fact that we assumed $u = m \comp f$ deterministic.
\end{proof}

\begin{remark}\label{remark:detSubobCat}
	Consider a partializable Markov category $\cC$.
	As a consequence of \cref{DetMonoLeftCancellation}, the category $\Sub_{\det}(X)$ of deterministic subobjects (in $\cC$) of an object $X$ is the same as $\Sub(X)$ with $X$ considered as an object of $\cC_{\det}$. Both can further be equivalently identified as the subcategory of the slice $\cC_{/X}$ of the deterministic monomorphisms and deterministic monomorphisms between them (which is a full subcategory).
	
	Thus, meets in $\Sub_{\det}\left(X\right)$ (when they exist) correspond to limits of diagrams of deterministic monomorphisms over $X$.
	In particular, the limit projections are also deterministic monomorphisms.
\end{remark}

\begin{proposition}\label{proposition:DetInclusionCartPreservation}
	Let $\cC$ be a partializable Markov category.
	The inclusion $\cC_{\det} \to \cC$ creates pullbacks along monomorphisms.
\end{proposition}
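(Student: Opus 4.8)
The plan is to unwind what ``creates'' means: a cospan $X \xrightarrow{f} Z \xleftarrow{m} Y$ in $\cC_\det$ whose leg $m$ is a monomorphism --- equivalently, by \cref{remark:detSubobCat}, a deterministic monomorphism of $\cC$ --- admits a pullback in $\cC$ by \cref{def:partializable}~\ref{it:pullback_cond},
\[
\begin{tikzcd}
	P \arrow[d, "m'"', tail] \arrow[r, "f'"] \arrow[rd, "\lrcorner"{anchor=center, pos=0.125}, draw=none] & Y \arrow[d, "m", tail] \\
	X \arrow[r, "f"'] & Z
\end{tikzcd}
\]
in which the pulled-back leg $m'$ is again a deterministic monomorphism. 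Because the inclusion $\cC_\det \hookrightarrow \cC$ is faithful and the identity on objects, creation reduces to two claims: that the cone $(m', f')$ lifts to $\cC_\det$ (its legs are deterministic, whence the lift is the unique one), and that this lifted cone is a pullback in $\cC_\det$.

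First I would check that the lift exists. The leg $m'$ is deterministic by the axiom, so it remains to treat $f'$. Since $m \comp f' = f \comp m'$ is a composite of deterministic maps it is deterministic, and it factors through the deterministic monomorphism $m$; hence \cref{DetMonoLeftCancellation}~\ref{triangleDeterministicCond} forces $f'$ to be deterministic. Thus both projections are deterministic and the pullback cone lies in $\cC_\det$, uniquely so since the inclusion is faithful and bijective on objects.

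Next I would verify the universal property internally to $\cC_\det$. Given a competing cone consisting of deterministic maps $a \colon W \to X$ and $b \colon W \to Y$ with $f \comp a = m \comp b$, the pullback in $\cC$ yields a unique $w \colon W \to P$ with $m' \comp w = a$ and $f' \comp w = b$. As $a = m' \comp w$ is deterministic and $m'$ is a deterministic monomorphism, a second application of \cref{DetMonoLeftCancellation}~\ref{triangleDeterministicCond} shows that $w$ is deterministic; its uniqueness in $\cC_\det$ is inherited from $\cC$ by faithfulness. This establishes the universal property and completes the creation argument.

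The only real subtlety --- and the main thing to pin down --- is the identification of ``monomorphism in $\cC_\det$'' with ``deterministic monomorphism of $\cC$'', so that both \cref{def:partializable}~\ref{it:pullback_cond} and \cref{DetMonoLeftCancellation} are applicable; I would justify this via \cref{remark:detSubobCat}. Everything else is bookkeeping enabled by the inclusion being faithful and bijective on objects, which automatically supplies uniqueness of the lift and reflection of the cone equations.
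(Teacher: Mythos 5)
Your proof is correct and follows essentially the same route as the paper: both arguments reduce everything to \cref{DetMonoLeftCancellation}, using it to see that the legs of the pullback cone taken in $\cC$ and the universally induced comparison maps are deterministic, so that the cone lifts to and is universal in $\cC_\det$. The only difference is that the paper additionally spells out the converse direction --- that a pullback square in $\cC_\det$ along a monomorphism is also a pullback in $\cC$, which is the form actually invoked in \cref{corollary:SPTSUniversality} --- but this is a formal consequence of creation together with the existence axiom \cref{def:partializable}~\ref{it:pullback_cond}, so your argument yields it implicitly.
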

\begin{proof}
	We first show that a pullback (in $\cC$) along a deterministic monomorphism
	\[
	\begin{tikzcd}
		W \arrow[d, "j"', tail] \arrow[r, "g"] \arrow[rd, "\lrcorner"{anchor=center, pos=0.125}, draw=none]& Y \arrow[d, "i", tail] \\
		X \arrow[r, "f"']                      & Z                     
	\end{tikzcd}
	\]
	such that all morphisms involved are deterministic\footnote{In fact, by \cref{DetMonoLeftCancellation} any pullback in $\cC$ of a deterministic $f$ along a deterministic monomorphism $i$ is of this form.} is a pullback in $\cC_\det$.
	For this, consider $A \xrightarrow{u} X$, $A \xrightarrow{v} Y$ deterministic such that $f \comp u = i \comp v$.
	We need only show that the induced map $h$ to the pullback $W$ is deterministic; this is a consequence of \cref{DetMonoLeftCancellation} applied to $u = j \comp h$.
	
	Conversely, a pullback along a monomorphism in $\cC_\det$
	\[
	\begin{tikzcd}
		A \arrow[r, "g"] \arrow[d, "j"', tail] \arrow[rd, "\lrcorner"{anchor=center, pos=0.125}, draw=none]& C \arrow[d, "i", tail] \\
		B \arrow[r, "f"']                      & D                     
	\end{tikzcd}
	\]
	is a pullback in $\cC$.
	To see this, note that there is by hypotheses a pullback
	\[
	\begin{tikzcd}
		X \arrow[r, "v"] \arrow[d, "u"', tail] \arrow[rd, "\lrcorner"{anchor=center, pos=0.125}, draw=none]& C \arrow[d, "i", tail] \\
		B \arrow[r, "f"']                      & D                     
	\end{tikzcd}
	\]
	in $\cC$, where $u$ is a deterministic monomorphism, hence with $v$ deterministic as well by \cref{DetMonoLeftCancellation}.
	Thus by the first assertion of the claim, this square is also a pullback in $\cC_\det$.
	As $\left(X,u,v\right)$ and $\left(A,j,g\right)$ both define limits of the same diagram in $\cC_\det$, there is an isomorphism between the two cones. This remains an isomorphism between them in $\cC$, completing the proof.
\end{proof}

\begin{corollary}
	\label{corollary:SPTSUniversality}
	Consider a partializable Markov category. For every deterministic monomorphism $X\xrightarrow{i} Y$, the following squares are pullbacks.
	\[
	\begin{tikzcd}
		X \otimes A \arrow[d, "i \otimes A"', tail] \arrow[r, "\pi_X"] \arrow[rd, "\lrcorner"{anchor=center, pos=0.125}, draw=none] & X \arrow[d, "i", tail] \\
		Y \otimes A \arrow[r, "\pi_Y"']                                 & Y                     
	\end{tikzcd}
	\qquad\qquad
	\begin{tikzcd}
		A \arrow[d, "i"', tail] \arrow[r, "(A \otimes i)\comp\cop"] \arrow[rd, "\lrcorner"{anchor=center, pos=0.125}, draw=none] & A \otimes X \arrow[d, "i \otimes A", tail] \\
		X \arrow[r, "\cop"']                & X \otimes X                         
	\end{tikzcd}
	\qquad\qquad
	\begin{tikzcd}
		A \arrow[d, "i"', tail] \arrow[r, "\cop"] \arrow[rd, "\lrcorner"{anchor=center, pos=0.125}, draw=none] & A \otimes A \arrow[d, "i \otimes i", tail] \\
		X \arrow[r, "\cop"']                & X \otimes X                         
	\end{tikzcd}
	\]
\end{corollary}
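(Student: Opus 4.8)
The plan is to push all three squares down into the deterministic subcategory $\cC_\det$, where the copy and projection maps become the diagonal and the product projections, verify them there as elementary cartesian pullbacks, and then transport them back up to $\cC$ using \cref{proposition:DetInclusionCartPreservation}.

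First I would record that every arrow in the three squares is deterministic: the maps $\cop$ and $\pi$ are deterministic in any Markov category, $i$ is deterministic by hypothesis, and tensors and composites of deterministic maps are deterministic. I would also check that the distinguished leg of each cospan is a genuine monomorphism, so that the relevant axioms apply: $i \otimes \id_A$ and $i \otimes \id_X$ are monic by the closure-under-tensoring clause of \cref{def:partializable}, and $i \otimes i = (\id \otimes i)\comp(i \otimes \id)$ is then a composite of two such monos. Since a monomorphism of $\cC$ restricts to one of the wide subcategory $\cC_\det$, each square is a cospan in $\cC_\det$ whose pulled-back leg is a monomorphism, and the corresponding pullback exists in $\cC$ by the second axiom of \cref{def:partializable}. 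Thus \cref{proposition:DetInclusionCartPreservation} applies and it suffices to verify that the three squares are pullbacks in $\cC_\det$.

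Next I would invoke Fox's theorem~\cite{foxCoalgebrasCartesianCategories1976}: the deterministic subcategory of a Markov category is cartesian monoidal, with $\otimes$ the categorical product, $\cop$ the diagonal, and $\pi_X,\pi_Y$ the product projections (in $\cC_\det$ both copying and discarding are natural). In this cartesian setting the three squares are the three most standard pullbacks one can write down. The first is the stability of a product projection under pullback, i.e.\ that $X \otimes A$ is the pullback of $\pi_Y$ along $i$ (this needs no monicity). The second exhibits $A$, via its leg $(A \otimes i)\comp \cop = \langle \id_A, i\rangle$, as the graph pullback of $\cop$ along $i \otimes \id$. The third exhibits $\cop_A$ as the pullback of $\cop_X$ along $i \otimes i$. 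In each case the universal property is a one-line check: given a competing cone, project to the factors, use commutativity of the square to pin down the components, and read off the unique induced map.

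I expect the main obstacle to be purely bookkeeping: correctly matching up which leg of each cospan is the monomorphism so that \cref{proposition:DetInclusionCartPreservation} is applicable, and the single genuine use of monicity in the third square. There, pulling $\cop_X$ back along $i \otimes i$ forces the two $A$-components of any competing cone to agree after post-composition with $i$, and cancelling the monomorphism $i$ collapses the pullback onto the diagonal; everything else is the routine verification of standard cartesian pullbacks. (One could instead avoid naming Fox's theorem and assemble the squares by pasting \cref{lemma:factorThroughMonoCart} with tensor-stability of pullbacks, but the reduction to $\cC_\det$ is the most economical route.)
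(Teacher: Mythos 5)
Your proposal is correct and matches the paper's intended argument: the corollary is stated immediately after \cref{proposition:DetInclusionCartPreservation} precisely so that one reduces to $\cC_\det$, where the squares are the standard cartesian pullbacks (the paper leaves the proof implicit, and the footnote at \cref{lemma:ParFiniteMarginalFibre} confirms that an appeal to \cref{proposition:DetInclusionCartPreservation} is the intended route). Your bookkeeping about which legs are genuine monomorphisms of $\cC$ (via the tensoring axiom of \cref{def:partializable}) and the single use of monicity of $i$ in the third square are exactly the details needed to make that reduction legitimate.
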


\begin{corollary}\label{corollary:ParDetSubcategory}
	For $\cC$ a partializable Markov category, so is $\cC_\det$.
	The inclusion $\cC_\det \to \cC$ induces an inclusion $\Par{\cC_\det} \to \Par{\cC}$.
\end{corollary}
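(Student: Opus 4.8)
The plan is to prove the two assertions in turn: first that $\cC_\det$ satisfies the three conditions of \cref{def:partializable}, and then that the inclusion functor $\iota\colon\cC_\det\to\cC$ lifts to a faithful, identity-on-objects functor $\Par{\cC_\det}\to\Par{\cC}$. The technical fact I would lean on throughout is that monomorphisms of $\cC_\det$ coincide, as subobjects, with deterministic monomorphisms of $\cC$; this is exactly \cref{remark:detSubobCat}, which rests on the left-cancellation property \cref{DetMonoLeftCancellation}. Note first that $\cC_\det$ is itself a Markov category: it inherits the CD structure of $\cC$ and all its maps are total, so it is in particular quasi-Markov, and the notions of \cref{def:partializable} make sense for it.

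For condition (i), positivity: given a composable pair $u,v$ in $\cC_\det$ whose composite is copyable (which is automatic, since every map of $\cC_\det$ is deterministic), the positivity equation of \cref{positivityDefinition} is an identity between CD-structure composites computed identically in $\cC_\det$ and in $\cC$; as $\cC$ is positive it holds, so $\cC_\det$ is positive. For condition (ii): in $\cC_\det$ every morphism is deterministic, so a ``deterministic monomorphism'' is just a monomorphism of $\cC_\det$, which by \cref{remark:detSubobCat} is a deterministic monomorphism of $\cC$; its pullback exists in $\cC$ and is a deterministic monomorphism there by condition (ii) for $\cC$, and by \cref{proposition:DetInclusionCartPreservation} this pullback is simultaneously a pullback in $\cC_\det$, its pulled-back leg being (deterministic and) again monic in $\cC_\det$. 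For condition (iii): if $i$ is a monomorphism of $\cC_\det$, hence a deterministic monomorphism of $\cC$, then $i\otimes\id_A$ is monic in $\cC$ by condition (iii) for $\cC$ and is deterministic, so being monic in $\cC$ it is a fortiori monic in $\cC_\det$. Thus $\cC_\det$ is partializable and $\Par{\cC_\det}$ is defined.

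For the induced functor I would send a span $X\xleftarrow{i}D\xrightarrow{f}Y$ representing a morphism of $\Par{\cC_\det}$ to the same span regarded in $\cC$. This is a legitimate morphism of $\Par{\cC}$, since the left leg $i$, being monic in $\cC_\det$, is a deterministic monomorphism of $\cC$ by \cref{remark:detSubobCat}. Well-definedness on equivalence classes and preservation of identities are immediate, as an isomorphism of apices in $\cC_\det$ remains one in $\cC$. Preservation of composition is the crux: in both categories composition is by pullback of a right leg along a left leg (a deterministic monomorphism), and \cref{proposition:DetInclusionCartPreservation} guarantees that pullbacks of this shape formed in $\cC_\det$ are preserved by $\iota$, so the two composite spans agree on the nose. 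Faithfulness—and hence that the functor is an embedding, being identity on objects—follows because any isomorphism of apices witnessing equivalence of images in $\cC$ commutes with the deterministic-monomorphism left legs $i=i'\comp\phi$, whence $\phi$ (and symmetrically $\phi^{-1}$) is deterministic by the second case of \cref{DetMonoLeftCancellation}, so the equivalence already holds in $\cC_\det$.

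The main obstacle I anticipate is the bookkeeping around the two a priori distinct notions of monomorphism—monic in $\cC_\det$ versus deterministic-monic in $\cC$—together with the related need to know that the pullbacks used to compose spans in $\cC_\det$ are genuinely preserved by the inclusion. Both are dispatched by \cref{remark:detSubobCat} and \cref{proposition:DetInclusionCartPreservation} respectively, after which every remaining verification is routine. One may further observe, via \cref{proposition:ParCDDeterminism}, that the image of this embedding is precisely the copyable subcategory $\Par{\cC}_\copyable$, realising the equivalence $\Par{\cC_\det}\cong\Par{\cC}_\copyable$.
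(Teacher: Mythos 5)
Your proposal is correct and follows essentially the route the paper intends: the paper states this as an unproved corollary of \cref{proposition:DetInclusionCartPreservation}, \cref{DetMonoLeftCancellation}, and \cref{remark:detSubobCat}, and your argument is precisely the fleshing-out of that implicit proof (identify monomorphisms of $\cC_\det$ with deterministic monomorphisms of $\cC$, transfer the three partializability conditions, and use creation of pullbacks to see that composition of spans is preserved). The closing observation that the image is $\Par{\cC}_\copyable$ matches the identification the paper makes elsewhere.
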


\begin{remark}
	A commuting square in which either of the two parallel pairs of morphisms are isomorphisms is a pullback.
\end{remark}

\subsection{Kolmogorov products in partializable Markov	categories}

\begin{proposition}\label{proposition:InfProdDetMonos}
	Consider a partializable Markov category $\cC$ that has Kolmogorov products of size $K$.
	Given a family of deterministic monomorphisms $\left(X_t \xrightarrow{i_t} Y_t\right)_{t \in K}$, the product $i^K \coloneqq \bigotimes_{t \in K} i_t$ is a (deterministic) monomorphism.
\end{proposition}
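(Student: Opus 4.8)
The plan is to reduce the infinite case to the finite tensors and then bootstrap via the universal property of the Kolmogorov product. Recall that $i^K \colon X^K \to Y^K$ is, by definition, the map induced by the family $\bigl(X^K \xrightarrow{\pi_F} X^F \xrightarrow{i^F} Y^F\bigr)_{F \subseteq K \text{ finite}}$, where $i^F \coloneqq \bigotimes_{t \in F} i_t$. First I would check that this family really is a cone over the diagram $Y^{\left(\phsm\right)}$, so that $i^K$ is well-defined: for $F' \subseteq F$ one has $\pi_{F,F'}^Y \comp i^F = i^{F'} \comp \pi_{F,F'}^X$ because each $i_t$ is total (all maps of the Markov category $\cC$ are), so that applying the $i_t$ commutes with discarding the factors in $F \setminus F'$. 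In particular the induced map satisfies the naturality relation $\pi_F^Y \comp i^K = i^F \comp \pi_F^X$ for every finite $F$.

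Next I would treat the finite tensors, showing by induction on $\abs{F}$ that each $i^F$ is a deterministic monomorphism. Writing $i^F = (i_t \otimes \id) \comp (\id \otimes i^{F \setminus \{t\}})$ for a chosen $t \in F$, the first factor is a monomorphism directly by the third axiom of \cref{def:partializable}, while the second is conjugate by braiding isomorphisms to $i^{F \setminus \{t\}} \otimes \id$, which is monic by the same axiom together with the inductive hypothesis; as monomorphisms compose, $i^F$ is monic. It is moreover deterministic, since determinism is preserved under tensoring in any CD category.

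The determinism of $i^K$ itself is then immediate from the property of Kolmogorov products noted in \cref{InfTensorProductDefn}: the cone $\bigl(i^F \comp \pi_F^X\bigr)_F$ inducing $i^K$ has copyable (equivalently, in a Markov category, deterministic) components, being composites of the deterministic projections $\pi_F^X$ with the deterministic $i^F$, so the induced map is copyable. For the monomorphism property I would exploit that $X^K$ is a limit cone, whose projections are therefore jointly monic. Given parallel $f, g \colon A \to X^K$ with $i^K \comp f = i^K \comp g$, composing with each $\pi_F^Y$ and using the naturality relation above yields $i^F \comp \pi_F^X \comp f = i^F \comp \pi_F^X \comp g$; since $i^F$ is monic, $\pi_F^X \comp f = \pi_F^X \comp g$ for all finite $F$, and joint monicity of the limit projections gives $f = g$.

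The hard part is the finite-tensor step, as it is the only place where the partializability hypothesis genuinely enters — specifically the third axiom, that deterministic monomorphisms remain monic after tensoring with identities; everything else is a formal consequence of the limit (respectively Kolmogorov) universal property. I would also be careful to spell out the cone compatibility $\pi_{F,F'}^Y \comp i^F = i^{F'} \comp \pi_{F,F'}^X$, since this ``naturality of discarding'' is precisely what holds in the Markov category $\cC$ (but fails in $\Par{\cC}$) and is what makes $i^K$ exist as a genuine induced map.
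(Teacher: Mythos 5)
Your proof is correct and follows essentially the same route as the paper's: compose with the finite marginalizations $\pi_F$, cancel the monic finite tensor $i^F$, and conclude by the joint monicity of the limit projections. The paper's version is terser — it takes the monicity of the finite tensors and the well-definedness of the cone for granted — whereas you spell out the induction via the third partializability axiom and the totality argument for cone compatibility, which are worthwhile details but not a different argument.
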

\begin{proof}
	Consider $f,g$ such that $i^K\comp f = i^K \comp g$.
	Then, for any finite $F \subseteq K$,
	\[
	i^F \comp \pi_F \comp i^K \comp f = \pi_F \comp i^K \comp g = i^F \comp \pi_F \comp g
	\]
	so that $\pi_F f = \pi_F g$ since the finite product $i^F = \bigotimes_{t \in F} i_t$ is monic.
	As $F$ was arbitrary, the universal property of the Kolmogorov product ensures $f = g$.
\end{proof}

\begin{proposition}\label{proposition:ParSubobMeets}
	Consider a partializable Markov category $\cC$ with $K$ sized Kolmogorov products.
	For an object $B$ of $\cC$, the meet in $\Sub_{\det}(B)$ of a family $A_k \xrightarrow{i_k} B$ of deterministic monomorphisms over $B$ exists, and is given by the pullback $\iota$ in
	\[
	\begin{tikzcd}
		A \arrow[d, "\iota"', tail] \arrow[r, "j", tail] \arrow[rd, "\lrcorner"{anchor=center, pos=0.125}, draw=none]& A^K \arrow[d, "i^K", tail] \\
		B \arrow[r, "\cop"', tail]                      & B^K                 
	\end{tikzcd}
	\]
	(where we again use the notation $i^K \coloneqq \bigotimes_{k \in K} i_k$)
\end{proposition}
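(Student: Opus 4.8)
The plan is to verify directly that the deterministic subobject $\iota$ produced by the displayed pullback is the greatest lower bound of the family $\left(A_k \xrightarrow{i_k} B\right)$ in $\Sub_{\det}(B)$. First I record existence and well-definedness: by \cref{proposition:InfProdDetMonos} the map $i^K = \bigotimes_{k\in K} i_k$ is a deterministic monomorphism, so by the pullback axiom of a partializable Markov category its pullback $\iota$ along $\cop\colon B \to B^K$ exists and is again a deterministic monomorphism; hence $\iota$ genuinely represents an element of $\Sub_{\det}(B)$, which by \cref{remark:detSubobCat} is the subobject poset of $B$ computed in $\cC_\det$. It therefore suffices to compare $\iota$ with the $i_k$ as subobjects in $\cC_\det$.

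Next I would show $\iota \le i_k$ for each $k$. Composing the pullback relation $i^K \comp j = \cop \comp \iota$ with the $k$-th Kolmogorov projection and using $\pi_k \comp i^K = i_k \comp \pi_k$ together with $\pi_k \comp \cop = \id_B$ gives $\iota = i_k \comp \left(\pi_k \comp j\right)$. Thus $\pi_k \comp j$ is a (deterministic) factorization of $\iota$ through $i_k$, witnessing $\iota \le i_k$, so $\iota$ is a lower bound.

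The main step is to show $\iota$ is the greatest lower bound. Suppose $C \xrightarrow{m} B$ is a deterministic monomorphism equipped with deterministic factorizations $a_k\colon C \to A_k$, $i_k \comp a_k = m$, for every $k$. Since the $a_k$ are deterministic and hence total, their finite tuplings $\langle a_k\rangle_{k\in F}\colon C \to A^F$ are compatible with marginalization and assemble into a cone over the finite-product diagram $\left(A^F\right)_F$ with deterministic components; by the universal property of the Kolmogorov product (\cref{InfTensorProductDefn}) this induces a deterministic map $a\colon C \to A^K$. I then check $i^K \comp a = \cop \comp m$ by composing with each $\pi_F$: the left side becomes $i^F \comp \pi_F \comp a = i^F \comp \langle a_k\rangle_F$, which equals $\left(\pi_F\comp\cop\right)\comp m$ (the iterated copy $B\to B^F$ precomposed with $m$) using $i_k\comp a_k = m$ and naturality of copying for the total map $m$, and this matches the right side $\pi_F \comp \cop \comp m$. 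As this holds for all $F$, joint monicity of the projections gives $i^K\comp a = \cop\comp m$. The pullback property of $\iota$ now yields a unique $b\colon C \to A$ with $\iota \comp b = m$, and $b$ is deterministic by \cref{DetMonoLeftCancellation} applied to the monomorphism $m = \iota \comp b$. Hence $m \le \iota$, and $\iota$ is the meet.

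The step I expect to be the main obstacle is the infinite bookkeeping in this third paragraph: verifying that the tuplings $\langle a_k\rangle_{k\in F}$ genuinely form a cone (compatibility with marginalization) and that the induced map $a$ satisfies $i^K\comp a = \cop\comp m$. Both hinge on the determinism and totality of the $a_k$ and $m$, so that copying and deletion behave exactly as in the cartesian subcategory $\cC_\det$, and on threading the Kolmogorov-product universal property correctly through all finite marginals. Conceptually the statement is just the standard fact that an intersection of subobjects is the pullback of their product against the diagonal, instantiated in the cartesian category $\cC_\det$ (whose $K$-fold products are the Kolmogorov products) and transported to $\cC$ via \cref{proposition:DetInclusionCartPreservation}, which tells us the inclusion creates pullbacks along monomorphisms; making the finite-to-infinite passage precise is the only real work.
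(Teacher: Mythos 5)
Your proposal is correct and follows essentially the same route as the paper's proof: establish that $\iota$ is a lower bound via the marginals of $j$, and show it is the greatest lower bound by tupling the deterministic factorizations $a_k$ into a map $C \to A^K$ (using the Kolmogorov product's universal property on the cone of finite tuplings) and then invoking the universal property of the pullback. Your version spells out the cone-compatibility and joint monicity of the finite marginalizations more explicitly than the paper does, but the argument is the same.
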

\begin{proof}
	The morphisms $\iota$ and $j$ are pullbacks of deterministic monomorphisms and hence themselves deterministic monomorphisms (so in particular $\iota \in \Sub_{\det}(B)$).
	Now if $j_k$ is the marginal of $j$ on to $A_k$ for some $k$, we have $i_k \comp j_k = \iota$ so that $\iota \le i_k$ for each $k$.
	
	Conversely, given a subobject $C \xrightarrow{i} B$ such that $i \le i_k$ for each $k$, there are $s_k \colon C \to A_k$ such that $i_k \comp s_k = i$ (necessarily deterministic monomorphisms by \cref{DetMonoLeftCancellation}).
	Then, the morphism $s \coloneqq \left(s_k\right)_{k \in K}$ satisfies $i^K \comp s = i^{(K)}$ and therefore witnesses $i \le \iota$ by the universal property of the pullback.
	Consequently, $\iota$ is the infimum of the $i_k$'s, as claimed.
\end{proof}

\begin{lemma}\label{lemma:ParSubobBoxCutout}
	Consider a partializable Markov category $\cC$ and a family of deterministic monomorphisms $\left(A_k \xrightarrow{i_k} X_k\right)_{k \in K}$.
	For finite $F \subseteq K$ and arbitrary $t\in K$, let
	\[
		B_{F,t} \coloneqq \begin{cases} A_t & \text{if } t \in F \\ X_t & \text{otherwise} \end{cases}
	\]
	and set $j_{F,t}\colon B_{F,t} \to X_t$ to be $i_t$ when $t \in F$ and the identity otherwise.
	Let $j_F^K \coloneqq \bigotimes_{k \in K}j_{F,k}$ and use the convention $i^K \coloneqq \bigotimes_{k \in K} i_k$.
	Then $i^K$ is the meet in $\Sub_{\det}\left(X^K\right)$ of the $j_F^K$ (ranging over all finite subsets $F$ of $K$).
\end{lemma}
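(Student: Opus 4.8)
The plan is to prove directly that $i^K$ is a lower bound for the family $\{j_F^K\}_{F}$ in $\Sub_\det(X^K)$, and then that it is the greatest such lower bound. Throughout I would work inside $\cC$ — equivalently, by \cref{remark:detSubobCat}, inside $\cC_\det$ — since every subobject in sight is deterministic. The feature that makes all of the following manipulations legitimate, in contrast to the situation in $\Par{\cC}$, is that $\cC$ is a genuine Markov category, so its $K$-sized Kolmogorov products are functorial: products of maps such as $\bigotimes_{k} j_{F,k}$ exist and commute with the finite marginalizations, and finite products of monomorphisms are again monic.

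For the lower-bound direction I would exhibit an explicit witness. For each finite $F$ set $m_{F,t} := \id_{A_t}$ if $t \in F$ and $m_{F,t} := i_t$ otherwise, and let $m_F := \bigotimes_{k} m_{F,k}\colon A^K \to B_F^K$. In every coordinate one has $j_{F,t}\comp m_{F,t} = i_t$ (either $i_t\comp\id$ or $\id\comp i_t$), hence $j_F^K\comp m_F = i^K$; by \cref{DetMonoLeftCancellation} the map $m_F$ is automatically a deterministic monomorphism, so it witnesses $i^K \le j_F^K$.

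For the greatest-lower-bound direction, suppose $c\colon C \rightarrowtail X^K$ is a deterministic monomorphism with $c \le j_F^K$ for every finite $F$, witnessed by maps $s_F\colon C \to B_F^K$ with $j_F^K\comp s_F = c$. Marginalizing $B_F^K$ onto the coordinates in $F$ (where $B_{F,t}=A_t$, so the target is $A^F$) gives $\sigma_F := \pi^B_F\comp s_F\colon C \to A^F$, and using the interchange $\pi_F^X\comp j_F^K = i^F\comp\pi^B_F$ (which holds because $\bigotimes_{t\in F} j_{F,t} = i^F$) I obtain $i^F\comp\sigma_F = \pi_F^X\comp c$ for each $F$. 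I would then check that $(\sigma_F)_F$ is a cone over the diagram $\FinSub{K}^\op\to\cC$ defining $A^K$: for $F'\subseteq F$, postcomposing either $\pi^A_{F,F'}\comp\sigma_F$ or $\sigma_{F'}$ with the monomorphism $i^{F'}$ yields $\pi^X_{F'}\comp c$ in both cases, so the two agree since $i^{F'}$ is monic. This cone has deterministic components, so the universal property of the Kolmogorov product $A^K$ induces a deterministic $\sigma\colon C\to A^K$ with $\pi_F^A\comp\sigma = \sigma_F$. Finally $i^K\comp\sigma = c$, because both sides have the same finite marginals, $\pi_F^X\comp i^K\comp\sigma = i^F\comp\sigma_F = \pi_F^X\comp c$, and the projections of $X^K$ are jointly monic. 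Thus $c\le i^K$, and $i^K$ is the meet.

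The point requiring the most care is not any single calculation but ensuring that every product of morphisms and every commutation with marginalization used above is valid — this is exactly where the Markov (totality) hypothesis on $\cC$ enters, through functoriality of its Kolmogorov products, and it is precisely this functoriality that fails in $\Par{\cC}$ (cf.\ \cref{KolProdsNonFunctorialWarning}). The remaining work is the routine promotion of the coordinatewise equalities into the cone condition by cancelling finite monomorphisms.
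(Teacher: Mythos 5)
Your proof is correct and follows essentially the same route as the paper's: the identical explicit witness $A^K \to B_F^K$ for the lower-bound direction, and for the greatest-lower-bound direction the same assembly of a map $C \to A^K$ out of the witnesses $s_F$ via the universal property of the Kolmogorov product, followed by a comparison of finite marginals against $i^{K}$. The only difference is cosmetic: the paper extracts the components from the singleton witnesses $u_{\{k\}}$ alone and invokes determinism of $\iota$ to conclude it is determined by its unary marginals, whereas you run the full cone verification over all finite subsets by cancelling the monomorphisms $i^{F'}$ — both amount to the same appeal to the universal property of $A^K$.
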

\begin{proof}
	First, $i^K$ is a lower bound as for each finite subset $F$, there is a witness to $i^K \le j_F^K$ given by the map $A^K \to B_F^K$ (where $B_F^K \coloneqq \bigotimes_{k \in K} B_{F,k}$) formed by tensoring $i_k$ on the $k \notin F$ and the identity on $k \in F$.
	
	Conversely, consider an arbitrary deterministic monomorphism $C \xrightarrow{\iota} X^K$ that is a lower bound for the $j_F^K$'s.
	Consider witnesses for this, say $C \xrightarrow{u_F} B_F^K$ such that $j_F^K \comp u_F = \iota$.
	Then, the marginal $v_k \coloneqq \left(u_{\left\{k\right\}}\right)_k$ of $u_{\{k\}}$ on to $B_{{\{k\}},k} = A_k$ is such that $i_k \comp v_k$ is the marginal $\iota_k$ of $\iota$ on to $X_k$.
	As $\iota$ and the $i_k$'s are deterministic, we have
	\[
	\iota = \left(\iota_k\right)_{k \in K} = \left(i_k \comp v_k\right)_{k \in K} = i^K \comp \left(v_k\right)_{k \in K}
	\]
	In particular $\iota \le i^K$, and as $\iota$ was an arbitrary lower bound of the $j_F^K$'s, $i^K$ is the infimum, as claimed.
\end{proof}

\begin{remark}
	In light of \cref{remark:detSubobCat,proposition:DetInclusionCartPreservation}, the \cref{proposition:InfProdDetMonos,proposition:ParSubobMeets,lemma:ParSubobBoxCutout} are really statements about cartesian products, as they refer to pullbacks and subobjects in the subcategory of deterministic morphisms.
	In any case, the proofs are essentially the same whether the products are considered as cartesian or Kolmogorov.
\end{remark}

On the other hand, we will also have cause to consider pullbacks along morphisms that need not be deterministic.
Here, the difference from the cartesian case will have to be dealt with. 

\begin{proposition}\label{proposition:StablePullbackTensorStability}
	Consider a partializable Markov category with Kolmogorov products of size $K$ (when $K$ is finite, this holds for any partializable Markov category).
	
	The tensor product of $\vert K \vert$ pullbacks along deterministic monomorphisms is a pullback.
	That is, given pullbacks
	\[\begin{tikzcd}
		A_k \arrow[r,"g_k"] \arrow[d, "j_k"', tail] \arrow[rd, "\lrcorner"{anchor=center, pos=0.125}, draw=none] & B_k \arrow[d, "i_k", tail] \\
		C_k \arrow[r,"f_k"']                                            & D_k                      
	\end{tikzcd}\]
	for each $k \in K$ with all the $i_k$'s deterministic monomorphisms, the following square is a pullback as well (where for a family of objects (resp.\ morphisms) $\left(X_k\right)_{k\in K}$ we use the shorthand $X^K \coloneqq \bigotimes_{k \in K} X_k$).\footnote{The vertical maps are deterministic monomorphisms by \cref{proposition:InfProdDetMonos}.}
	\[\begin{tikzcd}
		A^K \arrow[r, "g^K"] \arrow[d, "j^K"'] \arrow[rd, "\lrcorner"{anchor=center, pos=0.125}, draw=none] & B^K \arrow[d, "i^K"] \\
		C^K \arrow[r,"f^K"']                                  & D^K
	\end{tikzcd}\]
\end{proposition}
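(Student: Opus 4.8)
The plan is to prove the statement in three stages: first the binary case $\abs{K} = 2$, then arbitrary finite $K$ by induction on $\abs{K}$, and finally infinite $K$ by realizing the Kolmogorov product as a limit.

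For the binary case I would show that $A_1 \otimes A_2$, together with $g_1 \otimes g_2$ and $j_1 \otimes j_2$, satisfies the universal property of the pullback of $i_1 \otimes i_2$ along $f_1 \otimes f_2$. Uniqueness is immediate since $j_1 \otimes j_2$ is a monomorphism (\cref{proposition:InfProdDetMonos}). For existence, suppose $p \colon W \to C_1 \otimes C_2$ and $q \colon W \to B_1 \otimes B_2$ satisfy $\left(f_1 \otimes f_2\right)\comp p = \left(i_1 \otimes i_2\right)\comp q$. The key point is that, since $\cC$ is Markov, every map is total, so marginalization commutes with tensor products of maps; applying $\pi_{D_1}$ to the hypothesis yields $f_1 \comp \pi_{C_1}\comp p = i_1 \comp \pi_{B_1}\comp q$, and symmetrically $f_2 \comp \pi_{C_2}\comp p = i_2 \comp \pi_{B_2}\comp q$. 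Because $A_1$ and $A_2$ are the pullbacks of $i_1, i_2$ along $f_1, f_2$, the maps $\pi_{C_1}\comp p$ and $\pi_{C_2}\comp p$ factor through $j_1$ and $j_2$ respectively. Writing $j_1 \otimes j_2 = \left(j_1 \otimes C_2\right)\comp\left(A_1 \otimes j_2\right)$ and using the two pullback squares of \cref{corollary:SPTSUniversality} (which exhibit $A_1 \otimes C_2$ and $A_1 \otimes A_2$ as pullbacks of $j_1$ and $j_2$ along the relevant projections), these two factorizations assemble into a map $u \colon W \to A_1 \otimes A_2$ with $\left(j_1 \otimes j_2\right)\comp u = p$. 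Finally, $\left(g_1 \otimes g_2\right)\comp u = q$ follows by post-composing with the monomorphism $i_1 \otimes i_2$ and using $i_k \comp g_k = f_k \comp j_k$.

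Arbitrary finite $K$ then follows by induction on $\abs{K}$, splitting off a single factor and applying the binary case to the given square for that factor together with the (inductively pullback) square for the remaining factors, whose right leg is again a deterministic monomorphism by \cref{proposition:InfProdDetMonos}. For infinite $K$, I would reduce to the finite case by using that each of $A^K, B^K, C^K, D^K$ is a limit of its finite marginalizations (\cref{InfTensorProductDefn}). Given a testing pair $\left(p,q\right)$ into $C^K, B^K$, marginalizing to each finite $F \subseteq K$ and invoking the finite case produces maps $u_F \colon W \to A^F$; totality of all maps again ensures that these are compatible with marginalization, hence assemble into a unique $u \colon W \to A^K$, and one last marginalization argument checks $j^K \comp u = p$ and $g^K \comp u = q$.

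The main obstacle is the binary case, and within it the existence part: because the monoidal product is not a categorical product, one cannot build $u$ coordinatewise, and the non-determinism of $f_1, f_2$ must be absorbed by systematically exploiting totality (so that the marginalizations of the $f_k$ and $i_k$ behave correctly) together with the explicit pullback squares of \cref{corollary:SPTSUniversality}, which let one factor $p$ through $j_1 \otimes j_2$ one tensor factor at a time.
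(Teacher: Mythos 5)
Your proof is correct, but it takes a genuinely different route from the paper's. The paper does not verify the universal property directly: it invokes the partializability axiom to obtain \emph{some} pullback $(X,u,v)$ of $i^K$ along $f^K$ with $u$ a deterministic monomorphism, obtains the comparison map $t\colon A^K \to X$ from the universal property (so $j^K \le u$ in $\Sub_{\det}(C^K)$), and then shows $u \le j^K$ by marginalizing the square $(u,v)$ componentwise, factoring each marginal $u_k$ through $j_k$ via the given pullbacks, and using that $u$ is \emph{deterministic} and hence reconstructible from its marginals; mutual domination makes $t$ invertible. Your argument instead constructs the lift of an arbitrary test pair $(p,q)$ by hand: the binary case factors $p$ through $j_1\otimes j_2$ one tensor leg at a time using the projection pullback of \cref{corollary:SPTSUniversality} (noting that $\pi_{C_2}\comp(j_1\otimes C_2)=\pi_{C_2}$ since $j_1$ is total, so the second lift is licensed), finite $K$ follows by induction, and infinite $K$ by assembling the finite lifts $u_F$ into a cone over $F\mapsto A^F$ and using that $A^K$ is the limit; the remaining identities follow by post-composing with the monomorphisms $i^K$ and $j^F$. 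Both proofs ultimately rest on the same two ingredients — totality makes marginalization natural, and determinism/stability lets one work componentwise — but the paper's subobject-comparison handles all cardinalities of $K$ uniformly in one step, whereas yours is more elementary and self-contained (it never needs the abstract existence of the pullback of $i^K$ along $f^K$, only the explicit squares of \cref{corollary:SPTSUniversality}) at the cost of the three-stage structure. One presentational remark: in the infinite case the compatibility $\pi_{F,G}\comp u_F = u_G$ is most cleanly obtained from the \emph{uniqueness} clause of the finite case applied to the marginalized test data, rather than from totality alone; this is implicit in what you wrote but worth making explicit.
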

\begin{proof}
	As $\cC$ is partializable, there is a pullback square
	\[\begin{tikzcd}
		X \arrow[r, "v"] \arrow[d, "u"', tail] \arrow[rd, "\lrcorner"{anchor=center, pos=0.125}, draw=none] & B^K \arrow[d, "i^K", tail] \\
		C^K \arrow[r, "f^K"'] & D^K                        
	\end{tikzcd}\]
	with $u$ a deterministic monomorphism.
	Consequently, the universal property produces a factorization
	\[
	\begin{tikzcd}
		A^K \arrow[rd, "t"] \arrow[rrd, "g^K", bend left] \arrow[rdd, "j^K"', bend right, tail] &                                         &                                        \\
		& X \arrow[r, "v"] \arrow[d, "u"',tail] \arrow[rd, "\lrcorner"{anchor=center, pos=0.125}, draw=none] & B^K \arrow[d, "i^K", tail] \\
		& C^K \arrow[r, "f^K"'] & D^K                          
	\end{tikzcd}
	\]
	Thus in particular, $t$ establishes $j^K \le u$ as subobjects of $C^K$.
	
	Let $u$ have marginals $u_k$ on to $C_k$ for arbitrary $k \in K$, and those of $v$ be $v_k$.
	Then, we have commutative squares given by the marginals that induce (for each $k$) factorizations
	\[\begin{tikzcd}
		X \arrow[rdd, "u_k"', bend right] \arrow[rrd, "v_k", bend left] \arrow[rd, "s_k"] &                                        &                        \\
		& A_k \arrow[d, "j_k"', tail] \arrow[r, "g_k"] \arrow[rd, "\lrcorner"{anchor=center, pos=0.125}, draw=none]& B_k \arrow[d, "i_k", tail] \\
		& C_k \arrow[r, "f_k"']                      & D_k
	\end{tikzcd}\]
	As $u$ is deterministic, it is determined by its marginals as $\left(u_k\right)_{k \in K}$.
	That is, 
	\[
	\tikzfig{uDetExpand}
	\]
	Thus, $u$ factors through $j^K$, or in other words $\left(s_k\right)_{k \in K}$ establishes $u \le j^K$ as subobjects of $C^K$ as well.
	Therefore, the comparison map $t$ is invertible, proving the claim.
\end{proof}

\begin{lemma}\label{lemma:ParFiniteMarginalFibre}
	Consider a partializable Markov category $\cC$ with Kolmogorov products of size $K$.
	Given a family $\big(A_k \xrightarrow{i_k} X_k\big)_{k \in K}$ of deterministic monomorphisms in $\cC$ and a finite subset $F \subseteq K$, set for each $k \in K$
	\[
		B_{F,k} \coloneqq \begin{cases} A_k & \text{if } k \in F \\ X_k & \text{otherwise} \end{cases}
	\]
	and $B_F^K \coloneqq \bigotimes_{k \in K} B_{F,k}$.
	Let $j_F^K \colon B_F^K \to X^K \coloneqq \bigotimes_{k \in K} j_{F,k}$, where $j_{F,k}\colon B_{F,k} \to X_k$ is $i_k$ for $k \in F$ and identity otherwise. 
	Then, the following square is a pullback\footnote{This generalizes \cref{corollary:SPTSUniversality}. As with \cref{corollary:SPTSUniversality}, this can be shown by an appeal to \cref{proposition:DetInclusionCartPreservation}. However, there is again the immediate direct argument that we present.}
	\[
		\begin{tikzcd}
		B_F \arrow[r, "p_F"] \arrow[d, "j_F^K"', tail] & A^F \arrow[d, "i^F", tail] \\
		X^K \arrow[r, "\pi_F"']                      & X^F                       
	\end{tikzcd}
	\]
	where $p_F$ (like $\pi_F$) is given by the identity on components belonging to $F$ and deletion elsewhere.
\end{lemma}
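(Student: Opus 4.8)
The plan is to realize the claimed pullback as a deterministic subobject of $X^K$ and to identify it with the genuine pullback of $i^F$ along $\pi_F$ (which exists by the pullback axiom of \cref{def:partializable}) by comparing the two as subobjects. First I would record the easy structural facts: the square commutes, which one checks by marginalizing $j_F^K$ onto $X^F$ and comparing one-dimensional components (on the factors indexed by $F$ both composites restrict to $i_k$, on the remaining factors both composites discard); and that $i^F$ and $j_F^K$ are deterministic monomorphisms, the former as a finite and the latter as a $K$-indexed tensor of deterministic monomorphisms (\cref{proposition:InfProdDetMonos}). By the pullback axiom of \cref{def:partializable} the pullback of $i^F$ along $\pi_F$ exists; write it as $P \xrightarrow{\iota} X^K$, $P \xrightarrow{q} A^F$, with $\iota$ a deterministic monomorphism since it is a pullback of the deterministic monomorphism $i^F$.

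Next comes the core of the argument: I show that $j_F^K$ and $\iota$ define the same deterministic subobject of $X^K$. The commuting square induces a comparison map $c\colon B_F^K \to P$ with $\iota\comp c = j_F^K$ and $q \comp c = p_F$; since $\iota \comp c = j_F^K$ is a monomorphism, \cref{DetMonoLeftCancellation} makes $c$ a deterministic monomorphism, giving $j_F^K \le \iota$ in $\Sub_\det(X^K)$. For the reverse inequality I would exploit that $\iota$ is deterministic, hence determined by its family of one-dimensional marginals $\iota_k \colon P \to X_k$. From $\pi_F \comp \iota = i^F \comp q$ I read off, for each $k \in F$, that $\iota_k = i_k \comp q_k$ (where $q_k$ is the $k$-marginal of $q$), so $\iota_k$ factors through $i_k$. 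Setting $s_k \coloneqq q_k$ for $k \in F$ and $s_k \coloneqq \iota_k$ for $k \notin F$ yields a deterministic cone whose induced deterministic map $s\colon P \to B_F^K$ satisfies $j_F^K \comp s = \iota$ (both sides are deterministic with identical one-dimensional marginals $\iota_k$), witnessing $\iota \le j_F^K$. Antisymmetry of $\Sub_\det(X^K)$ then forces $c$ to be invertible, so $\big(B_F^K, j_F^K, p_F\big)$ is isomorphic to the pullback cone $(P, \iota, q)$ and is itself a pullback.

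The main obstacle is the reverse inequality, precisely because a map into an infinite tensor product is not in general determined by its one-dimensional marginals---the correlations between the factors matter. What rescues the argument is that the relevant map $\iota$ is the leg of a pullback of a monomorphism and is therefore deterministic; for deterministic maps a cone of one-dimensional marginals does assemble (uniquely) into a map into the Kolmogorov product, by the universal property recorded in \cref{InfTensorProductDefn}. As a conceptually cleaner alternative to this hands-on verification, one can instead invoke \cref{proposition:DetInclusionCartPreservation}: all four maps are deterministic and $i^F$ is monic, so it suffices to check that the square is a pullback in $\cC_\det$, where the tensor is a genuine product and the identification $B_F^K \cong A^F \otimes X^{K\setminus F}$ exhibits the square as a reindexing of the first square of \cref{corollary:SPTSUniversality}.
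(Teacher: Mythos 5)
Your argument is correct, but it takes a different (if closely related) route from the paper's. The paper's proof is a two-line reduction: up to permuting factors, the square in question is the tensor of the identity square on $i^F \colon A^F \rightarrowtail X^F$ with the square having $\discard \colon X^{K\setminus F} \to I$ as horizontal legs and identities as vertical legs, both of which are trivially pullbacks; one then invokes \cref{proposition:StablePullbackTensorStability} to conclude that their tensor is a pullback. What you do instead is construct the canonical pullback $P \xrightarrow{\iota} X^K$ supplied by axiom~(ii) of \cref{def:partializable} and show $\iota$ and $j_F^K$ coincide in $\Sub_\det\left(X^K\right)$ by comparing one-dimensional marginals, using determinism of $\iota$ (and of $q$, via \cref{DetMonoLeftCancellation}) to reassemble the cone $\left(s_k\right)_{k\in K}$ into a map $s$ with $j_F^K \comp s = \iota$. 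This is essentially an inlined, specialized re-proof of \cref{proposition:StablePullbackTensorStability} itself, whose proof in the paper runs on exactly the same subobject-antisymmetry-via-marginals mechanism; so your version is self-contained but longer, while the paper's version buys brevity by factoring the marginal argument out into the already-established tensor-stability result. Your closing alternative via \cref{proposition:DetInclusionCartPreservation} is also sound and is precisely the route the paper's footnote acknowledges but declines to take. All the delicate points (determinism of $\iota$ and $q$, the fact that a deterministic map into a Kolmogorov product is determined by its one-dimensional marginals, and antisymmetry forcing the comparison map to be invertible) are handled correctly.
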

\begin{proof}
	For each finite $F \subseteq K$, there are pullbacks
	\[
		\begin{tikzcd}
		A^F \arrow[r, Rightarrow, no head] \arrow[d, "i^F"', tail] \arrow[rd, "\lrcorner"{anchor=center, pos=0.125}, draw=none]& A^F \arrow[d, "i^F", tail] \\
		X^F \arrow[r, Rightarrow, no head]                         & X^F                 
	\end{tikzcd}
	\qquad \qquad \qquad \qquad
		\begin{tikzcd}
		X^{K\setminus F} \arrow[r, "\discard"] \arrow[d, Rightarrow, no head] \arrow[rd, "\lrcorner"{anchor=center, pos=0.125}, draw=none]& I \arrow[d, Rightarrow, no head] \\
		X^{K\setminus F} \arrow[r, "\discard"']                      & I                      
	\end{tikzcd}
	\]
	Up to permuting factors, the commutative square of the hypothesis is precisely the tensor of these two pullbacks, and is hence also a pullback by \cref{proposition:StablePullbackTensorStability}.
\end{proof}

\bibliographystyle{alphaabbrdoi}
\bibliography{markov,specificRefs}

\newcommand{\etalchar}[1]{$^{#1}$}
\begin{thebibliography}{GRSDlC24}
\providecommand{\url}[1]{\texttt{#1}}
\expandafter\ifx\csname urlstyle\endcsname\relax
  \providecommand{\doi}[1]{doi: #1}\else
  \providecommand{\doi}{doi: \begingroup \urlstyle{rm}\Url}\fi

\bibitem[Car87]{carboniBicategoriesPartialMaps1987}
A.~Carboni.
\newblock Bicategories of Partial Maps.
\newblock {\em Cahiers de Topologie et G{\'e}om{\'e}trie Diff{\'e}rentielle
  Cat{\'e}goriques}, 28(2):111--126, 1987.

\bibitem[CFG{\etalchar{+}}24]{chen2024aldoushoover}
L.~Chen, T.~Fritz, T.~Gonda, A.~Klingler, and A.~Lorenzin.
\newblock The {A}ldous--{H}oover Theorem in Categorical Probability, 2024.
\newblock \href{https://arxiv.org/abs/2411.12840}{arXiv:2411.12840}.

\bibitem[CG99]{corradini1999algebraic}
A.~Corradini and F.~Gadducci.
\newblock An algebraic presentation of term graphs, via gs-monoidal categories.
\newblock {\em Applied Categorical Structures}, 7:299--331, 1999.
\newblock \doi{10.1023/A:1008647417502}.

\bibitem[CGH12]{cockett2012range}
J.~R.~B. Cockett, X.~Guo, and P.~Hofstra.
\newblock Range categories {I}: General theory.
\newblock {\em Theory and Applications of Categories}, 26(17):412--452, 2012.

\bibitem[CGT25a]{cioffo2025MarkovRestriction}
C.~J. Cioffo, F.~Gadducci, and D.~Trotta.
\newblock Between {{Markov}} and Restriction: {{Two}} More Monads on Categories
  for Relations, August 2025.
\newblock \doi{10.48550/arXiv.2508.20054}.

\bibitem[CGT25b]{cioffoTaxonomyCategoriesRelations2025}
C.~J. Cioffo, F.~Gadducci, and D.~Trotta.
\newblock A Taxonomy of Categories for Relations, February 2025.
\newblock \doi{10.48550/arXiv.2502.10323}.

\bibitem[CJ13]{coumans2013scalars}
D.~Coumans and B.~Jacobs.
\newblock Scalars, monads and categories.
\newblock In {\em Quantum Physics and Linguistics: A Compositional,
  Diagrammatic Discourse}. Oxford Academic, 2013.
\newblock \doi{10.1093/acprof:oso/9780199646296.003.0007}.

\bibitem[CJ19]{chojacobs2019strings}
K.~Cho and B.~Jacobs.
\newblock Disintegration and {B}ayesian Inversion via String Diagrams.
\newblock {\em Math. Structures Comput. Sci.}, 29:938--971, 2019.
\newblock \doi{10.1017/s0960129518000488}.

\bibitem[CL02]{cockettlack2002partialmaps}
J.~R.~B. Cockett and S.~Lack.
\newblock Restriction categories {I}: {C}ategories of partial maps.
\newblock {\em Theoret. Comput. Sci.}, 270(1-2):223--259, 2002.
\newblock \doi{10.1016/S0304-3975(00)00382-0}.

\bibitem[CL07]{cockettlackRestrictionCategoriesIII2007}
R.~Cockett and S.~Lack.
\newblock Restriction Categories {{III}}: Colimits, Partial Limits and
  Extensivity.
\newblock {\em Mathematical Structures in Computer Science}, 17(4):775--817,
  August 2007.
\newblock \doi{10.1017/S0960129507006056}.

\bibitem[CO89]{curienPartialityCartesianClosedness1989}
P.~L. Curien and A.~Obtu{\l}owicz.
\newblock Partiality, Cartesian Closedness, and Toposes.
\newblock {\em Information and Computation}, 80(1):50--95, January 1989.
\newblock \doi{10.1016/0890-5401(89)90023-0}.

\bibitem[CSY22]{carmeliAmbidexterityChromaticHomotopy2022}
S.~Carmeli, T.~M. Schlank, and L.~Yanovski.
\newblock Ambidexterity in Chromatic Homotopy Theory.
\newblock {\em Inventiones mathematicae}, 228(3):1145--1254, June 2022.
\newblock \doi{10.1007/s00222-022-01099-9}.

\bibitem[CW87]{carboniwalters1987bicartesian}
A.~Carboni and R.~F.~C. Walters.
\newblock Cartesian bicategories. {I}.
\newblock {\em J. Pure Appl. Algebra}, 49(1-2):11--32, 1987.
\newblock \doi{10.1016/0022-4049(87)90121-6}.

\bibitem[DLdFR22]{di2022monoidal}
E.~Di~Lavore, G.~de~Felice, and M.~Rom{\'a}n.
\newblock Monoidal Streams for Dataflow Programming.
\newblock In {\em Proceedings of LICS}, pages 1--14, 2022.
\newblock \doi{10.1145/3531130.3533365}.

\bibitem[DLR23]{dilavore2023evidential}
E.~Di~Lavore and M.~Rom{\'a}n.
\newblock Evidential Decision Theory via Partial {M}arkov Categories.
\newblock In {\em Proceedings of LICS}, pages 1--14, 2023.
\newblock \doi{10.1109/LICS56636.2023.10175776}.

\bibitem[DLRS]{dilavore2024partial}
E.~Di~Lavore, M.~Rom{\'a}n, and P.~Soboci{\'{n}}ski.
\newblock Partial {M}arkov categories.
\newblock \href{https://arxiv.org/abs/2502.03477}{arXiv:2502.03477}.

\bibitem[dPH87]{paolaDominicalCategoriesRecursion1987}
R.~A. di~Paola and A.~Heller.
\newblock Dominical Categories: Recursion Theory without Elements.
\newblock {\em The Journal of Symbolic Logic}, 52(3):594--635, September 1987.
\newblock \doi{10.2307/2274352}.

\bibitem[EP23]{ensarguet2023ergodic}
N.~Ensarguet and P.~Perrone.
\newblock Categorical probability spaces, ergodic decompositions, and
  transitions to equilibrium.
\newblock 2023.
\newblock \href{https://arxiv.org/abs/2310.04267}{arXiv:2310.04267}.

\bibitem[Fad85]{faden1985conditional}
A.~M. Faden.
\newblock The existence of regular conditional probabilities: necessary and
  sufficient conditions.
\newblock {\em Ann. Probab.}, 13(1):288--298, 1985.

\bibitem[FGHL{\etalchar{+}}23]{fritz2022dilations}
T.~Fritz, T.~Gonda, N.~G. Houghton-Larsen, A.~Lorenzin, P.~Perrone, and
  D.~Stein.
\newblock Dilations and Information Flow Axioms in Categorical Probability.
\newblock {\em Math. Struct. Comp. Sci.}, 33:913--957, 2023.
\newblock \doi{10.1017/S0960129523000324}.

\bibitem[FGL{\etalchar{+}}23]{fritz2023supports}
T.~Fritz, T.~Gonda, A.~Lorenzin, P.~Perrone, and D.~Stein.
\newblock Absolute continuity, supports and idempotent splitting in categorical
  probability.
\newblock 2023.
\newblock \href{https://arxiv.org/abs/2308.00651}{arXiv:2308.00651}.

\bibitem[FGL{\etalchar{+}}25]{fritz2025empirical}
T.~Fritz, T.~Gonda, A.~Lorenzin, P.~Perrone, and A.~S. Mohammed.
\newblock Empirical Measures and Strong Laws of Large Numbers in Categorical
  Probability.
\newblock {\em arXiv:2503.21576}, 2025.

\bibitem[FGP21]{fritz2021definetti}
T.~Fritz, T.~Gonda, and P.~Perrone.
\newblock de {F}inetti's theorem in categorical probability.
\newblock {\em J. Stoch. Anal.}, 2(4), 2021.
\newblock \doi{10.31390/josa.2.4.06}.

\bibitem[FGPR23]{fritz2023representable}
T.~Fritz, T.~Gonda, P.~Perrone, and E.~F. Rischel.
\newblock Representable {M}arkov categories and comparison of statistical
  experiments in categorical probability.
\newblock {\em Theoretical Computer Science}, 961:113896, 2023.
\newblock \doi{10.1016/j.tcs.2023.113896}.

\bibitem[FGTC23]{fritz2023lax}
T.~Fritz, F.~Gadducci, D.~Trotta, and A.~Corradini.
\newblock From Gs-monoidal to Oplax Cartesian Categories: Constructions and
  Functorial Completeness.
\newblock {\em Appl. Categ. Struct.}, 31(42), 2023.
\newblock \doi{10.1007/s10485-023-09750-z}.

\bibitem[FK23]{fritz2022dseparation}
T.~Fritz and A.~Klingler.
\newblock The $d$-separation criterion in Categorical Probability.
\newblock {\em J. Mach. Learn. Res.}, 24(46):1--49, 2023.
\newblock URL \url{http://jmlr.org/papers/v24/22-0916.html}.

\bibitem[Fox76]{foxCoalgebrasCartesianCategories1976}
T.~Fox.
\newblock Coalgebras and Cartesian Categories.
\newblock {\em Communications in Algebra}, 4(7):665--667, January 1976.
\newblock \doi{10.1080/00927877608822127}.

\bibitem[FR20]{fritzrischel2019zeroone}
T.~Fritz and E.~F. Rischel.
\newblock Infinite products and zero-one laws in categorical probability.
\newblock {\em Compositionality}, 2:3, 2020.
\newblock \doi{10.32408/compositionality-2-3}.

\bibitem[Fri20]{fritz2019synthetic}
T.~Fritz.
\newblock A synthetic approach to {M}arkov kernels, conditional independence
  and theorems on sufficient statistics.
\newblock {\em Adv. Math.}, 370:107239, 2020.
\newblock \doi{10.1016/j.aim.2020.107239}.

\bibitem[Gir82]{giry}
M.~Giry.
\newblock A Categorical Approach to Probability Theory.
\newblock In {\em Categorical aspects of topology and analysis}, volume 915 of
  {\em Lecture Notes in Mathematics}. Springer, 1982.
\newblock \doi{10.1007/bfb0092872}.

\bibitem[GRSDlC24]{gonda2024framework}
T.~Gonda, T.~Reinhart, S.~Stengele, and G.~De~les Coves.
\newblock A framework for universality in physics, computer science, and
  beyond.
\newblock {\em Compositionality}, 6, 2024.
\newblock \doi{10.46298/compositionality-6-3}.

\bibitem[Har20]{harpazAmbidexterityUniversalityFinite2020}
Y.~Harpaz.
\newblock Ambidexterity and the Universality of Finite Spans.
\newblock {\em Proceedings of the London Mathematical Society},
  121(5):1121--1170, 2020.
\newblock \doi{10.1112/plms.12367}.

\bibitem[Hau18]{haugsengIteratedSpansClassical2018}
R.~Haugseng.
\newblock Iterated Spans and Classical Topological Field Theories.
\newblock {\em Mathematische Zeitschrift}, 289(3):1427--1488, August 2018.
\newblock \doi{10.1007/s00209-017-2005-x}.

\bibitem[HPL21]{heunenpacaudlemay2020tensor}
C.~Heunen and J.-S. Pacaud~Lemay.
\newblock Tensor-restriction categories.
\newblock {\em Theory Appl. Categ.}, 37(21):635--670, 2021.
\newblock \href{https://arxiv.org/abs/2009.12432}{arXiv:2009.12432}.

\bibitem[Kec95]{kechris}
A.~S. Kechris.
\newblock {\em Classical descriptive set theory}, volume 156 of {\em Graduate
  Texts in Mathematics}.
\newblock Springer-Verlag, New York, 1995.
\newblock \doi{10.1007/978-1-4612-4190-4}.

\bibitem[LRSS25]{dilavore2025OrderPartialMarkov}
E.~D. Lavore, M.~Rom{\'a}n, P.~Soboci{\'n}ski, and M.~Sz{\'e}les.
\newblock Order in {{Partial Markov Categories}}, July 2025.
\newblock \doi{10.48550/arXiv.2507.19424}.

\bibitem[LT23]{lorenz2023causalmodels}
R.~Lorenz and S.~Tull.
\newblock Causal models in string diagrams, 2023.
\newblock \href{https://arxiv.org/abs/2304.07638}{arXiv:2304.07638}.

\bibitem[Lur]{lurieClassificationTopologicalField}
J.~Lurie.
\newblock On the {{Classification}} of {{Topological Field Theories}}.
\newblock {\em Current Developments in Mathematics}, 2008(1):129--280.
\newblock \doi{10.4310/CDM.2008.v2008.n1.a3}.

\bibitem[MP22]{moss2022probability}
S.~Moss and P.~Perrone.
\newblock Probability monads with submonads of deterministic states.
\newblock In {\em Proceedings of LICS}, pages 1--13, 2022.
\newblock \doi{10.1145/3531130.3533355}.

\bibitem[MP23]{moss2022ergodic}
S.~Moss and P.~Perrone.
\newblock A category-theoretic proof of the ergodic decomposition theorem.
\newblock {\em Ergodic Theory Dynam. Systems}, pages 1--27, 2023.
\newblock \doi{10.1017/etds.2023.6}.

\bibitem[Pan99]{panangaden1999CategoryMarkovKernels}
P.~Panangaden.
\newblock The Category of {M}arkov Kernels.
\newblock In {\em {{PROBMIV}}'98, First International Workshop on Probabilistic
  Methods in Verification}, volume~22, pages 171--187. 1999.

\bibitem[Ros86]{rosoliniContinuityEffectivenessTopoi1986}
G.~Rosolini.
\newblock {\em Continuity and {{Effectiveness}} in {{Topoi}}}.
\newblock PhD thesis, University of Oxford, 1986.

\bibitem[RR88]{robinsonrosoliniCategoriesPartialMaps1988}
E.~Robinson and G.~Rosolini.
\newblock Categories of Partial Maps.
\newblock {\em Information and Computation}, 79(2):95--130, November 1988.
\newblock \doi{10.1016/0890-5401(88)90034-X}.

\bibitem[Sri96]{srinivasExactCategoriesQuillens1996}
V.~Srinivas.
\newblock Exact {{Categories}} and {{Quillen}}'s {{Q-Construction}}.
\newblock In V.~Srinivas, editor, {\em Algebraic {{K-Theory}}}, pages 38--45.
  Birkh{\"a}user, Boston, MA, 1996.
\newblock \doi{10.1007/978-0-8176-4739-1_4}.

\end{thebibliography}

\end{document}